
\documentclass[a4paper,reqno]{amsart}
\textheight 220mm
\textwidth 160mm 
\hoffset -14mm 
\voffset -10mm

\usepackage{verbatim} 

\usepackage{tikz}
\usepackage{tikz-cd}

\pgfqkeys{/tikz/commutative diagrams}{%
  path/.style={/tikz/arrows=cm cap-cm cap}%
}

\usepackage{pstricks,pst-coil}
\usepackage{amsthm} 
\usepackage{graphics,graphicx}
\usepackage{stmaryrd}
\usepackage{latexsym}
\usepackage{amssymb}
\usepackage{amsmath}
\usepackage{extarrows}
\usepackage{upgreek}
\usepackage{mathrsfs} 
\newtheorem{theorem}{Theorem}[section]
\newtheorem{proposition}[theorem]{Proposition}
\newtheorem{lemma}[theorem]{Lemma}
\newtheorem{corollary}[theorem]{Corollary}

\theoremstyle{definition} 
\newtheorem{definition}[theorem]{Definition}
\newtheorem{remark}[theorem]{Remark}
\theoremstyle{remark} 
\newtheorem{example}[theorem]{Example}



\renewcommand{\and}{\quad\text{and}\quad}

\renewcommand{\for}{\text{ for }}

\newcommand{\ZZ}{\mathbb{Z}}

\newcommand{\NN}{\mathbb{N}}
\newcommand{\ra}{\rightarrow}
\newcommand{\lra}{\longrightarrow}



\newcommand{\cc}{{\mathcal C}}
\newcommand{\cd}{{\mathcal D}}

\newcommand{\cj}{{\mathcal J}}
\newcommand{\cp}{{\mathcal P}}
\newcommand{\ck}{{\mathcal K}}

\newcommand{\DR}{\mathop{{\operatorname{D}}_{\!R}}}
\newcommand{\DK}{\mathop{{\operatorname{D}}_{\!K}}}

\newcommand{\cpp}{{\mathcal P}(Q_\sigma,W_\sigma,F)}
\newcommand{\bas}{P_{Q_{\sigma}}/\hspace{-2pt}\sim}


\renewcommand{\mod}{\mathrm{mod}\,}
\DeclareMathOperator{\proj}{proj}
\DeclareMathOperator{\add}{add}

\DeclareMathOperator{\im}{Im}
\DeclareMathOperator{\tr}{Tr}
\DeclareMathOperator{\soc}{soc}
\DeclareMathOperator{\M}{M}
\DeclareMathOperator{\GL}{GL}
\DeclareMathOperator{\Id}{Id}

\DeclareMathOperator{\coker}{coker}
\DeclareMathOperator{\Hom}{Hom}
\DeclareMathOperator{\Ext}{Ext}
\DeclareMathOperator{\End}{End}
\DeclareMathOperator{\Aut}{Aut}

\DeclareMathOperator{\inj}{inj}
\DeclareMathOperator{\CM}{CM}
\DeclareMathOperator{\thick}{thick}
\DeclareMathOperator{\diag}{diag}
\DeclareMathOperator{\gldim}{gl. dim}
\DeclareMathOperator{\prdim}{proj. dim}
\DeclareMathOperator{\depth}{depth}
\DeclareMathOperator{\inte}{int}
\DeclareMathOperator{\Gr}{Gr}
\DeclareMathOperator*{\tens}{\otimes}


\newcommand{\shift}[2]{\ifstrequal{#2}{1}{\Sigma #1}{\Sigma^{#2}#1}}


\newcommand{\vvect}[1]{
  \begin{bmatrix}
    #1
  \end{bmatrix}
  }
\newcommand{\hvect}[1]{
  \begin{bmatrix}
    #1
  \end{bmatrix}
  }
  



    \def\restr#1#2{\mathchoice
                  {\setbox1\hbox{${\displaystyle #1}_{\scriptstyle #2}$}
                  \restrictionaux{#1}{#2}}
                  {\setbox1\hbox{${\textstyle #1}_{\scriptstyle #2}$}
                  \restrictionaux{#1}{#2}}
                  {\setbox1\hbox{${\scriptstyle #1}_{\scriptscriptstyle #2}$}
                  \restrictionaux{#1}{#2}}
                  {\setbox1\hbox{${\scriptscriptstyle #1}_{\scriptscriptstyle #2}$}
                  \restrictionaux{#1}{#2}}}
    \def\restrictionaux#1#2{{#1\,\smash{\vrule height .8\ht1 depth .85\dp1}}_{\,#2}}

\newcommand{\vech}[2]{
    \begin{bmatrix}
      #1 & #2
    \end{bmatrix}
    }
\newcommand{\mat}[4]{
    \begin{bmatrix}
      #1 & #2 \\
      #3 & #4
    \end{bmatrix}
    }

\usepackage[normalem]{ulem}
\usepackage{cancel}
\usepackage{soul}

\definecolor{orange}{rgb}{1,0.90,0.5}
\setlength{\marginparwidth}{1.5cm}
\let\oldmarginpar\marginpar
\renewcommand\marginpar[1]{(\hl{**})\oldmarginpar[\raggedleft\footnotesize \fcolorbox{blue}{orange}{\parbox{\marginparwidth}{\color{blue}{(**) #1}}}]%
{\raggedright\footnotesize \fcolorbox{blue}{orange}{\parbox{\marginparwidth}{\color{blue}{(**) #1}}}}}

\newcommand{\rep}[2]{{\color{red}\ifmmode\ifx&#1&\else\cancel{#1}\ \fi\else\ifx&#1&\else\sout{#1}\ \fi\fi\color{green!50!black}#2}}


\numberwithin{equation}{section}
\title[Ice quivers with potential associated with triangulations of polygons]{Ice quivers with potential associated with triangulations and Cohen-Macaulay modules over orders}
\author[L. Demonet]{Laurent Demonet}
\author[X. Luo]{Xueyu Luo}

\subjclass[2010]{16G20, 16H20, 13C14, 13F60}

\begin{document}

\begin{abstract}
Given a triangulation of a polygon $P$ with $n$ vertices, we associate an ice quiver with potential such that the associated Jacobian algebra has the structure of a Gorenstein tiled $K[x]$-order $\Lambda$.
Then we show that the stable category of the category of Cohen-Macaulay $\Lambda$-modules is equivalent to the cluster category $\cc$ of type $A_{n-3}$.
 It gives a natural interpretation of the usual indexation of cluster tilting objects of $\cc$ by triangulations of $P$. Moreover, it extends naturally the triangulated categorification by $\cc$ of the cluster algebra of type $A_{n-3}$ to an exact categorification by adding coefficients corresponding to the sides of $P$. Finally, we lift the previous equivalence of categories to an equivalence between the stable category of graded Cohen-Macaulay $\Lambda$-modules and the bounded derived category of modules over a path algebra of type $A_{n-3}$. 
\end{abstract}

\maketitle

\section{Introduction}
\label{s:Introduction2}

In 2001, Fomin and Zelevinsky introduced a new class of algebras with a rich combinatorial structure, called \emph{cluster algebras} \cite{FoZe02}, \cite{FoZe03} motivated by canonical bases and total positivity \cite{Ka91}, \cite{Lu91}, \cite{Lu97}. 
Since their emergence, cluster algebras have generated a lot of interest, coming in particular from their links with many other subjects: combinatorics, Poisson geometry, integrable systems, Teichm\"uller spaces, and, last but not least, representations of finite-dimensional algebras.

In seminal articles, Marsh, Reineke, Zelevinsky \cite{MaReZe03}, Buan, Marsh, Reineke, Reiten, Todorov \cite{BMRRT} and Caldero, Chapoton \cite{CaCh06} have shown that the important class of acyclic cluster algebras could be modeled with categories constructed from representations of quivers. Under these categorifications, clusters are represented by objects having a strong homological property, called \emph{cluster tilting objects}. Since that time, these categories, called cluster categories, were widely generalized in several directions.
One of these generalizations, was given by Amiot \cite{Afourier} by using quivers with potential, introduced by Derksen, Weyman and Zelevinsky in \cite{DWZ1}. A quiver with potential is a quiver $Q$ together with a linear combination $W$ of cyclic paths in $Q$. To each quiver with potential is associated an important algebra, $\cp(Q,W)$, called the \emph{Jacobian algebra}. 

For each triangulation of a bordered surface with marked points,  a quiver was introduced by Caldero, Chapoton and Schiffler in type $A$ \cite{CPF06} and Fomin, Shapiro and Thurston in general cases \cite{FST08}. 
They showed that the combinatorics of triangulations of the surface correspond to that of the cluster algebra defined by the quiver.
Later in \cite{LF09}, Labardini-Fragoso associated to each triangulation $\sigma$ of a bordered surface with marked points a potential $W(\sigma)$ on the corresponding quiver $Q(\sigma)$. 
He proved that flips of triangulations are compatible with mutations of quivers with potential.
This is partially generalized to the case of tagged triangulations by Labardini-Fragoso and Cerulli Irelli in \cite{LF11}, and completely by Labardini-Fragoso in \cite{LF12}.

The aim of this paper is to enhance some of these known results by considering ice quivers with potential and certain Frobenius categories given by Cohen-Macaulay modules. 
The study of Cohen-Macaulay modules (or lattices) over orders is a classical subject in representation theory. We refer to \cite{FMO,RT,POS,CM} for a general background on this subject and also to \cite{AIRCC,Araya,Thanvanden,IyTa13,Matrixrepresentation,Matrixweight} for recent results about connections with tilting theory. Recently a strong connection between Cohen-Macaulay representation theory and cluster categories has been found \cite{AIRCC,Thanvanden,IyTa13,KR}.
We will enlarge this connection by studying the frozen Jacobian algebras associated with triangulations of surfaces from the viewpoint of Cohen-Macaulay representation theory.
Through this paper, let $K$ denote a field and $R=K[x]$.  We extend the construction of \cite{CPF06}, and associate an ice quiver with potential $(Q_\sigma, W_\sigma)$ to each triangulation $\sigma$ of a polygon $P$ with $n$ vertices by adding a set $F$ of $n$ frozen vertices corresponding to the edges of the polygon and certain arrows (see Definition \ref{def:QP}). We study the associated frozen Jacobian algebra 
\[  \Gamma_\sigma:=\cpp \]
(see Definition \ref{frozen JA}). Our first main results are the following:
\begin{theorem}
[Theorem \ref{thm:frozen Jacobian algebra is order} and \ref{thm:frozen part}]
Let $e_F$ be the sum of the idempotents of $\Gamma_\sigma$ at frozen vertices. Then 
\begin{enumerate}
\item the frozen Jacobian algebra $\Gamma_\sigma$ has the structure of an $R$-order (see Definition \ref{def:order} and Remark \ref{rem:order}).
\item the $R$-order $e_F \Gamma_\sigma e_F$ is isomorphic to the Gorenstein tiled $R$-order 
\begin{equation}
    \label{the order}
  \Lambda:=  \begin{bmatrix}
    R & R& R&\cdots&R & (x^{-1})\\
    (x) & R& R&\cdots& R&R\\
    (x^2) &(x)& R&\cdots& R & R\\
    \vdots&\vdots&\vdots&\ddots&\vdots&\vdots\\
    (x^2) &(x^2)& (x^2)&\cdots& R & R\\
    (x^2) & (x^2)& (x^2)&\cdots  &(x) &R
   \end{bmatrix}_{n\times n.}
\end{equation}
\end{enumerate}
\end{theorem}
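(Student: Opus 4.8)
The plan is to first isolate the commutative subring $R=K[x]$ inside $\Gamma_\sigma$ and then to extract both assertions from an explicit description of the subspaces $e_i\Gamma_\sigma e_j$. For the $R$-structure I would exhibit a distinguished central element $x\in\Gamma_\sigma$, taken to be the sum over all vertices $v$ of a canonical cycle $c_v$ based at $v$ that runs once around $P$, as cut out by the triangles of $\sigma$. Centrality is then checked arrow by arrow: for each arrow $a$ the Jacobian relation $\partial_a W_\sigma$ equates the two ways of completing $a$ to a full rotation, which is exactly the identity $c_{t(a)}a=a\,c_{s(a)}$ needed to move $x$ past $a$. This endows $\Gamma_\sigma$ with the structure of a $K[x]$-algebra with $x$ central, and fixes the $R$-action used throughout.

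For part (1) I would produce a free $R$-basis of $\Gamma_\sigma$ consisting of \emph{reduced} paths — those containing no full rotation cycle — using that every path factors uniquely as a power of $x$ times a reduced path. Multiplication by $x$ is then the injective $R$-linear map adding one rotation, so each $e_i\Gamma_\sigma e_j$ is a free $R$-module of finite rank with the reduced paths from $j$ to $i$ as a basis; finiteness holds because there are only finitely many reduced paths between two fixed vertices. A finitely generated, torsion-free $K[x]$-module which is a $K[x]$-algebra with $x$ central is an $R$-order, which gives (1).

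For part (2) I would pass to the frozen idempotents, labelling the frozen vertices $1,\dots,n$ by the edges of $P$ in cyclic order, and compute each $e_i\Gamma_\sigma e_j$. The central claim is that between two frozen vertices there is, up to the Jacobian relations and a scalar, exactly one reduced path, equivalently $\dim_K e_i(\Gamma_\sigma/x\Gamma_\sigma)e_j\le 1$; this already shows $e_F\Gamma_\sigma e_F$ is a tiled order. To identify it with $\Lambda$ I would choose generators $g_{ij}$ of these rank-one modules, record the structure constants in $g_{ij}g_{jk}=x^{c_{ijk}}g_{ik}$, and normalise by rescaling the $g_{ij}$ (equivalently, conjugating the resulting embedding into $\M_n(K(x))$ by a suitable $\diag(x^{d_1},\dots,x^{d_n})$) so that the diagonal becomes $R$. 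Carrying out this computation, the normalised exponents vanish on and above the diagonal except in the top-right corner, increase by one per step below the diagonal but saturate at $2$ once the relations collapse the longer boundary detours, and equal $-1$ in that corner, where one full rotation — which equals $x$ — is ``spent'' on the wrap from vertex $n$ to vertex $1$. Concatenation of paths then becomes addition of these exponents modulo the relation (full loop)$\,=x$, which is precisely the multiplication of the fractional ideals $(x^{a_{ij}})$ in \eqref{the order}; hence $e_F\Gamma_\sigma e_F\cong\Lambda$. The order $\Lambda$ is tiled by inspection, and its Gorenstein property I would read off from the standard combinatorial criterion applied to the exponent matrix $(a_{ij})$.

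The hard part is the content of part (2): proving that each frozen $e_i\Gamma_\sigma e_j$ is genuinely of rank one and pinning down its exponent modulo the Jacobian ideal. This is precisely where the detailed shape of $W_\sigma$ is indispensable, since one must show that the relations collapse every longer boundary detour onto an $x$-multiple of a single generator, and in particular account for the fractional corner through the normalisation. By contrast, locating the central element $x$ and establishing the free-module structure of part (1) should be comparatively routine once the correct rotation element is in hand. If it can be shown that $e_F\Gamma_\sigma e_F$ is invariant under flips of $\sigma$ (mutation at the mutable vertices), the computation would reduce to a single convenient triangulation such as the fan; lacking that, I would arrange the path combinatorics so as to apply uniformly to every $\sigma$, as the statement demands.
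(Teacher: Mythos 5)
Your outline follows the same strategy as the paper: a central element assembled from canonical cycles at the vertices, with centrality checked arrow by arrow through the Jacobian relations; a normal form (every path is a power of the central element times a minimal path) giving freeness over $R$; and, for part (2), a computation of exponents between frozen vertices followed by conjugation with a diagonal matrix of powers of $x$. But there is a genuine gap, and it sits exactly where you declare the work ``comparatively routine''. The assertion that every path factors \emph{uniquely} as a power of the central element times a reduced path, with only finitely many reduced classes between two fixed vertices, is the technical core of the whole theorem, and nothing in your proposal proves it. Without it you cannot rule out $K[x]$-torsion in $e_i\Gamma_\sigma e_j$ (a relation $wC\sim w'C$ with $w\not\sim w'$ would produce exactly such torsion and destroy the order property), so part (1) rests on the same unproven claim as your ``central claim'' of rank one in part (2); note also that distinct reduced paths can be equal in $\Gamma_\sigma$, so they cannot literally serve as a $K$-basis --- one must work with equivalence classes. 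In the paper this normal form is Theorem \ref{th:induction}: for \emph{all} pairs of vertices $i,j$ (not only frozen ones) there is a path $w_0(i,j)$ of minimal $\theta$-length with $w\sim w_0(i,j)C^N$ for every path $w$ from $i$ to $j$. Its proof needs real input that your plan does not supply: the grading $\ell^\theta$ by angles (required even to make ``minimal'' meaningful and to separate the powers of $C$), the extended quiver $Q'_\sigma$ with the extra arrows $a^*$ and the internal Jacobian algebra, and an induction on $n$ that cuts off a corner triangle and embeds the algebra of the smaller polygon. A smaller inaccuracy: the cycles $C_i$ are the \emph{minimal} cycles (cyclic triangles and big cycles, running around a triangle of $\sigma$ or around a vertex of $P$), not cycles ``running once around $P$''; this is harmless since the relations identify all minimal cycles at a vertex, which is what your centrality check actually uses.

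Your closing remark about flip invariance is a good instinct: the paper does reduce the computation of the exponents to a convenient (fan-type) triangulation via flip invariance of minimal $\theta$-lengths (Proposition \ref{pro:independence}, then Proposition \ref{pro:length}). However, the paper's proof of that invariance itself invokes the normal form (through Corollary \ref{thm:basis of Jacobian algebra}), so this shortcut is only available \emph{after} the missing induction is established; it cannot substitute for it.
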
 

\begin{remark}
 The order $\Lambda$ is an almost Bass order of type (IVa), see for example \cite{Iya-order}.
\end{remark}

\begin{theorem}[Theorems \ref{thm:frozen Jacobian algebra is order}, \ref{thm:frozen part}, \ref{thm:edges and modules}, \ref{thm:cluster tilting} and \ref{th:cyeq}]
 \begin{enumerate}
  \item For any triangulation $\sigma$ of $P$, we can map each edge $i$ of $\sigma$ to the indecomposable Cohen-Macaulay $\Lambda$-module $e_F \Gamma_\sigma e_i$ where $e_i$ is the idempotent of $\Gamma_\sigma$ corresponding to $i$. In fact, this module does only depend on $i$ and this construction induces one-to-one correspondences 
 \begin{align*}
  \left\{\text{sides and diagonals of $P$} \right\} & \longleftrightarrow \left\{\text{indecomposable objects of $\CM(\Lambda)$}\right\}/\cong \\
  \left\{\text{sides of $P$} \right\} & \longleftrightarrow \left\{\text{indecomposable projectives of $\CM(\Lambda)$}\right\}/\cong\\
  \left\{\text{triangulations of $P$} \right\} & \longleftrightarrow \left\{\text{basic cluster tilting objects of $\CM(\Lambda)$}\right\}/\cong.
 \end{align*} 
  \item For the basic cluster tilting object $T_\sigma$ corresponding to a triangulation $\sigma$, $$\End_{\CM(\Lambda)}(T_\sigma) \cong \Gamma_\sigma^{\operatorname{op}}.$$
  \item The category $\underline{\CM}(\Lambda)$ is $2$-Calabi-Yau.
  \item If $K$ is a perfect field, there is a triangle-equivalence $\cc(K Q) \cong \underline{\CM}(\Lambda)$ where $Q$ is a quiver of type $A_{n-3}$ and $\cc(K Q)$ is the corresponding cluster category. 
 \end{enumerate}
\end{theorem}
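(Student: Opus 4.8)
The plan is to treat the four assertions in turn, the backbone being an explicit combinatorial model of $\CM(\Lambda)$. Since $\Gamma_\sigma$ is an $R$-order (first Theorem), it is free as an $R$-module, so each $e_F\Gamma_\sigma e_i$ is $R$-free, i.e.\ lies in $\CM(\Lambda)$; this gives the map of assertion~(1). To see that $e_F\Gamma_\sigma e_i$ depends only on the arc $i$ and not on the ambient triangulation, I would describe it intrinsically as a lattice determined by the two endpoints of $i$, using that after inverting $x$ the order degenerates to the simple algebra $M_n(K(x))$, so every lattice is pinned down by its position inside $K(x)^n$. Because $\Lambda$ is a tiled $R$-order, its indecomposable lattices can be enumerated combinatorially as suitable chains of fractional ideals; I would carry out this classification, check that the resulting list has exactly $\binom{n}{2}$ members, and match it with the sides and diagonals of $P$, verifying that the $n$ indecomposable projective-injectives correspond precisely to the sides.

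For assertion~(2) I would use idempotent reduction along the frozen idempotent. The functor $e_F(-)=e_F\Gamma_\sigma\otimes_{\Gamma_\sigma}-$ sends $\Gamma_\sigma e_i$ to $e_F\Gamma_\sigma e_i$, and the crucial point is that it is fully faithful on projectives, i.e.\ the multiplication map $e_i\Gamma_\sigma e_j\to\Hom_\Lambda(e_F\Gamma_\sigma e_i,e_F\Gamma_\sigma e_j)$ is an isomorphism; this expresses that $e_F$ is faithful and large enough to recover all morphisms, which I would read off from the explicit shape of $\Gamma_\sigma$. Since the vertices of $Q_\sigma$ are exactly the edges of $\sigma$, summing over $i\in\sigma$ gives $T_\sigma=e_F\Gamma_\sigma e_\sigma$ with $e_\sigma=1$, whence $\End_\Lambda(T_\sigma)\cong\Gamma_\sigma^{\operatorname{op}}$, the opposite appearing because composition of maps between projectives is reverse multiplication. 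The remaining bijection in~(1) between triangulations and basic cluster tilting objects I would then obtain by showing that each $T_\sigma$ is cluster tilting and that a flip of $\sigma$ corresponds to an exchange of cluster tilting objects, matching the mutation of $(Q_\sigma,W_\sigma)$ recorded by Labardini-Fragoso; transitivity of flips forces the correspondence to be onto.

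Assertion~(3) is where I expect the main work. First I would record that $\Lambda$ is an isolated singularity: inverting $x$ yields $M_n(K(x))$, so $\Lambda_{\mathfrak p}$ is a maximal, hence hereditary, order at every nonzero prime, the only singular point being $x=0$. Hence $\underline{\CM}(\Lambda)$ is a $\Hom$-finite (over $K$) triangulated category with suspension $\Sigma=\Omega^{-1}$ which, $\Lambda$ being Gorenstein, admits Auslander--Reiten duality and a Serre functor $\mathbb{S}$. Writing $\tau$ for the Auslander--Reiten translation, being $2$-Calabi--Yau is equivalent to the functorial isomorphism $\tau\cong\Sigma$, i.e.\ $\mathbb{S}\cong\Sigma^2$. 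I would prove this by computing $\Omega$ and $\tau$ directly on the combinatorial model of paragraph~one, the Gorenstein structure of $\Lambda$ --- visible in the corner entry $(x^{-1})$ of the order --- supplying the self-duality that pins $\mathbb{S}$ to $\Sigma^2$. This explicit identification of $\tau$ with the suspension is the hard part; as a fallback I would instead show that $\Lambda$ is a (graded) $3$-Calabi--Yau order, from which $2$-Calabi--Yau-ness of the stable category is formal.

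Finally, assertion~(4) follows from a recognition theorem. By~(3) the category $\underline{\CM}(\Lambda)$ is algebraic, $\Hom$-finite and $2$-Calabi--Yau, and by~(1)--(2) it carries a cluster tilting object, the image of $T_\sigma$; its summands indexed by the sides of $P$ are projective and hence vanish stably, so the stable endomorphism algebra is $\Gamma_\sigma^{\operatorname{op}}$ modulo morphisms factoring through projectives, namely the ordinary Jacobian algebra obtained from $(Q_\sigma,W_\sigma)$ by deleting the frozen vertices. This is precisely the cluster-tilted algebra of type $A_{n-3}$. Invoking the Keller--Reiten and Amiot recognition theorem, valid over a perfect field, one concludes that $\underline{\CM}(\Lambda)$ is triangle-equivalent to $\cc(KQ)$ with $Q$ of type $A_{n-3}$; the perfectness of $K$ enters only here, to lift the isomorphism of endomorphism algebras to an equivalence of triangulated categories.
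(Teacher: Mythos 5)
Your treatment of the lattice classification and of assertion~(2) essentially matches the paper: the paper also classifies the indecomposables of $\CM(\Lambda)$ as explicit column lattices $(i,j)$ (by a Euclidean/Gaussian elimination argument over $R$, Lemmas \ref{trigo}--\ref{diagon}), and proves $\End_\Lambda(T_\sigma)\cong\Gamma_\sigma^{\operatorname{op}}$ by exactly the reduction you describe, the surjectivity of right multiplication $e_i\Gamma_\sigma e_j\to\Hom_\Lambda(e_F\Gamma_\sigma e_i,e_F\Gamma_\sigma e_j)$ being precisely Theorem \ref{th:induction}~(2). The genuine gaps are in the other two ingredients.

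First, the bijection between triangulations and basic cluster tilting objects. You never compute extensions between the indecomposable lattices, yet this is the engine of that correspondence: the paper classifies all non-split short exact sequences (Lemma \ref{pro:extension}) and deduces that $\Ext^1_\Lambda((s,t),(s',t'))\neq 0$ exactly when the two diagonals cross (Proposition \ref{cor:extensioncross}); Theorem \ref{thm:cluster tilting}, including surjectivity, is then immediate, since the summands of any basic cluster tilting object form a maximal non-crossing collection, i.e.\ a triangulation. Your flip-and-mutate route breaks at three points: (i) you give no method for the base case that even one $T_\sigma$ is cluster tilting --- this is exactly where the Ext computation is unavoidable; (ii) exchange of cluster tilting objects requires the $2$-Calabi-Yau property, which in your ordering is established only afterwards; (iii) most seriously, ``transitivity of flips forces the correspondence to be onto'' is a non sequitur: it would require every basic cluster tilting object of $\CM(\Lambda)$ to be mutation-reachable from some $T_\sigma$, i.e.\ connectedness of the exchange graph, which is not available at that stage and in your plan would only follow from assertion~(4) --- whose proof in turn needs a cluster tilting object, so the argument is circular.

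Second, assertion~(3). Being $2$-Calabi-Yau is a statement about a bifunctorial isomorphism; computing $\Omega$, $\nu$ and $\tau$ on the combinatorial model only identifies $\tau X\cong \Omega X$ object by object, and you yourself flag the functorial identification as ``the hard part'' without supplying a mechanism. The paper's mechanism is concrete and quite different: it realizes $\Lambda$ as the order $S^{[n]}$ attached to the $\ZZ/n\ZZ$-graded hypersurface $S=K[X,Y]/(X^{n-2}-Y^2)$, so that matrix-factorization periodicity gives a \emph{functorial} isomorphism $[2]\cong(2)$ (Theorem \ref{degree}), and it identifies the Nakayama functor with the twist ${}_{\alpha^{-1}}(-)$ by an explicit rotation automorphism, which under this identification is again a degree shift. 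Your fallback via an internally $3$-Calabi-Yau structure on $\Gamma_\sigma$ is a legitimate alternative in the spirit of Amiot--Iyama--Reiten, but it is not ``formal'': verifying the bimodule $3$-CY condition is itself a substantial computation that you do not sketch. A smaller remark on~(4): the paper chooses the fan triangulation so that the stable endomorphism algebra is the hereditary algebra $KQ$, which is all that Theorem \ref{KR} can handle; your appeal to a general cluster-tilted algebra silently requires the stronger Amiot-type recognition theorem for quivers with potential.
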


Note that this theorem permits to enhance the usual cluster category of type $A_{n-3}$ with projective objects corresponding through the categorification to coefficients of the cluster algebra, in a way which fits perfectly with the combinatorial interpretation of triangulations of polygons. Indeed, there is exactly one coefficient per side of the polygon and the situation is invariant by rotation of the polygon. More precisely, using the cluster character $X'$ defined in \cite[section 3]{FuKeller} on Frobenius categories (see also \cite{Palu}), we get the following theorem.
\begin{theorem}[Theorem \ref{th:grassman}]
 If $K$ is algebraically closed, the category $\CM(\Lambda)$ categorifies through the cluster character $X'$ a cluster algebra structure on the homogeneous coordinate ring of the Grassmannian of $2$-dimensional planes in $L^n$ (for any field $L$). This cluster algebra structure coincides with the one defined in \cite{FoZe02} from Pl\"ucker coordinates.
\end{theorem}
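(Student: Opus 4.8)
The plan is to apply the theory of cluster characters on Frobenius $2$-Calabi-Yau categories of Fu and Keller \cite{FuKeller} to $\CM(\Lambda)$, and then to match the resulting combinatorial data with the classical Pl\"ucker cluster structure on the Grassmannian. By Theorem~\ref{th:cyeq} the stable category $\underline{\CM}(\Lambda)$ is Hom-finite and $2$-Calabi-Yau, and $\CM(\Lambda)$ is a Frobenius category admitting the cluster tilting objects $T_\sigma$; since $K$ is algebraically closed the Euler characteristics of quiver Grassmannians are available, so the cluster character $X'$ of \loccit{} is well defined on $\CM(\Lambda)$. Recall that $X'$ is multiplicative on direct sums, sends the indecomposable projective-injectives to the coefficient variables, and satisfies the multiplication formula $X'_X X'_Y = X'_E + X'_{E'}$ whenever $\dim_K \Ext^1(X,Y)=1$, with $E$ and $E'$ the middle terms of the two non-split extensions.

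First I would take an initial triangulation $\sigma$ and use $T_\sigma$ as the initial seed. By Theorem~\ref{thm:cluster tilting} we have $\End_{\CM(\Lambda)}(T_\sigma)\cong\Gamma_\sigma^{\operatorname{op}}$, so the Gabriel quiver of the seed is the ice quiver $Q_\sigma$, with a mutable part of type $A_{n-3}$ and $n$ frozen vertices indexed by the sides of $P$. This is exactly the ice quiver that Fomin and Zelevinsky attach to $\sigma$ when they realize the homogeneous coordinate ring of $\Gr(2,L^n)$ as a cluster algebra \cite{FoZe02}: the Pl\"ucker coordinate $p_{ab}$ is assigned to the chord joining the vertices $a$ and $b$ of $P$, the $p_{i,i+1}$ being the frozen variables. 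The bijections of Theorem~\ref{thm:edges and modules}---sides and diagonals with indecomposables, sides with projectives, triangulations with cluster tilting objects---match this dictionary verbatim.

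Next I would identify the values of $X'$ with Pl\"ucker coordinates. On the summands of $T_\sigma$ the character $X'$ produces the initial cluster variables together with the coefficients, which under the identification of the ambient rational function field with that of $\Gr(2,L^n)$ are the Pl\"ucker coordinates indexed by the diagonals of $\sigma$ and by the sides of $P$. I would then proceed by induction on the number of flips separating a chord from $\sigma$: a flip is a mutation of cluster tilting objects, governed by an exchange pair $(X,Y)$ with $\dim_K\Ext^1(X,Y)=1$, so the multiplication formula reproduces the three-term relation associated with a flip inside the quadrilateral with vertices $a,b,c,d$, namely $X'_{ac}X'_{bd}=X'_{ab}X'_{cd}+X'_{ad}X'_{bc}$. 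This is precisely the short Pl\"ucker relation $p_{ac}p_{bd}=p_{ab}p_{cd}+p_{ad}p_{bc}$, so inductively $X'$ sends the indecomposable $e_F\Gamma_\sigma e_i$ attached to the chord $\{a,b\}$ to $p_{ab}$. Since every chord is reached from $\sigma$ by flips and the coordinate ring of $\Gr(2,L^n)$ is generated by the $p_{ab}$, this exhibits the cluster algebra categorified by $\CM(\Lambda)$ through $X'$ as the Pl\"ucker cluster structure of \cite{FoZe02}.

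The step I expect to be the main obstacle is the explicit verification of the exchange sequences in $\CM(\Lambda)$ realizing a flip: one must check that the two non-split extensions between the modules of the diagonals $ac$ and $bd$ have middle terms $e_F\Gamma_\sigma e_{ab}\oplus e_F\Gamma_\sigma e_{cd}$ and $e_F\Gamma_\sigma e_{ad}\oplus e_F\Gamma_\sigma e_{bc}$, with the boundary chords appearing correctly, and one must pin down the normalization so that $X'$ on the projectives yields exactly the frozen variables $p_{i,i+1}$ and not some rescaling. This amounts to an $\Ext^1$ computation carried out through the matrix presentation \eqref{the order} of $\Lambda$ and the chord-to-module correspondence of Theorem~\ref{thm:edges and modules}. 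Once this local computation is established, the global statement follows formally from the Fu--Keller theorem together with the classical description of the cluster structure on $\Gr(2,n)$.
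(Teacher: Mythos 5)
Your proposal is correct and takes essentially the same route as the paper: both apply the Fu--Keller cluster character $X'$ to the initial cluster tilting object $T_\sigma$, match the exchange sequences coming from crossing diagonals (the paper's Lemma \ref{pro:extension}, together with $\dim_K\Ext^1_\Lambda((p,s),(q,t))=1$) with the short Pl\"ucker relations, and use reachability of all indecomposables by flips to generate the whole algebra. The only organizational difference is that the paper constructs an explicit homomorphism $\kappa$ from the presented ring $L[\Gr_2(L^n)]$, verifying all short Pl\"ucker relations directly and obtaining injectivity from the fraction-field argument, whereas you induct on flips inside the identified function field; the ``main obstacle'' you flag is exactly the extension computation already established in Lemma \ref{pro:extension}, so it is available as a prior result and your argument goes through.
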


Similar results are obtained in the categorifications of the cluster structures of coordinate rings of (general) Grassmannians independently by  Baur, King, Marsh \cite{BKM} (see also Jensen, King, Su \cite[Theorem 4.5]{JKS}).

Usually, the cluster category $\cc(K Q)$ is constructed as an orbit category of the bounded derived category $\cd^{\operatorname{b}}(K Q)$. We can reinterpret this result in this context by studying the category of Cohen-Macaulay graded $\Lambda$-modules $\CM^\ZZ(\Lambda)$:

\begin{theorem}[Theorem \ref{thm:cluster cat as stable cat for A type}]
  Using the same notation as before: 
  \begin{enumerate}
   \item The Cohen-Macaulay $\Lambda$-module $e_F \Gamma_\sigma$ can be lifted to a tilting object in $\underline{\CM}^\ZZ(\Lambda)$.
   \item There exists a triangle-equivalence $\cd^{\operatorname{b}}(K Q) \cong \underline{\CM}^{\ZZ}(\Lambda)$.
  \end{enumerate}
\end{theorem}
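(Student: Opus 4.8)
The plan is to produce the equivalence of part (2) from part (1) by exhibiting $\underline{\CM}^\ZZ(\Lambda)$ as the bounded derived category of the endomorphism algebra of a tilting object, applying Keller's Morita theorem for algebraic triangulated categories, and following the strategy of Amiot--Iyama--Reiten \cite{AIRCC} and Iyama--Takahashi \cite{IyTa13}. First I would fix the $\ZZ$-grading on $\Lambda$: each entry of the matrix defining $\Lambda$ is a fractional $R$-ideal $(x^{a})$ and $R=K[x]$ carries the grading $\deg x = 1$, so $\Lambda$ becomes a graded $R$-order and $\CM^\ZZ(\Lambda)$ is a Frobenius category whose projective--injective objects are the graded projectives. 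Its stable category $\underline{\CM}^\ZZ(\Lambda)$ is therefore algebraic and triangulated, with suspension $\Sigma$ the graded cosyzygy, and the internal grading shift $(1)$ is an autoequivalence commuting with $\Sigma$. Forgetting the grading gives a functor onto $\underline{\CM}(\Lambda)$ realizing the latter as the orbit category by $(1)$; this matches, and guides, the comparison $\underline{\CM}(\Lambda)\cong\cc(KQ)$ of the previous theorem, since $\cc(KQ)$ is the corresponding orbit category of $\cd^{\operatorname{b}}(KQ)$.

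For part (1), the frozen Jacobian algebra $\Gamma_\sigma$ carries a natural grading by path length, which endows $e_F\Gamma_\sigma = \bigoplus_i e_F\Gamma_\sigma e_i$ with the structure of a graded $\Lambda=e_F\Gamma_\sigma e_F$-module lying in $\CM^\ZZ(\Lambda)$; I write $T$ for its image in $\underline{\CM}^\ZZ(\Lambda)$. I would then verify the two tilting axioms. For rigidity, $\Hom_{\underline{\CM}^\ZZ(\Lambda)}(T,\Sigma^i T)=0$ for $i\neq 0$, I would refine the rigidity of the cluster tilting object $T_\sigma$ already known in the $2$-Calabi--Yau category $\underline{\CM}(\Lambda)$: the vanishing of $\Ext^1$ there, together with the grading, forces the graded stable self-extensions to concentrate in a single internal degree, while the Gorenstein property of $\Lambda$ controls the remaining suspensions. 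For generation, $\thick(T)=\underline{\CM}^\ZZ(\Lambda)$, I would run a d\'evissage over the indecomposable graded Cohen--Macaulay modules, using the bijection of the earlier theorem between indecomposables of $\CM(\Lambda)$ and the sides and diagonals of $P$ to account for every graded lift.

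For part (2) the crucial computation is $\End_{\underline{\CM}^\ZZ(\Lambda)}(T)\cong KQ$ for a suitable orientation $Q$ of the Dynkin diagram $A_{n-3}$ (any orientation giving a derived-equivalent path algebra). Since $\End_{\CM(\Lambda)}(e_F\Gamma_\sigma)\cong\Gamma_\sigma^{\operatorname{op}}$ by the earlier theorem, I would transport this identification to the graded category and then pass to the stable quotient: the $n$ summands attached to frozen vertices are projective and hence vanish, leaving the $n-3$ summands indexed by the diagonals of $\sigma$, and the internal grading places the relations coming from the potential $W_\sigma$ in degrees where they no longer contribute, so that precisely the path algebra $KQ$ survives. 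As $KQ$ is hereditary, $\gldim KQ\leq 1<\infty$, and Keller's theorem then yields the triangle equivalence $\underline{\CM}^\ZZ(\Lambda)\cong\operatorname{per}(KQ)=\cd^{\operatorname{b}}(\mod KQ)=\cd^{\operatorname{b}}(KQ)$.

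I expect the main obstacle to be this endomorphism computation in the graded stable category, namely keeping simultaneous control of the internal grading and of the factoring-through-projectives relation in order to see that the hereditary algebra $KQ$, rather than the full Jacobian algebra with its potential relations, is what remains. A close second is the verification of graded rigidity across all suspensions $\Sigma^i$ rather than merely the $\Ext^1$-rigidity supplied by the $2$-Calabi--Yau structure, which will require an explicit understanding of the graded cosyzygies of the tiled order $\Lambda$.
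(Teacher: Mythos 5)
Your overall architecture (graded lift of $e_F\Gamma_\sigma$, tilting verification, stable endomorphism algebra isomorphic to $KQ$, then Keller's theorem as in Theorem \ref{BK}) matches the paper, but part (1) of your plan has genuine gaps. First, $\Gamma_\sigma$ carries no ``natural grading by path length'': the ideal $\cj(W_\sigma,F)$ is generated by commutativity relations equating a length-two path inside a triangle with an almost complete big cycle, and these have different lengths as soon as some vertex of $P$ meets at least two diagonals (in Figure \ref{pentagon}, for instance $ca-eh\alpha$). This is exactly why the paper introduces the $\theta$-length (Definition \ref{def:length}); but even that grading satisfies $\deg x=\deg C=n$, whereas the grading on $\Lambda$ in the statement has $\deg x=1$, so a $\theta$-graded module is not an object of $\CM^\ZZ(\Lambda)$: comparing the two requires the graded Morita equivalence and the decomposition into $n$ components discussed in the paper's remark, which your plan never invokes. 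The paper's proof avoids this entirely by working with the explicit graded lifts $(i,j)(k)$ classified in Theorem \ref{th:graded modules}.

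Second, and more seriously, the word ``lifted'' carries the real content of part (1): the graded lifts of the summands must be chosen carefully, and the obvious choice fails. The paper picks a vertex $i_0$ with no incident diagonal and replaces $(i,j)$ by $(i,j)(-1)$ whenever $j>i_0$. Concretely, take $n=6$ and the triangulation with diagonals $(1,3),(3,6),(4,6)$, and lift every summand to its standard representative $(i,j)$ with $1\le i<j\le n$. By \eqref{eq:projective cover} one has $\Omega(3,6)=(1,4)$ in $\CM^\ZZ(\Lambda)$, and the degree-zero inclusion $(1,3)\hookrightarrow(1,4)$ is nonzero in $\underline{\CM}^\ZZ(\Lambda)$ (a direct check shows the ideal of maps $(1,3)\to(1,4)$ factoring through projectives is $(x)$ times the inclusion), so $\Hom_{\underline{\CM}^\ZZ(\Lambda)}(T,\Sigma^{-1}T)\neq0$ for this lift, and it is not tilting. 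Your rigidity argument cannot detect or repair this, because graded rigidity is not a ``refinement'' of ungraded vanishing: for every $k\neq0,1$ the ungraded spaces $\Hom_{\underline{\CM}(\Lambda)}(T_\sigma,\Sigma^kT_\sigma)$ are typically nonzero (for example $\Sigma^2\cong\nu$ is the Serre functor, so $\Hom_{\underline{\CM}(\Lambda)}(T_\sigma,\Sigma^2T_\sigma)\cong\DK\underline{\End}_\Lambda(T_\sigma)\neq0$), hence graded vanishing is entirely a question of which internal degrees these spaces occupy; that is governed by the explicit syzygy formula $\Omega^k(i,j)=(i+k,j+k)(-k)$ together with the chosen shifts, data your plan never pins down, and the Gorenstein property of $\Lambda$ plays no role here. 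A smaller misattribution occurs in part (2): the potential relations do not disappear ``because the grading places them in harmless degrees''; they die already in the ungraded stable category, since $\underline{\End}_\Lambda(T_\sigma)\cong\Gamma_\sigma^{\operatorname{op}}/(e_F)$ is hereditary for the fan triangulation. The grading's actual role there is to show that there are no stable endomorphisms of nonzero internal degree among the chosen lifts, so that $\End_{\underline{\CM}^\ZZ(\Lambda)}$ agrees with $\End_{\underline{\CM}(\Lambda)}$.
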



\medskip\noindent{\bf Acknowledgments }
We would like to show our gratitude to the second author's supervisor, Osamu Iyama, for his guidance and valuable discussions. We also thank him for introducing us this problem and showing us enlightening examples.
We would also like to thank Erik Darp\"o, Martin Herschend, Gustavo Jasso and Dong Yang for their helpful comments and advice. 

\section{Ice Quivers with Potentials associated with triangulations}
\label{s:Ice Quivers with Potentials associated with Triangulations}
In this section, we introduce ice quivers with potential associated with triangulations and their frozen Jacobian algebras. We show that in our case, the frozen Jacobian algebras have a structure of $R$-orders, and the frozen parts of the Jacobian algebras are isomorphic to $\Lambda$ defined in \eqref{the order} as an $R$-order.

\subsection{Frozen Jacobian algebras}
\label{ss:Frozen Jacobian algebras}
We refer to \cite{DWZ1} for background about quivers with potential.
Let $Q$ be a finite connected quiver without loops, with set of vertices $Q_0=\{1,\ldots,n\}$ and set of arrows $Q_1$. 
As usual, if $a \in Q_1$, we denote by $s(a)$ its starting vertex and by $e(a)$ its ending vertex.
We denote by $KQ_i$ the $K$-vector space with basis $Q_i$ consisting of paths of length $i$ in $Q$, and by $KQ_{i,{\rm cyc}}$ the subspace of $KQ_i$ spanned by all cycles in $KQ_i$. 
Consider the path algebra $KQ=\bigoplus_{i\ge0}KQ_i$. 
An element $W\in \bigoplus_{i\ge1}KQ_{i,{\rm cyc}}$ is called a {\em potential}.
Two potentials $W$ and $W'$ are called \emph{cyclically equivalent} if $W-W'$ belongs to $[KQ,KQ]$, the vector space spanned by commutators.
A \emph{quiver with potential} is a pair $(Q,W)$ consisting of a quiver $Q$ without loops and a potential $W$ which does not have two cyclically equivalent terms. 
For each arrow $a\in Q_1$, the cyclic derivative $\partial_a$ is the $K$-linear map $\bigoplus_{i\ge1}KQ_{i,{\rm cyc}}\to KQ$ defined on cycles by
\[
\partial_a(a_1\cdots a_d)=\sum_{a_i=a}a_{i+1}\cdots a_d a_1\cdots a_{i-1}.\]




\begin{definition} 
\label{frozen JA}
\cite{BIRS}
An \emph{ice quiver with potential} is a triple $(Q,W,F)$, where $(Q,W)$ is a quiver with potential and $F$ is a subset of $Q_0$.
Vertices in $F$ are called \emph{frozen vertices}.
   The \emph{frozen Jacobian algebra} is defined by \[\cp(Q,W,F)=KQ/\cj(W,F),\] where $\cj(W,F)$ is the ideal \[\cj(W,F)=\langle \partial_a W \mid a \in Q_1, \ s(a)\notin F\ \mbox{ or }\ e(a)\notin F \rangle\] of $KQ$. 
\end{definition}

\begin{figure}[t]
 \[{ \begin{tikzcd}[ampersand replacement=\&]
         \&             \&               \& 1\drar{b_1}                 \&                \&                 \&\\
         \&             \& 2\drar{b_2}\urar{a_1}     \&                      \& 3\arrow{ll}[swap]{c_1} \drar{b_3} \&    \&\\
       \& 4\urar{a_2}   \&        \& 5\arrow{ll}[swap]{c_2}\urar{a_3} \&   \& 6\arrow{ll}[swap]{c_3} \&\\ 
  \end{tikzcd}}
  \]
\caption{Example of iced quiver with potential}
\label{ausl}
\end{figure}
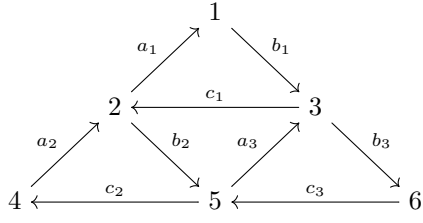

\begin{example}
Consider the quiver $Q$ of Figure \ref{ausl}
 with potential $W=a_1b_1c_1+a_2b_2c_2+a_3b_3c_3-c_1b_2a_3$
  and set of frozen vertices $F=\{4,5,6\}.$
  Then the Jacobian ideal is 
  \begin{align*}
  \cj(W,F) =\langle b_1c_1,c_1a_1,a_1b_1-b_2a_3,b_2c_2,c_2a_2-a_3c_1,b_3c_3-c_1b_2,c_3a_3  \rangle.
  \end{align*}
\end{example}

Note that this ice quiver with potential can be constructed from preprojective algebras \cite{BIRS,GLS11}.
\subsection{Ice quivers with potential arising from triangulations}
\label{ss:quiver with potential arising from triangulations}
We recall the definition of triangulations of polygons and introduce our definition of ice quivers with potential arising from triangulations of a polygon.
\begin{definition}
\label{def:QP}
Let $P$ be a regular polygon with $n$ vertices and $n$ sides.
A diagonal of $P$ is a line segment connecting two vertices of $P$ and lying in the interior of $P$.
Two diagonals are \emph{crossing} if they share one interior point, and they are \emph{non-crossing} if they share no interior points.
A \emph{triangulation} of $P$ is a decomposition of $P$ into triangles by a maximal set of non-crossing diagonals.

Fix a triangulation $\sigma$ of the polygon $P$.
We assign to $\sigma$ an ice quiver with potential $(Q_\sigma, W_\sigma, F)$ as follows.

The quiver $Q_{\sigma}$ of the triangulation $\sigma$ is a quiver whose vertices are indexed by all internal and external edges of $\sigma$ (the external edges are the sides of $P$ and the internal edges are the diagonals of $\sigma$). 
   Whenever two edges $a$ and $b$ are sides of a common triangle of $\sigma$, then $Q_{\sigma}$ contains an \emph{internal arrow} $a \ra b$ in the triangle if $a$ is a predecessor of $b$ with respect to anti-clockwise orientation centered at the joint vertex.
  Moreover, for every vertex of the polygon with at least one incident internal edge in $\sigma$, there is an \emph{external arrow} $a \ra b$ where $a$ and $b$ are its two incident external edges, $a$ being a predecessor of $b$ with respect to anti-clockwise orientation centered at the joint vertex.
 
An \emph{internal path} is a path consisting of internal arrows. A \emph{minimal cycle} of $Q_\sigma$ is a cycle in which no arrow appears more than once, and which encloses a part of the plane whose interior is connected and does not contain any arrow of $Q_\sigma$. For example, in Figure \ref{pentagon}, $\alpha b e h$ is a minimal cycle, but $\alpha b c \beta \gamma g h$ and $c \beta d e h \alpha b$ are not. There are two kinds of minimal cycles of $Q_\sigma$: \emph{cyclic triangles}, which consist of three internal arrows inside a triangle of $\sigma$, and \emph{big cycles}, which consist of internal arrows and one external arrow around a vertex of $P$. 
An \emph{almost complete big cycle} means a path in $Q_{\sigma}$ which can be completed into a big cycle by adding an internal arrow.

We define $F$ as the subset of $(Q_{\sigma})_0$ indexed by the $n$ sides of the polygon, and the potential $W_{\sigma}$ as $$\sum \text{cyclic\ triangles} - \sum \text{big\ cycles}.$$ 
\end{definition}

We name the vertices of $P$ by $P_1,P_2,\ldots,P_n$ counterclockwise.
We number $(Q_\sigma)_0$ from $1$ to $2n-3$ in such a way that $(P_i,P_{i+1})$ is numbered by $i$ for $1\le i\le n-1$ and $(P_n,P_1)$ is numbered by $n$. 
From now on, for any triangulation $\sigma$ of the polygon $P$, denote $\cpp$ by $\Gamma_\sigma$.

\begin{example}
 We illustrate the construction of $Q_{\sigma}$ and $W_{\sigma}$ if $\sigma$ is the triangulation of the pentagon in Figure \ref{pentagon}.
   In this case $W_{\sigma}=abc+def+ghi-\alpha beh-\beta dc-\gamma gf$, $F=\{1,2,3,4,5\}$ and
  $$\cj(W_{\sigma},F) = \langle ca - eh\alpha, ab - \beta d, ef-c\beta, fd-h\alpha b, de-\gamma g, ig-\alpha be, hi-f\gamma \rangle.$$
   \begin{figure}[t]
\scalebox{1} 
{
\begin{pspicture}(0,-1.7986894)(5.3229103,1.8306894)
\definecolor{color639}{rgb}{1.0,0.0,0.2}
\definecolor{color648}{rgb}{0.0,0.2,1.0}
\pspolygon[linewidth=0.016](3.9541228,0.7813978)(2.5330968,1.7512873)(1.133119,0.7476629)(1.7856674,-0.9728546)(3.5044217,-0.8961507)(3.516309,-0.8704857)
\psline[linewidth=0.016cm](3.9165704,0.76762015)(1.1587839,0.73577553)
\psline[linewidth=0.016cm](3.9234593,0.748844)(1.7975547,-0.9471896)
\psline[linewidth=0.016cm,linecolor=color639,arrowsize=0.07cm 10.0,arrowlength=2.0,arrowinset=0.6]{->}(1.94671,1.2591994)(3.0965087,1.3188903)
\psline[linewidth=0.016cm,linecolor=color639,arrowsize=0.07cm 10.0,arrowlength=2.0,arrowinset=0.6]{->}(2.5051234,0.7823613)(1.8784943,1.212868)
\psline[linewidth=0.016cm,linecolor=color639,arrowsize=0.07cm 10.0,arrowlength=2.0,arrowinset=0.6]{->}(3.0658453,1.2863365)(2.5495646,0.7773628)
\psline[linewidth=0.016cm,linecolor=color639,arrowsize=0.07cm 10.0,arrowlength=2.0,arrowinset=0.6]{->}(1.4618925,-0.09037531)(2.4951265,0.69347906)
\psline[linewidth=0.016cm,linecolor=color639,arrowsize=0.07cm 8.0,arrowlength=2.0,arrowinset=0.6]{->}(2.5583436,0.69536936)(2.8288984,-0.10011799)
\psline[linewidth=0.016cm,linecolor=color639,arrowsize=0.07cm 10.0,arrowlength=2.0,arrowinset=0.6]{->}(2.7844572,-0.09511949)(1.482559,-0.14670378)
\psline[linewidth=0.016cm,linecolor=color639,arrowsize=0.07cm 10.0,arrowlength=2.0,arrowinset=0.6]{->}(2.9177806,-0.110114984)(3.725828,-0.04798836)
\psline[linewidth=0.016cm,linecolor=color639,arrowsize=0.07cm 10.0,arrowlength=2.0,arrowinset=0.6]{->}(3.725828,-0.04798836)(2.682597,-0.920725)
\psline[linewidth=0.016cm,linecolor=color639,arrowsize=0.07cm 10.0,arrowlength=2.0,arrowinset=0.6]{->}(2.6569319,-0.9088377)(2.8307886,-0.16333528)
\psbezier[linewidth=0.016,linecolor=color648,arrowsize=0.07cm 10.0,arrowlength=2.0,arrowinset=0.6]{->}(3.754441,-0.080097646)(4.8932076,0.08206386)(4.4008427,1.7122262)(3.115285,1.3257792)
\psbezier[linewidth=0.016,linecolor=color648,arrowsize=0.07cm 10.0,arrowlength=2.0,arrowinset=0.6]{->}(1.8321627,1.2810837)(1.0227705,1.6871432)(0.19363868,0.63731444)(1.431229,-0.122929126)
\psbezier[linewidth=0.016,linecolor=color648,arrowsize=0.07cm 10.0,arrowlength=2.0,arrowinset=0.6]{->}(1.4465011,-0.089506)(0.8933879,-1.0565202)(2.262574,-1.7906893)(2.676205,-0.963523)
\usefont{T1}{ptm}{m}{n}
\rput(3.1724708,1.637291){$1$}
\usefont{T1}{ptm}{m}{n}
\rput(1.7324708,1.577291){$2$}
\usefont{T1}{ptm}{m}{n}
\rput{-3.8304546}(0.017683469,0.064378574){\rput(0.9524707,-0.24270904){$3$}}
\usefont{T1}{ptm}{m}{n}
\rput{-0.36567238}(0.0075401687,0.018429808){\rput(2.8724706,-1.1827091){$4$}}
\usefont{T1}{ptm}{m}{n}
\rput(4.0924706,-0.26270905){$5$}

\usefont{T1}{ptm}{m}{n}
\rput(2.5500000,0.797291){$6$}
\usefont{T1}{ptm}{m}{n}
\rput(2.9000000,-0.15270905){$7$}

\usefont{T1}{ptm}{m}{n}
\rput(2.4424708,1.4172909){$a$}
\usefont{T1}{ptm}{m}{n}
\rput(2.9924707,0.99729097){$b$}
\usefont{T1}{ptm}{m}{n}
\rput{3.6363735}(0.067826286,-0.121106036){\rput(1.9224707,0.99729097){$c$}}
\usefont{T1}{ptm}{m}{n}
\rput(1.8524708,0.35729095){$d$}
\usefont{T1}{ptm}{m}{n}
\rput{-3.2419322}(-0.01844946,0.16360523){\rput(2.8624706,0.39729095){$e$}}
\usefont{T1}{ptm}{m}{n}
\rput{-1.9640993}(0.010576258,0.072549306){\rput(2.1024706,-0.28270903){$f$}}
\usefont{T1}{ptm}{m}{n}
\rput(2.5524707,-0.60270905){$g$}
\usefont{T1}{ptm}{m}{n}
\rput(3.3524706,0.07729095){$h$}
\usefont{T1}{ptm}{m}{n}
\rput(3.3124707,-0.5827091){$i$}
\usefont{T1}{ptm}{m}{n}
\rput(4.662471,0.797291){$\alpha$}
\usefont{T1}{ptm}{m}{n}
\rput(0.4924707,0.77729094){$\beta$}
\usefont{T1}{ptm}{m}{n}
\rput(1.2824707,-1.2027091){$\gamma$}
\end{pspicture} 
}
   \caption{}
   \label{pentagon}
 \end{figure}
\end{example}

\begin{remark}
  Note that $\partial_a W_{\sigma}$ is not in $\cj(W_{\sigma},F)$ as both the source and the sink of $a$ are in $F$. All arrows that are not linking two vertices in $F$ are shared by two cycles in $Q_{\sigma}$, one of which is a cyclic triangle and the other is a big cycle. Thus all relations in $\Gamma_\sigma$ are commutativity relations. 
\end{remark}

\subsection{Frozen Jacobian algebras are $R$-orders}
\label{ss:Jacobian Alg as Order}
Let $(Q_{\sigma},W_{\sigma},F)$ be an ice quiver with potential arising from a triangulation $\sigma$ as defined in Section \ref{ss:quiver with potential arising from triangulations} and $e_i$ be the trivial path of length $0$ at vertex $i$.
The main result of this section is that $\Gamma_\sigma$ is an $R$-order.

First, we introduce the definition of orders and Cohen-Macaulay modules over orders.
\begin{definition} 
\label{def:order}
Let $S$ be a commutative Noetherian ring of Krull dimension 1.
An $S$-algebra $A$ is called an {\emph{$S$-order}} if it is a finitely generated $S$-module and $\soc_S A=0$. 
 For an $S$-order $A$, a left $A$-module $M$ is called a (maximal) {\emph{Cohen-Macaulay $A$-module}} if it is finitely generated as an $S$-module and $\soc_S M=0$ (or equivalently $\soc_A M=0$). 
  We denote by $\CM(A)$ the category of Cohen-Macaulay $A$-modules. It is a full exact subcategory of $\mod A$.
\end{definition}  

\begin{remark}
\label{rem:order}
If $S$ is a principal ideal domain (\emph{e.g.} $S=R$) and $M \in \mod S$, then $\soc_S M=0$ if and only if $M$ is free as $S$-module.
\end{remark}

We refer to \cite{FMO},\cite{RT},\cite{POS} and \cite{CM} for more details about orders and Cohen-Macaulay modules.

\begin{definition}
\label{def:tiled order}
An $R$-order $A$ is called a \emph{tiled $R$-order} if $A=((x^{a_{ij}}))_{i,j\in\{1,\ldots,n\}} \subset \M_n(R)$ for $a_{i,j}\in \ZZ$ satisfying $a_{ij}+a_{jt}\ge a_{it}$ for all $i,j,t\in \{1,2,\ldots,n\}${ where $(x^{a_{i,j}})$ is the $R$-submodule of the fraction field of $R$ generated by $x^{a_{i,j}}$.}
\end{definition}

Then the main theorem of this subsection is the following.
\begin{theorem}
\label{thm:frozen Jacobian algebra is order}
The frozen Jacobian algebra $\Gamma_\sigma$ has the structure of a tiled $R$-order.
\end{theorem}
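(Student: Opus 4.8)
The plan is to realize $\Gamma_\sigma$ explicitly as a ring of matrices over $R=K[x]$ of tiled form. The starting point is the observation, made in the Remark following Definition \ref{def:QP}, that every relation in $\Gamma_\sigma$ is a commutativity relation equating the two paths obtained by completing a non-frozen arrow $a$ to its cyclic triangle and to its big cycle. To exploit this I would first equip $Q_\sigma$ with an integer weight $\delta\colon (Q_\sigma)_1\to\ZZ_{\ge0}$ for which every minimal cycle---cyclic triangle or big cycle---has total weight $1$. Such a $\delta$ exists: choose in each triangle of $\sigma$ one of its three corners to be ``heavy'' and give weight $1$ to the internal arrow sitting at that corner and weight $0$ to the other two; using the ear decomposition of a triangulated polygon one can make these choices so that each polygon vertex is the heavy corner of at most one incident triangle, and then assigning to each external arrow the weight $1$ minus the number of heavy internal arrows in its big cycle keeps all weights in $\{0,1\}$. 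With this weighting each term of $W_\sigma=\sum\text{cyclic triangles}-\sum\text{big cycles}$ is homogeneous of degree $1$, so $W_\sigma$ is homogeneous, $\cj(W_\sigma,F)$ is a graded ideal, and $\Gamma_\sigma$ becomes $\ZZ$-graded with degree induced by $\delta$.

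Next I would define a $K$-algebra homomorphism $\bar\phi\colon\Gamma_\sigma\to\M_{2n-3}(R)$ by $e_i\mapsto E_{ii}$ and, for an arrow $a\colon i\to j$, $a\mapsto x^{\delta(a)}E_{ij}$, extended multiplicatively. Since $E_{ij}E_{jk}=E_{ik}$ and $\delta$ is additive along paths, a path $p\colon i\to j$ is sent to $x^{\delta(p)}E_{ij}$. The commutativity relation attached to $a$ equates two paths from $e(a)$ to $s(a)$ whose weights are both $1-\delta(a)$, since each of the two enclosing minimal cycles (the triangle and the big cycle) has total weight $1$; hence $\bar\phi$ sends both to the same matrix and descends to $\Gamma_\sigma$. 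Setting $m_{ij}=\min\{\delta(p)\mid p\ \text{a path}\ i\to j\}$ (so $m_{ii}=0$, the minimum being realized by the trivial path as all weights are non-negative), the $(i,j)$ entry of the image is $\sum_p Rx^{\delta(p)}=(x^{m_{ij}})$: indeed post-composing a minimal path with the weight-$1$ minimal cycle at $j$ realizes every degree $\ge m_{ij}$. Concatenation of minimal paths gives $m_{ij}+m_{jk}\ge m_{ik}$, so $\bar\phi(\Gamma_\sigma)=((x^{m_{ij}}))$ is a tiled $R$-order in the sense of Definition \ref{def:tiled order}.

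It remains to prove that $\bar\phi$ is injective, which is the main obstacle and the only genuinely combinatorial point. Concretely, one must show that $e_j\Gamma_\sigma e_i$ is a free $R$-module of rank one, i.e. that any two parallel paths $i\to j$ of the same $\delta$-degree are equal modulo $\cj(W_\sigma,F)$, and that no nonzero element is annihilated by $x$. I expect to prove the first assertion by a normal-form argument: using the planar embedding of $Q_\sigma$ in $P$, any path can be deformed, via successive applications of the commutativity relations (each replacing the triangle-side of a minimal cycle by its big-cycle-side or vice versa), to a distinguished representative determined by its endpoints and degree, so that two such paths are connected by a sequence of elementary moves and hence equal in $\Gamma_\sigma$; an induction on the area swept out between two parallel paths organizes this. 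The non-zero-divisor property then follows, because multiplication by the minimal cycle at $j$ strictly raises the degree and is a bijection onto the paths of one higher degree. Granting injectivity, $\bar\phi$ is an isomorphism of $K$-algebras onto the tiled $R$-order $((x^{m_{ij}}))$; pulling back the $R$-action, so that $x$ acts as $\sum_i z_i$, the sum of the minimal cycles $z_i$ at the vertices $i$ (central because $\diag(x,\dots,x)$ is central in $\M_{2n-3}(R)$), equips $\Gamma_\sigma$ with the structure of a tiled $R$-order, proving the theorem.
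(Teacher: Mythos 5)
Your overall framework is sound and is, in essence, the paper's own strategy in different clothes: the paper also grades $KQ_\sigma$ so that every minimal cycle is homogeneous of one fixed degree (it uses the canonical $\theta$-length of Definition \ref{def:length}, with value $n$ on minimal cycles, where you use a chosen $\{0,1\}$-weighting), and it also identifies each $e_i\Gamma_\sigma e_j$ with an ideal $(x^{m_{ij}})\subset R$ spanned by a minimal path times powers of the central cycle. Your construction of $\delta$ via ears is correct (every triangulation of an $n$-gon with $n\ge 4$ has at least two ears, so one can injectively assign to each triangle one of its corners), the verification that $\bar\phi$ kills the generators $\partial_aW_\sigma$ is correct, and so is the computation of the image as the tiled order $\left((x^{m_{ij}})\right)$. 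But note what this actually proves: only that $\Gamma_\sigma$ \emph{surjects} onto a tiled $R$-order.

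The genuine gap is exactly where you flag it: injectivity, i.e.\ the claim that any two parallel paths of the same $\delta$-degree coincide in $\Gamma_\sigma$ (equivalently, that every path from $i$ to $j$ is equivalent to a fixed minimal path times a power of the minimal cycle). This is not a technical footnote but the entire content of the theorem; in the paper it is Theorem \ref{th:induction}(1), proved by induction on the number of vertices of the polygon, cutting off a corner triangle, transporting the statement along the non-unital embedding $\xi\colon KQ'_\tau\hookrightarrow KQ'_\sigma$, and treating the two removed vertices by separate arguments. Your proposed substitute, an ``induction on the area swept out between two parallel paths,'' is only announced, and as stated it runs into concrete difficulties. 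Paths in $Q_\sigma$ need not be simple: for instance $z_ip$ and $pz_j$ (with $z_i,z_j$ minimal cycles at the endpoints of $p$) are parallel of equal degree, yet if ``area'' means Euclidean area they enclose a nonzero signed region, so ``area zero'' cannot serve as the base case; if instead ``area'' means the degree-weighted count of enclosed faces, then the base case already contains nontrivial identifications such as $z_ip\sim pz_j$, whose proof requires knowing that all minimal cycles through a vertex are equal in $\Gamma_\sigma$ and that their sum is central --- facts your argument has not yet established at that point (and which your final sentence derives \emph{from} injectivity, making the reasoning circular). Moreover, each elementary move swaps a length-two triangle-completion for a big-cycle completion, changing the enclosed region by a triangle face plus a big-cycle face, an amount varying with the move, so monotonicity of your induction is unclear, and the promised ``distinguished representative'' is never defined. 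Such confluence statements are true, but they are the analogues of consistency theorems for dimer models and require a real proof; until this step is carried out (e.g.\ by the paper's ear-removal induction), the theorem is not established.
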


We will give a proof of Theorem \ref{thm:frozen Jacobian algebra is order} in the rest of this subsection. The strategy is to construct a central element $C$ in $\Gamma_\sigma$ such that for each pair of vertices $(i,j)$ of $Q_\sigma$, $e_i\Gamma_\sigma e_j$ is a free module of rank 1 over $K[C]$.
We start by defining the central element $C$.
\begin{definition} We say that two paths $w_1$ and $w_2$ of $Q_\sigma$ are \emph{equivalent} if $w_1=w_2$ holds in $\Gamma_\sigma$. 
In this case, we write $w_1\sim w_2$.
\end{definition}

The basis of the path algebra $KQ_{\sigma}$ consisting of all paths in $Q_{\sigma}$ is denoted by $P_{Q_{\sigma}}$. 

\begin{lemma} 
\label{lem:basis of frozen algebra}
 The set $\bas$ is a basis of the frozen Jacobian algebra $\Gamma_\sigma$. Moreover, $e_i \Gamma_\sigma e_j \cap (\bas)$ is a basis of $e_i \Gamma_\sigma e_j$.
\end{lemma}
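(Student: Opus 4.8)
The plan is to prove that $\bas$ (the set of equivalence classes of paths, with one representative chosen per class) forms a $K$-basis of $\Gamma_\sigma = KQ_\sigma/\cj(W_\sigma,F)$, and that the subsets lying in $e_i\Gamma_\sigma e_j$ give bases of those cornered spaces. Since $P_{Q_\sigma}$ is by definition a basis of the path algebra $KQ_\sigma$, and $\Gamma_\sigma$ is the quotient by $\cj(W_\sigma,F)$, the images of the paths certainly span $\Gamma_\sigma$; grouping them by the equivalence relation $\sim$ (where $w_1\sim w_2$ means they are equal in $\Gamma_\sigma$), the distinct classes still span. The entire content of the lemma is therefore \emph{linear independence}: I must show that no nontrivial $K$-linear combination of inequivalent paths can lie in $\cj(W_\sigma,F)$.

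\medskip

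First I would exploit the structure of the ideal established in the Remark following Definition~\ref{def:QP}: every relation in $\Gamma_\sigma$ is a \emph{commutativity relation}, i.e.\ each generator $\partial_a W_\sigma$ (for $a$ not joining two frozen vertices) is a difference $p - q$ of two paths, coming from the fact that $a$ is shared by exactly one cyclic triangle and one big cycle. This means $\cj(W_\sigma,F)$ is generated by binomials of the form ``path $=$ path,'' so the equivalence relation $\sim$ is generated by these elementary moves (replacing one subpath by another of the same source and target). Consequently each equivalence class of paths has a well-defined starting and ending vertex, and $e_i\Gamma_\sigma e_j$ is spanned precisely by the classes of paths from $j$ to $i$; this immediately reduces the ``Moreover'' statement to the main statement, since the $e_i(-)e_j$ decomposition is compatible with the equivalence classes.

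\medskip

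The heart of the argument is to establish that $\sim$ is the \emph{only} source of relations, i.e.\ that two paths equal in $\Gamma_\sigma$ are connected by a chain of commutativity moves, and that distinct classes remain linearly independent. The cleanest route is to produce an explicit normal form for paths together with a well-defined $K$-linear map separating classes. I expect the central element $C$ promised in the surrounding text to be the key tool: I would show that each commutativity move preserves a natural invariant (the total ``winding'' or the power of $C$ absorbed), so that applying the relations never collapses two genuinely different classes. Concretely, I would argue that modulo the binomial relations every path can be rewritten into a canonical representative recording (i) its source and target, and (ii) how many times it wraps around, and that this data is invariant under $\sim$; then a combination in $\cj(W_\sigma,F)$ forces all coefficients to vanish class by class.

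\medskip

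The main obstacle will be the linear independence half: ruling out unexpected coincidences forced by composing several commutativity relations, in particular verifying that the relations do not secretly identify paths of different classes or create ``zero'' relations collapsing a class. Handling this combinatorially requires a careful analysis of how internal and external arrows interact around each vertex of $P$ and inside each triangle of $\sigma$, using that minimal cycles are only of the two prescribed types (cyclic triangles and big cycles) and that every non-frozen arrow sits in exactly one of each. Establishing a confluent rewriting system or an explicit complete invariant on paths is where the real work lies; once that is in place, the spanning and the corner-wise refinement follow formally.
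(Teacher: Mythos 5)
Your proposal correctly disposes of the spanning statement and of the reduction of the ``Moreover'' part to the main one, but it leaves the actual content --- linear independence --- as an unexecuted plan: you explicitly say that building the normal form, the winding invariant, or a confluent rewriting system ``is where the real work lies,'' and you never do it. That is a genuine gap, and it also mislocates the difficulty. The point the paper's one-line proof invokes is that once every generator $\partial_a W_\sigma$ of $\cj(W_\sigma,F)$ is known to be a difference $p_a-q_a$ of two \emph{parallel} paths (the Remark after Definition \ref{def:QP}), linear independence is purely formal, requiring no analysis of $Q_\sigma$, no central element, and no invariant. Indeed, $\cj(W_\sigma,F)$ is spanned over $K$ by the elements $u\,(\partial_a W_\sigma)\,v = up_av-uq_av$ with $u,v$ paths, hence by differences $w_1-w_2$ of paths with $w_1\approx w_2$, where $\approx$ denotes the equivalence relation on $P_{Q_\sigma}$ generated by the substitutions $up_av\approx uq_av$; conversely, every difference of $\approx$-equivalent paths lies in the ideal by telescoping along a chain of elementary moves. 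For a subspace of $KQ_\sigma$ spanned by differences of basis elements, the quotient is canonically the vector space with basis the set of $\approx$-classes: the assignments $w+\cj(W_\sigma,F)\mapsto [w]_\approx$ and $[w]_\approx\mapsto w+\cj(W_\sigma,F)$ are well defined and mutually inverse linear maps. Consequently no path vanishes in $\Gamma_\sigma$, two paths are equal in $\Gamma_\sigma$ if and only if they are $\approx$-equivalent (so the paper's relation $\sim$ coincides with $\approx$ --- a fact you assert in your second paragraph, although it is precisely what has to be proved), and $\bas$ is a basis; since $\approx$ preserves sources and targets, the statement for $e_i\Gamma_\sigma e_j$ follows as you indicate.

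The worries driving your plan --- that iterated commutativity moves might ``secretly identify'' inequivalent paths or collapse a class to zero --- are therefore vacuous: every element of the ideal has, within each $\approx$-class, coefficients summing to zero, so no such collapse can occur. The machinery you propose (the central element $C$, canonical representatives $w_0(i,j)C^N$, a degree invariant) is what the paper develops \emph{after} this lemma, in Theorem \ref{th:induction} and Corollary \ref{thm:basis of Jacobian algebra}, in order to describe the basis $\bas$ explicitly; it is not needed to know that $\bas$ is a basis. Your route could in principle be completed --- the grading $\ell^\theta$ does give a well-defined invariant because the relations are homogeneous --- but it would amount to proving those later results first, and even then the final separation of classes would reduce to the same formal linear-algebra argument above, so the extra apparatus buys nothing for this particular lemma.
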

\begin{proof}
 Recall that all relations here are commutativity relations. 
\end{proof}

Let $i\in (Q_{\sigma})_0$. All minimal cycles of $Q_\sigma$ passing through $i$ are equal in $\Gamma_\sigma$. We denote the common element that they represent in $\Gamma_\sigma$ by $C_i$.
We now define $C:=\sum_{i\in (Q_\sigma)_0}C_i \in \Gamma_\sigma$. 
We get the following proposition.
\begin{proposition}
  The element $C$ is central in $\Gamma_\sigma$.
\end{proposition}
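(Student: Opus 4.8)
The plan is to show that $C$ commutes with every element of $\Gamma_\sigma$. By $K$-linearity and the multiplicativity of the relations, it suffices to check that $C$ commutes with each idempotent $e_k$ and with each arrow $a \in (Q_\sigma)_1$, since these generate $\Gamma_\sigma$ as an algebra. For the idempotents, the key observation is that $C_i$ is the class of a minimal cycle through vertex $i$, hence $e_i C_i = C_i = C_i e_i$ and $e_k C_i = 0 = C_i e_k$ for $k \neq i$. Therefore $e_k C = C_k = C e_k$, which handles commutation with idempotents and simultaneously shows $C_i \in e_i \Gamma_\sigma e_i$.

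The heart of the argument is to prove $aC = Ca$ for every arrow $a$ with, say, $s(a) = i$ and $e(a) = j$. Using the idempotent computation, $Ca = C e_i a = C_i a$ and $aC = a e_j C = a C_j$, so the claim reduces to the local identity
\[
  C_i\, a = a\, C_j \quad \text{in } \Gamma_\sigma.
\]
Both sides are internal paths starting at $i$ and ending at $j$: on the left we traverse the minimal cycle at $i$ and then the arrow $a$, on the right we traverse $a$ and then the minimal cycle at $j$. The strategy here is to invoke the Remark preceding this subsection, which tells us that every arrow not joining two frozen vertices is shared by exactly two minimal cycles, a cyclic triangle and a big cycle, and that all relations in $\Gamma_\sigma$ are commutativity relations. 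Concretely, I would argue that $C_i a$ and $a C_j$ both represent, in $\Gamma_\sigma$, the same path winding once around the minimal region adjacent to $a$; the commutativity relations (of the form ``one path around a cyclic triangle equals the complementary path around the adjacent big cycle'') are precisely what lets us slide the cycle $C_i$ through the arrow $a$ to obtain the cycle $C_j$ on the other side.

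The main obstacle I anticipate is the careful bookkeeping in this sliding step, which requires a case analysis according to the type of vertices $i$ and $j$ and the position of the arrow $a$. I would split the arrows into internal arrows and external arrows, and within the internal case distinguish whether $a$ sits inside a cyclic triangle or along a big cycle; in each case one must identify explicitly which minimal cycles pass through $i$ and $j$ and verify that the defining relations of $\cj(W_\sigma, F)$ (each coming from a cyclic derivative equating a cyclic-triangle path to a big-cycle path) convert $C_i a$ into $a C_j$. One should also treat separately the frozen vertices, where an arrow may join two vertices of $F$; here $C_i$ and $C_j$ are still well-defined as classes of minimal cycles through the frozen vertices, and the same commutativity relations apply. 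Because Lemma \ref{lem:basis of frozen algebra} gives a clean combinatorial basis $\bas$ of $\Gamma_\sigma$ in terms of equivalence classes of paths, each identity $C_i a \sim a C_j$ can be reduced to checking equality of two basis elements, so no delicate cancellation is involved --- only the verification, case by case, that the two paths lie in the same $\sim$-class.
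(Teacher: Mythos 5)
Your proposal is correct and follows essentially the same route as the paper: commutation with idempotents is immediate from $C_k \in e_k\Gamma_\sigma e_k$, and centrality reduces to the per-arrow identity $C_i a = a C_j$ for each arrow $a\colon i \to j$. The one thing to note is that the paper dispatches this identity in a single line, with no case analysis at all: since $a$ itself lies on a minimal cycle, one can take that cycle as the representative of both $C_i$ and $C_j$, writing $C_i = ap$ and $C_j = pa$ for the same path $p$ from $j$ to $i$, whence $C_i a = apa = aC_j$ — so the ``sliding'' step you anticipate as the main obstacle requires no commutativity relations beyond those already implicit in the well-definedness of the elements $C_i$, and your division into internal/external arrows and frozen/non-frozen vertices is unnecessary.
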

\begin{proof}It suffices to show that for each arrow $a\in (Q_{\sigma})_1$, the equation $Ca=aC$ holds.
  Indeed, for each arrow $a:\ i\ra j$, $C_i=a p$ and $C_j=p a$ holds for some path $p$ from $j$ to $i$. Thus we have $$Ca=\sum_{k\in Q_0}C_ka=\sum_{k\in Q_0}C_k e_i a=C_i a=a p a=a C_j=a e_j \sum_{k\in Q_0}C_k=aC.$$
\end{proof}

\begin{example}
Let us calculate $C$ for the triangulation $\sigma$ of the pentagon in Figure \ref{pentagon}.
In $\Gamma_\sigma$, we have $C_1=bca=beh\alpha,\ C_2=abc=\beta dc,\ C_3=dc\beta=def=\gamma gf,\ C_4=ghi=gf\gamma,\ C_5=igh=\alpha beh,\ C_6=cab=c\beta d=efd=eh\alpha b$ and  $C_7=hig=f\gamma g=fde=h\alpha be$. 
So $C=bca+abc+dc\beta +ghi+igh+cab +hig.$
\end{example}

Next, we define a grading on $KQ_\sigma$ such that $\cj(W_\sigma,F)$ is a homogeneous ideal. Each angle of the polygon is $(n-2)\pi/n$. The angles of triangles of $\sigma$ are multiples of $\pi/n$. 
\begin{definition}
\label{def:length}
The \emph{$\theta$-length} of an internal arrow in $Q_{\sigma}$ from $i$ to $j$ is $t$ $(1\leq t\leq n-2)$ if the angle between the two edges $i$ and $j$ of the triangle in $\sigma$ is $t\pi/n$.
The \emph{$\theta$-length} of an external arrow in $Q_{\sigma}$ is 2.

This extends additively to a map $\ell^{\theta}$ from paths in $Q_\sigma$ to integers, defining a grading on $KQ_{\sigma}$ which will also be denoted by $\ell^{\theta}$.
\end{definition}

\begin{example}
Consider the triangulation $\sigma$ of the pentagon in Figure \ref{pentagon}.
By calculating the angles of the triangulation, we can get the $\theta$-length of each arrow in $Q_{\sigma}$:
$\ell^{\theta}(a)= 3,\ \ell^{\theta}(b)= 1,\ \ell^{\theta}(c)= 1,\ \ell^{\theta}(d)= 2,\ \ell^{\theta}(e)= 1,\ \ell^{\theta}(f)= 2,\ \ell^{\theta}(g)= 1,\ \ell^{\theta}(h)= 1,\ \ell^{\theta}(i)= 3,\ \ell^{\theta}(\alpha)= 2,\ \ell^{\theta}(\beta)= 2,\ \ell^{\theta}(\gamma)= 2.$
From this, the minimal $\theta$-length of paths between any two vertices can be calculated.

For all paths $w_{1i}$ from $1$ to $i$ ($1 \le i \le 7$) with minimal $\theta$-length, we have:
$w_{11} \sim e_1$, $w_{12} \sim  bc$, $w_{13} \sim  bc\beta$, $w_{14} \sim  bc\beta\gamma$, $w_{15} \sim  beh$, $w_{16} \sim b$ and $w_{17} \sim  be$. We will prove that the paths with minimal $\theta$-length are the generators of $\Gamma_\sigma$ as $R$-order.
\end{example}

The following facts about $\ell^\theta$ are readily derived from the definition:
\begin{lemma}
 \begin{enumerate}
  \item The potential $W_{\sigma}$ is homogeneous of $\theta$-length $n$.
  
    \item If $w_1 \sim w_2$, then $\ell^{\theta}(w_1)=\ell^{\theta}(w_2)$ holds and $\ell^{\theta}$ induces the structure of a graded algebra on $\Gamma_\sigma$.
  
  \item The element $C\in \Gamma_\sigma$ is homogeneous of $\theta$-length $n$.

 \item An internal arrow between frozen vertices always has $\theta$-length $n-2$.
 \end{enumerate}
\end{lemma}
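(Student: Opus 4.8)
The plan is to derive all four statements from the angle-sum identities of the polygon together with the homogeneity they force on the defining ideal. I would begin with (1), showing geometrically that every term of $W_{\sigma}$ has $\theta$-length exactly $n$. A cyclic triangle consists of three internal arrows, one sitting inside each angle of a triangle of $\sigma$; since the three angles of a Euclidean triangle sum to $\pi = n\cdot(\pi/n)$, the corresponding $\theta$-lengths sum to $n$. For a big cycle around a vertex $v$ of $P$, the internal arrows run successively through the edges incident to $v$, so their angles partition the interior angle of $P$ at $v$, which equals $(n-2)\pi/n$; hence the internal arrows contribute $\theta$-length $n-2$, and the single external arrow of $\theta$-length $2$ brings the total to $n$. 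Thus $W_\sigma$ is homogeneous of $\theta$-length $n$.

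For (2), I would combine (1) with the remark that all relations in $\Gamma_\sigma$ are commutativity relations. Because $W_\sigma$ is homogeneous of degree $n$ and $\partial_a$ deletes one occurrence of $a$, each generator $\partial_a W_\sigma$ of $\cj(W_\sigma,F)$ is homogeneous of $\theta$-length $n-\ell^{\theta}(a)$; concretely it is a difference of two parallel paths of equal $\theta$-length. Therefore $\cj(W_\sigma,F)$ is a homogeneous ideal for the grading $\ell^{\theta}$, so $\ell^{\theta}$ descends to a grading on $\Gamma_\sigma$. Invoking Lemma \ref{lem:basis of frozen algebra} so that parallel paths represent basis elements, two equivalent paths must then lie in the same graded component, which yields $\ell^{\theta}(w_1)=\ell^{\theta}(w_2)$.

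Statements (3) and (4) are then short. For (3), each $C_i$ is by definition represented by a minimal cycle through $i$, that is, a cyclic triangle or a big cycle, which has $\theta$-length $n$ by (1) (consistently with (2)); hence each $C_i$ is homogeneous of degree $n$ and so is $C=\sum_i C_i$. For (4), an internal arrow between two frozen vertices $i$ and $j$ forces the external edges $i$ and $j$ to be two sides of a single triangle of $\sigma$; being sides of $P$ that share a vertex $v$, they are consecutive edges of the polygon, so the triangle is the ear with apex $v$ whose third side is the diagonal joining the two neighbors of $v$. The angle of this triangle at $v$ is therefore the full interior angle $(n-2)\pi/n$, so the arrow has $\theta$-length $n-2$.

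All of these are elementary computations, and the statement is rightly flagged as ``readily derived from the definition.'' The only points that require genuine care are the angle bookkeeping for big cycles in (1)—verifying that the internal arrows of a big cycle account for precisely the interior angle at $v$, with no overlap and no omission—and the verification in (2) that the commutativity relations are genuinely $\theta$-balanced, which is exactly what makes $\cj(W_\sigma,F)$ homogeneous. I expect the big-cycle angle decomposition to be the main thing to pin down rigorously, since everything else follows formally once the potential is known to be homogeneous of degree $n$.
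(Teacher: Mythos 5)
Your proof is correct, and it is essentially the paper's approach: the paper states this lemma without proof, calling the facts ``readily derived from the definition,'' and your argument (angle sums of triangles and of the interior angle of $P$ for (1) and (4), homogeneity of the generators $\partial_a W_\sigma$ plus Lemma~\ref{lem:basis of frozen algebra} for (2), and (1)+(2) for (3)) is exactly the intended derivation.
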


In order to study the structure of $\Gamma_\sigma$, we prove the following lemma by induction.
\begin{theorem}
\label{th:induction}
 \begin{enumerate}
    \item Let $i,j\in  (Q_\sigma)_0$. There exists a path $w_0(i,j)$ from $i$ to $j$ with minimal $\theta$-length such that for any path $w$ from $i$ to $j$, $w\sim w_0(i,j)C^N$ holds for some non-negative integer $N$.

    \item Let $\ell^{\theta}_{i,j}=\ell^\theta(w_0(i,j))$ for any $i,j\in (Q_\sigma)_0$. Then
      \begin{equation}
       \ell^{\theta}_{i,j}
       = \max_{k\in F} \{ \ell^{\theta}_{k,i} -\ell^{\theta}_{k,j} \}.
    \end{equation}
 \end{enumerate}
\end{theorem}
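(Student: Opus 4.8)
The plan is to deduce (2) from (1) by a triangle-inequality argument, so the bulk of the work is (1). Two preliminary reductions make the structure visible. First, by the Remark every relation of $\Gamma_\sigma$ is a commutativity relation and, by Lemma \ref{lem:basis of frozen algebra}, a basis of $e_i\Gamma_\sigma e_j$ is given by the $\sim$-classes of paths from $i$ to $j$; so it suffices to argue with honest paths and the moves generated by the relations $\partial_b W_\sigma$. Second, the $\theta$-length is rigid modulo $n$: the quiver $Q_\sigma$ is drawn in the disc, its bounded faces are precisely the minimal cycles (cyclic triangles and big cycles), and these generate the cycle space of the underlying graph; since each minimal cycle has $\theta$-length $n$ (the three angles of a triangle sum to $\pi=n\cdot\pi/n$, while a big cycle contributes $(n-2)+2$), every closed path has $\theta$-length in $n\ZZ$. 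Hence all paths from $i$ to $j$ lie in one residue class mod $n$, this class has a smallest nonnegative representative, and I take $w_0(i,j)$ to realize it, so that $\ell^\theta_{i,j}=\ell^\theta(w_0(i,j))$.

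The heart of (1) is the rigidity statement
\[(\star)\qquad\text{any two paths from $i$ to $j$ of equal $\theta$-length are $\sim$-equivalent.}\]
Granting $(\star)$, part (1) is immediate: $C$ is central and homogeneous of $\theta$-length $n$ (by the preceding lemma and proposition), and for a path $w$ ending at $j$ one has $wC=wC_j$, so $w_0(i,j)C^N$ is a path of $\theta$-length $\ell^\theta_{i,j}+nN$ for every $N\ge 0$; conversely any path $w$ from $i$ to $j$ has $\theta$-length $\ell^\theta_{i,j}+nN$ for a unique $N\ge 0$ by the previous paragraph, whence $w\sim w_0(i,j)C^N$ by $(\star)$.

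I would prove $(\star)$ by induction on the number of arrows (equivalently on the $\theta$-length). Let $w,w'$ be coterminal of equal $\theta$-length. If they share their first arrow, strip it and apply the induction hypothesis to the two shorter coterminal paths. If they leave $i$ through distinct arrows $a\neq a'$, one uses that every non-frozen arrow is shared by exactly one cyclic triangle and one big cycle: the relation attached to it trades the complementary arc of the triangle for the complementary arc of the big cycle through $i$, and such local moves (which preserve both the $\sim$-class and the $\theta$-length) allow one to rewrite $w$ into a path beginning with $a'$. After this normalization $w$ and $w'$ share a first arrow and the induction applies. This local synchronization is the main obstacle: one must check that the commutativity relations really do suffice to bring any two coterminal paths of equal $\theta$-length to a common initial arrow, and that the rewriting terminates — this is exactly where the planarity of $Q_\sigma$ and the precise cyclic-triangle/big-cycle shape of $W_\sigma$ enter.

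For (2), set $d(i,j)=\ell^\theta_{i,j}$. Concatenating a minimal path $k\to i$ with $w_0(i,j)$ and reducing via (1) gives $w_0(k,i)\,w_0(i,j)\sim w_0(k,j)\,C^{N}$ with $N\ge 0$, so $d(k,j)\le d(k,i)+d(i,j)$, i.e.\ $\ell^\theta_{k,j}-\ell^\theta_{k,i}\le\ell^\theta_{i,j}$ for every $k\in F$; this triangle inequality gives one inequality of the identity in (2). For the reverse inequality it is enough to exhibit a single frozen vertex $k$ for which that concatenation is already minimal (i.e.\ $N=0$), equivalently a side $k$ of $P$ possessing a minimal path to $j$ that factors through $i$. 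Such a $k$ is produced geometrically: interpreting $w_0(i,j)$ as an anticlockwise angular sweep and extending it backwards from $i$ as far as minimality allows, the backward extension must terminate at an external edge of $P$, since only the sides of $P$ are extremal for this process. The resulting $k$ satisfies $d(k,j)=d(k,i)+d(i,j)$ and saturates the inequality; establishing this saturation is the delicate point of part (2).
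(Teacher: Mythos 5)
Your high-level skeleton is reasonable, and your preliminary observation is correct: since $Q_\sigma$ is planar, its bounded faces are exactly the minimal cycles, each of $\theta$-length $n$, and they generate the cycle space of the underlying graph, so any two coterminal paths have $\theta$-lengths congruent modulo $n$. This would indeed reduce part (1) to your rigidity statement $(\star)$. The problem is that $(\star)$ \emph{is} essentially the content of the theorem, and you have not proved it. The local-move argument you sketch does not go through as stated: a relation $\partial_a W_\sigma$ can only be applied to a path containing, as a subpath, the full complementary arc of the cyclic triangle or of the big cycle through $a$; so from the mere fact that $w$ and $w'$ leave $i$ by distinct arrows $a \neq a'$ you cannot conclude that any relation is applicable at the start of $w$, let alone that a sequence of such moves synchronizes the first arrows and terminates. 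You acknowledge this yourself (``this is exactly where the planarity \dots{} enter''), which is a flag for a missing argument, not an argument. The same happens in part (2): the triangle-inequality direction is fine, but the saturation step --- that extending a minimal path backwards from $i$ must reach a frozen vertex while staying minimal --- is asserted geometrically (``only the sides of $P$ are extremal'') without showing that at every internal vertex some incoming arrow preserves minimality, nor that the process terminates at a side. Both halves of your proof therefore rest on unproven claims, and those claims are exactly where the difficulty lies.

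For comparison, the paper sidesteps any confluence-type analysis by inducting on the number $n$ of vertices of the polygon. It chooses a corner triangle $P_lP_{l+1}P_{l+2}$ with $j \notin \{l, l+1\}$, passes to the triangulation $\tau$ of the polygon with $P_{l+1}$ removed, and uses the non-unital embedding $\xi : KQ'_\tau \hookrightarrow KQ'_\sigma$ of internal Jacobian algebras (sending $\alpha' \mapsto \alpha b$, $\beta' \mapsto c\beta$, fixing the other arrows). Any path from $i$ to $j$ is first normalized, up to $\sim$, so as to avoid $a$ and $a^*$, hence lies in the image of $\xi$; the identities $\xi(C_\tau) = C - C_l - C_{l+1}$ and $w_0(i,j)C_l = w_0(i,j)C_{l+1} = 0$ then transport the induction hypothesis from $\tau$ to $\sigma$ (with separate easy cases when $i \in \{l, l+1\}$). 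Part (2) is proved by the same induction: the frozen vertex $k$ with $w_0(k,j) \sim w_0(k,i)w_0(i,j)$ --- your ``saturating'' $k$ --- is obtained from the vertex $k'$ given by the induction hypothesis in $\tau$, replacing $k' = m$ by $l$ when necessary. To salvage your route you would need a genuine diamond-lemma/confluence proof of $(\star)$ (with a termination order for the rewriting system) and a real proof of the backward-extension claim; otherwise you should switch to an induction of the paper's type.
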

Before proving Theorem \ref{th:induction}, let us introduce another description of the frozen Jacobian algebra $\cpp$.
For each internal arrow $a \in (Q_{\sigma})_1$ between two frozen vertices $l+1$ and $l$, we add an external arrow $a^*:l\ra l+1$ to obtain an extended version $Q'_{\sigma}$ of $Q_\sigma$.
In such a case, we call the triangle $P_lP_{l+1}P_{l+2}$ a \emph{corner triangle}. 

We define the \emph{internal Jacobian algebra} $\cp_{\inte}(Q'_{\sigma},W'_{\sigma})$ by 
\[\cp_{\inte}(Q'_{\sigma},W'_{\sigma})=KQ'_{\sigma}/\cj_{\inte}(W'_{\sigma}),\]
 where $W'_{\sigma}:=W_\sigma - \sum_{a^*\in (Q'_{\sigma})_1\smallsetminus(Q_{\sigma})_1} aa^*$ and $\cj_{\inte}(W'_{\sigma}) :=\langle \partial_a W'_{\sigma} \mid \text{$a \in (Q'_\sigma)_1$ is internal} \rangle.$ 
 
 It is easy to check that we have an isomorphism 
 \begin{align*}
 \cp_{\inte}(Q'_{\sigma},W'_{\sigma}) &\cong \cp(Q_\sigma,W_\sigma,F)\\
 (Q_\sigma)_1 \ni a &\mapsto a\\
 (Q'_{\sigma})_1\smallsetminus(Q_{\sigma})_1 \ni  a* & \mapsto \partial_a W_\sigma.
 \end{align*} 
 
 Therefore, the two algebras have the same $K$-basis and the central element $C$ of $\cp(Q_\sigma,W_\sigma,F)$ can be regarded as a central element $C$ of $\cp_{\inte}(Q'_{\sigma},W'_{\sigma})$. We will prove Theorem~\ref{th:induction} for $\cp_{\inte}(Q'_{\sigma},W'_{\sigma})$.

\begin{proof}[Proof of Theorem \ref{th:induction}]
We will prove (1) and (2) by induction over $n$. When $n=3$, (1) and (2) are obvious.

Suppose that $n\ge4$ and suppose that (1) and (2) are proved for $n-1$.
 Let $i,j\in (Q'_\sigma)_0$. There exists a corner triangle $P_lP_{l+1}P_{l+2}$ such that $j\notin \{l, l+1\}$ since $n\ge 4$ implies that $\sigma$ has at least two corner triangles. 
We denote by $\tau$ the restriction of $\sigma$ to the polygon obtained by removing $P_{l+1}$, $l$ and $l+1$.
Let $\cp_{\inte}(Q'_\tau, W'_\tau)$ be the internal Jacobian algebra associated to $\tau$ and $C_\tau$ be its central element.
See Figure \ref{triangulation}.
\begin{figure}[t]
\scalebox{1} 
{
\begin{pspicture}(0,-2.48)(8.881894,2.48)
\definecolor{color42}{rgb}{1.0,0.0,0.2}
\pstriangle[linewidth=0.016,dimen=outer](4.22,-0.3)(5.4,1.68)
\psline[linewidth=0.016cm](1.52,-0.3)(0.44,-2.46)
\psline[linewidth=0.016cm](6.92,-0.3)(8.0,-2.46)
\usefont{T1}{ptm}{m}{n}
\rput(4.181455,1.55){$P_{l+1}$}
\usefont{T1}{ptm}{m}{n}
\rput(7.3114552,-0.135){$P_{l}$}
\usefont{T1}{ptm}{m}{n}
\rput(1.3714551,-0.035){$P_{l+2}$}
\psline[linewidth=0.016cm,linecolor=red,arrowsize=0.07cm 8.0,arrowlength=2.0,arrowinset=0.6]{->}(3.2,0.68)(5.24,0.66)
\psline[linewidth=0.016cm,linecolor=red,arrowsize=0.07cm 8.0,arrowlength=2.0,arrowinset=0.6]{->}(5.26,0.66)(4.28,-0.26)
\psline[linewidth=0.016cm,linecolor=red,arrowsize=0.07cm 8.0,arrowlength=2.0,arrowinset=0.6]{->}(4.22,-0.3)(3.22,0.64)
\usefont{T1}{ptm}{m}{n}
\rput(5.571455,1.065){$l$}
\usefont{T1}{ptm}{m}{n}
\rput(2.861455,1.205){$l+1$}
\usefont{T1}{ptm}{m}{n}
\rput(4.361455,0.505){$a$}
\usefont{T1}{ptm}{m}{n}
\rput(4.6,0.225){$b$}
\usefont{T1}{ptm}{m}{n}
\rput(3.900000,0.225){$c$}
\psbezier[linewidth=0.016,linecolor=blue,arrowsize=0.07cm 8.0,arrowlength=2.0,arrowinset=0.6]{->}(5.3,0.68)(5.34,2.28)(3.24,2.46)(3.22,0.74)
\usefont{T1}{ptm}{m}{n}
\rput(8.081455,-1.595){$l-1$}
\usefont{T1}{ptm}{m}{n}
\rput(0.4214551,-1.495){$l+2$}
\psbezier[linewidth=0.016,linecolor=blue,arrowsize=0.07cm 8.0,arrowlength=2.0,arrowinset=0.6]{->}(3.18,0.76)(1.1945455,2.02)(0.0,0.31333333)(0.94,-1.34)
\psbezier[linewidth=0.016,linecolor=blue,arrowsize=0.07cm 8.0,arrowlength=2.0,arrowinset=0.6]{->}(7.54,-1.48)(8.56,-0.35757226)(7.6512055,1.76)(5.36,0.68)
\usefont{T1}{ptm}{m}{n}
\rput(8.221455,0.505){$\alpha$}
\usefont{T1}{ptm}{m}{n}
\rput(4.131455,2.185){$a^*$}
\usefont{T1}{ptm}{m}{n}
\rput(0.89145505,1.105){$\beta$}
\usefont{T1}{ptm}{m}{n}
\rput(4.181455,-0.455){$m$}
\usefont{T1}{ptm}{m}{n}
\rput(4.571455,-1.815){$\tau$}
\end{pspicture} 
} 
\scalebox{1} 
{
\begin{pspicture}(0,-2.21)(8.36,2.21)
\psline[linewidth=0.016cm](2.34,0.21)(5.54,0.21)
\psline[linewidth=0.016cm](2.34,0.21)(0.74,-2.19)
\psline[linewidth=0.016cm](5.54,0.21)(7.14,-2.19)
\usefont{T1}{ptm}{m}{n}
\rput(5.871455,0.315){$P_{l}$}
\usefont{T1}{ptm}{m}{n}
\rput(1.911455,0.295){$P_{l+2}$}
\usefont{T1}{ptm}{m}{n}
\rput(6.801455,-1.045){$l-1$}
\usefont{T1}{ptm}{m}{n}
\rput(3.841455,-0.085){$m$}
\usefont{T1}{ptm}{m}{n}
\rput(0.6814551,-1.545){$l+2$}
\usefont{T1}{ptm}{m}{n}
\rput(4.211455,-1.285){$\tau$}
\psbezier[linewidth=0.016,linecolor=blue,arrowsize=0.07cm 8.0,arrowlength=2.0,arrowinset=0.6]{->}(6.34,-0.99)(8.34,1.01)(5.1844444,1.81)(3.94,0.21)
\psbezier[linewidth=0.016,linecolor=blue,arrowsize=0.07cm 8.0,arrowlength=2.0,arrowinset=0.6]{->}(3.86,0.25)(2.7,2.19)(0.0,0.51)(1.16,-1.51)
\usefont{T1}{ptm}{m}{n}
\rput(7.3114552,0.575){$\alpha'$}
\usefont{T1}{ptm}{m}{n}
\rput(2.241455,1.295){$\beta'$}
\end{pspicture} 
}
     \caption{Triangulations $\sigma$ and $\tau$}
   \label{triangulation}
\end{figure}
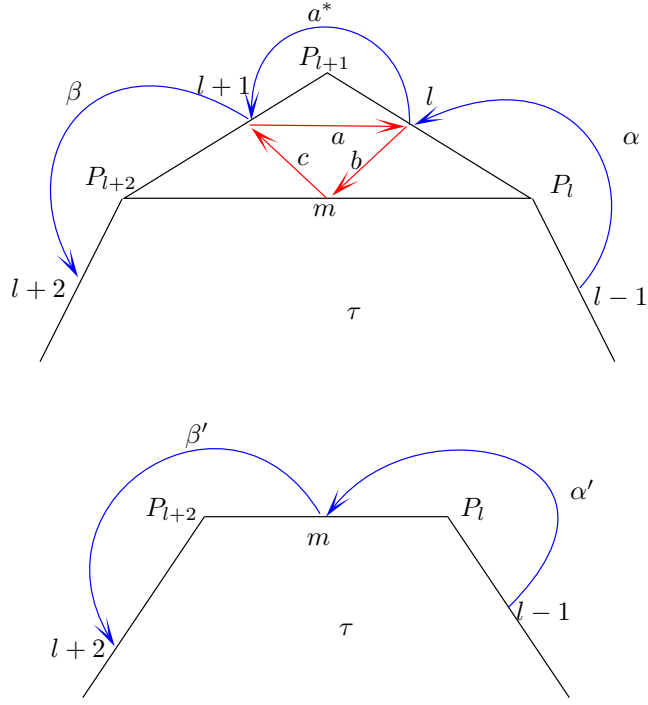

Then there is a non-unital monomorphism 
\begin{align*}
\xi: KQ'_{\tau} & \hookrightarrow KQ'_{\sigma}\\
\alpha' &\mapsto \alpha b,\\
\beta'  &\mapsto c\beta,\\
(Q'_{\tau})_1\smallsetminus\{\alpha', \beta'\} \ni \gamma  &\mapsto \gamma .
\end{align*}
Let $w$ be a path from $i$ to $j$ in $Q'_\sigma$. Up to relations, we can suppose that:
\begin{itemize}
\item $w$ does not contain $a^*$. As $a^* \sim bc$, we can replace $a^*$ by $bc$;
\item $w$ does not contain $a$. Indeed, since $j\neq l$ holds, it follows that $w$ contains $a$ if and only if $w$ contains $ab$. Up to the relation $\partial_c W'_\sigma$, the path $ab$ is equivalent to a path which does not contain $a$.
\end{itemize}
 
First, suppose that $i \notin \{l,l+1\}$.
By induction hypothesis, there exists a path $v_0(i,j)$ with minimal $\theta$-length in $KQ'_{\tau}$ such that for any path $v$ from $i$ to $j$ in $Q'_\tau$, $v \sim v_0(i,j) C_{\tau}^N$ holds for some non-negative integer $N$.
Let us show that $w_0(i,j):=\xi(v_0(i,j))$ satisfies the desired property. 
Let $w$ be a path in $Q'_\sigma$ from $i$ to $j$. Using the previous observation, we can assume that $w$ does not contain $a$ or $a^*$ up to $\sim$. Thus there is a path $v$ of $Q'_\tau$ from $i$ to $j$ such that $w = \xi(v)$. Moreover, there is a non-negative integer $N$ such that $v \sim v_0(i,j) C_\tau^N$.  
Since $\xi$ sends relations to relations, it follows that $w \sim w_0(i,j) \xi(C_{\tau})^N$ holds.
Moreover $$\xi(C_{\tau})= C- C_l -C_{l+1} \and w_0(i,j)C_l=w_0(i,j)C_{l+1}=0$$ hold. Hence $w\sim w_0(i,j) C^N$.

Next, suppose that $i=l$ and put $w_0(l,j) :=b w_0(m,j)$. For any path $w$ from $l$ to $j$, $w\sim bp$ for some path $p$ from $m$ to $j$.
We saw that $p\sim w_0(m,j)C^N$ for some non-negative integer $N$, so $w\sim bw_0(m,j)C^N \sim w_0(l,j)C^N$.

Finally, suppose that $i=l+1$. Then one can show that $w_0(l+1,j)=\beta w_0(l+2,j)$ satisfies the desired property by a similar argument.

Clearly, the path $w_0(i,j)$ has minimal $\theta$-length. Indeed, for any path $w$ in $KQ'_\sigma$ from $i$ to $j$, 
$w \sim w_0(i,j) C^N$ holds for some non-negative integer $N$. It follows that $$\ell^\theta(w) = \ell^\theta(w_0(i,j)) + \ell^\theta(C^N)  \ge \ell^\theta(w_0(i,j)).$$ 
It finishes the proof of (1) for $n$ vertices.

Let us prove (2) for $n$ vertices. Since $ \ell^{\theta}_{k,i}+\ell^{\theta}_{i,j} \ge \ell^{\theta}_{k,j}$ holds for any $k \in F$,  it is sufficient to prove that there exists a frozen vertex $k$ such that $w_0(k,j) \sim w_0(k,i)w_0(i,j)$. 

If $i\in F$, then $k=i$ works.
Otherwise, $i \notin \{l,l+1\}$ and therefore by induction hypothesis, there exists a frozen vertex $k'$ in the triangulation $\tau$ such that $v_0(k',j) \sim v_0(k',i)v_0(i,j)$ holds. 
\begin{itemize}
  \item If $k\neq m$, then put $k=k'$;
 \item If $k=m$, then put $k=l$.
 \end{itemize}
In both cases, we have $w_0(k,j) \sim w_0(k,i)w_0(i,j)$.
\end{proof}

\begin{remark}
The non-unital monomorphism $\xi$ induces an isomorphism $$(1-e_{l+1})\cp_{\inte}(Q'_\sigma, W'_\sigma) (1-e_{l+1}) \cong \cp_{\inte}(Q'_\tau, W'_\tau).$$
\end{remark}

According to the above discussion, we get the following corollary.
\begin{corollary}
  \label{thm:basis of Jacobian algebra}
  For any two vertices $i,j\in (Q_{\sigma})_0$,
  the map 
  \begin{align*}
  \NN_{\ge0} &\to e_i \Gamma_\sigma e_j \cap (\bas)  \\ 
   l &\mapsto w_0(i,j)C^l
   \end{align*}
  is a bijection, where $w_0(i,j)$ is the path from $i$ to $j$ constructed in Theorem \ref{th:induction}.
\end{corollary}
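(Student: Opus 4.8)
The plan is to combine the two halves of Theorem~\ref{th:induction} to nail down the structure of $e_i\Gamma_\sigma e_j$ as a module over the subring $K[C]\subseteq\Gamma_\sigma$. By Lemma~\ref{lem:basis of frozen algebra}, the set $e_i\Gamma_\sigma e_j\cap(\bas)$ is a basis of $e_i\Gamma_\sigma e_j$, so it suffices to show that this basis is exactly $\set{w_0(i,j)C^l\mid l\in\NN_{\ge0}}$ and that these elements are pairwise distinct. Theorem~\ref{th:induction}(1) immediately gives surjectivity of the stated map onto $e_i\Gamma_\sigma e_j$: every path $w$ from $i$ to $j$ satisfies $w\sim w_0(i,j)C^N$ for some $N$, and since paths span $\Gamma_\sigma$, every element of the basis $e_i\Gamma_\sigma e_j\cap(\bas)$ is represented by some power $w_0(i,j)C^l$.

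The main work is injectivity, that is, showing the elements $w_0(i,j)C^l$ are distinct for distinct $l$. First I would observe that $C$ is a non-nilpotent central element: since $C$ is homogeneous of $\theta$-length $n$ by the lemma following Definition~\ref{def:length}, the products $w_0(i,j)C^l$ are homogeneous of $\theta$-length $\ell^\theta_{i,j}+ln$, and these degrees are distinct for distinct $l$. Hence as soon as each $w_0(i,j)C^l$ is nonzero, they must be pairwise distinct (indeed linearly independent across different powers, being of different degree). So injectivity reduces to the single claim that $w_0(i,j)C^l\neq0$ in $\Gamma_\sigma$ for all $l\ge0$.

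To prove non-vanishing, I would argue that for any $l$ the product $w_0(i,j)C^l$ is represented, up to $\sim$, by an honest nonzero path lying in $\bas$, so that it is a basis element rather than $0$. Concretely, $w_0(i,j)$ is a path from $i$ to $j$ and $C_j$ is (the class of) a minimal cycle at $j$; since all minimal cycles through $j$ are equal in $\Gamma_\sigma$ and represent $C_j=e_jCe_j$, the product $w_0(i,j)C=w_0(i,j)C_j$ is represented by the concatenation of $w_0(i,j)$ with that minimal cycle, which is again a genuine path in $P_{Q_\sigma}$. Iterating, $w_0(i,j)C^l$ is represented by a concrete path, and this path survives in $\bas$ because the relations are purely commutativity relations (see the Remark after Definition~\ref{def:QP}): a commutativity relation identifies two paths but never sends a path to zero. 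Thus no power can vanish.

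The step I expect to be the main obstacle is this last non-vanishing argument, because one must be careful that passing from the central element $C=\sum_k C_k$ to the local factor $C_j$ at the right endpoint, and then concatenating $l$ times with a minimal cycle at $j$, really does produce a legitimate element of $P_{Q_\sigma}$ whose class in $\bas$ is nonzero. The cleanest way to close this is to invoke the $\theta$-grading: since $\ell^\theta$ induces a grading on $\Gamma_\sigma$ and $C$ is homogeneous of degree $n>0$, the map $K[C]\to\Gamma_\sigma$ is injective (its image is a polynomial subalgebra), and multiplication by $C^l$ raises degree by $ln$; combined with $w_0(i,j)\neq0$ and the fact that $e_j\Gamma_\sigma e_j$ contains $1\cdot C_j^l\neq0$ for every $l$, one concludes $w_0(i,j)C^l\neq0$. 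This yields bijectivity of the map $l\mapsto w_0(i,j)C^l$ onto $e_i\Gamma_\sigma e_j\cap(\bas)$, completing the proof of the corollary.
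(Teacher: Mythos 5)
Your proof is correct and takes essentially the same route as the paper: surjectivity is exactly Theorem \ref{th:induction}(1), and injectivity follows from the grading $\ell^\theta$ with $\ell^{\theta}(C)=n$, the elements $w_0(i,j)C^l$ being homogeneous of pairwise distinct degrees. Your extra care about non-vanishing (each $w_0(i,j)C^l$ is the class of an honest path, hence lies in the basis of Lemma \ref{lem:basis of frozen algebra}) is left implicit in the paper and is best closed by that path-concatenation argument rather than by your final remark, since nonzero factors need not have a nonzero product in an algebra with zero divisors.
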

\begin{proof}
It is surjective by Theorem \ref{th:induction}~(1).
It is injective since we have the grading $\ell^\theta$ such that $\ell^\theta(C)=n$.
\end{proof}
 
 We have the following immediate consequences.
\begin{remark}
 Let $i,j\in (Q_{\sigma})_0$ and $w$ be a path from $i$ to $j$. 
 \begin{itemize}
    \item The path $w$ has minimal $\theta$-length if and only if there is no other path $w'$ from $i$ to $j$ such that $w \sim w'C$ holds.
    \item If $\ell^\theta(w)<n$, then $w$ is a path with minimal $\theta$-length. 
  \end{itemize}  
\end{remark}

We consider the $K$-linear map  
\begin{align}
\label{equ:mor}
\phi: R &\hookrightarrow \Gamma_\sigma \\
x^i &\mapsto C^i  \ \ \ (i\ge0). \notag
\end{align}
 Then $\phi$ is a ring homomorphism and $\Gamma_\sigma$ is an $R$-algebra. Now, we are ready to prove Theorem \ref{thm:frozen Jacobian algebra is order}.

\begin{proof}[Proof of Theorem \ref{thm:frozen Jacobian algebra is order}]
According to Lemma \ref{lem:basis of frozen algebra} and Corollary \ref{thm:basis of Jacobian algebra}, the frozen Jacobian algebra $\Gamma_\sigma$ has a $K$-basis $\bas$ which consists of $ C^k w_0(i,j) $ for $k \in \NN_{\ge0}$ and $i,j\in (Q_\sigma)_0$. 
Thus, for $i,j \in (Q_\sigma)_0$, there is an isomorphism of $R$-modules:
\begin{align*}
R &\to e_i\Gamma_\sigma e_j  \\
r &\mapsto \phi(r)w_0(i,j).
\end{align*}
It follows that $\Gamma_\sigma= \bigoplus_{i,j\in(Q_\sigma)_0}e_i \Gamma_\sigma e_j$ is a finitely generated free $R$-module.
Consequently, the frozen Jacobian algebra $\Gamma_\sigma$ is an $R$-order.
\end{proof}

\subsection{Combinatorial structures of frozen Jacobian algebras}
\label{ss: Combinatorial Structure of Jacobian algebras}
In this section, we give an explicit description of $\Gamma_\sigma$ as a tiled order. Moreover, we prove that the frozen part $e_F \Gamma_\sigma e_F$ of $\Gamma_\sigma$ is isomorphic to $\Lambda$ given in \eqref{the order} for any triangulation $\sigma$, where $e_F$ is the sum of the {idempotents} at all frozen vertices of ${Q_\sigma}$.

We show that the minimal $\theta$-length between two vertices does not depend on the triangulation by the following proposition. 
\begin{proposition}
\label{pro:independence}
Let $\sigma$ and $\sigma'$ be two different triangulations of the polygon $P$. For any two edges $(P_i,P_{i'}),(P_j,P_{j'})$ common to $\sigma$ and $\sigma'$, the minimal $\theta$-length of paths from $(P_i,P_{i'})$ to $(P_j,P_{j'})$ in $Q_\sigma$ is the same as the one in $Q_{\sigma'}$. 
\end{proposition}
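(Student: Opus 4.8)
The plan is to prove something stronger than stated: that in \emph{any} triangulation $\sigma$ the number $\ell^\theta_{i,j}$ depends only on the two chords $i=(P_a,P_b)$ and $j=(P_c,P_d)$ of $P$, and not on $\sigma$ at all. Once $\ell^\theta_{i,j}$ is identified with an explicit quantity $\rho(i,j)$ attached to the chords, the proposition is immediate: an edge common to $\sigma$ and $\sigma'$ receives the same value $\rho(i,j)$ in $Q_\sigma$ and in $Q_{\sigma'}$.

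The engine is to read $\ell^\theta$ as the total angle swept by a rotating chord. Every arrow of $Q_\sigma$ is a rotation of a chord about one of its endpoints: an internal arrow joining two edges at a common vertex $v$ keeps $v$ fixed and moves the opposite endpoint counter-clockwise, the inscribed angle being exactly $\ell^\theta\cdot\pi/n$; an external arrow around a vertex of $P$ moves the opposite endpoint counter-clockwise by $2$, matching the exterior angle $2\pi/n$. Now lift the vertices $P_1,\dots,P_n$ to $\ZZ$ and record each chord by the pair of integer lifts of its two endpoints. The convention orienting arrows (predecessor counter-clockwise at the joint vertex) guarantees that the moving endpoint always advances to a \emph{larger} lift, by precisely the $\theta$-length of the arrow, while the pivot lift is unchanged. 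Hence, along any path $w$ from $i$ to $j$, the sum $\tilde S$ of the two lifted endpoints is non-decreasing and telescopes to
\[
\ell^\theta(w)=\tilde S_{\mathrm{end}}-\tilde S_{\mathrm{start}}.
\]

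Two consequences follow at once, both independent of $\sigma$. Reducing modulo $n$ gives $\ell^\theta(w)\equiv (c+d)-(a+b)\pmod n$, so the residue class of $\ell^\theta_{i,j}$ is fixed by the chords. Moreover, since the two endpoint lifts only ever increase, $\tilde S_{\mathrm{end}}$ is bounded below by the least value of $\tilde c'+\tilde d'$ over all lifts $\tilde c',\tilde d'$ of $\{c,d\}$ lying pointwise above the starting lifts of $i$; calling this bound $\rho(i,j)+(a+b)$, we obtain $\ell^\theta_{i,j}\ge \rho(i,j)$ with $\rho(i,j)$ depending only on $i$ and $j$. This lower bound is already the subtle point: in the pentagon of Figure~\ref{pentagon} it forces $\ell^\theta_{1,4}=6>n$, so the value genuinely exceeds the naive residue, and $\rho$—rather than the residue class—is the right invariant. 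It remains to prove the matching upper bound $\ell^\theta_{i,j}\le\rho(i,j)$ in every triangulation.

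This upper bound is the main obstacle, since it is exactly where an individual triangulation could a priori intervene: one must realize the minimal rotation $\rho(i,j)$ by an \emph{actual} path inside $Q_\sigma$, i.e.\ show that no triangulation forces extra full turns. I would prove it by induction on $n$, reusing the ear-removal apparatus from the proof of Theorem~\ref{th:induction}: deleting a corner triangle $P_lP_{l+1}P_{l+2}$ relates $\sigma$ to a triangulation $\tau$ of a smaller polygon through the non-unital monomorphism $\xi$, and—$\rho$ being purely combinatorial—the inductive equality $\ell^\theta_{i,j}=\rho(i,j)$ for $\tau$ should transport along $\xi$ (using $\xi(C_\tau)=C-C_l-C_{l+1}$ together with $w_0(i,j)C_l=w_0(i,j)C_{l+1}=0$), the cases $i\in\{l,l+1\}$ being handled as in that proof. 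The technical heart here is reconciling the two different angular units ($\pi/n$ versus $\pi/(n-1)$) across $\xi$, so that the minimal rotation in the sub-polygon matches $\rho$ in $P$. Because $\rho(i,j)$ refers to nothing but the chords, the resulting equality $\ell^\theta_{i,j}=\rho(i,j)$ holds for $\sigma$ and $\sigma'$ at once, proving the proposition for their common edges.
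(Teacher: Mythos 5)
Your rotation-number argument for the lower bound is correct, and it is a genuinely different mechanism from the paper's: each arrow of $Q_\sigma$ (internal or external) does pivot the current chord at the joint vertex and advance the other endpoint counter-clockwise by exactly the arrow's $\theta$-length, so the sum of the two lifted endpoints telescopes, giving both the congruence $\ell^\theta(w)\equiv (c+d)-(a+b)\pmod n$ and the bound $\ell^\theta_{i,j}\ge\rho(i,j)$, where your $\rho$ is precisely the expression of Proposition \ref{pro:length}. For contrast, the paper proves only invariance under a single flip (Figure \ref{trisigsigp}), using connectivity of the flip graph of triangulations containing the two common edges plus a short case analysis with the relations, and it derives the explicit formula only \emph{afterwards}, in Proposition \ref{pro:length}, by computing in a fan triangulation; this ordering is exactly what lets the paper avoid ever proving your upper bound in an arbitrary triangulation. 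Note also that your lower bound and congruence alone cannot prove the proposition: they only place $\ell^\theta_{i,j}$ in $\rho(i,j)+n\NN$ for each triangulation, so the two values could a priori still differ by a multiple of $n$.

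The genuine gap is therefore the upper bound $\ell^\theta_{i,j}\le\rho(i,j)$, which you rightly call the main obstacle but do not prove, and the mechanism you invoke for it does not do the job as stated. The identities $\xi(C_\tau)=C-C_l-C_{l+1}$ and $w_0(i,j)C_l=w_0(i,j)C_{l+1}=0$ transport the algebraic structure of Theorem \ref{th:induction} (every path is $w_0(i,j)C^N$); they say nothing about how $\theta$-lengths compare across $\xi$, and in fact $\xi$ does \emph{not} preserve $\theta$-length: an arrow of $Q'_\tau$ whose sweep passes over the deleted vertex $P_{l+1}$ gains exactly $1$ (for instance $\ell^\theta(\xi(\alpha'))=\ell^\theta(\alpha b)=3$, while $\alpha'$ has $\theta$-length $2$ in the $(n-1)$-gon). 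Hence for a path $v$ in $Q'_\tau$ one has $\ell^\theta(\xi(v))=\ell^\theta(v)+(\text{number of sweeps of $v$ over } P_{l+1})$, and this correction depends on which of the two endpoint-matchings $v$ realizes. Writing $\rho_{n-1}$ and $\rho_n$ for the invariant computed in the small and the large polygon, closing the induction requires at least: (i) that the sweep count of a $\tau$-minimal path equals the gap-crossing number of the corresponding minimal token trajectories (true, since each token advances by less than $n-1$); (ii) that a matching minimizing $\rho_{n-1}$ also minimizes $\rho_n$ --- this needs the rigidity observation that the totals of the two matchings differ by a multiple of $n-1$ (resp.\ $n$) while the crossing corrections lie in $\{0,1,2\}$; and (iii) a separate verification for $i\in\{l,l+1\}$, e.g.\ $\rho_n(l,j)=1+\rho_n(m,j)$, matching the paper's $w_0(l,j)=b\,w_0(m,j)$. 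None of this is carried out; the sentence ``$\rho$ being purely combinatorial, the inductive equality should transport along $\xi$'' is exactly the assertion that needs proof, not a reason. My assessment is that your plan is completable along these lines, but as submitted the argument is missing its essential half, whereas the paper's flip argument settles the proposition in a few lines precisely because it never asserts the stronger, triangulation-independent formula at this stage.
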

\begin{proof}
Consider the modified versions $(Q'_\sigma,W'_\sigma,F)$ and $(Q'_{\sigma'},W'_{\sigma'},F)$. The two triangulations are related by a sequence of flips which do not modify $(P_i, P_{i'})$ and $(P_j, P_{j'})$, so, without losing generality, we can assume that the only difference between $\sigma$ and $\sigma'$ is shown in Figure \ref{trisigsigp} and the other parts of $\sigma$ and $\sigma'$ are equal. 
Hence it is sufficient to prove that the minimal $\theta$-lengths of paths between two different external vertices in the diagrams of Figure \ref{trisigsigp} are the same.

\begin{figure}[t]
 \begin{pspicture}(0,-2)(5.5,2)
\psframe[linewidth=0.016,dimen=outer](4.08,1.2)(1.58,-1.3)
\psline[linewidth=0.016cm](1.6,1.16)(4.06,-1.28)
\psline[linewidth=0.016cm,linecolor=red
,arrowsize=0.07cm 8.0,arrowlength=2.0,arrowinset=0.6]{->}(4.08,-0.1)(2.88,-0.12)
\psline[linewidth=0.016cm,linecolor=red
,arrowsize=0.07cm 8.0,arrowlength=2.0,arrowinset=0.6]{->}(2.96,1.18)(4.0,-0.02)
\psline[linewidth=0.016cm,linecolor=red
,arrowsize=0.07cm 8.0,arrowlength=2.0,arrowinset=0.6]{->}(2.9,-0.1)(2.94,1.14)
\psline[linewidth=0.016cm,linecolor=red
,arrowsize=0.07cm 8.0,arrowlength=2.0,arrowinset=0.6]{->}(2.9,-0.2)(2.9,-1.24)
\psline[linewidth=0.016cm,linecolor=red
,arrowsize=0.07cm 8.0,arrowlength=2.0,arrowinset=0.6]{->}(1.62,-0.12)(2.86,-0.14)
\psline[linewidth=0.016cm,linecolor=red
,arrowsize=0.07cm 8.0,arrowlength=2.0,arrowinset=0.6]{->}(2.92,-1.26)(1.66,-0.14)
\usefont{T1}{ptm}{m}{n}
\rput(4.7,-0.1){$(P_k,P_l)$}
\usefont{T1}{ptm}{m}{n}
\rput(1.0,-0.1){$(P_i,P_{j})$}
\usefont{T1}{ptm}{m}{n}
\rput(2.9,1.45){$(P_l,P_i)$}
\usefont{T1}{ptm}{m}{n}
\rput(2.9,-1.6){$(P_j,P_k)$}
\usefont{T1}{ptm}{m}{n}
\rput(2.1,-0.8){$\alpha$}
\usefont{T1}{ptm}{m}{n}
\rput(2.25,0.1){$\beta$}
\usefont{T1}{ptm}{m}{n}
\rput(3.1,-0.75){$\gamma$}
\usefont{T1}{ptm}{m}{n}
\rput(3.8,0.6){$\alpha'$}
\usefont{T1}{ptm}{m}{n}
\rput(3.6,-0.35){$\beta'$}
\usefont{T1}{ptm}{m}{n}
\rput(2.7,0.5){$\gamma'$}
\end{pspicture}  \hspace{1cm} \scalebox{1} 
{
\begin{pspicture}(0,-2)(5.5,2)
\psframe[linewidth=0.015,dimen=outer](4.08,1.2)(1.58,-1.3)
\psline[linewidth=0.015cm](1.58,-1.3)(4.08,1.2)
\psline[linewidth=0.015cm,linecolor=red,arrowsize=0.07cm 8.0,arrowlength=2.0,arrowinset=0.6]{<-}(2.8,-0.08)(2.8,1.15)
\psline[linewidth=0.015cm,linecolor=red,arrowsize=0.07cm 8.0,arrowlength=2.0,arrowinset=0.6]{<-}(4.08,-0.1)(2.88,-0.12)
\psline[linewidth=0.015cm,linecolor=red,arrowsize=0.07cm 8.0,arrowlength=2.0,arrowinset=0.6]{<-}(1.62,-0.12)(2.7,-0.12)
\psline[linewidth=0.015cm,linecolor=red,arrowsize=0.07cm 8.0,arrowlength=2.0,arrowinset=0.6]{<-}(2.8,-1.25)(4.08,-0.12)
\psline[linewidth=0.015cm,linecolor=red,arrowsize=0.07cm 8.0,arrowlength=2.0,arrowinset=0.6]{->}(2.8,-1.25)(2.8,-0.12)
\psline[linewidth=0.015cm,linecolor=red,arrowsize=0.07cm 8.0,arrowlength=2.0,arrowinset=0.6]{<-}(2.8,1.15)(1.62,-0.12)

\usefont{T1}{ptm}{m}{n}
\rput(1.0,-0.1){$(P_i,P_j)$}
\usefont{T1}{ptm}{m}{n}
\rput(4.7,-0.1){$(P_k,P_l)$}
\usefont{T1}{ptm}{m}{n}
\rput(2.9,1.45){$(P_l,P_i)$}
\usefont{T1}{ptm}{m}{n}
\rput(2.9,-1.6){$(P_j,P_k)$}
\usefont{T1}{ptm}{m}{n}
\rput(4.7,-0.1){$(P_k,P_l)$}
\usefont{T1}{ptm}{m}{n}
\rput(1.0,-0.1){$(P_i,P_{j})$}
\usefont{T1}{ptm}{m}{n}
\rput(2.9,1.45){$(P_l,P_i)$}
\usefont{T1}{ptm}{m}{n}
\rput(2.9,-1.6){$(P_j,P_k)$}
\usefont{T1}{ptm}{m}{n}
\rput(2.0,0.7){$\tilde \alpha$}
\usefont{T1}{ptm}{m}{n}
\rput(2.2,-0.4){$\tilde \beta$}
\usefont{T1}{ptm}{m}{n}
\rput(2.6,-0.75){$\tilde \gamma'$}
\usefont{T1}{ptm}{m}{n}
\rput(3.7,-0.7){$\tilde \alpha'$}
\usefont{T1}{ptm}{m}{n}
\rput(3.5,0.1){$\tilde \beta'$}
\usefont{T1}{ptm}{m}{n}
\rput(3.0,0.5){$\tilde \gamma$}

\end{pspicture} 
}
 \caption{Triangulations $\sigma$ and $\sigma'$}
 \label{trisigsigp}
\end{figure}
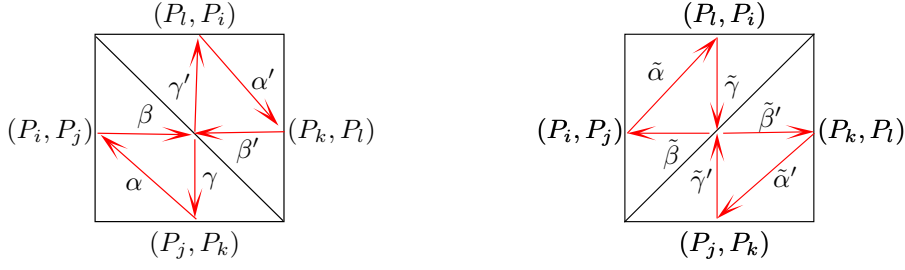

The reasoning can then be done case by case. By symmetry, we can consider minimal $\theta$-length of paths starting from $(P_i, P_j)$:
\begin{itemize}
 \item From $(P_i, P_j)$ to $(P_j, P_k)$, as $\ell^\theta(\beta \gamma) < n$, $\beta \gamma$ is of minimal $\theta$-length. Using the relation $\partial_\alpha W'_\sigma$, it is equivalent to a path $w$ which still exists in $\sigma'$ and therefore the minimal $\theta$-length of paths from $(P_i, P_j)$ to $(P_j, P_k)$ is the same in $\sigma'$;
 \item From $(P_i, P_j)$ to $(P_l, P_i)$, as $\ell^\theta(\beta \gamma') < n$, $\beta \gamma'$ is of minimal $\theta$-length. Moreover, $\ell^\theta(\tilde \alpha) = \ell^\theta(\beta \gamma')$;
 \item From $(P_i, P_j)$ to $(P_k, P_l)$, by Corollary \ref{thm:basis of Jacobian algebra}, $\beta \gamma' \alpha' \sim w_0((P_i,P_j), (P_k, P_l)) C^l$ for some $l \in \NN$ and $\beta' \gamma \alpha \sim w_0((P_k, P_l), (P_i, P_j)) C^{l'}$, so 
 $$e_{(P_i, P_j)}C^2 \sim w_0((P_i,P_j), (P_k, P_l)) w_0((P_k, P_l), (P_i, P_j)) C^{l+l'}.$$
 Moreover, $(P_i, P_j)$ and $(P_k, P_l)$ are not in the same minimal cycle (as they are not incident to the same vertex of $P$), so $l+l' = 0$ and $\beta \gamma' \alpha'$ is of minimal $\theta$-length. 
 Finally, $\ell^\theta(\beta \gamma' \alpha') = \ell^\theta(\tilde \alpha \tilde \gamma \tilde \beta')$. 
\end{itemize}
\end{proof}

Roughly speaking, the minimal $\theta$-length of paths between two vertices of the quiver $Q_\sigma$ corresponding to two edges of the triangulation reflects the angle we need to rotate one of the edges to make it coincide with the other one. However, this is true only modulo $n$. Unfortunately, this description is not in general enough to compute the minimal $\theta$-length which can actually be bigger than $n$. Its explicit value is given in the following proposition.
\begin{proposition}
\label{pro:length}
Let $(P_i,P_j)$ and $(P_k,P_l)$ be two vertices in $Q_\sigma$. The minimal $\theta$-length of paths from $(P_i,P_j)$ to $(P_k,P_l)$ is $$k+l-i-j+n\min\{\delta_{j>k}+{\delta_{i>l}}, {\delta_{i>k}}+{\delta_{j>l}}\},$$
where ${\delta_{i>j}}=
{\begin{cases}
1, & \text{if } i > j, \\
0, & \text{if } i \le j.
\end{cases}}$
\end{proposition}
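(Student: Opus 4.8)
The plan is to reinterpret the $\theta$-length geometrically as an anticlockwise rotation count and then to read off the two terms of the claimed minimum as the two ways of matching the endpoints of the source edge with those of the target edge. First I would rewrite the claimed value in a more symmetric form: writing $\langle m\rangle$ for the representative of $m$ modulo $n$ lying in $\{0,1,\dots,n-1\}$, one checks directly that $(k-j)+n\delta_{j>k}=\langle k-j\rangle$ and, likewise, $(l-i)+n\delta_{i>l}=\langle l-i\rangle$, $(k-i)+n\delta_{i>k}=\langle k-i\rangle$, $(l-j)+n\delta_{j>l}=\langle l-j\rangle$. Hence the quantity to be computed is
\[ \min\bigl\{\,\langle k-j\rangle+\langle l-i\rangle,\ \langle k-i\rangle+\langle l-j\rangle\,\bigr\}, \]
the two terms corresponding to the two bijections $\{i,j\}\to\{k,l\}$ (namely $i\mapsto l,\ j\mapsto k$ and $i\mapsto k,\ j\mapsto l$). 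This reformulation also makes the symmetry under $i\leftrightarrow j$ and $k\leftrightarrow l$ manifest, so no orientation convention on the edges is needed.

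The geometric input is an observation that I would isolate as a lemma. Realise $P$ as inscribed in a circle with $P_m$ at angle $2\pi m/n$, and think of an arrow of $Q_\sigma$ as fixing the common (pivot) endpoint of the two edges it joins and moving the other endpoint from some $P_p$ to some $P_q$. By the inscribed-angle theorem, the triangle angle at the pivot equals $\frac{\pi}{n}\langle q-p\rangle$, so an internal arrow advances its moving endpoint anticlockwise by exactly $\langle q-p\rangle=\ell^\theta(\text{arrow})$ steps; an external arrow advances the moving endpoint by the two steps $P_{v-1}\to P_{v+1}$ across a polygon vertex, again matching its $\theta$-length $2$. Thus every arrow moves exactly one endpoint anticlockwise by exactly its $\theta$-length, and the anticlockwise orientation convention guarantees that this motion is never clockwise. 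Summing along a path $w$ from $\{P_i,P_j\}$ to $\{P_k,P_l\}$, I obtain that $\ell^\theta(w)$ equals the total number of anticlockwise steps travelled by the two endpoints; a full loop of one endpoint contributes $n$, in agreement with $\ell^\theta(C)=n$.

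From here the two bounds follow. For the lower bound, any path realises a bijection from $\{i,j\}$ to $\{k,l\}$, and since each endpoint only ever moves anticlockwise, the endpoint sent to a given target covers at least $\langle\text{target}-\text{start}\rangle$ steps; summing over the two endpoints gives $\ell^\theta(w)\ge\min\{\dots\}$ for every path, and hence the same bound for the minimal $\theta$-length. For the upper bound I would exhibit a path attaining the minimising bijection with no superfluous winding: using Proposition \ref{pro:independence} I may replace $\sigma$ by any triangulation containing both edges, and in a suitably chosen one (for instance a fan arrangement through which each endpoint can be rotated monotonically to its target) the step-count identity shows that the constructed path has $\theta$-length exactly $\min\{\dots\}$. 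Combining the two bounds yields the formula.

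I expect the main obstacle to be the upper bound rather than the lower bound: one must produce, inside a single triangulation, a path that realises the optimal matching while moving each endpoint by the minimal number of steps, that is, without either endpoint being forced to wind an extra full turn (which would add $n$). This is exactly the point where Proposition \ref{pro:independence} is indispensable, since it frees me to compute in a favourable triangulation. A secondary, bookkeeping-type obstacle is to verify carefully that the quiver's anticlockwise orientation convention really does force every arrow to move its non-pivot endpoint anticlockwise, so that all step contributions are nonnegative; once this monotonicity is established, the additivity of $\ell^\theta$ along paths does the rest. As an alternative to the explicit construction, one could instead run the induction on $n$ used for Theorem \ref{th:induction}, removing a corner triangle and tracking how the formula changes under $\xi$ and the identity $\xi(C_\tau)=C-C_l-C_{l+1}$, but this route replaces the clean rotation picture with a heavier index computation.
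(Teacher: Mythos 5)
Your proposed proof is correct, but its key mechanism is genuinely different from the paper's. The paper has no geometric interpretation of $\ell^\theta$ at all: it proves the lower bound by induction on the $\theta$-length of a path, splitting any path as a one-endpoint move $(P_i,P_j)\to(P_i,P_{j'})$ followed by a strictly shorter path and closing with an inequality among the $\delta$'s, and it gets the upper bound by concatenating two one-endpoint rotations inside a triangulation (legitimate by Proposition~\ref{pro:independence}) containing all four edges $(P_i,P_j)$, $(P_i,P_k)$, $(P_i,P_l)$, $(P_k,P_l)$. Your inscribed-angle lemma is a different key idea, and it does hold: internal arrows advance the non-pivot endpoint anticlockwise by exactly their $\theta$-length, and for external arrows the direction (from $(P_{v-1},P_v)$ to $(P_v,P_{v+1})$, i.e.\ two anticlockwise steps) is pinned down not by the ambiguous phrase ``anticlockwise predecessor'' but by the requirement that the external arrow closes the big cycle at $P_v$; summing, $\ell^\theta(w)$ is the total number of anticlockwise steps of the two endpoint tokens. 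Granting this, your lower bound is immediate and needs no induction: each token moves monotonically, so it covers at least $\langle \mathrm{target}-\mathrm{start}\rangle$ steps, extra winding costs multiples of $n$, and minimizing over the two possible matchings of $\{i,j\}$ with $\{k,l\}$ gives exactly the stated formula. What your route buys is conceptual clarity -- it explains why the answer is a minimum over two matchings and why corrections come in multiples of $n$; what the paper's route buys is that it never needs any orientation or monotonicity verification, staying entirely inside the path-equivalence calculus used elsewhere in Section~2.

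One point does need repair in your upper bound. You suggest computing in ``a fan arrangement,'' but a fan centered at a single vertex contains $(P_i,P_k)$ and $(P_i,P_l)$, while it contains $(P_k,P_l)$ only when the latter is a side or passes through the center; so a single fan does not in general contain both of the given edges, and the constructed path would not live in a quiver having both as vertices. The fix is exactly the paper's choice of triangulation: since $(P_i,P_j)$ and $(P_k,P_l)$ both belong to $\sigma$, they do not cross, hence $(P_i,P_j)$, $(P_i,P_k)$, $(P_i,P_l)$, $(P_k,P_l)$ are pairwise non-crossing and extend to a triangulation; there the concatenation through $(P_i,P_k)$ realizes the matching $j\mapsto k$, $i\mapsto l$ with cost $\langle k-j\rangle+\langle l-i\rangle$, the concatenation through $(P_i,P_l)$ realizes the other matching, and each one-endpoint rotation is costed by reducing (again via Proposition~\ref{pro:independence}) to the fan centered at the shared vertex. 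With that substitution your argument is complete.
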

\begin{proof}
If $\{i,j\} =\{k,l\}$ holds, then $(P_i,P_j)$ and $(P_k,P_l)$ are the same vertex. The minimal $\theta$-length is $0$.

Otherwise, if $\{i,j\} \cap \{k,l\} \neq \varnothing$ holds, assume that $i=k$ holds. 
It is easy to check that the minimal $\theta$-length from $(P_i,P_j)$ to $(P_i,P_l)$ is $l-j+n {\delta_{j>l}}$. Indeed, thanks to Proposition \ref{pro:independence}, we can choose the triangulation in which we do the computation. Thus, choose the one with edges $(P_i, P_{i'})$ for all $i'$ not equal or adjacent to $i$.

If $i,j,k,l$ are distinct, let us prove that there exists a path from $(P_i,P_j)$ to $(P_k,P_l)$ with $\theta$-length $$k+l-i-j+n\min\{{\delta_{j>k}}+{\delta_{i>l}},{\delta_{i>k}}+{\delta_{j>l}}\}.$$ 
By Proposition \ref{pro:independence}, the minimal $\theta$-length of paths from $(P_i,P_j)$ to $(P_k,P_l)$ does not depend on the triangulation. We can assume the triangulation contains the edges $(P_i,P_j)$, $(P_k,P_l)$, $(P_i,P_k)$ and $(P_i,P_l)$.
Consider the paths $$w_0((P_i,P_j),(P_i,P_l))w_0((P_i,P_l),(P_k,P_l))$$ and $$w_0((P_i,P_j),(P_i,P_k))w_0((P_i,P_k),(P_k,P_l))$$ from $(P_i,P_j)$ to $(P_k,P_l)$.
Since $$\ell^\theta(w_0((P_i,P_j),(P_i,P_l)))=l-j+n{\delta_{j>l}},$$ $$\ell^\theta(w_0((P_i,P_l),(P_k,P_l)))=k-i+n{\delta_{i>k}},$$ $$\ell^\theta(w_0((P_i,P_j),(P_i,P_k)))=k-j+n{\delta_{j>k}}$$ and $$\ell^\theta(w_0((P_i,P_k),(P_k,P_l)))=l-i+n{\delta_{i>l}},$$ it follows that there exists a path of $\theta$-length $$k+l-i-j+n\min\{{\delta_{j>k}}+{\delta_{i>l}},{\delta_{i>k}}+{\delta_{j>l}}\}$$ from $(P_i,P_j)$ to $(P_k,P_l)$.

Let us now prove by induction on the $\theta$-length of a path $w$ from $(P_i,P_j)$ to $(P_k,P_l)$ that this $\theta$-length is at least $$k+l-i-j+n\min\{{\delta_{j > k}}+{\delta_{i > l}}, {\delta_{i > k}}+{\delta_{j > l}}\}.$$
When $\{i,j\} =\{k,l\}$, this is clear.
When $\{i,j\} \neq \{k,l\}$, $w$ is the composition of a path $w'$ from $(P_i,P_j)$ to $(P_i,P_{j'})$ and a path $w''$ from $(P_i,P_{j'})$ to $(P_k,P_l)$ for some $j'{\neq j}$ (or similarly, it can be realized as the composition of a path $w'$ from $(P_i,P_j)$ to $(P_{i'},P_j)$ and a path $w''$ from $(P_{i'},P_j)$ to $(P_k,P_l)$ for some $i'{\neq i}$).
By the induction hypothesis, we have 
\begin{align*}
\ell^{\theta}(w)&= \ell^{\theta}(w')+\ell^{\theta}(w'') \\
&\ge  j'-j+n\delta_{j>j'}+k+l-i-j'+n\min\{\delta_{j'>k}+{\delta_{i > l}}, {\delta_{i > k}}+\delta_{j'>l}\} \\
&=  k+l-i-j+n\min\{\delta_{j'>k}+{\delta_{i > l}}+\delta_{j>j'},{\delta_{i > k}}+\delta_{j'>l}+\delta_{j>j'}\}\\
&\ge  k+l-i-j+n\min\{{\delta_{j > k}}+{\delta_{i > l}},{\delta_{i > k}}+{\delta_{j > l}}\}.
\end{align*}
\end{proof}

In the rest of this section, we will give an explicit description of the frozen Jacobian algebra $\Gamma_\sigma$ as a tiled $R$-order given by $\theta$-length.

For $i,j,k\in (Q_\sigma)_0$, there exists a non-negative integer $d^j_{i,k}$ such that $w_0(i,j)w_0(j,k) \sim C^{d^j_{i,k}}w_0(i,k)$. Then we have
\begin{equation}
\label{eq:length}
\ell^{\theta}_{i,j}/n+\ell^{\theta}_{j,k}/n-\ell^{\theta}_{i,k}/n=d^j_{i,k}{\in \NN}.
\end{equation}

Using the ring homomorphism  
\begin{align*}
     R=K[x] &\to K[t]\\
     x &\mapsto t^n,
\end{align*}
we regard $R$ as a subring of $K[t]$.
We identify $(Q_\sigma)_0$ with $\{1,2, \ldots, 2n-3\}$ and $F$ with $\{1,2, \ldots, n\}$.
Consider the $R$-module defined by $${\Gamma'_\sigma} := (t^{\ell^{\theta}_{i,j}}R)_{i,j\in(Q_\sigma)_0}\subset \M_{2n-3}(K[t]).$$
Since \eqref{eq:length} implies that $(t^{\ell^{\theta}_{i,j}}R) \cdotp (t^{\ell^{\theta}_{j,k}}R) \subset t^{\ell^{\theta}_{i,k}}R$,
${\Gamma'_\sigma}$ is a $R$-subalgebra of $\M_{2n-3}(K[t])$. 
Moreover, we get the following proposition.

\begin{proposition}
\label{pro:tiled order}
The frozen Jacobian algebra $\Gamma_\sigma$ is isomorphic to ${\Gamma'_\sigma}$ as an $R$-algebra.
\end{proposition}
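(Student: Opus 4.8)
The plan is to exhibit an explicit $R$-algebra isomorphism $\Gamma_\sigma \to \Gamma'_\sigma$ and to verify that it respects both the multiplicative structure and the $R$-module decompositions already established. The natural candidate sends, for each pair $i,j \in (Q_\sigma)_0$, the generator $w_0(i,j)$ of the rank-one free $R$-module $e_i\Gamma_\sigma e_j$ (Corollary~\ref{thm:basis of Jacobian algebra}) to the generator $t^{\ell^\theta_{i,j}}E_{ij}$ of the $(i,j)$-entry $t^{\ell^\theta_{i,j}}R$ of $\Gamma'_\sigma$, where $E_{ij}$ is the matrix unit. More precisely, I would define $\Psi\colon \Gamma_\sigma \to \Gamma'_\sigma$ on the $K$-basis $\bas = \{\,C^k w_0(i,j) : k \in \NN_{\ge 0},\ i,j \in (Q_\sigma)_0\,\}$ by $\Psi(C^k w_0(i,j)) = t^{nk+\ell^\theta_{i,j}}E_{ij}$, and extend $K$-linearly. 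This is forced by the requirement that $\Psi$ be $R$-linear with $\Psi(\phi(x)) = \Psi(C) = \sum_i t^n E_{ii}$, since $\ell^\theta(C)=n$ and multiplication by $C$ shifts the exponent by $n$.

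First I would check that $\Psi$ is a bijection of $K$-vector spaces: this is immediate because $\bas$ is a $K$-basis of $\Gamma_\sigma$ by Lemma~\ref{lem:basis of frozen algebra} and Corollary~\ref{thm:basis of Jacobian algebra}, while the monomials $\{t^{nk+\ell^\theta_{i,j}}E_{ij}\}$ form a $K$-basis of $\Gamma'_\sigma$ (the $(i,j)$-entry $t^{\ell^\theta_{i,j}}R$ being $K$-spanned by $t^{nk+\ell^\theta_{i,j}}$ for $k\ge 0$, as $R \subset K[t]$ is generated by $t^n$). Next I would verify $R$-linearity, which amounts to $\Psi(C \cdot \xi) = \Psi(C)\Psi(\xi)$ on basis elements, a direct consequence of the exponent bookkeeping above.

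The heart of the argument is multiplicativity. Given two basis elements supported on composable pairs, say $w_0(i,j)$ and $w_0(j,k)$, equation~\eqref{eq:length} together with the defining relation $w_0(i,j)w_0(j,k) \sim C^{d^j_{i,k}}w_0(i,k)$ gives, in $\Gamma_\sigma$, a product lying in $\bas$ with exponent governed precisely by $d^j_{i,k} = (\ell^\theta_{i,j}+\ell^\theta_{j,k}-\ell^\theta_{i,k})/n$. On the matrix side, $t^{\ell^\theta_{i,j}}E_{ij}\cdot t^{\ell^\theta_{j,k}}E_{jk} = t^{\ell^\theta_{i,j}+\ell^\theta_{j,k}}E_{ik}$, and since $\ell^\theta_{i,j}+\ell^\theta_{j,k} = \ell^\theta_{i,k} + n\,d^j_{i,k}$, this equals $t^{nd^j_{i,k}+\ell^\theta_{i,k}}E_{ik} = \Psi(C^{d^j_{i,k}}w_0(i,k))$, matching $\Psi$ of the $\Gamma_\sigma$-product. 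Products of basis elements supported on non-composable pairs (where $e_j \ne e_{j'}$) vanish on both sides, since $E_{ij}E_{j'k} = 0$ for $j \ne j'$ and the corresponding paths do not compose in $KQ_\sigma$. General elements follow by $K$-bilinearity.

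I expect the main obstacle to be purely bookkeeping rather than conceptual: one must confirm that the choice of representatives $w_0(i,j)$ is consistent enough that the integers $d^j_{i,k}$ of \eqref{eq:length} genuinely satisfy the cocycle-type compatibility making $\Psi$ well-defined on all triple products simultaneously—but this is automatic because $\Psi$ is defined on a basis and multiplicativity need only be checked on basis elements, where \eqref{eq:length} supplies exactly the needed exponent identity. Thus the verification reduces to the single computation above, and I would conclude that $\Psi$ is an $R$-algebra isomorphism, identifying $\Gamma_\sigma$ with the tiled $R$-order $\Gamma'_\sigma$.
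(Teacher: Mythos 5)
Your proof is correct and is essentially the paper's own argument: the paper defines the same basis-matching map (in the opposite direction, $\Gamma'_\sigma \to \Gamma_\sigma$, $r\,t^{\ell^\theta_{i,j}}E_{i,j} \mapsto \phi(r)w_0(i,j)$) and verifies multiplicativity by exactly the same use of \eqref{eq:length}. The only differences — the direction of the isomorphism and your use of $K$-bases with a separate $R$-linearity check instead of working directly with $R$-bases — are cosmetic.
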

\begin{proof}
We consider the $R$-linear map 
\begin{align*}
\Psi : {\Gamma'_\sigma}  &\to \Gamma_\sigma\\
 r t^{\ell^{\theta}_{i,j}} E_{i,j} &\mapsto  \phi(r)w_0(i,j),
 \end{align*}
for any $r\in R$ and $i,j\in (Q_\sigma)_0$, where $\phi$ is the homomorphism defined in \eqref{equ:mor} and $E_{i,j}$ is the matrix which has 1 in the $(i,j)$-entry and $0$ elsewhere.

{The map $\Psi$ sends the $R$-basis $\{t^{\ell^\theta_{i,j}} E_{i,j}\,|\,i, j \in (Q_\sigma)_0\}$ of $\Gamma'_\sigma$ to the $R$-basis $\{w_0(i,j) \mid i,j \in (Q_\sigma)_0\}$ of $\Gamma_\sigma$ and it} is therefore an isomorphism of $R$-modules. Moreover, for $i,j,i',j'\in (Q_\sigma)_0$, we have 
\begin{align*}
\Psi(t^{\ell^{\theta}_{i,j}}E_{i,j} t^{\ell^{\theta}_{i',j'}} E_{i',j'}) 
&= \Psi(\delta_{j,i'} t^{\ell^{\theta}_{i,j}+\ell^{\theta}_{i',j'}} E_{i,j'})  = \delta_{j,i'} C^{{\ell^{\theta}_{i,j}/n}+\ell^{\theta}_{i',j'}/n-\ell^{\theta}_{i,j'}/n} w_0(i,j')\\
& = w_0(i,j)w_0(i',j')  = \Psi(t^{\ell^{\theta}_{i,j}}E_{i,j})  \Psi(t^{\ell^{\theta}_{i',j'}}E_{i',j'}).
 \end{align*} 
So $\Psi$ is an isomorphism of $R$-algebras.
\end{proof}
By applying elementary matrix transformations to ${\Gamma'_\sigma}$, we get the following corollary.
\begin{theorem}
\label{cor:tiled order}
Let $k\in (Q_\sigma)_0$. The frozen Jacobian algebra $\Gamma_\sigma$ is isomorphic to the tiled $R$-order (Definition \ref{def:tiled order}) $$ \left((x^{d_{k,j}^i } )\right)_{i,j \in \{1, \ldots, 2n-3\}}.$$
\end{theorem}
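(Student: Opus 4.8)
The plan is to deduce Theorem \ref{cor:tiled order} directly from Proposition \ref{pro:tiled order} and Proposition \ref{pro:length}, by performing explicit row and column normalizations on the matrix description $\Gamma'_\sigma = (t^{\ell^\theta_{i,j}}R)_{i,j}$. Since conjugating $\Gamma'_\sigma$ by an invertible diagonal matrix over the fraction field $K(t)$ preserves the isomorphism type of the $R$-order while rescaling the $(i,j)$-entry by $t^{c_i - c_j}$ for a chosen integer tuple $(c_i)$, the goal reduces to choosing such a rescaling so that the exponent $\ell^\theta_{i,j} + c_i - c_j$ becomes $n \cdot d^i_{k,j}$ for every pair, after which the substitution $x = t^n$ converts $t^{n\,d^i_{k,j}}R$ into $(x^{d^i_{k,j}})$.

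First I would fix the reference vertex $k$ and record, using \eqref{eq:length}, the identity $\ell^\theta_{k,i} + \ell^\theta_{i,j} - \ell^\theta_{k,j} = n\,d^i_{k,j}$, so that $n\,d^i_{k,j} = \ell^\theta_{i,j} + \ell^\theta_{k,i} - \ell^\theta_{k,j}$. This is exactly the shape produced by the diagonal conjugation with exponents $c_i := \ell^\theta_{k,i}$: the entry $t^{\ell^\theta_{i,j}}R$ in position $(i,j)$ is sent to $t^{\ell^\theta_{i,j} + \ell^\theta_{k,i} - \ell^\theta_{k,j}}R = t^{n\,d^i_{k,j}}R$. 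Thus conjugation by $\diag(t^{\ell^\theta_{k,1}}, \ldots, t^{\ell^\theta_{k,2n-3}})$ carries $\Gamma'_\sigma$ to the $R$-order with $(i,j)$-entry $t^{n\,d^i_{k,j}}R$. I would then invoke the identification of $R = K[x]$ inside $K[t]$ via $x \mapsto t^n$ to rewrite $t^{n\,d^i_{k,j}}R$ as $(x^{d^i_{k,j}})$, giving precisely the tiled order $\left((x^{d^i_{k,j}})\right)_{i,j}$ claimed in the statement.

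Two points deserve care rather than being the central difficulty. One must check that the conjugating diagonal matrix lies in $\GL_{2n-3}(K(t))$ and that the resulting order is genuinely of the tiled form of Definition \ref{def:tiled order}; the subadditivity $a_{ij} + a_{jt} \ge a_{it}$ needed there is immediate from \eqref{eq:length}, since $d^i_{k,j} \in \NN$ forces $d^t_{k,j} + d^j_{k,i} \ge d^t_{k,i}$ after translating back through the $\ell^\theta$ triangle inequality $\ell^\theta_{i,j} + \ell^\theta_{j,t} \ge \ell^\theta_{i,t}$. One also verifies that diagonal conjugation over $K(t)$ yields an isomorphic $R$-order: it is an $R$-algebra automorphism of $\M_{2n-3}(K(t))$ restricting to an isomorphism onto its image, so the structure of $\Gamma_\sigma$ transports along it. Given Proposition \ref{pro:tiled order}, which already furnishes the isomorphism $\Gamma_\sigma \cong \Gamma'_\sigma$, the whole argument is the observation that the bookkeeping in \eqref{eq:length} is exactly the cocycle identity making the exponents $n\,d^i_{k,j}$ a diagonal conjugate of the $\ell^\theta_{i,j}$.

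The main obstacle, such as it is, is purely notational: keeping the two indexings consistent and ensuring that the parameter $d^i_{k,j}$ appearing in the theorem matches the $d^j_{i,k}$ defined just before Proposition \ref{pro:tiled order} (the relevant exponent is the one measuring $w_0(k,i)w_0(i,j) \sim C^{?}w_0(k,j)$, read off from the fixed base vertex $k$). Because the genuine mathematical content — the existence of minimal paths, the grading, and the freeness over $R$ — has already been established, the proof itself is the short verification that the elementary diagonal transformation $\Psi$ of Proposition \ref{pro:tiled order} becomes, after conjugation, the stated tiled normal form, and I would present it as such.
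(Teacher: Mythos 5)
Your proposal is correct and takes essentially the same route as the paper: the paper's proof conjugates $\Gamma'_\sigma$ by exactly the same diagonal matrix $X_k=\diag\left(t^{\ell^{\theta}_{k,i}}\right)_{1\le i\le 2n-3}$, uses \eqref{eq:length} to identify the resulting exponents $\ell^{\theta}_{k,i}+\ell^{\theta}_{i,j}-\ell^{\theta}_{k,j}$ with $n\,d^i_{k,j}$, and concludes by invoking Proposition \ref{pro:tiled order}. The only difference is that you make explicit the routine checks (invertibility of the conjugating matrix over $K(t)$, and the subadditivity condition of Definition \ref{def:tiled order}, which indeed follows from the triangle inequality encoded in \eqref{eq:length}) that the paper leaves implicit.
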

\begin{proof}
Consider the {diagonal }matrix 
$$X_k={\diag\left( t^{\ell^{\theta}_{k,i} } \right)_{1 \le i \le 2n-3.}}$$
Then we have 
\begin{align*}
X_k {\Gamma'_\sigma} X_k^{-1} &= \left(t^{\ell^{\theta}_{k,i} }(t^{\ell^{\theta}_{i,j}}R)t^{-\ell^{\theta}_{k,j} }\right)_{i,j \in \{1, \ldots, 2n-3\}} \\
 &= \left(t^{\ell^{\theta}_{k,i} +\ell^{\theta}_{i,j} -\ell^{\theta}_{k,j} }R\right)_{i,j \in \{1, \ldots, 2n-3\}}\\ 
 &= \left((x^{d_{k,j}^i })\right)_{i,j \in \{1, \ldots, 2n-3\}}.
 \end{align*}
 Since $\Gamma_\sigma$ is isomorphic to ${\Gamma'_\sigma}$ as an $R$-order by Proposition \ref{pro:tiled order}, it follows that $\Gamma_\sigma$ is a tiled $R$-order which is isomorphic to $X_k{\Gamma'_\sigma} X_k^{-1}$.
\end{proof}

Let $e_F$ be the sum of the {idempotents} at all frozen vertices in ${Q_\sigma}$. Consider the suborder $$\Lambda_\sigma:=e_F \Gamma_\sigma e_F.$$ We get the following theorem.
\begin{theorem}
 \label{thm:frozen part}
The two $R$-orders $\Lambda_\sigma$ and $\Lambda$ are isomorphic.
\end{theorem}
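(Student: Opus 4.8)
The plan is to deduce Theorem~\ref{thm:frozen part} from Theorem~\ref{cor:tiled order}, exploiting the freedom in the choice of the base vertex $k$ and the independence result of Proposition~\ref{pro:independence}. By Theorem~\ref{cor:tiled order}, for any choice of $k \in (Q_\sigma)_0$ the whole order $\Gamma_\sigma$ is isomorphic to the tiled order $((x^{d^i_{k,j}}))_{i,j}$. Restricting to the frozen idempotent $e_F$, the suborder $\Lambda_\sigma = e_F\Gamma_\sigma e_F$ is then isomorphic to the submatrix $((x^{d^i_{k,j}}))_{i,j \in F}$ indexed by the $n$ frozen vertices $F = \{1,\ldots,n\}$. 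So the task reduces to a purely combinatorial computation: I must show that, after conjugating by a suitable diagonal matrix $X_k$ (equivalently, choosing $k$ judiciously), the exponent matrix $(d^i_{k,j})_{i,j\in F}$ equals the exponent pattern displayed in $\Lambda$ in~\eqref{the order}.

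The key computational tool is Proposition~\ref{pro:length}, which gives the explicit minimal $\theta$-length $\ell^\theta_{(P_i,P_j),(P_k,P_l)}$ between any two frozen vertices. Since the frozen vertex numbered $i$ corresponds to the side $(P_i, P_{i+1})$ of the polygon (with index $n$ corresponding to $(P_n,P_1)$), I would substitute these consecutive-vertex pairs into the formula of Proposition~\ref{pro:length} to obtain $\ell^\theta_{i,j}$ for $i,j \in F$ explicitly. Using the definition $d^i_{k,j}$ via $\ell^\theta_{k,i} + \ell^\theta_{i,j} - \ell^\theta_{k,j} = n\,d^i_{k,j}$ (from~\eqref{eq:length}), I then compute the $(i,j)$-exponents of the conjugated order $X_k\Gamma'_\sigma X_k^{-1}$ restricted to frozen vertices. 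The most convenient choice is to take $k$ to be one of the frozen vertices so that the diagonal becomes $R$ (exponent $0$) and the resulting integer exponents can be read off directly; one then matches the staircase pattern of $(x)$, $(x^2)$, and the single entry $(x^{-1})$ appearing in $\Lambda$. The invariance granted by Proposition~\ref{pro:independence} is what guarantees the answer is triangulation-independent, so it suffices to verify the pattern for one convenient triangulation (e.g.\ the fan triangulation with all diagonals incident to a single vertex).

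The main obstacle I expect is bookkeeping: the formula in Proposition~\ref{pro:length} contains the $\min$ of two sums of indicator functions $\delta_{i>j}$, and carefully tracking these for consecutive pairs $(P_i,P_{i+1})$ across all the cases (whether indices wrap around the vertex $n$, whether $i<j$ or $i>j$, and the boundary behavior at the corner entry giving $(x^{-1})$) is delicate but routine. The single off-pattern entry $(x^{-1})$ in the top-right corner of $\Lambda$ is the place most likely to require care, since it reflects the wrap-around from side $n$ back to side $1$ and is the reason $\Lambda$ is only an \emph{almost} Bass order rather than a hereditary one. Once the exponent matrix is computed, the final identification is achieved by a single conjugation by a diagonal matrix over $K[t]$, i.e.\ another elementary transformation of the kind already used in the proof of Theorem~\ref{cor:tiled order}, normalizing the diagonal entries to $R$ and aligning the result with the explicit matrix~\eqref{the order}. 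I would therefore organize the proof as: (i) restrict Theorem~\ref{cor:tiled order} to $F$; (ii) evaluate $\ell^\theta_{i,j}$ for frozen $i,j$ via Proposition~\ref{pro:length}; (iii) conjugate by the appropriate diagonal matrix and check entrywise that the exponents coincide with those of $\Lambda$.
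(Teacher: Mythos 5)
Your proposal is correct and takes essentially the same approach as the paper: the paper specializes Theorem~\ref{cor:tiled order} to the frozen base vertex $k=1$, restricts to $E_F$, evaluates the exponents $d^i_{1,j}$ entrywise via Proposition~\ref{pro:length}, and performs one further conjugation by $\diag\left(x^{\delta_{i,n}}\right)$ to produce the corner entry $(x^{-1})$ and match \eqref{the order} exactly. The only cosmetic difference is your appeal to Proposition~\ref{pro:independence} and a fan triangulation, which is redundant because the formula in Proposition~\ref{pro:length} is already explicit and triangulation-independent.
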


\begin{proof}
Consider the matrix 
 $$X' = \diag \left(x^{\delta_{i,n}}\right)_{1 \leq i \leq 2n-3} X_1$$
 where $X_1$ is defined in the proof of Theorem \ref{cor:tiled order}. We get
 $$X' \Gamma'_\sigma X'^{-1} = \left((x^{d_{1,j}^i + \delta_{i,n}-\delta_{j,n}})\right)_{i,j\in\{1,\ldots,2n-3\}}.$$
Under the isomorphism $\Psi : {\Gamma'_\sigma}  \to \Gamma_\sigma$, the idempotent $e_F\in \Gamma_\sigma$ corresponds to $$E_F:=\begin{bmatrix}
\Id_{n} & 0 \\
0 & 0_{n-3}
  \end{bmatrix}_{(2n-3) \times (2n-3)}\in \Gamma'_\sigma.$$
   Thus we have $$\Lambda_\sigma \cong E_F {\Gamma'_\sigma} E_F \cong  X' E_F {\Gamma'_\sigma} E_F X'^{-1} \cong E_F X' {\Gamma'_\sigma} X'^{-1}E_F\cong \left((x^{d^i_{1,j}+\delta_{i,n}-\delta_{j,n}})\right)_{i,j\in\{1,\ldots,n\}}.$$
 We only have to show that $(x^{d^i_{1,j}+\delta_{i,n}-\delta_{j,n}})$ coincides with the $(i,j)$-entry of $\Lambda$.
 
 According to our notation, if $1\le i\le n-1$, the vertex $i$ of $Q_\sigma$ corresponds to the side $(P_i, P_{i+1})$ of the polygon and the vertex $n$ corresponds to $(P_1,P_n)$. Using Proposition \ref{pro:length}, if we denote $p^+ = p+1$ if $p < n$ and $n^+ = 1$, we get
 \begin{align*}d^i_{1,j} &= \frac{\ell^\theta_{1,i} + \ell^\theta_{i,j} - \ell^\theta_{1,j}}{n} \\ &= \min(\delta_{2>i}+\delta_{1>i^+}, \delta_{1>i}+\delta_{2>i^+}) + \min(\delta_{i^+>j}+\delta_{i>j^+}, \delta_{i>j}+\delta_{i^+>j^+}) \\& \quad - \min(\delta_{2>j}+\delta_{1>j^+}, \delta_{1>j}+\delta_{2>j^+}) \\
 &= 0 + \min(\delta_{i^+>j}+\delta_{i>j^+}, \delta_{i>j}+\delta_{i^+>j^+}) - 0 \\
  &= {\begin{cases}
1-\delta_{j,n-1}-\delta_{j,n}, & \text{if } i = n, \\
1-\delta_{i,1}-\delta_{i,n}, & \text{if } j = n, \\
\delta_{i > j} + \delta_{i > j+1} , & \text{if } n \notin \{i,j\}
\end{cases}} \\
 &=\delta_{i > j} + \delta_{i > j+1} + \delta_{j,n} \delta_{i > 1} - \delta_{i, n}. 
 \end{align*}
 
  Thus, $d^i_{1,j} + \delta_{i,n} - \delta_{j,n} = \delta_{i > j} + \delta_{i > j+1} - \delta_{j,n} \delta_{i,1}$ and $\Lambda = ((x^{d_{1,j}^i  +\delta_{i,n} - \delta_{j,n}}))_{i,j \in \{1, \ldots, n\}}$ holds.
 \end{proof}

\section{Cohen-Macaulay Modules over $\Lambda$}
\label{s:Cohen-Macaulay modules over orders}
Let $\Lambda$ be the $R$-order defined in \eqref{the order}. The aim of this section is to study the representation theory of $\Lambda$ and its connection to triangulations of $P$ and the cluster category of type $A_{n-3}$.
In particular, we classify all Cohen-Macaulay $\Lambda$-modules and construct a bijection between the set of the isomorphism classes of all indecomposable Cohen-Macaulay $\Lambda$-modules and the set of all sides and diagonals of the polygon $P$. We then  show that the stable category $\underline{\CM}(\Lambda)$ of Cohen-Macaulay $\Lambda$-modules is $2$-Calabi-Yau and $\underline{\CM}(\Lambda)$ is triangle-equivalent with the cluster category of type $A_{n-3}$.

Throughout, we denote $\DK:=\Hom_K(-,K)$, $\DR:=\Hom_{{R}}(-,{R})$ and $(-)^*:=\Hom_{\Lambda}(-,\Lambda)$.

\subsection{Classification of Cohen-Macaulay $\Lambda$-modules}
\label{ss:CM modules}
It is easy to check that the quiver of $\Lambda$ is given as follows:
 \begin{center}  
  \begin{tikzpicture}[commutative diagrams/every diagram]
  \node (a) at (0:2)      {$3$};
  \node (b) at (0+60:2)   {$2$};
  \node (c) at (0+60*2:2) {$1$};
  \node (d) at (0+60*3:2) {$n$};
  \node (e) at (0+60*4:2) {$n-1$};
  \path[commutative diagrams/.cd, every arrow, every label]
  (a) edge[out=110,in=-50,commutative diagrams/rightarrow] node[swap] {$xE_{3,2}$} (b)
  (b) edge[out=170,in=10,commutative diagrams/rightarrow] node[swap] {$xE_{2,1}$}  (c)
  (c) edge[out=230,in=70,commutative diagrams/rightarrow] node[swap] {$x^{-1}E_{1,n}$}  (d)
  (d) edge[out=-70,in=130,commutative diagrams/rightarrow] node[swap] {$xE_{n,n-1}$} (e)
  (e) edge[out=110,in=-50,commutative diagrams/rightarrow]  node[swap] {$E_{n-1,n}$}(d)
  (d) edge[out=50,in=-110,commutative diagrams/rightarrow]  node[swap] {$x^2E_{n,1}$} (c)
  (c) edge[out=-10,in=190,commutative diagrams/rightarrow] node[swap] {$E_{1,2}$}  (b)
  (b) edge[out=-70,in=130,commutative diagrams/rightarrow] node[swap] {$E_{2,3}$} (a)
  (a) edge[out=-120,in=0,loosely dotted,commutative diagrams/path] (e);
\end{tikzpicture}
\end{center}
This helps to prove the following theorem.

\begin{theorem}
\label{th:indecomposable modules}
\begin{enumerate}
  \item A Cohen-Macaulay $\Lambda$-module is indecomposable if and only if it is isomorphic to $$(i,j)=[\overbrace{R \cdots R}^{i} \overbrace{(x)  \cdots (x)}^{j-i} \overbrace{(x^2)  \cdots (x^2)}^{n-j}]^t$$ for some $ 1\le i<j\le n$.

  \item Any Cohen-Macaulay $\Lambda$-module is isomorphic to $\bigoplus_{i,j}(i,j)^{l_{i,j}}$ for some non-negative integers $l_{i,j}$. Moreover, $l_{i,j}$ are uniquely determined.
\end{enumerate}
\end{theorem}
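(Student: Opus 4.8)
My plan is to identify Cohen--Macaulay modules with lattices over $\Lambda$ and then reduce their classification to a finite-dimensional problem over the ring of dual numbers $K[\epsilon]=K[x]/(x^2)$. Since $R=K[x]$ is a principal ideal domain, by Remark~\ref{rem:order} a Cohen--Macaulay $\Lambda$-module is precisely a finitely generated $R$-free $\Lambda$-module, that is, a $\Lambda$-lattice $M$. As $\Lambda\otimes_R K(x)=\M_n(K(x))$ is simple with unique simple module $S=K(x)^n$, the module $M\otimes_R K(x)$ is isomorphic to $S^r$ for some $r\ge0$, and $M$ embeds as a full $R$-lattice in $S^r$ stable under $\Lambda$. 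Writing $M_k:=e_kM\subseteq K(x)^r$, the action of the generators $x^{a_{ij}}E_{ij}$ of $\Lambda$ turns $M$ into a tuple $(M_1,\dots,M_n)$ of full $R$-lattices of rank $r$ subject to $x^{a_{ij}}M_j\subseteq M_i$ for all $i,j$. Reading off the exponents from \eqref{the order}, these inclusions are equivalent to the chain $M_1\supseteq M_2\supseteq\cdots\supseteq M_n$ together with $xM_k\subseteq M_{k+1}$, $M_n\subseteq xM_1$ and $x^2M_j\subseteq M_i$ whenever $i\ge j+2$; two tuples give isomorphic modules exactly when they differ by a single $g\in\GL_r(K(x))=\Aut_\Lambda(S^r)$.

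The decisive feature is the inclusion $x^2M_1\subseteq M_n$ (the case $i=n$, $j=1$), which traps the entire chain between $M_1$ and $x^2M_1$. I would therefore reduce modulo $x^2M_1$ and pass to the finite length module $V:=M_1/x^2M_1\cong K[\epsilon]^r$, a \emph{free} module over the dual numbers. Each $M_k$ contains $x^2M_1$, so it is the preimage of its image $\bar M_k\subseteq V$, and the data of $M$ becomes a flag $V=\bar M_1\supseteq\bar M_2\supseteq\cdots\supseteq\bar M_n$ of $K[\epsilon]$-submodules satisfying $\epsilon\bar M_k\subseteq\bar M_{k+1}$ and $\bar M_n\subseteq\epsilon V$. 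The heart of the proof --- and the step I expect to be the main obstacle --- is the finite type statement that every such flag in a free $K[\epsilon]$-module admits an \emph{adapted basis}: there is a $K[\epsilon]$-basis $f_1,\dots,f_r$ of $V$ with $\bar M_k=\bigoplus_s(\bar M_k\cap K[\epsilon]f_s)$ and each $\bar M_k\cap K[\epsilon]f_s\in\{K[\epsilon]f_s,\ \epsilon K[\epsilon]f_s,\ 0\}$. This is exactly the assertion that the finite-dimensional category of such flags is representation-finite, and I would prove it by induction, peeling off one adapted free rank-one summand at a time and using $\epsilon\bar M_k\subseteq\bar M_{k+1}$ and $\bar M_n\subseteq\epsilon V$ to constrain the admissible jump patterns.

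Granting the adapted basis, I lift $f_1,\dots,f_r$ to an $R$-basis $\tilde f_1,\dots,\tilde f_r$ of the free module $M_1$ (a lift of a $K[\epsilon]$-basis is an $R$-basis by Nakayama), and since $M_k$ is the preimage of $\bar M_k$ we obtain simultaneously $M_k=\bigoplus_s (x^{c_k^{(s)}})\tilde f_s$ with $c_k^{(s)}\in\{0,1,2\}$. This exhibits $M$ as a direct sum of rank-one lattices, and solving the inclusion constraints in the rank-one case $r=1$ shows these summands are precisely the non-decreasing $\{0,1,2\}$-valued step lattices, namely the modules $(i,j)$ with $1\le i<j\le n$. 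Each $(i,j)$ is a genuine Cohen--Macaulay module, and it is indecomposable because $\End_\Lambda((i,j))\hookrightarrow\End_{\M_n(K(x))}(S)=K(x)$ by Schur's lemma, while an element $\lambda\in K(x)$ preserving the monomial lattice $(i,j)$ forces $\lambda\in R$; hence $\End_\Lambda((i,j))=R$ is a domain. The same description shows the $(i,j)$ are pairwise non-isomorphic, which together with the decomposition proves part (1).

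Finally, for the uniqueness in part (2) I would argue inside the flag category of the second paragraph. It is $K$-linear, its Hom-spaces are finite-dimensional over $K$, and idempotents split (a flag-preserving idempotent $e$ splits $V=eV\oplus(1-e)V$ compatibly with each $\bar M_k$), so it is a Krull--Schmidt category. The flag attached to $M$ is well defined up to isomorphism and depends additively on $M$, distinct pairs $(i,j)$ yield distinct indecomposable flags (their jump positions differ), so each multiplicity $l_{i,j}$ equals the multiplicity of the flag of $(i,j)$ in the flag of $M$ and is therefore uniquely determined.
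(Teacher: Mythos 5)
Your reduction of Cohen--Macaulay modules to lattice chains $M_1\supseteq M_2\supseteq\cdots\supseteq M_n$ with $xM_k\subseteq M_{k+1}$, $M_n\subseteq xM_1$, and then (via $x^2M_1\subseteq M_n$) to flags of $K[\epsilon]$-submodules of $K[\epsilon]^r$, is sound and is a genuinely different and attractive framework compared with the paper's. But the proposal has a fatal gap exactly where you say the difficulty is: the ``adapted basis'' statement is never proved, only announced with a one-sentence plan (``peel off one adapted rank-one summand at a time''). That statement \emph{is} the theorem: producing a rank-one free summand $K[\epsilon]f$ together with a complement compatible with all $n$ submodules $\bar M_k$ simultaneously is a nontrivial simultaneous-splitting problem (unconstrained chains of $K[\epsilon]$-submodules are not of finite type in general, so the constraints $\epsilon\bar M_k\subseteq\bar M_{k+1}$ and $\bar M_n\subseteq\epsilon V$ must enter in an essential way). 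This is precisely what the paper's Lemmas \ref{trigo}, \ref{critic} and \ref{diagon} accomplish in equivalent matrix language --- Euclidean upper-triangularization, the ``critical vector'' analysis of factorizations $MN=x^2\Id$, and the simultaneous diagonalization of a cyclic product $M_1\cdots M_l=x^2\Id$ with $M_iN_i=x\Id$. Without an argument of comparable substance, the classification is assumed, not proved.

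There is also a concrete error in the lifting step: ``a lift of a $K[\epsilon]$-basis is an $R$-basis by Nakayama'' is false, because $(x^2)$ is not contained in the Jacobson radical of $R=K[x]$ (which is zero). For $r=1$ and $M_1=R$, the element $1+x^2$ lifts the basis $\bar 1$ of $K[\epsilon]$ but generates the proper ideal $(1+x^2)\subsetneq R$. Structurally, the reduction map $\GL_r(R)\to\GL_r(K[\epsilon])$ is not surjective (its image consists of matrices whose determinant lies in $K^*$; already $1+\epsilon$ is not in the image for $r=1$), so an adapted basis of $V$ need not lift to an $R$-basis of $M_1$. This is repairable: an adapted basis stays adapted after rescaling each $f_s$ by a unit of $K[\epsilon]$, so one can normalize the change-of-basis matrix to have constant determinant and then lift it to $\GL_r(R)$, using that $\operatorname{SL}_r(K[\epsilon])$ is generated by elementary matrices; but this fix is absent and the step fails as written. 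By contrast, your endomorphism-ring computation $\End_\Lambda((i,j))=R$ and the Krull--Schmidt argument for part (2) via the flag category are fine once part (1) is in place (only the easy direction, that a module isomorphism induces a flag isomorphism, is needed there); the paper instead obtains uniqueness by completing at $x$ to get local endomorphism rings.
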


\begin{remark}
Theorem \ref{th:indecomposable modules} shows that Krull-Schmidt-Azumaya property is valid in this case. This is interesting by itself since our base ring $R=K[x]$ is not even local, and in such a case, Krull-Schmidt-Azumaya property usually fails.
\end{remark}

\begin{figure}
{\tiny \[
\begin{tikzcd}[column sep=0ex, row sep=2ex]
\phantom{X}\arrow[dashed,-]{dd} && & & & &&&\phantom{X}\\
&(1,2) \drar & &(2,3)\drar\arrow[dotted,-]{rrrr}&&&&(\frac{n}{2},{\frac{n}{2}+1})\drar&\\
(2,n)\urar\drar\arrow[dashed,-]{dd} & &(1,3)\urar\drar&&(2,4)\arrow[dotted,-]{rr} &&(\frac{n}{2}-1,{\frac{n}{2}+1}) \urar\drar & &({\frac{n}{2},{\frac{n}{2}+2}})\arrow[dashed,-]{uu}\arrow[dashed,-]{dd}\\
&(3,n)\urar\drar&&(1,4)\urar\drar\arrow[dotted,-]{rrrr}& & & &({\frac{n}{2}-1,{\frac{n}{2}+2}})\drar\urar&\\ 
(3,n-1)\urar\drar \arrow[dashed,-]{dddddd}& &(4,n)\urar\drar\arrow[dotted,-]{dddddd} &&(1,5) \arrow[dotted,-]{rr} &&(\frac{n}{2}-2,{\frac{n}{2}+2})  \urar\drar & &({\frac{n}{2}-1,{\frac{n}{2}+3}})\arrow[dashed,-]{dddddd}\\
&(4,n-1)\arrow[dotted,-]{dddd} \urar&&(5,n)\urar\arrow[dotted,-]{dddd} \arrow[dotted,-]{rrrr}&&&&({\frac{n}{2}-2,{\frac{n}{2}+3}})\arrow[dotted,-]{dddd} \urar&\\
 & \quad \\
 & \quad \\
 & \quad \\
&({\frac{n}{2},\frac{n}{2}+3})\drar&&({\frac{n}{2}+1,\frac{n}{2}+4})\drar\arrow[dotted,-]{rrrr} &&&&(2,n-1)\drar&\\
({\frac{n}{2},{\frac{n}{2}+2}})\urar\drar & &
({\frac{n}{2}+1,{\frac{n}{2}+3}})\urar\drar&&({\frac{n}{2}+2,\frac{n}{2}+4})\arrow[dotted,-]{rr} && (1,n-1)\urar\drar & &(2,n).\\
&({\frac{n}{2}+1,{\frac{n}{2}+2}})\urar&&({\frac{n}{2}+2,\frac{n}{2}+3})\urar\arrow[dotted,-]{rrrr} && & &(1,n)\urar&\\
\phantom{X}\arrow[dashed,-]{uu}& & & & & &&&\phantom{X}\arrow[dashed,-]{uu}
\end{tikzcd}
\]}
\caption{$\CM(\Lambda)$ for even $n$}
\label{CMLe}
{\tiny \[
\begin{tikzcd}[column sep=0ex, row sep=2ex]
\phantom{X}\arrow[dashed,-]{d} && & & & &&&\phantom{X}\\
(1,2) \drar\arrow[dashed,-]{dd} & &(2,3)\drar && (3,4)\arrow[dotted,-]{rrr}&&&(\frac{n+1}{2},{\frac{n+3}{2}})\drar&\\
&(1,3)\urar\drar & &(2,4)\urar\drar\arrow[dotted,-]{rrr} &&&(\frac{n-1}{2},{\frac{n+3}{2}}) \urar\drar & &({\frac{n+1}{2},{\frac{n+5}{2}}})\arrow[dashed,-]{uu}\arrow[dashed,-]{dd}\\
(3,n)\urar\drar \arrow[dashed,-]{dddddd}&&(1,4)\urar\drar\arrow[dotted,-]{dddddd}&&(2,5)\arrow[dotted,-]{rrr}& & &({\frac{n-1}{2},{\frac{n+5}{2}}})\drar\urar\arrow[dotted,-]{dddddd}&\\ 
&(4,n)\urar\arrow[dotted,-]{dddd}& &(1,5) \urar\arrow[dotted,-]{dddd}\arrow[dotted,-]{rrr} &&&(\frac{n-3}{2},{\frac{n+5}{2}})  \urar\arrow[dotted,-]{dddd} & &({\frac{n-1}{2},{\frac{n+7}{2}}})\arrow[dashed,-]{dddd}\\
& \quad \\
 & \quad \\
 & \quad \\
&({\frac{n+1}{2},\frac{n+7}{2}})\drar&&({\frac{n+3}{2},\frac{n+9}{2}})\drar\arrow[dotted,-]{rrr} &&&(2,n-1)\drar&&(3,n) \arrow[dashed,-]{dd}\\
({\frac{n+1}{2},{\frac{n+5}{2}}})\urar\drar & &
({\frac{n+3}{2},{\frac{n+7}{2}}})\urar\drar&&({\frac{n+5}{2},\frac{n+9}{2}})\arrow[dotted,-]{rrr} && &(2,n)\drar \urar &.\\
&({\frac{n+3}{2},{\frac{n+5}{2}}})\urar&&({\frac{n+5}{2},\frac{n+7}{2}})\urar\arrow[dotted,-]{rrr} && &(1,n)\urar& &(1,2)\\
\phantom{X}\arrow[dashed,-]{uu}& & & & & &&&\phantom{X}\arrow[dashed,-]{u}
\end{tikzcd}
\]}
\caption{$\CM(\Lambda)$ for odd $n$}
\label{CMLo}
\end{figure}

\begin{remark}
\label{remark:quiver}
(1) When $n$ is an even number, the category $\CM(\Lambda)$ is presented by the quiver of Figure \ref{CMLe}. This quiver has the shape of a M{\"o}bius strip. Its first and last rows consist of $n/2$ projective-injective Cohen-Macaulay $\Lambda$-modules respectively, with $\lceil(n-3)/2\rceil$ $\tau$-orbits between them.

(2) When $n$ is an odd number,  the category $\CM(\Lambda)$ is presented by the quiver of Figure \ref{CMLo}. After completion, this is the Auslander-Reiten quiver of $\CM(\Lambda \tens_R K\llbracket {x} \rrbracket)$, \emph{c.f.} proof of Theorem \ref{th:graded modules}.
\end{remark}

In order to prove Theorem \ref{th:indecomposable modules}, we need the following lemmas.
\begin{lemma} \label{trigo}
For any matrix $M\in \M_k(R)$, there exists an invertible matrix $G\in \GL_k(R)$ such that $MG$ is upper triangular.
\end{lemma}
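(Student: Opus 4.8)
The plan is to perform column reduction, exploiting that $R = K[x]$ is a Euclidean domain, and to induct on $k$. Right multiplication by an element of $\GL_k(R)$ corresponds exactly to column operations on $M$: adding an $R$-multiple of one column to another, permuting two columns, or scaling a column by a unit of $R$ (i.e.\ by an element of $K^\times$). Each such operation is realized by an elementary matrix whose determinant is a unit (a transvection has determinant $1$, a transposition $\pm 1$, a scaling a unit), so any finite composition of them lies in $\GL_k(R)$. It therefore suffices to bring $M$ to upper triangular form by a finite sequence of column operations and to record their product as $G$.

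The base case $k=1$ is trivial. For the inductive step the key subroutine is to clear the last row of $M$. Writing this row as $(a_1, \ldots, a_k)$, I would run the Euclidean algorithm in $R$ through column operations: repeatedly select a nonzero entry of minimal degree, move it to the last position by a column swap, and subtract suitable $R$-multiples of that column from the others to reduce their last-row entries modulo the pivot (division with remainder in $K[x]$). At each stage either the number of nonzero entries in the row drops or the minimal degree among the nonzero entries strictly decreases, so the process terminates, leaving the last row equal to $(0, \ldots, 0, d)$ where $d$ is a greatest common divisor of the $a_i$. Let $G_1 \in \GL_k(R)$ be the product of the corresponding elementary matrices.

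After this step, $M G_1$ has the block form $\begin{bmatrix} N & v \\ 0 & d \end{bmatrix}$ with $N \in \M_{k-1}(R)$, $v \in R^{k-1}$, and a vanishing bottom-left block. By the induction hypothesis there is $H \in \GL_{k-1}(R)$ with $NH$ upper triangular. Setting $G_2 = \begin{bmatrix} H & 0 \\ 0 & 1 \end{bmatrix} \in \GL_k(R)$, right multiplication by $G_2$ recombines only the first $k-1$ columns, hence it fixes the last row $(0, \ldots, 0, d)$, fixes the last column $\begin{bmatrix} v \\ d \end{bmatrix}$, and replaces $N$ by $NH$. Thus $M G_1 G_2 = \begin{bmatrix} NH & v \\ 0 & d \end{bmatrix}$ is upper triangular, and $G := G_1 G_2 \in \GL_k(R)$ is the desired matrix.

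The only genuinely non-formal point is the row-clearing subroutine, and specifically its termination: over an arbitrary commutative ring one cannot in general clear a row by column operations, but over the Euclidean domain $K[x]$ the degree function guarantees termination exactly as in the classical one-variable Euclidean algorithm. Everything else is routine bookkeeping with block matrices, which I would not spell out in full.
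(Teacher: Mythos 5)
Your proof is correct and follows essentially the same route as the paper's: both exploit Euclidean division in $K[x]$ to realize a Gaussian-elimination-style column reduction by right multiplication with invertible matrices. The paper only sketches the $2\times 2$ gcd subroutine and asserts the rest, whereas you spell out the induction on $k$ and the termination of the row-clearing step, which is a faithful elaboration of the same idea.
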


\begin{proof}
Take two elements $A, B \in R$ such that $B \neq 0$. Write $A = BQ + S$ the usual Euclidean division of $A$ by $B$. We have
  $$\vech{A}{B} \mat{1}{0}{-Q}{1} = \vech{S}{B},$$
  so by using the usual Euclidean algorithm, there is an invertible $2 \times 2$ matrix $G_0$ such that
  $$\vech{A}{B} G_0 = \vech{0}{\gcd(A, B)}.$$
  This observation permits to apply an analogous to the Gaussian elimination method to the matrix $M$ by right multiplication to obtain an upper triangular matrix.
\end{proof}

\begin{definition}
 An element $v \in R^k$ is called \emph{basis-adapted} if it is part of an $R$-basis of $R^k$. It is equivalent to say that the standard coordinates of $v$ generate $R$ as an ideal.

 Let $M \in \M_k(R)$. An element $v \in R^k$ is called \emph{critical of degree $d$ for $M$} if $v$ is basis-adapted and $M v$ is of the form $x^d w$ where $w$ is basis-adapted.
\end{definition}

\begin{lemma} \label{critic}
If $M$ and $N$ are two upper triangular matrices in $\M_k(R)$ satisfying that $MN=x^2 \Id$, then there exists two invertible upper triangular matrices $G$ and $H$ such that $GMH$ is a diagonal matrix with diagonal entries in $\{1, x\}$ or $M$ admits a critical vector of degree $2$.
\end{lemma}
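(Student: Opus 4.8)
The plan is to split into two cases according to whether $x M^{-1}$ has entries in $R$, where $M^{-1}$ is computed over the fraction field $K(x)$ (note that $\det M = \prod_i M_{ii} \neq 0$, since $\det M \cdot \det N = x^{2k}$, so $M$ is invertible there). Taking the $(i,i)$-entry of the relation $MN = x^2\Id$ gives $M_{ii} N_{ii} = x^2$, so each diagonal entry of $M$ is a monomial $u_i x^{b_i}$ with $u_i \in K^\times$ and $b_i \in \{0,1,2\}$; after left-multiplying by the invertible diagonal matrix $\diag(u_i^{-1})$ I may assume every $M_{ii}$ equals $1$, $x$ or $x^2$. The dichotomy I will prove is: if $x M^{-1} \in \M_k(R)$ then $M$ reduces to a $\{1,x\}$-diagonal matrix by upper triangular operations, and otherwise $M$ admits a critical vector of degree $2$. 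Since $N = x^2 M^{-1}$, these two cases correspond exactly to whether or not $x$ divides every entry of $N$.

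Suppose first that $x M^{-1} \notin \M_k(R)$. Writing a Smith normal form $M = P \Sigma Q$ with $P, Q \in \GL_k(R)$ and $\Sigma = \diag(d_1,\dots,d_k)$, $d_1 \mid \cdots \mid d_k$, the integrality of $N = x^2 M^{-1} = Q^{-1}(x^2\Sigma^{-1})P^{-1}$ forces $d_i \mid x^2$ for all $i$, while the assumption $x M^{-1}\notin \M_k(R)$, equivalent to $x\Sigma^{-1}\notin\M_k(R)$, forces $d_k = x^2$. I would then set $w := P e_k$ and $v := N w$, where $e_k$ is the last standard basis vector of $R^k$. Here $w$ is basis-adapted, being a column of the invertible matrix $P$, and a direct computation gives $v = x^2 M^{-1} P e_k = Q^{-1}(x^2 \Sigma^{-1}) e_k = Q^{-1} e_k$, a column of the invertible matrix $Q^{-1}$, hence also basis-adapted. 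Since $M v = MN w = x^2 w$, the vector $v$ is critical of degree $2$, as desired. This Smith-form route sidesteps the delicate point that $N w$ be \emph{exactly} primitive (gcd one) over $K[x]$, which could otherwise be awkward for finite $K$.

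Suppose now that $x M^{-1} \in \M_k(R)$, and set $N' := x M^{-1}$, an upper triangular matrix with $M N' = x\Id$; taking diagonal entries gives $M_{ii} N'_{ii} = x$, so no $M_{ii}$ equals $x^2$ and each $M_{ii} \in \{1, x\}$. I would argue by induction on $k$. If some $M_{ii} = 1$, I use this unit as a pivot to clear the rest of column $i$ (adding multiples of row $i$ to the earlier rows) and then the rest of row $i$ (adding multiples of column $i$ to the later columns); all of these are upper triangular operations, and they split $M$ as $[1] \oplus M'$ with $M'$ upper triangular. Because upper triangular operations turn $MN' = x\Id$ into an analogous relation for $M'$, the block $M'$ satisfies the same hypothesis and the induction applies. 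The terminal case is when every $M_{ii} = x$: then $N'_{ii} = 1$ for all $i$, so $N'$ is unipotent upper triangular, hence lies in $\GL_k(R)$, and the identity $M N' = x\Id = \diag(x,\dots,x)$ already exhibits the $\{1,x\}$-diagonal form with $G = \Id$ and $H = N'$.

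The bookkeeping in the first case is routine, so I expect the main point to be the reduction case, where the crucial observation that makes the argument close up is that once all diagonal entries are forced to equal $x$, the companion matrix $N' = x M^{-1}$ is automatically unipotent and therefore invertible, turning the relation $MN' = x\Id$ itself into the sought normal form. The one subtlety to check carefully is that the pivoting used to peel off a unit diagonal entry stays inside the upper triangular group and does not disturb already-cleared positions, which is exactly why I clear column $i$ before row $i$.
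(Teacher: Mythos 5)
Your proof is correct, but it takes a genuinely different route from the paper's. The paper splits on the diagonal entries of $M$ (some entry $x^2$; all entries $x$; a mix of $1$ and $x$): if some $M_{ii}=x^2$ then $N_{ii}=1$, so $e_i$ is critical of degree $0$ for $N$ and $Ne_i$ is critical of degree $2$ for $M$; in the all-$x$ case it reduces the off-diagonal entries of $M$ (and hence of $N$) to scalars by triangular operations and extracts a critical vector from a column of $N$ unless $N=x\Id$ (in which case $M=x\Id$); and the mixed case is reduced to the all-$x$ case by confining the nonzero off-diagonal scalars to the block whose diagonal entries are $x$. You instead split on whether $xM^{-1}$ is integral. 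In the non-integral case you invoke the Smith normal form over the PID $K[x]$, which forces the top elementary divisor to be $x^2$ and hands you the critical vector $v=Q^{-1}e_k$ at once; the matrices $P,Q$ there are not upper triangular, but this is harmless because criticality is a basis-free condition, and, as you note, it bypasses any gcd bookkeeping. In the integral case you peel off unit pivots by upper triangular row and column operations, and in the terminal all-$x$ case you observe that $N'=xM^{-1}$ is unipotent, hence invertible upper triangular, so $MN'=x\Id$ is itself the desired normal form ($G=\Id$, $H=N'$). What each approach buys: your dichotomy is intrinsic, and your first case does not use triangularity at all, so it is slightly more general; the paper's argument stays entirely within elementary triangular reductions, in the spirit of Lemma \ref{trigo}, and never appeals to the Smith normal form. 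The steps you leave implicit --- that normalizing the diagonal by units preserves both conclusions, and that the relation $MN'=x\Id$ descends to the peeled-off block $M'$ --- are routine verifications, comparable to what the paper itself leaves unsaid.
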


\begin{proof}
Up to scalar multiplications, we can suppose that the diagonal entries of $M$ and $N$ are powers of $x$.

 If one of the diagonal entries of $M$ is $x^2$, then the corresponding entry of $N$ is $1$. Thus, $N$ admits a critical vector $v$ of degree $0$. As $MN = x^2 \Id$, $Nv$ is a critical vector of degree $2$ of $M$. 

 Suppose now that all the diagonal entries of $M$ are $x$. Up to right multiplication by an invertible upper triangular matrix, we can suppose that all elements outside of the diagonal are scalars. It is then the same for $N$. Therefore, if $N \neq x \Id$, $N$ admits a critical vector $v$ of degree $0$ (take the basis vector $e_i$ for a column of $N$ containing a nonzero scalar). Thus $M$ admits the critical vector $Nv$ of degree $2$.

 Suppose finally that the diagonal entries of $M$ are $x$ and $1$. Up to right and left multiplications by invertible upper triangular matrices, we can suppose that $M$ contains only scalars outside of the diagonal, and moreover that nonzero scalars $M_{i j}$ appear only if $M_{ii} = M_{j j} = x$. Thus, we can apply the previous case to the submatrix consisting of rows and columns whose diagonal entries are $x$.
\end{proof}

\begin{lemma} \label{diagon}
Consider $M_1,M_2,\ldots,M_l,N_1,N_2, \ldots, N_l \in \M_k(R)$ satisfying $M_1 M_2 \ldots M_l = x^2\Id$ and for each $i\in \{1,2,\ldots, l\}$, $M_i N_i=x\Id$. There exist $G_0=G_l, G_1,G_2, \ldots, G_{l-1}\in \GL_k(R)$ such that for each $i\in \{1,2,\ldots, l\}$, $G_i^{-1}M_iG_{i-1}$ and $G_{i-1}^{-1}N_iG_i$ are diagonal matrices with diagonal entries in $\{1,x\}$.
\end{lemma}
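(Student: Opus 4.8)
The plan is to translate the statement into a question about a chain of $R$-lattices and then solve that by a simultaneous basis adaptation over the Artinian ring $S:=R/(x^2)=K[x]/(x^2)$. First note that the conditions on the $N_i$ are redundant: $M_iN_i=x\Id$ forces $N_i=xM_i^{-1}$ over the fraction field $K(x)$ (and then $N_iM_i=x\Id$ as well, since $M_i$ is invertible over $K(x)$), so once $D_i:=G_i^{-1}M_iG_{i-1}$ is diagonal with entries in $\{1,x\}$, the matrix $G_{i-1}^{-1}N_iG_i=xD_i^{-1}$ is automatically diagonal with entries in $\{1,x\}$. Hence it suffices to diagonalize the $M_i$. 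Next, set $V:=K(x)^k$ and transport the standard lattice $R^k\subseteq V$ by the partial products of the $M_i$. Because each $N_i=xM_i^{-1}$ lies in $\M_k(R)$, this produces an increasing chain $\Lambda_0\subseteq\Lambda_1\subseteq\cdots\subseteq\Lambda_l=x^{-2}\Lambda_0$ with $\Lambda_0=R^k$ and $x\Lambda_i\subseteq\Lambda_{i-1}$ for all $i$: the inclusion $x\Lambda_i\subseteq\Lambda_{i-1}$ is exactly $N_i\in\M_k(R)$, and $\Lambda_l=x^{-2}\Lambda_0$ uses $\prod M_i=x^2\Id$. Choosing the $G_i$ amounts to choosing an $R$-basis of each $\Lambda_i$.

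The crux is to reduce everything to a single combinatorial datum: a decomposition $V=\bigoplus_{j=1}^k\ell_j$ into lines for which every lattice of the chain is homogeneous, i.e.\ $\Lambda_i=\bigoplus_j(\Lambda_i\cap\ell_j)$. Given such a decomposition, write $\Lambda_i\cap\ell_j=x^{-c_{i,j}}Rw_j$; then $c_{0,j}=0$, $c_{l,j}=2$, and $c_{i,j}-c_{i-1,j}\in\{0,1\}$, the last from $x\Lambda_i\subseteq\Lambda_{i-1}$. Reading off the induced bases of the $\Lambda_i$ and transporting them back through the partial products yields invertible $G_i\in\GL_k(R)$ for which $G_i^{-1}M_iG_{i-1}=\diag(x^{c_{i,j}-c_{i-1,j}})_j$, a diagonal matrix with entries in $\{1,x\}$; the cyclic closure $G_l=G_0$ is forced by $c_{l,j}=c_{0,j}+2$, itself a consequence of $\Lambda_l=x^{-2}\Lambda_0$. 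So the whole lemma comes down to producing the homogeneous line decomposition.

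To produce it, pass to $W:=\Lambda_l/\Lambda_0=x^{-2}\Lambda_0/\Lambda_0$, a free $S$-module of rank $k$, and put $F_i:=\Lambda_i/\Lambda_0$, so that $x\Lambda_i\subseteq\Lambda_{i-1}$ becomes $xF_i\subseteq F_{i-1}$. Since $x^2=0$ in $S$ one has $xW=\soc W$, multiplication by $x$ induces an isomorphism $\mu\colon W/xW\xrightarrow{\sim}xW$, and each $F_i$ is governed by its socle part $S_i:=F_i\cap xW$ and its top $T_i:=(F_i+xW)/xW$. These give two flags $T_\bullet$ and $\mu^{-1}(S_\bullet)$ inside $W/xW$. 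As any two flags admit a common adapted basis, choose $\bar v_1,\dots,\bar v_k$ of $W/xW$ adapted to both, and lift each $\bar v_j$ to an element $v_j\in F_i$ at the level $i$ where $\bar v_j$ first enters $T_\bullet$. A short dimension count based on $\dim F_i=\dim S_i+\dim T_i$ then shows $W=\bigoplus_j Sv_j$ with $F_i=\bigoplus_j(F_i\cap Sv_j)$ for every $i$; lifting this $S$-decomposition back to $V$ gives the required homogeneous decomposition of the lattice chain.

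The genuinely delicate point is the \emph{simultaneous} compatibility in the last step: one must exhibit a single basis adapted to all the $F_i$ at once and closing up around the cycle, which is precisely why over the non-local ring $R$ the usual Krull--Schmidt shortcut is unavailable (cf.\ the Remark after Theorem \ref{th:indecomposable modules}). What rescues the argument is that $S=K[x]/(x^2)$ is a principal Artinian ring, so a \emph{single} chain of submodules of a free module always splits compatibly, and the hypothesis $xF_i\subseteq F_{i-1}$ then pins the elementary divisor of each step to $\{1,x\}$. Alternatively, the same decomposition can be obtained by induction on $k$: Lemma \ref{trigo} triangulates an individual step, and Lemma \ref{critic} applied together with the observation that $M_iN_i=x\Id$ rules out a critical vector of degree $2$ shows each $M_i$ is individually diagonalizable with entries in $\{1,x\}$; peeling off one common adapted line at a time and recursing then yields Lemma \ref{diagon}.
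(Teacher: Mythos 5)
Your strategy is genuinely different from the paper's. The paper works entirely at the level of matrices: it rotates the product cyclically, triangularizes with Lemma \ref{trigo}, uses Lemma \ref{critic} and critical vectors to split off one rank-one block compatibly with \emph{all} the factors at once, and recurses on the size $k$. You instead recast the data as a chain of lattices $\Lambda_0\subseteq\Lambda_1\subseteq\cdots\subseteq\Lambda_l=x^{-2}\Lambda_0$ with $x\Lambda_i\subseteq\Lambda_{i-1}$ and split the chain after reduction modulo $(x^2)$. The part of your argument over $S=K[x]/(x^2)$ is correct as sketched: the two flags $T_\bullet$ and $\mu^{-1}(S_\bullet)$ admit a common adapted basis, the hypothesis $xF_i\subseteq F_{i-1}$ yields the interlacing $T_i\subseteq\mu^{-1}(S_{i-1})$, and the count $\dim F_i=\dim S_i+\dim T_i$ does give $F_i=\bigoplus_j(F_i\cap Sv_j)$. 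If completed, this is a more structural proof than the paper's.

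There is, however, a genuine gap at the final step, ``lifting this $S$-decomposition back to $V$''. An $S$-basis of $W=\Lambda_l/x^2\Lambda_l$ need \emph{not} lift to an $R$-basis of $\Lambda_l$: every matrix in $\GL_k(R)$ has determinant in $K^*$, while a matrix in $\GL_k(S)$ can have any unit of $S$ as determinant, so the reduction $\GL_k(R)\to\GL_k(S)$ is not surjective. Already for $k=1$, the basis $\overline{1+x}$ of $S$ lifts to $1+x$, which is not a basis of $R$; generation of $\Lambda_l$ modulo $x^2\Lambda_l$ does not imply generation, because $x^2$ is not in the Jacobson radical of $R$. This is exactly where the non-locality of $R$ --- which you yourself flag as the essential difficulty --- re-enters, and your write-up simply asserts the step. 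The gap is reparable: the decomposition $W=\bigoplus_j Sv_j$ matters only up to rescaling each $v_j$ by a unit of $S$, and rescaling a single $v_j$ makes the transition matrix from a fixed $R$-basis have determinant in $K^*$; since $S$ is local, such a matrix is a product of elementary matrices and a constant diagonal matrix, all of which lift to $\GL_k(R)$. But some argument of this kind must be supplied; as written the proof is incomplete at its load-bearing step.

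Two smaller points. First, the transport must be set up on the correct side: with column vectors and $\Lambda_i:=(M_1\cdots M_i)^{-1}R^k$, the inclusion $\Lambda_{i-1}\subseteq\Lambda_i$ is equivalent to $P_{i-1}M_iP_{i-1}^{-1}\in\M_k(R)$, not to $M_i\in\M_k(R)$, so conjugates intervene, contrary to what you assert. Transporting images rather than preimages (or using row lattices $R^kM_{i+1}\cdots M_l$) repairs this, but then the diagonal form obtained is $G_{i-1}^{-1}M_iG_i$ rather than $G_i^{-1}M_iG_{i-1}$ --- the same form the paper's own proof produces, since its elementary moves are $M_i\mapsto M_iA$, $M_{i+1}\mapsto A^{-1}M_{i+1}$. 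The two index orders are not interchangeable: if $G_i^{-1}M_iG_{i-1}=D_i$ with $G_l=G_0$, then $M_lM_{l-1}\cdots M_1=G_0D_lD_{l-1}\cdots D_1G_0^{-1}$ would be conjugate to a diagonal matrix, which fails for instance for $M_1=\mat{x}{1}{0}{1}$, $M_2=\mat{1}{0}{0}{x}$, $M_3=x^2(M_1M_2)^{-1}=\mat{x}{-x}{0}{x}$ (all hypotheses hold, yet $M_3M_2M_1=\mat{x^2}{x-x^2}{0}{x^2}$ is a nontrivial Jordan block). Second, the alternative sketched in your last sentence is not a proof: individual diagonalizability of each $M_i$ is just the Smith normal form over the PID $R$, and ``peeling off one common adapted line at a time'' is precisely the hard simultaneous statement --- producing one line compatible with all the matrices and closing up around the cycle is what the paper's critical-vector argument (and your flag argument) exists to do.
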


\begin{proof}
As each $M_i$ is invertible if we pass to the fraction field of $R$, it is easy to see that the relation $$M_i M_{i+1} \cdots M_l M_1 \cdots M_{i-1} = x^2 \Id$$ is satisfied for every $i\in \{1,2,\ldots, l\}$. 

 Thus we can suppose that there is an $m \in \NN^*$ such that $M_1 M_2 \cdots M_m$ has a critical vector $v$ of degree $2$ while $M_1 M_2 \cdots M_{m-1}$ and $M_2 M_3 \cdots M_m$ have no critical vector of degree $2$. 

 Up to replacing $M_1$ by some $G M_1$, $M_l$ by $M_l G^{-1}$, $M_m$ by $M_m H$ and $M_{m+1}$ by $H^{-1} M_{m+1}$ (and the $N$'s similarly) for $G, H \in \GL_k(R)$, we can then suppose that 
 $$M_1 M_2 \cdots M_m = \mat{x^2}{L}{0}{M'}$$
 for some matrices $M'$ and $L$ (and $v$ is the first basis vector). Moreover, applying Lemma \ref{trigo}, we can suppose that $M'$ is upper triangular. 

 Using again Lemma \ref{trigo}, for some $G \in \GL_k(R)$, replacing $M_1$ by $M_1 G$, $N_1$ by $G^{-1} N_1$, $M_2$ by $G^{-1} M_2$ and $N_2$ by $N_2 G$, we can suppose that $M_1$ and $N_1$ are upper triangular. Then, in the same way, without modifying $M_1$ and $N_1$, we can suppose that $M_2$ and $N_2$ are upper triangular and so on until $M_{m - 1}$. Thus, as $M_1 \cdots M_m$ is upper triangular, $M_m$ and $N_m$ are also upper triangular. Note that we did not change $M_1 M_2 \cdots M_m$. We can do the same for $M_{m+1}$, \dots , $M_{l-1}$ without changing the product $M_{m+1} \cdots M_l$ and get for free that $M_l$ is upper triangular.

 As a consequence, for $i \in \llbracket 1, l \rrbracket$ we can write
 $$M_i = \mat{t_i}{L_i}{0}{M'_i}$$
 where the $t_i$'s are $1$ or $x$. More precisely, $t_1 = t_m = x$ and $t_j = 1$ for $j \notin \{1,m\}$. Indeed, the first basis vector is critical of degree $2$ for $M_1 \cdots M_m$ but $M_1 \cdots M_{m-1}$ and $M_2 \cdots M_m$ have no critical vectors of degree $2$. Thanks to Lemma \ref{critic}, there are two upper triangular invertible matrices $G$ and $H$ such that $G M_2 \dots M_m H$ is diagonal with $1$ and $x$ on the diagonal. So we suppose that $M_2 \dots M_m$ is diagonal with $1$ and $x$ on the diagonal.
 
 As $M_2 \cdots M_m$ and $M_2 \cdots M_l M_1$ are diagonal, $M_{m+1} \cdots M_l M_1$ is also diagonal. 

 As upper triangular matrices $M_2$, $M_3$, \dots, $M_{m-1}$, $M_{m+1}$, \dots, $M_l$ have $1$ in the upper left corner, we can, up to multiplication on the right by invertible upper triangular matrices suppose that they are of the form
 $$M_i = \mat{1}{0}{0}{M'_i}$$
 without changing the products $M_2 \cdots M_{m}$ and $M_{m+1} \cdots M_\ell M_1$

 As $M_2 M_3 \cdots M_m$ is diagonal, we automatically get that $M_m$ is of the form
  $$M_m = \mat{x}{0}{0}{M'_m}$$
 and in the same way 
 $$M_1 = \mat{x}{0}{0}{M'_1}.$$
 Applying inductively the same method on the $M'_i$'s, we prove the existence of the expected $G_i$'s.
\end{proof}

\begin{proof}[Proof of Theorem \ref{th:indecomposable modules}]
 The quiver of $\Lambda$ is the double quiver of a cycle of length $n$. Call the arrows in one direction $\alpha_i$ and the other $\beta_i$. A quick analysis shows that $\alpha_i \beta_i = x$, $\beta_i \alpha_i = x$ and $\alpha_n \alpha_{n-1} \dots \alpha_1 = x^2$ so using Lemma \ref{diagon} we get that any Cohen-Macaulay $\Lambda$-module is ``diagonalizable''. In particular, any indecomposable has rank $1$ over $R$ at each vertex. Looking carefully at the proof of Lemma \ref{diagon}, we get for free that there are $1$ or $2$ changes of degree when we multiply by $\alpha$ (they are determined by the positions of $1$ and $m$ in the previous proof, up to the cyclic rotation that has been done at the beginning). 

 For the uniqueness of the decomposition, just notice that the endomorphism algebra of each indecomposable object is $R$. Moreover, any endomorphism factorizing through another indecomposable is in the ideal $(x)$. Thus, if we denote $\hat \Lambda = K\llbracket x \rrbracket \tens_R \Lambda$, and consider the functor $ K\llbracket x \rrbracket \tens_R -: \mod \Lambda \rightarrow \mod \hat \Lambda$, non-isomorphic indecomposable objects are mapped to non-isomorphic objects, which are also indecomposable using explicitly the classification we did. Moreover, the endomorphism rings of the objects $K \llbracket x \rrbracket \tens_R (i, j)$ are local so we get the uniqueness of the decomposition of objects in the essential image of the functor $K\llbracket x \rrbracket \tens_R -$. Therefore, we proved the uniqueness of the decomposition in $\CM(\Lambda)$.
\end{proof}

In the following, let $\mathcal{S}$ be the set of all sides and diagonals of the polygon $P$ and $\mathcal{E}$ be the set of the isomorphism classes of all indecomposable Cohen-Macaulay $\Lambda$-modules. 
According to Theorem \ref{th:indecomposable modules}, each indecomposable Cohen-Macaulay $\Lambda$-module is of the form $(s,t)$ for $1\le s < t \le n$. On the other hand, each side or diagonal of the polygon $P$ can also be denoted as a pair $(P_s,P_t)$ for $(1\le s<t \le n)$.
In particular, they correspond bijectively.
This bijection $\epsilon: \mathcal{S} \rightarrow \mathcal{E}$ can be given in terms of the frozen Jacobian algebras.
 
\begin{theorem}
\label{thm:edges and modules}
For any triangulation $\sigma$ and $(P_s,P_t) \in \sigma$ ($1\le s<t \le n$), the vertex $j=(P_s,P_t)$ satisfies that $$e_F \Gamma_\sigma e_j \cong (s,t)  = [\overbrace{R \cdots R}^{s} \overbrace{(x)  \cdots (x)}^{t-s} \overbrace{(x^2)  \cdots (x^2)}^{n-t}]^{\operatorname{t}}$$ as $\Lambda$-modules.
Hence, $\restr{\epsilon}{\sigma}$ can be rewritten as
   \begin{eqnarray*}
  \restr{\epsilon}{\sigma}: \sigma &\to &  \mathcal{E} \\
     j & \mapsto & e_F {\Gamma}_\sigma e_j.
   \end{eqnarray*}
\end{theorem}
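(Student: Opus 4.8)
The plan is to make everything explicit through the matrix model $\Gamma'_\sigma$ of Proposition \ref{pro:tiled order} together with the normalizing isomorphism $\Lambda_\sigma \cong \Lambda$ of Theorem \ref{thm:frozen part}, then to read off $e_F \Gamma_\sigma e_j$ as a concrete submodule of $K[t]^n$ and compare its entries with those of $(s,t)$ row by row.

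First I would transport the left $\Lambda_\sigma$-module $e_F \Gamma_\sigma e_j$ to the matrix side. Under $\Psi^{-1}$ the idempotent $e_j$ corresponds to $E_{jj}$ and $e_F$ to $E_F$, so $e_F \Gamma_\sigma e_j$ becomes the column module $E_F \Gamma'_\sigma E_{jj}$, whose $i$-th entry is the $R$-submodule $t^{\ell^\theta_{i,j}} R$ of $K[t]$. Since $(s,t)$ is a module over the normal form $\Lambda$ of \eqref{the order}, I would view $e_F \Gamma_\sigma e_j$ as a $\Lambda$-module by conjugating with the diagonal matrix $X' = \diag(x^{\delta_{i,n}}) X_1$ used in the proof of Theorem \ref{thm:frozen part}; as $X'$ is diagonal it commutes with $E_F$, and the resulting left $\Lambda$-module is $E_F X' \Gamma'_\sigma E_{jj}$, whose $i$-th entry (for $1 \le i \le n$) is $t^{n\delta_{i,n} + \ell^\theta_{1,i} + \ell^\theta_{i,j}} R$.

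Next I would compute these exponents from Proposition \ref{pro:length}. The values $\ell^\theta_{1,i} = 2(i-1)$ for $1 \le i \le n-1$ and $\ell^\theta_{1,n} = n-2$ are immediate, and writing the frozen source vertex $i$ as the edge $(P_i, P_{i^+})$, the same proposition gives $\ell^\theta_{i,j}$. A short calculation shows that for $1 \le i \le n-1$ the exponent equals $(s+t-3) + n m_i$ with $$m_i = \min\{\delta_{i+1 > s} + \delta_{i > t},\ \delta_{i > s} + \delta_{i+1 > t}\},$$ and that for $i = n$ it equals $(s+t-3) + n(2 - \delta_{t,n})$. Thus every entry carries the common factor $t^{s+t-3}$, and after deleting it the $i$-th entry is $(x^{m_i})$ for $i \le n-1$ and $(x^{2 - \delta_{t,n}})$ for $i = n$. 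A case analysis on the position of $i$ relative to $s$ and $t$ then evaluates the minimum to $m_i = 0$ for $i \le s$, $m_i = 1$ for $s < i \le t$, and $m_i = 2$ for $t < i \le n-1$, while the $n$-th entry is $(x^2)$ when $t < n$ and $(x)$ when $t = n$. These are precisely the entries of $(s,t)$, so $E_F X' \Gamma'_\sigma E_{jj}$ and $(s,t)$ agree entry by entry up to the global shift $t^{s+t-3}$; multiplication by $t^{s+t-3}$ is an isomorphism of $\Lambda$-modules, which yields $e_F \Gamma_\sigma e_j \cong (s,t)$.

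The main obstacle is the bookkeeping in the last step: correctly treating the wrap-around vertex $i = n$, whose edge $(P_n, P_1)$ carries the extra $\diag(x^{\delta_{i,n}})$ twist, and checking that the minima in Proposition \ref{pro:length} collapse to the clean $0/1/2$ pattern in each range of $i$. Once these exponents are pinned down the identification with $(s,t)$ is immediate, and since $(s,t)$ depends only on the endpoints of the edge, both the independence of $\sigma$ and the reformulation of $\restr{\epsilon}{\sigma}$ follow at once.
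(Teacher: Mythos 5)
Your proposal is correct and follows essentially the same route as the paper's proof: transport $e_F\Gamma_\sigma e_j$ to the matrix model via $\Psi$ from Proposition \ref{pro:tiled order}, normalize with the diagonal matrix $X'$ of Theorem \ref{thm:frozen part}, and evaluate the exponents entry by entry using Proposition \ref{pro:length}. The only (harmless) cosmetic differences are that you factor out the common power $t^{s+t-3}$ at the end instead of conjugating by $X'^{-1}$ to land directly on the exponents $d^i_{1,j}+\delta_{i,n}-\delta_{j,n}$, and that your computation treats the side case $j\in F$ uniformly rather than deferring it to the proof of Theorem \ref{thm:frozen part} as the paper does.
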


\begin{proof}
Let $\sigma$ be a triangulation with $(P_s,P_t) \in \sigma$ and the vertex $j\in (Q_\sigma)_0$ corresponding to $(P_s,P_t)$.
By taking $X'$ as in Theorem \ref{thm:frozen part}, we have 
\begin{align*}
& e_F \Gamma_\sigma e_j  \cong  E_F X'{\Gamma'_\sigma} X'^{-1} E_{j,j}  \\
 \cong  &  \begin{bmatrix}
     (x^{d_{1,j}^1  +\delta_{1,n} - \delta_{j,n}}) & (x^{d_{1,j}^2  +\delta_{2,n} - \delta_{j,n}}) &\cdots &(x^{d_{1,j}^n  +\delta_{n,n} - \delta_{j,n}})
   \end{bmatrix}^{\operatorname{t}}.
\end{align*} 
 
 In the rest, we calculate $d_{1,j}^i  +\delta_{i,n} - \delta_{j,n}$ for $1\le i \le n$.
If $j\in F$, the computation has been done in the proof of Theorem \ref{thm:frozen part}.

Assume that $j \in (Q_\sigma)_0 \smallsetminus F$.
First, suppose that $1 \le i \le n-1$. By \eqref{eq:length}, we have $d^i_{1,j}=\ell^{\theta}_{1,i}/n+\ell^{\theta}_{i,j}/n-\ell^{\theta}_{1,j}/n$.
Recall that $1=(P_1,P_2)$, $i=(P_i,P_{i+1})$ and $j=(P_s,P_t)$. By Proposition \ref{pro:length}, we get 
   \begin{align*}
   \ell^{\theta}_{1,i}&= i+(i+1)-1-2+n\min\{\delta_{1>i}+\delta_{2>i+1},\delta_{1>i+1}+\delta_{2>i}\}=2i-2,\\
   \ell^{\theta}_{i,j}&= s+t-i-(i+1)+n\min\{\delta_{i>s}+\delta_{i+1>t},\delta_{i>t}+\delta_{i+1>s}\}, \\
   \ell^{\theta}_{1,j}&= s+t-1-2+n\min\{\delta_{1>s}+\delta_{2>t},\delta_{1>t}+\delta_{2>s}\}=s+t-3. 
   \end{align*} 
Since $t \ge s+2$, we have 
 \begin{align*}
 {d_{1,j}^i  +\delta_{i,n} - \delta_{j,n}} = d_{1,j}^i  &=  \min\{\delta_{i>s}+\delta_{i+1>t},\delta_{i>t}+\delta_{i+1>s}\} \\
  &=\begin{cases}
     0, &\text{if } 1\le i \le s, \\
     1, &\text{if } s < i \le \min\{t,n-1\}, \\
     2, &\text{if } t < i \le n-1.
   \end{cases}
 \end{align*} 
 Secondly, when $i = n$, one can calculate 
 \begin{align*}
 {d_{1,j}^{i}  +\delta_{i,n} - \delta_{j,n}} =d_{1,j}^i +1
 &= \min\{\delta_{1>s}+\delta_{n>t},\delta_{1>t}+\delta_{n>s}\} +1\\
 &= \begin{cases}
   2, &\text{if } t<n,\\
   1, &\text{if } t=n.
   \end{cases} 
 \end{align*}
Therefore, $e_F \Gamma_\sigma e_j  \cong  [\overbrace{R \cdots R}^{s} \overbrace{(x)  \cdots (x)}^{t-s} \overbrace{(x^2)  \cdots (x^2)}^{n-t}]^{\operatorname{t}}$ holds.
\end{proof}

\begin{theorem}
\label{th:endomorphism}
For a triangulation $\sigma$, consider the Cohen-Macaulay $\Lambda$-module $T_\sigma$ defined by $$T_\sigma:=e_F\Gamma_\sigma\cong \bigoplus_{(P_s,P_t)\in \sigma} \epsilon (P_s,P_t)$$ where the sum runs over the sides and diagonals appearing in $\sigma$.
    Through right multiplication, there is a canonical isomorphism $$\End_{\Lambda}(T_\sigma) \cong \Gamma_\sigma^{\operatorname{op}}.$$
  \end{theorem}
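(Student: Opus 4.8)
The plan is to prove that the right-multiplication map
\[
\rho\colon \Gamma_\sigma^{\operatorname{op}} \to \End_\Lambda(T_\sigma), \qquad a \mapsto \bigl(m \mapsto ma\bigr),
\]
is an isomorphism of $R$-algebras. First I would check that $\rho$ is a well-defined unital ring homomorphism. Since $T_\sigma = e_F\Gamma_\sigma$ is stable under right multiplication by $\Gamma_\sigma$ and right multiplication commutes with the left $\Lambda$-action, each $\rho_a$ is an endomorphism of the left $\Lambda$-module $T_\sigma$; and $\rho_{ab} = \rho_b \circ \rho_a$ shows that $\rho$ is a homomorphism out of $\Gamma_\sigma^{\operatorname{op}}$. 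Because $\Gamma_\sigma$ is a tiled $R$-order in $\M_{2n-3}(K(x))$ and $T_\sigma \cong \bigoplus_{(P_s,P_t)\in\sigma}(s,t)$ is an $R$-lattice (Theorem \ref{thm:edges and modules}), both $\Gamma_\sigma^{\operatorname{op}}$ and $\End_\Lambda(T_\sigma)$ are $R$-orders in $\M_{2n-3}(K(x))$ of the same $R$-rank $(2n-3)^2$, the latter because over $K(x)$ the algebra $\Lambda$ becomes $\M_n(K(x))$ and each $T_\sigma e_i$ becomes the simple module. It therefore suffices to prove that $\rho$ is both injective and surjective.

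For injectivity, recall that $e_i\Gamma_\sigma e_j = R\,w_0(i,j)$ is free of rank one over $R=K[C]$ by Corollary \ref{thm:basis of Jacobian algebra}, so any $a$ may be written as $\sum_{i,j} g_{ij}(C)\,w_0(i,j)$. Suppose $\rho_a = 0$, i.e.\ $e_F\Gamma_\sigma\,a = 0$; then $w_0(k,i)\,a = 0$ for every frozen vertex $k$ and every $i$. Projecting onto the summand $e_k\Gamma_\sigma e_j$ and using $w_0(k,i)w_0(i,j) = C^{d^i_{k,j}}w_0(k,j)$ yields
\[
w_0(k,j)\,C^{d^i_{k,j}}\,g_{ij}(C) = 0
\]
inside the free rank-one $R$-module $e_k\Gamma_\sigma e_j$, forcing $g_{ij}=0$. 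Since for each $i$ there is a frozen $k$ for which $w_0(k,i)$ is defined (Theorem \ref{th:induction}), we conclude $a=0$, so $\rho$ is injective.

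For surjectivity I would decompose $\End_\Lambda(T_\sigma) = \bigoplus_{i,j}\Hom_\Lambda(T_\sigma e_i, T_\sigma e_j)$, which matches the decomposition $\Gamma_\sigma = \bigoplus_{i,j} e_i\Gamma_\sigma e_j$, reducing the claim to surjectivity of each component $e_i\Gamma_\sigma e_j \to \Hom_\Lambda(T_\sigma e_i, T_\sigma e_j)$. Writing $T_\sigma e_i \cong (s,t)$ and $T_\sigma e_j \cong (s',t')$ as lattices in $K(x)^n$, any $\Lambda$-homomorphism extends over $K(x)$ to an $\M_n(K(x))$-map, hence is scalar multiplication by some $q \in K(x)$ with $q(s,t)\subseteq(s',t')$; thus $\Hom_\Lambda(T_\sigma e_i, T_\sigma e_j)$ is the free rank-one $R$-module of such scalars, generated by $x^{c}$ with $c = \max_{1\le p\le n}\bigl(v_p(s',t') - v_p(s,t)\bigr)$. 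On the other side, right multiplication by $w_0(i,j)$ acts as multiplication by a power of $x$ governed by the $\theta$-length data. The main obstacle is to check that these two powers coincide, equivalently that the minimal path $w_0(i,j)$ realizes a \emph{generator} of $\Hom_\Lambda(T_\sigma e_i, T_\sigma e_j)$ rather than a proper $x$-multiple of it (so that the torsion cokernel of the already-injective component map vanishes). This is precisely where the explicit $\theta$-length formula of Proposition \ref{pro:length} together with the column description in the proof of Theorem \ref{thm:edges and modules} are needed. Once each component is shown to be an isomorphism, assembling over all $i,j$ gives surjectivity of $\rho$, and combined with injectivity this yields the desired canonical isomorphism $\End_\Lambda(T_\sigma) \cong \Gamma_\sigma^{\operatorname{op}}$.
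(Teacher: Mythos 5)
Your overall strategy coincides with the paper's own proof: consider right multiplication, decompose it into components $e_i\Gamma_\sigma e_j \to \Hom_{\Lambda}(T_\sigma e_i, T_\sigma e_j)$, note that each component is an embedding of rank-one $R$-modules, and reduce the theorem to showing that $w_0(i,j)$ hits a \emph{generator} of $\Hom_{\Lambda}(T_\sigma e_i, T_\sigma e_j)$. Your injectivity argument (using the basis $\{C^N w_0(i,j)\}$ from Corollary \ref{thm:basis of Jacobian algebra}) is correct, as is your identification of $\Hom_{\Lambda}((s,t),(s',t'))$ with the set of scalars $q\in K(x)$ satisfying $q\,(s,t)\subseteq (s',t')$, hence a free rank-one $R$-module generated by an explicit power of $x$.

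The genuine gap is that the one step carrying all the content is never proved. You flag the generator condition as ``the main obstacle,'' assert that Proposition \ref{pro:length} and the column description of Theorem \ref{thm:edges and modules} ``are needed,'' and then continue conditionally (``Once each component is shown to be an isomorphism\dots''); no verification is actually performed. In the paper's notation the required statement is
\[
\ell^{\theta}_{i,j} \;=\; \max_{k\in F}\,\bigl\{\ell^{\theta}_{k,j}-\ell^{\theta}_{k,i}\bigr\},
\]
where the inequality $\geq$ is automatic from $\ell^{\theta}_{k,i}+\ell^{\theta}_{i,j}\geq\ell^{\theta}_{k,j}$; the nontrivial point is that the maximum is attained, i.e.\ that there exists a frozen vertex $k$ with $w_0(k,j)\sim w_0(k,i)\,w_0(i,j)$. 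This is exactly Theorem \ref{th:induction}(2) (whose displayed formula contains an index typo, but which is used in the paper's proof of Theorem \ref{th:endomorphism} in the form written above), established earlier by induction on $n$, and it is cited at precisely the point where your argument stops. Your proposed alternative route --- computing both powers of $x$ explicitly from Proposition \ref{pro:length} and the columns in Theorem \ref{thm:edges and modules} --- is viable, but it amounts to a case analysis of the same kind as in the proof of Theorem \ref{thm:edges and modules} and is not carried out. Without either that computation or an appeal to Theorem \ref{th:induction}(2), what you have is a correct reduction of the theorem to its key lemma, not a proof.
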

\begin{proof}
As $\Lambda$-modules, $$T_\sigma \cong \Lambda \oplus e_F\Gamma_\sigma (1-e_F).$$
Then we have 
\begin{align*}
\End_{\Lambda}(T_\sigma)
&\cong \begin{bmatrix}
\Hom_{\Lambda}(\Lambda,e_F \Gamma_\sigma) & \Hom_{\Lambda}(e_F\Gamma_\sigma (1-e_F),e_F \Gamma_\sigma)
\end{bmatrix} \\ &\cong \begin{bmatrix}
e_F\Gamma_\sigma^{\operatorname{op}} & \Hom_{\Lambda}(e_F\Gamma_\sigma (1-e_F),e_F \Gamma_\sigma)
\end{bmatrix}_.
\end{align*}

By Proposition \ref{pro:tiled order}, as an $R$-order, $\Gamma_\sigma$ is isomorphic to ${\Gamma'_\sigma} = (t^{\ell^{\theta}_{i,j}}R)_{i,j\in(Q_\sigma)_0}$. So $$\Hom_{\Lambda}(e_F\Gamma_\sigma e_h,e_F\Gamma_\sigma e_k) \cong \Hom_{\Lambda}(E_F {\Gamma'_\sigma} E_{h,h},E_F {\Gamma'_\sigma} E_{k,k})$$ holds for $h,k\in (Q_\sigma)_0$.

By right multiplication, there is a canonical embedding \[E_{h,h} \Gamma'^{\operatorname{op}}_\sigma E_{k,k} \hookrightarrow \Hom_{\Lambda}(E_F {\Gamma'_\sigma} E_{h,h},E_F {\Gamma'_\sigma} E_{k,k}).\] 
Since $$\ell^{\theta}_{h,k} = \max_{i\in F} \{ \ell^{\theta}_{i,k} -\ell^{\theta}_{i,h} \}$$ holds by Theorem \ref{th:induction}(2), this embedding is in fact surjective. 
Thus we have 
\begin{equation}
\label{eq:nonfrozen endomorphism}
\Hom_{\Lambda}(e_F\Gamma_\sigma e_h,e_F\Gamma_\sigma e_k) \cong e_h \Gamma_\sigma^{\operatorname{op}} e_k.
\end{equation}
Therefore, the right multiplication induces an isomorphism $\End_{\Lambda}({T_\sigma}) \cong \Gamma_\sigma^{\operatorname{op}}.$
\end{proof}

Next, we study the non-split short exact sequences of indecomposable Cohen-Macaulay $\Lambda$-modules and the relation with diagonals of triangulations.
 \begin{lemma}
 \label{pro:extension}
The non-split extensions of indecomposable Cohen-Macaulay $\Lambda$-modules are of the form 
$$0\lra (s,t) \lra (s,t')\oplus(s',t) \lra (s',t')\lra 0$$
with $s<s'<t<t'$,
or $$0\lra (s,t) \lra (s',s)\oplus(t',t) \lra (s',t')\lra 0$$ 
with $s'<s<t'<t$.
 \end{lemma}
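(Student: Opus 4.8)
The plan is to split the statement into a construction part and a uniqueness part, both driven by the explicit classification of Theorem \ref{th:indecomposable modules}. I write each indecomposable as the column $(s,t)$ whose $i$-th entry is the fractional ideal $(x^{c_i(s,t)})$ with $c_i(s,t)=\delta_{i>s}+\delta_{i>t}$, and I record two facts. First, every Cohen-Macaulay module is a direct sum of such columns, uniquely by Krull--Schmidt. Second, since the generic fibre of each $(s,t)$ is the simple $\M_n(K(x))$-module, a $\Lambda$-homomorphism $(a,b)\to(a',b')$ is multiplication by a scalar $\lambda\in K(x)$ with $\lambda(x^{c_i(a,b)})\subseteq(x^{c_i(a',b')})$ for all $i$; hence $\Hom_\Lambda((a,b),(a',b'))$ is the free rank-one $R$-module generated by multiplication by $x^{\max_i(c_i(a',b')-c_i(a,b))}$. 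Given a non-split extension $0\to(s,t)\to E\to(s',t')\to 0$, the module $E$ is Cohen-Macaulay of $R$-rank $2n$, and comparing ranks entry by entry forces its profile to be $c_i(s,t)+c_i(s',t')$; by Theorem \ref{th:indecomposable modules}, $E\cong\bigoplus_k(a_k,b_k)$, and non-splitness means $E\not\cong(s,t)\oplus(s',t')$.

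Next I construct the two families. For a crossing pair with $s<s'<t<t'$ I use the maps $m\mapsto(m,-m)$ from $(s,t)$ into $(s,t')\oplus(s',t)$ and $(b_1,b_2)\mapsto b_1+b_2$ onto $(s',t')$, all components being multiplication by $1$. For $s'<s<t'<t$ I use $m\mapsto(xm,m)$ from $(s,t)$ into $(s',s)\oplus(t',t)$ and $(b_1,b_2)\mapsto b_1-xb_2$ onto $(s',t')$. In each case exactness is checked one coordinate at a time: the degree-$i$ sequence is $0\to(x^{p})\to(x^{q})\oplus(x^{r})\to(x^{u})\to 0$ with $p=c_i(s,t)$, $u=c_i(s',t')$ and $q,r$ the middle profiles. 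Since $p+u=q+r$ always holds, exactness reduces to the identities $p=\max(q,r),\ u=\min(q,r)$ in the first case (and their $x$-twisted analogues in the second), which follow respectively from the disjointness of the intervals $(s,s']$ and $(t,t']$ and from the nesting $(s,t']\subseteq(s',t]$ — and these interval facts are exactly what the crossing inequalities guarantee. Non-splitness is immediate from Krull--Schmidt, because the indecomposable summands of the middle term are not $\{(s,t),(s',t')\}$.

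Finally I must show there are no other non-split extensions, i.e. that $\Ext^1_\Lambda((s',t'),(s,t))$ vanishes when the diagonals do not cross (in particular when they share an endpoint or when $\{s,t\}=\{s',t'\}$) and is one-dimensional over $K$ when they cross. For the non-crossing case I would use the $\Hom$-formula above to exhibit an explicit splitting of any candidate extension, forcing $E\cong(s,t)\oplus(s',t')$. For the crossing case the cleanest route is to compute $\Ext^1_\Lambda((s',t'),(s,t))$ as the stable Hom $\underline{\Hom}_\Lambda(\Omega(s',t'),(s,t))$ in the Frobenius category $\CM(\Lambda)$, whose projective–injective objects are the side modules, using a projective presentation of $(s',t')$ together with the explicit $\Hom$-spaces; this shows $\Ext^1$ is annihilated by $x$ and one-dimensional over $K$, so the sequence built above is, up to isomorphism and a nonzero scalar, the unique non-split extension.

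I expect this last step to be the main obstacle. The construction and the coordinatewise exactness checks are routine $R$-linear algebra, but pinning down the syzygy $\Omega(s',t')$ and verifying one-dimensionality requires care, all the more so because $R=K[x]$ is not local, so one must lean on the Krull--Schmidt property recorded in the Remark following Theorem \ref{th:indecomposable modules}. Alternatively, once the $2$-Calabi--Yau property and the triangle equivalence with the cluster category of type $A_{n-3}$ are available, the crossing criterion and the one-dimensionality of $\Ext^1$ follow from the standard description of extensions between indecomposables of that category; but since this lemma is meant to be used on the way to those results, the self-contained $\Ext$-computation is the appropriate tool here.
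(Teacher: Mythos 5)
Your construction half is sound: the description of morphisms between indecomposables as multiplications by elements of $K(x)$, the two explicit sequences, the coordinatewise exactness checks, and the appeal to Krull--Schmidt for non-splitness all work, and they match what the paper needs. But the lemma is an \emph{exhaustive} classification --- it asserts that every non-split extension between indecomposables has one of the two stated forms --- and this half of your argument is a plan, not a proof. Concretely: you never compute the syzygy $\Omega(s',t')$ (it is $(s'+1,t'+1)$, arising from the projective cover $(s',s'+1)\oplus(t',t'+1)$, which you would also have to identify); you never carry out the stable-$\Hom$ computation that is supposed to show $\Ext^1_\Lambda((s',t'),(s,t))$ is one-dimensional over $K$ in the crossing case; and in the non-crossing case (including pairs sharing an endpoint and self-extensions) the phrase ``exhibit an explicit splitting of any candidate extension'' is not yet an argument, since describing the candidate extensions --- their middle terms and maps --- is exactly what is at issue. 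You flag this yourself as ``the main obstacle,'' and it is the entire content of the word ``the'' in the statement: as written, your proposal proves existence of the two families but not that they exhaust the non-split extensions. (Incidentally, the claim that entrywise rank comparison ``forces the profile'' of the middle term $E$ is not correct as stated --- each entry of $E$ is simply free of rank two over $R$, and determining which indecomposable summands occur is precisely the problem --- but this remark plays no essential role in your outline.)

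For comparison, the paper settles existence and exhaustiveness simultaneously, with no homological machinery. Using exactly your observation that morphisms between indecomposables are multiplications, it writes an arbitrary short exact sequence $0\to(u,v)\to(u',v')\oplus(u'',v'')\to(u''',v''')\to0$ with maps given by polynomials $P_1,P_2,Q_1,Q_2$ (after clearing powers of $x$), and then extracts everything from exactness: well-definedness gives entrywise inequalities, surjectivity forces $d_i=\min\{b_i,c_i\}$ and $\gcd(Q_1,Q_2)=1$, and the explicit kernel computation forces $a_i=\max\{b_i,c_i\}$ together with $P_1=Q_2$ and $P_2=Q_1$ up to scalar; a short case analysis on the middle term (the cases $(i,j)\oplus(i',j')$, $(i,j)\oplus x(i',j')$, $(i,j)\oplus x^d(i',j')$ with $d\ge 2$) then leaves exactly the two stated families as the non-split possibilities. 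To repair your route you must actually execute the $\Ext^1$ computation --- both the vanishing for non-crossing pairs and the one-dimensionality for crossing pairs --- after which your inference (one-dimensionality plus one exhibited non-split representative determines the middle term of every non-split extension, since scalar multiples of an extension class have isomorphic middle terms) is indeed valid; alternatively, replace that computation by the paper's direct parametrization, which is where your Hom-formula naturally leads in any case.
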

 \begin{proof}
 By using the classification of indecomposable Cohen-Macaulay $\Lambda$-modules in Theorem \ref{thm:edges and modules}, since the rank over $R$ of any indecomposable object of $\CM(\Lambda)$ is $n$, any extension of indecomposable Cohen-Macaulay $\Lambda$-modules can be written as $$0\lra (u,v) \lra (u',v')\oplus(u'',v'') \lra (u''',v''')\lra 0.$$
  Multiplying powers of $x$, it can be written in the following form:
$$0\lra  \vvect{(x^{a_1})\\(x^{a_2})\\ \vdots\\(x^{a_n})} \xlongrightarrow{\vvect{P_1 \\ P_2}} \vvect{(x^{b_1})\\(x^{b_2})\\ \vdots\\(x^{b_n})}\oplus \vvect{(x^{c_1})\\(x^{c_2})\\ \vdots\\(x^{c_n})}\xlongrightarrow{\hvect{Q_1 & - Q_2}}\vvect{(x^{d_1})\\(x^{d_2})\\ \vdots\\(x^{d_n})} \lra 0,$$
where $P_1$, $P_2$, $Q_1$ and $Q_2$ are polynomials with a nonzero constant term such that $P_1 Q_1 = P_2 Q_2$ and $a_{i+1}-a_i,b_{i+1}-b_i,c_{i+1}-c_i,d_{i+1}-d_i \in \{0,1\}$ for all $1\le i \le n$.

As the morphisms are well-defined, we have $a_i\ge b_i$, $a_i\ge c_i$, $b_i\ge d_i$ and $c_i\ge d_i$.
Since $\hvect{Q_1 & - Q_2}$ is surjective, it follows that $d_i=b_i$ or $d_i=c_i$ for any $i=1,\ldots,n$. Thus $d_i=\min\{b_i, c_i\}$. Moreover $\gcd(Q_1, Q_2) = 1$.

It is clear that $${\ker \hvect{Q_1 & -Q_2} = \vvect{(x^{\max\{b_1,c_1\})} \\(x^{\max\{b_2,c_2\}}) \\ \vdots\\ (x^{\max\{b_n,c_n\}})} \hvect{Q_2 & Q_1}}.$$
Since the sequence is exact, it follows that there is a polynomial $W_{i}$ satisfying $x^{a_i}W_{i}P_1=x^{\max\{{b_i,c_i}\}}Q_2$ and $x^{a_i}W_i P_2=x^{\max\{b_i,c_i\}}Q_1$ for $1\le i \le n$. As the smallest degree of terms of $W_i$ is non-negative, we have $a_i\le \max\{b_i,c_i\}$. Thus, we get $a_i=\max\{b_i, c_i\}$. As $\gcd(Q_1, Q_2) = 1$, we also get, up to scalar renormalization, that $W_i = 1$, $P_1 = Q_2$ and $P_2 = Q_1$.

When the middle term of an exact sequence is of the form $(i,j) \oplus (i',j')$ with $i<j$ and $i'<j'$, according to the above observation, the exact sequence can only be $$ 0\lra (\min\{i,i'\}, \min\{j,j'\}) \lra (i,j) \oplus (i',j') \lra (\max\{i,i'\}, \max\{j,j'\}) \lra 0.$$

When the middle term of an exact sequence is of the form $(i,j) \oplus x(i',j')$ with $i<j$ and $i'<j'$, according to the above observation, the exact sequence can only be $$ 0\lra x(\min\{i',j\}, j') \lra (i,j) \oplus x(i',j') \lra (i, \max\{i',j\}) \lra 0.$$

When the middle term of an exact sequence is of the form $(i,j) \oplus x^d(i',j')$ with $i<j, i'<j'$ and $d\ge 2$, according to the above observation, the exact sequence can only be $$ 0\lra x^d(i', j') \lra (i,j) \oplus x^d(i',j') \lra (i, j) \lra 0.$$

Observe finally, thanks to the previous analysis, that if the middle term of the short exact sequence was isomorphic to the direct sum of the two external terms, the short exact sequence would split.
 \end{proof}
According to Lemma \ref{pro:extension}, we can get the following proposition.
 \begin{proposition}
 \label{cor:extensioncross}
 For any two indecomposable Cohen-Macaulay $\Lambda$-modules $(s,t)$ and $(s',t')$, the following statements are equivalent:
 \begin{enumerate}
 \item $\Ext^1_{\Lambda}((s,t),(s',t'))\neq 0$;
  \item $\Ext^1_{\Lambda}((s',t'),(s,t))\neq 0$;
  \item the two diagonals $(P_s,P_t)$ and $(P_{s'},P_{t'})$ are crossing.
 \end{enumerate} 
 \end{proposition}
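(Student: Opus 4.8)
The plan is to read all three equivalences off the classification of non-split short exact sequences in Lemma~\ref{pro:extension}, after recording one elementary combinatorial fact: two diagonals of a convex polygon cross precisely when their endpoints interleave. First I would fix the convention that $(P_s,P_t)$ and $(P_{s'},P_{t'})$ are written with $s<t$ and $s'<t'$, and observe that the chords $(P_s,P_t)$ and $(P_{s'},P_{t'})$ share an interior point if and only if exactly one of $P_{s'},P_{t'}$ lies strictly between $P_s$ and $P_t$ along the boundary. In terms of indices this says
$$s<s'<t<t' \quad\text{or}\quad s'<s<t'<t.$$
This is the entire content of condition (3), and it is manifestly symmetric under exchanging the two pairs.

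Next, to establish (2)$\Leftrightarrow$(3), I would apply Lemma~\ref{pro:extension} with $(s,t)$ playing the role of the submodule and $(s',t')$ that of the quotient. The lemma asserts that a non-split extension $0\to (s,t)\to E\to (s',t')\to 0$ exists exactly when $s<s'<t<t'$ (the first family) or $s'<s<t'<t$ (the second family), and that no other non-split extensions between indecomposables occur. Since such sequences represent the nonzero classes of $\Ext^1_\Lambda((s',t'),(s,t))$, this group is nonzero if and only if one of the two interleaving inequalities holds, i.e. if and only if the diagonals cross. Hence (2)$\Leftrightarrow$(3).

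Then (1)$\Leftrightarrow$(3) follows from the same reading with the two pairs interchanged: a nonzero class of $\Ext^1_\Lambda((s,t),(s',t'))$ is a non-split sequence $0\to (s',t')\to E\to (s,t)\to 0$, which by Lemma~\ref{pro:extension} exists exactly when $s'<s<t'<t$ or $s<s'<t<t'$ --- the same crossing condition, by the symmetry noted above. Combining the two readings, (1), (2) and (3) are all equivalent.

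The real work is entirely contained in Lemma~\ref{pro:extension}; once that classification is available, the present proposition is a bookkeeping exercise. The only point requiring care is \emph{completeness}: I must use that the two displayed families exhaust all non-split extensions between indecomposables, so that whenever the diagonals do \emph{not} cross --- that is, when the pairs are nested, disjoint, share an endpoint, or coincide --- neither family applies and both $\Ext^1$ groups vanish. I would therefore verify that in each non-crossing configuration the index inequalities of Lemma~\ref{pro:extension} genuinely fail, which is immediate from the characterization of crossing recorded above. I expect no substantive obstacle beyond this matching of cases.
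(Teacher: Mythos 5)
Your proposal is correct and follows essentially the same route as the paper: the paper's proof also consists of reading off from Lemma~\ref{pro:extension} that condition (2) is equivalent to the interleaving inequalities $s<s'<t<t'$ or $s'<s<t'<t$, and noting this is exactly the crossing condition (3), with (1) following by the symmetry of that condition. Your extra attention to the completeness of the classification and to the non-crossing cases is sound but is already implicit in the paper's use of the word ``are'' in Lemma~\ref{pro:extension}.
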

 \begin{proof}
 By Lemma \ref{pro:extension}, (2) is equivalent to $s<s'<t<t'$ or $s'<s<t'<t$ holds. This is clearly equivalent to (3).
 \end{proof}

We now need to recall the definition of cluster tilting objects.
\begin{definition}
Let $\cc$ be a triangulated or exact category. An object $T$ in $\cc$ is said to be \emph{cluster tilting} if $$\add T=\{Z\in \cc \mid \Ext^1_\cc(T,Z)=0 \}=\{Z\in \cc \mid \Ext^1_\cc(Z,T)=0 \},$$ where $\add T$ is the set of finite direct sums of direct summands of $T$.
\end{definition}

 \begin{theorem}
  \label{thm:cluster tilting}
 The map $\sigma \to T_\sigma$ gives a one-to-one correspondence between the set of triangulations of the polygon and the set of isomorphism classes of basic cluster tilting objects in $\CM(\Lambda)$.
\end{theorem}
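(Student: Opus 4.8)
The plan is to translate the cluster-tilting condition entirely into the combinatorics of non-crossing diagonals, and then invoke the classical fact that a maximal collection of pairwise non-crossing diagonals of $P$ is exactly a triangulation. The two tools I would use repeatedly are the bijection $\epsilon \colon \mathcal{S} \to \mathcal{E}$ together with the identification $T_\sigma \cong \bigoplus_{d \in \sigma} \epsilon(d)$ from Theorem \ref{th:endomorphism}, and the dictionary of Proposition \ref{cor:extensioncross}, namely that $\Ext^1_\Lambda(\epsilon(d), \epsilon(d')) \neq 0$ if and only if $d$ and $d'$ cross. I would record at the outset that a side of $P$ crosses no other edge (its crossing number with any edge is ruled out by the numerical criterion $s<s'<t<t'$), so that the modules $\epsilon(e)$ attached to sides are $\Ext^1$-orthogonal, on both sides, to every indecomposable object. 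Since $\CM(\Lambda)$ is Krull--Schmidt by Theorem \ref{th:indecomposable modules}, every set-theoretic condition defining cluster tilting may be checked summand by summand using additivity of $\Ext^1$.

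First I would show that each $T_\sigma$ is cluster tilting. Rigidity $\Ext^1_\Lambda(T_\sigma, T_\sigma) = 0$ is immediate: the diagonals of a triangulation are pairwise non-crossing and the sides cross nothing, so Proposition \ref{cor:extensioncross} kills all extensions between summands of $T_\sigma$, giving $\add T_\sigma \subseteq \{Z : \Ext^1_\Lambda(T_\sigma, Z) = 0\}$. For the reverse inclusion, take an indecomposable $Z = \epsilon(d)$ with $\Ext^1_\Lambda(T_\sigma, Z) = 0$; then $d$ crosses no edge of $\sigma$, and because the diagonals of $\sigma$ form a maximal non-crossing family, $d$ must be an edge of $\sigma$ (a side or one of its diagonals), whence $Z \in \add T_\sigma$. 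This yields $\add T_\sigma = \{Z : \Ext^1_\Lambda(T_\sigma, Z) = 0\}$, and the symmetric equality $\add T_\sigma = \{Z : \Ext^1_\Lambda(Z, T_\sigma) = 0\}$ follows from the equivalence of (1) and (2) in Proposition \ref{cor:extensioncross}.

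Next I would prove that the map is surjective. Let $T$ be a basic cluster tilting object; by Krull--Schmidt and bijectivity of $\epsilon$, write $T = \bigoplus_{d \in S} \epsilon(d)$ for a set $S$ of distinct edges. Every side $e$ lies in $S$: as $e$ crosses nothing, $\Ext^1_\Lambda(T, \epsilon(e)) = 0$, forcing $\epsilon(e) \in \add T$. Rigidity of $T$ shows the diagonals in $S$ are pairwise non-crossing, and the cluster-tilting condition shows this family is maximal, since a diagonal $d$ crossing no element of $S$ would satisfy $\Ext^1_\Lambda(T, \epsilon(d)) = 0$ and hence lie in $S$. Therefore $\sigma := S$ is a triangulation and $T = T_\sigma$. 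Injectivity is then immediate: an isomorphism $T_\sigma \cong T_{\sigma'}$ forces, by the uniqueness in Theorem \ref{th:indecomposable modules} and injectivity of $\epsilon$, that $\sigma$ and $\sigma'$ have the same edge set, so $\sigma = \sigma'$.

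I expect the only delicate points to be bookkeeping rather than mathematics: ensuring the defining conditions are verified against arbitrary, not merely indecomposable, test objects (handled by additivity of $\Ext^1$ over the Krull--Schmidt decomposition), and ensuring that sides are consistently treated as non-crossing within the numerical criterion underlying Proposition \ref{cor:extensioncross}. The substantive combinatorial input, the passage from \emph{maximal non-crossing family} to \emph{triangulation}, is standard and requires no new argument.
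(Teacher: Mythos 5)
Your proposal is correct and follows essentially the same route as the paper: both translate the cluster-tilting condition through Proposition \ref{cor:extensioncross} into the statement that the edges form a maximal pairwise non-crossing family, which is precisely a triangulation. The paper compresses this into a single chain of equivalences for an arbitrary set of diagonals $\sigma$ (which handles well-definedness and surjectivity at once), whereas you spell out rigidity, maximality, surjectivity, and injectivity separately, including the observation that sides never cross anything and hence every basic cluster tilting object contains all projectives; this is just a more explicit bookkeeping of the same argument.
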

\begin{proof}
Let $\sigma$ be a set of diagonals and $T_\sigma$ the corresponding object in $\CM(\Lambda)$.
By Proposition \ref{cor:extensioncross}, any two diagonals in $\sigma$ are non-crossing if and only if $\Ext^1_{\Lambda}(T_\sigma,T_\sigma)= 0$.
Thus, $\sigma$ is a triangulation if and only if any diagonal which is non-crossing to all diagonals of $\sigma$ belongs to $\sigma$ if and only if any indecomposable $X \in \CM(\Lambda)$ satisfying $\Ext^1_{\Lambda}(T_\sigma,X)= 0$ belongs to $\add T_\sigma$ if and only if $T_\sigma$ is cluster tilting.
\end{proof}  
 
Theorem \ref{thm:edges and modules} and \ref{thm:cluster tilting} show that the category $\CM(\Lambda)$ is very similar to the cluster category of type $A_{n-3}$. In the rest to this section, we give an explicit connection. First, we recall some usual facts about cluster categories. The cluster category is defined in \cite{BMRRT} as follows.
 \begin{definition}
 For an acyclic quiver $Q$, the \emph{cluster category} $\cc(KQ)$ is the orbit category $\cd^{\operatorname{b}}(KQ)/F$ of the bounded derived category $\cd^{\operatorname{b}}(KQ)$ by the functor $F = \tau^{-1}[1]$, where $\tau$ denotes the Auslander-Reiten translation and $[1]$ denotes the shift functor.
 The objects in $\cc(KQ)$ are the same as in $\cd^{\operatorname{b}}(KQ)$, and the morphisms are given by
 $$\Hom_{\cc(KQ)}(X,Y)={\bigoplus_{i\in \ZZ}}\Hom_{\cd^{\operatorname{b}}(KQ)}(F^iX,Y),$$
 where $X$ and $Y$ are objects in $\cd^{\operatorname{b}}(KQ)$.
 For $f\in \Hom_{\cc(KQ)}(X,Y)$ and $g\in \Hom_{\cc(KQ)}(Y,Z)$, the composition is defined by 
 $$(g\circ f)_i= \sum_{i_1+i_2=i } g_{i_1} \circ F^{i_1}(f_{i_2})$$
 for all $i \in \ZZ$.
 \end{definition}
In \cite{Happel2}, Happel proves that $\cd^{\operatorname{b}}(KQ)$ has Auslander-Reiten triangles.
For a Dynkin quiver $Q$, he shows in \cite{Happel} that the Auslander-Reiten quiver of $\cd^{\operatorname{b}}(KQ)$ is $\ZZ\Delta$ where $\Delta$ is the underlying Dynkin diagram of $Q$.
Then the Auslander-Reiten quiver of $\cc(KQ)$ is $\ZZ\Delta/\varphi$, where $\varphi$ is the graph automorphism induced by $\tau^{-1}[1]$.
In type $A_{n-3}$, the Auslander-Reiten quiver of $\cc$ has the shape of a M{\"o}bius strip with $\lceil{(n-3)/2}\rceil$ $\tau$-orbits. As a quiver, it is the same as the quiver of $\underline{\CM}(\Lambda)$ (see Theorem \ref{th:indecomposable modules}).

Recall that a triangulated category is said to be algebraic if it is the stable category of a Frobenius category. Let us state the following result by Keller and Reiten.
\begin{theorem}[{\cite[Introduction and Appendix]{KR}}]
\label{KR}
If $K$ is a perfect field and $\cc$ an algebraic $2$-Calabi-Yau triangulated category containing a cluster tilting object $T$ with $\End_\cc(T)\cong KQ$ hereditary, then there is a triangle-equivalence $\cc(K Q)  \rightarrow \cc$.
\end{theorem}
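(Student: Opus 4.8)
As the statement is quoted from \cite{KR}, I only outline the strategy one would follow. The plan is to realise $\cc$ as a generalized cluster category in the sense of Amiot \cite{Afourier} and then to identify its underlying differential graded (dg) data with that attached to $Q$.

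First I would use that $\cc$ is algebraic to fix a dg enhancement, form the dg endomorphism algebra of a lift of the cluster tilting object $T$, and exploit the $2$-Calabi-Yau duality of $\cc$ to produce a $3$-Calabi-Yau dg algebra $\Gamma$ together with a triangle equivalence $\cc \cong \operatorname{per}(\Gamma)/\cd^{\mathrm b}_{\mathrm{fd}}(\Gamma)$ under which $T$ corresponds to the image of the free module and $H^0(\Gamma) \cong \End_\cc(T) \cong KQ$. The existence of such a model is exactly what the appendix of \cite{KR} supplies; the hypothesis that $K$ is perfect is used here to split off the semisimple part and to build $\Gamma$ over the quiver $Q$.

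The decisive step is formality. Because $\End_\cc(T)=KQ$ is hereditary, the cohomology that would encode a nonzero potential or nontrivial higher $A_\infty$-multiplications on $\Gamma$ must vanish, so $\Gamma$ is quasi-isomorphic to the Ginzburg dg algebra of the pair $(Q,0)$. For the latter, Amiot's theorem \cite{Afourier} identifies $\operatorname{per}(\Gamma)/\cd^{\mathrm b}_{\mathrm{fd}}(\Gamma)$ with the cluster category $\cc(KQ)$, the free module corresponding to the canonical cluster tilting object whose endomorphism algebra is $KQ$. Composing the two equivalences then yields a triangle equivalence $\cc(KQ) \to \cc$ carrying the canonical cluster tilting object to $T$.

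The hard part is the construction of the correct $3$-Calabi-Yau dg model $\Gamma$ and the proof of its formality: this is where both the hereditary condition and the perfectness of $K$ are genuinely needed, and where the deformation-theoretic input of \cite{KR} does the real work. The remaining identifications are then formal consequences of the recognition theorem for generalized cluster categories.
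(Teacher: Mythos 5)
The first thing to say is that the paper contains no proof of this statement: it is imported verbatim from Keller--Reiten \cite{KR} (hence the attribution ``Introduction and Appendix'') and is used purely as a black box to deduce Theorem \ref{th:cyeq}(2). So there is no internal argument to compare yours against, and your sketch must be judged against the literature. As such it is a fair outline, but of the \emph{appendix} route rather than Keller--Reiten's own: what you describe is essentially Van den Bergh's deformation-theoretic argument, later systematized in Amiot's framework \cite{Afourier} --- build a $3$-Calabi--Yau dg model $\Gamma$ with $\cc \simeq \operatorname{per}(\Gamma)/\cd^{\operatorname{b}}_{\operatorname{fd}}(\Gamma)$, identify $\Gamma$ with the Ginzburg dg algebra of $(Q,0)$, and conclude by the recognition theorem for generalized cluster categories. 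Keller--Reiten's main-body proof is genuinely different: algebraicity is used to lift $T$ to a dg bimodule, producing a triangle functor $\cd^{\operatorname{b}}(KQ)\to\cc$; the $2$-Calabi--Yau property shows this functor intertwines $\tau^{-1}[1]$ with the identity; Keller's universal property of triangulated orbit categories then factors it through $\cc(KQ)$; and the induced functor is shown to be fully faithful and dense by a cluster-tilting argument. Both routes use perfectness of $K$ in the way you indicate (separability of the semisimple quotient).

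One caveat on content: your formality step is asserted too glibly. Hereditariness of $H^0(\Gamma)\cong KQ$ does not by itself force ``the cohomology encoding a potential or higher $A_\infty$-multiplications'' to vanish; the substantive theorem --- which you do correctly flag as the hard part and defer to \cite{KR} --- is that $\Gamma$ is quasi-isomorphic to a deformed preprojective/Ginzburg dg algebra of some quiver with potential at all, after which acyclicity of $Q$ (forced by $KQ$ being finite-dimensional hereditary) leaves no room for a nonzero potential. Since you defer to \cite{KR} at exactly the point where the paper does, there is no gap relative to the paper's treatment; just make sure the formality is presented as a cited theorem, not as an immediate vanishing observation.
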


   By using the above statements, we can show the following triangle-equivalences between cluster categories of type $A$ and stable categories of Cohen-Macaulay modules.
\begin{theorem}
\label{th:cyeq}
    Let $\Lambda$ be the $R$-order given in \eqref{the order}.
\begin{enumerate}
\item
\label{2-CY}
The stable category $\underline{\CM}(\Lambda)$ is $2$-Calabi-Yau.
\item
\label{th:equivalence}
If $K$ is perfect, then there is a triangle-equivalence $\cc(KQ) \cong \underline{\CM}(\Lambda)$ for a quiver $Q$ of type $A_{n-3}$. 
 \end{enumerate}   
\end{theorem}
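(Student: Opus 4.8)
The crux of this theorem is the $2$-Calabi--Yau property; granting it, the triangle-equivalence is an application of Keller--Reiten's Theorem~\ref{KR}, so I would organize the argument by first reducing the second assertion to the first and then proving the first. For the reduction: the category $\underline{\CM}(\Lambda)$ is algebraic, being by construction the stable category of the Frobenius category $\CM(\Lambda)$ (since $\Lambda$ is Gorenstein, the projective and injective objects of $\CM(\Lambda)$ coincide, namely with $\add\Lambda$, the modules attached to the sides of $P$). I would take $\sigma$ to be the fan triangulation with diagonals $(P_1,P_3),\dots,(P_1,P_{n-1})$. By Theorem~\ref{thm:cluster tilting}, $T_\sigma$ is a basic cluster tilting object of $\CM(\Lambda)$, and deleting its projective (side) summands yields a basic cluster tilting object $\underline T_\sigma=\bigoplus_{\text{diagonals of }\sigma}\epsilon(P_s,P_t)$ of $\underline{\CM}(\Lambda)$. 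Using Theorem~\ref{th:endomorphism}, which identifies $\End_\Lambda(T_\sigma)$ with $\Gamma_\sigma^{\operatorname{op}}$, together with the vanishing in $\underline{\CM}(\Lambda)$ of every morphism factoring through a side, one computes $\underline{\End}_{\underline{\CM}(\Lambda)}(\underline T_\sigma)\cong KQ$, where $Q$ is the type $A_{n-3}$ quiver (linearly oriented) carried by the $n-3$ diagonals of the fan. This algebra is hereditary, so, $K$ being perfect and the category being $2$-Calabi--Yau, Theorem~\ref{KR} gives $\cc(KQ)\cong\underline{\CM}(\Lambda)$.

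For the $2$-Calabi--Yau property itself, the plan is to produce a Serre functor on $\underline{\CM}(\Lambda)$ and identify it with the square of the suspension. First, $\Lambda$ is an isolated singularity: localizing away from the prime $(x)$ (the role of $x$ being played by the central element $C$) turns $\Lambda$ into a full matrix order over a regular ring, so $\Lambda_{\mathfrak p}$ is regular for every $\mathfrak p\neq(x)$. Auslander--Reiten--Serre duality for Cohen--Macaulay modules over a Gorenstein isolated-singularity order then makes $\underline{\CM}(\Lambda)$ a $\Hom$-finite triangulated category with a Serre functor $\mathbb S$ and Auslander--Reiten triangles, linked by $\mathbb S\cong\tau\circ[1]$, where $\tau$ is the Auslander--Reiten translation. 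Since the $2$-Calabi--Yau property is exactly $\mathbb S\cong[2]$, it suffices to prove $\tau\cong[1]$, i.e. that $\tau$ coincides with the suspension $\Omega^{-1}$. I would verify this on the indecomposables $(s,t)$ of Theorem~\ref{th:indecomposable modules}: matching the first family of Lemma~\ref{pro:extension} with the meshes of the Auslander--Reiten quiver of Figures~\ref{CMLe} and~\ref{CMLo} shows $\tau(s,t)=(s-1,t-1)$ (indices modulo $n$, the one-step rotation of the polygon), while computing the cosyzygy $\Omega^{-1}(s,t)$ from the Frobenius structure — embedding $(s,t)$ in its injective hull, a sum of sides, and taking the cokernel — yields the same rotation. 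This coincidence is furthermore consistent with the symmetry $\Ext^1_\Lambda(X,Y)\neq0\Leftrightarrow\Ext^1_\Lambda(Y,X)\neq0$ of Proposition~\ref{cor:extensioncross}.

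The main obstacle is twofold: upgrading $\tau\cong[1]$ from an identification on objects to an identification of functors, and, more fundamentally, justifying Auslander--Reiten--Serre duality over the non-complete base $R=K[x]$, whereas the duality is classically developed over a complete base. I would resolve the latter by base change along $R\hookrightarrow\widehat R=K\llbracket x\rrbracket$: the functor $\widehat R\otimes_R-$ carries $\CM(\Lambda)$ to $\CM(\widehat\Lambda)$, with $\widehat\Lambda=\widehat R\otimes_R\Lambda$, compatibly with the combinatorics already exploited in the proof of Theorem~\ref{th:indecomposable modules}, and flat base change identifies the relevant finite-length $\Ext^1$-groups, all supported at $(x)$, with their completions without altering their $K$-dimensions. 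This transports the Serre duality established over the complete Gorenstein order $\widehat\Lambda$ back to $\Lambda$, legitimizing the computation above and confirming that the stable Auslander--Reiten quiver of $\underline{\CM}(\Lambda)$ is the M\"obius band with $\lceil(n-3)/2\rceil$ $\tau$-orbits, matching that of $\cc(KQ)$.
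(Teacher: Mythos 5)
Your treatment of part (2) is essentially the paper's: fan triangulation, $\Gamma_\sigma^{\operatorname{op}}/(e_F)\cong (KQ)^{\operatorname{op}}$ via Theorem \ref{th:endomorphism}, then Keller--Reiten. The problem is part (1). You correctly reduce the $2$-Calabi--Yau property to showing that the Serre functor $\nu=\tau\circ[1]$ is isomorphic to $[2]$, equivalently $\tau\cong[1]$, but your verification of this is only object-wise: you check $\tau(s,t)=(s-1,t-1)=\Omega^{-1}(s,t)$ on indecomposables by comparing AR meshes with cosyzygies. The $2$-Calabi--Yau property needed as input for Theorem \ref{KR} is a \emph{bifunctorial} isomorphism $\DK\,\underline{\Hom}_\Lambda(X,Y)\cong\underline{\Hom}_\Lambda(Y,X[2])$, i.e.\ an isomorphism of functors $\nu\cong[2]$; a coincidence of $\tau$ and $[1]$ on (iso classes of) indecomposable objects does not produce such an isomorphism, nor even a natural transformation. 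You yourself name this as the first of your two obstacles, but your proposed resolution --- base change along $R\hookrightarrow K\llbracket x\rrbracket$ and flat identification of the finite-length stable Hom and Ext spaces --- addresses only the \emph{second} obstacle (Serre duality over the non-complete base, which the paper instead handles by quoting Theorem \ref{isolated singularity th} for $R$-orders), and leaves the functoriality gap untouched. Since that gap is exactly the hard point, the proof is incomplete.

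The paper closes this gap by an entirely functorial argument, which is the real content of Section \ref{ss:Proof of 2-CY}: it computes $\DR\Lambda$ explicitly and shows $\DR\Lambda\cong{}_1\Lambda_{\alpha}$ as $\Lambda$-bimodules (Proposition \ref{pro:bimodule}), where $\alpha$ is conjugation by an explicit matrix $G$ and corresponds to a rotation of the quiver of $\Lambda$; hence $\nu\cong{}_{\alpha^{-1}}(-)$ as functors on $\underline{\CM}(\Lambda)$ (Lemma \ref{lem:functoriso}). It then identifies $\Lambda$ with the algebra $S^{[n]}$ built from the $\ZZ/n\ZZ$-graded hypersurface $S=K[X,Y]/(X^{n-2}-Y^2)$ (Proposition \ref{pro:iso}), so that $\CM(\Lambda)\cong\CM^{\ZZ/n\ZZ}(S)$, and uses matrix factorizations (Theorem \ref{degree}) to obtain an isomorphism of \emph{functors} $[2]\cong(2)$ (degree shift). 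The $2$-Calabi--Yau property then follows from the mechanical check ${}_\beta M\cong\gamma_{-2}(M)$. To complete your own route you would need an ingredient of this kind --- for instance the bimodule isomorphism $\DR\Lambda\cong{}_1\Lambda_{\alpha}$ together with a functorial identification of $[2]$ with the twist ${}_{\alpha^{-1}}(-)$ --- because no amount of AR-quiver computation on objects yields the required naturality.
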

\begin{proof}
We will prove (1) in the next subsection independently.

Take the triangulation $\sigma$ whose set of diagonals is $\{(P_1,P_3),(P_1,P_4),\cdots,(P_1,P_{n-1})\}.$
The full subquiver $Q$ of $Q_\sigma$ with the set of vertices $Q_{\sigma,0} \smallsetminus F$ is a quiver of type $A_{n-3}$.
Thus, we have  $$\Gamma_\sigma^{\operatorname{op}}/ (e_F) \cong (KQ)^{\operatorname{op}}.$$

By Theorem \ref{th:endomorphism}, for the cluster tilting object $T_\sigma$, we have the following isomorphism 
\begin{align*}
\underline{\End}_{\Lambda}(T_\sigma) \cong \Gamma_\sigma^{\operatorname{op}}/ (e_F) .
\end{align*} 

Then, by Theorem \ref{KR}, we have $\cc\left((KQ)^{\operatorname{op}}\right) \cong \underline{\CM}(\Lambda)$.
\end{proof}
 
 \subsection{Proof of Theorem \ref{th:cyeq}~(1)}
 \label{ss:Proof of 2-CY}
In this subsection, we prove the $2$-Calabi-Yau property of the stable category $\underline{\CM}(\Lambda)$.
Before, we recall some general definitions and facts about Cohen-Macaulay modules as follows. 
Let $R=K[x]$ be a polynomial ring over a field $K$ and $A$ an $R$-order.

\begin{definition}
\label{def:gorenstein}
 We call $X$ an \emph{injective} {Cohen-Macaulay} $A$-module if $\Ext^1_A(Y,X)=0$ for any $Y \in \CM(A)$, or equivalently, $X \in \add(\Hom_R(A^{\text{op}}, R))$.
 Denote by $\inj A$ the category of {\emph{injective}} Cohen-Macaulay $A$-modules.
 
 An $R$-order $A$ is \emph{Gorenstein} if $\Hom_{{R}}(A_A, R)$ is a projective as a left $A$-module, or equivalently, if $\Hom_R({}_A A, R)$ is projective as a right $A$-module. 
 \end{definition}

 We have an exact duality $\DR : \CM(A^{\operatorname{op}}) \to \CM(A)$. The Nakayama functor is defined here by $$\nu: \proj A \xlongrightarrow{(-)^*} \proj A^{\operatorname{op}} \xlongrightarrow{\DR} \inj A,$$ which is isomorphic to $(\DR A) \tens_{A}-$.
For any Cohen-Macaulay $A$-module $X$, consider a projective presentation $$P_1\xrightarrow{f} P_0 \to X \to 0.$$
 We apply $(-)^* : \mod A \to \mod A^{\operatorname{op}}$ to the projective presentation to get the following exact sequence: $$0\lra X^*\lra P_0^*\xlongrightarrow{f^*} P_1^*\lra \coker(f^*) \lra0.$$
 We denote $\coker(f^*)$ by $\tr X$ and we get $\im(f^*)=  \Omega \tr X$, where $\Omega$ is the syzygy functor: $\underline{\mod} A^{\operatorname{op}} \to \underline{\mod} A^{\operatorname{op}}$.
Then we apply $\DR: \CM(A^{\operatorname{op}}) \to \CM(A)$ to $$0\lra X^*\lra P_0^* \xlongrightarrow{f^*} \Omega \tr X \lra 0$$ and denote $\tau X:=\DR \Omega \tr X$. Thus, we get the exact sequence
\begin{equation}
\label{eq:tau}
  0\lra \tau X \lra \nu P_0 \lra \nu X \lra0.
  \end{equation}
 
  For an $R$-order $A$, if $K(x) \tens_R A$ is a semisimple $K(x)$-algebra, then we call $A$ an isolated singularity.
  By using the notions above, we have the following well-known results in Auslander-Reiten theory.
\begin{theorem}[\cite{FMO,almostsplitseqorder,MR0412223}]
 \label{isolated singularity th}
Let $A$ be an $R$-order. If $A$ is an isolated singularity, then 
  \begin{enumerate}
     \item  \cite[Chapter \uppercase\expandafter{\romannumeral1}, Proposition 8.3]{FMO} The construction $\tau$ gives an equivalence $\underline{\CM}(A) \to \overline{\CM}(A)$, where $\overline{\CM}(A)$ is the quotient of $\CM(A)$ by the ideal of maps which factor through injective objects.
     \item   \cite[Chapter \uppercase\expandafter{\romannumeral1}, Proposition 8.7]{FMO} For $X,Y\in \underline{\CM}(A)$, there is a functorial isomorphism $$\underline{\Hom}_{A}(X,Y) \cong \DK\Ext^1_{A}(Y, \tau X).$$
  \end{enumerate}
\end{theorem}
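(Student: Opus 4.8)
This statement is the foundational Auslander--Reiten duality for orders, so I would prove it by transporting Auslander's theory of almost split sequences to the $R$-order $A$, with the hypothesis that $A$ is an \emph{isolated singularity} doing the essential work. The first point to isolate is the precise homological meaning of that hypothesis: since $K(x)\otimes_R A$ is semisimple, for $X,Y\in\CM(A)$ and $i\ge 1$ the $R$-modules $\Ext^i_A(X,Y)$ are supported only at closed points, hence of finite length over $R$ and finite dimensional over $K$; the same holds for $\underline{\Hom}_A(X,Y)$ and $\overline{\Hom}_A(X,Y)$. Alongside this I would record that on finite-length $R$-modules the functor $\Ext^1_R(-,R)$ is the Matlis/$K$-duality $\DK$ (valid because $R=K[x]$ is regular of dimension one with $R$ itself as canonical module), since this is exactly the mechanism that turns the $R$-linear bookkeeping into the $K$-linear duality $\DK$ appearing in part (2).

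For part (1), I would read $\tau=\DR\,\Omega\,\tr$ as a composite of three functors whose behaviour on the stable and costable categories is controlled. The transpose $\tr$ is the Auslander--Bridger duality on $\underline{\mod} A$; its value $\tr X$ need not be a lattice, but the syzygy $\Omega\tr X=\im(f^*)$ sits inside a projective and so lies in $\CM(A^{\operatorname{op}})$, and the exact duality $\DR\colon\CM(A^{\operatorname{op}})\to\CM(A)$, which sends projectives to injectives, carries it back into $\CM(A)$. I would first check that $\tau$ is independent of the chosen presentation modulo maps factoring through injectives, so that it descends to a functor $\underline{\CM}(A)\to\overline{\CM}(A)$; then I would produce a quasi-inverse by the symmetric construction over $A^{\operatorname{op}}$, using the cosyzygy in place of the syzygy together with the two dualities, and verify that the unit and counit are isomorphisms using $\tr^2\cong\operatorname{id}$ on the stable category and $\DR\DR\cong\operatorname{id}$. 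It is precisely the isolated-singularity hypothesis that makes the syzygy/cosyzygy pair well behaved enough on these quotient categories for the composite to be invertible.

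For part (2), I would apply $\Hom_A(Y,-)$ to the fundamental sequence \eqref{eq:tau}, namely $0\to\tau X\to\nu P_0\to\nu X\to 0$. Because $\nu P_0\in\inj A$ we have $\Ext^1_A(Y,\nu P_0)=0$, so $\Ext^1_A(Y,\tau X)$ is identified with the cokernel of the induced map $\Hom_A(Y,\nu P_0)\to\Hom_A(Y,\nu X)$. The engine of the computation is the duality adjunction $\Hom_A(Y,\DR Z)\cong\DR(Z\otimes_A Y)$, natural in $Z\in\CM(A^{\operatorname{op}})$ and $Y\in\CM(A)$, together with $P^*\otimes_A Y\cong\Hom_A(P,Y)$ for projective $P$. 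Feeding the projective presentation of $X$ through these isomorphisms, and using the local-duality isomorphism that converts $\DR$ into $\DK$ on finite-length $R$-modules, rewrites $\Ext^1_A(Y,\tau X)$ as the $\DK$-dual of a $\tor$-type group; the Auslander--Bridger computation identifies that group with $\underline{\Hom}_A(X,Y)$, which yields the stated isomorphism, its naturality inherited from that of every isomorphism used.

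The step I expect to be the main obstacle is making $\tau$ a genuine equivalence in part (1) rather than merely a well-defined functor: one has to match precisely the indeterminacy coming from $\tr$ (projective summands) and from the passage to $\Omega\tr X$ against the injective summands that are killed in $\overline{\CM}(A)$, and then show that the syzygy and cosyzygy really invert one another on these quotient categories. This is also the exact place where, without the isolated-singularity hypothesis, the argument breaks down, since neither the finiteness used throughout part (2) nor the invertibility needed in part (1) would be available. As the result is classical, in the paper itself I would simply invoke \cite{FMO,almostsplitseqorder,MR0412223}; the outline above is the content those references supply.
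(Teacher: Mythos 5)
The paper gives no proof of this theorem: it is imported directly from the literature (\cite[Chapter I, Propositions 8.3 and 8.7]{FMO}), which is exactly what you propose doing at the end of your write-up. Your outline of the underlying classical argument --- finite length of $\Ext^1_A$ and of stable/costable Hom forced by the isolated-singularity hypothesis, $\tau=\DR\Omega\tr$ descending to an equivalence $\underline{\CM}(A)\to\overline{\CM}(A)$ with quasi-inverse built from the symmetric construction over $A^{\operatorname{op}}$, and part (2) obtained by applying $\Hom_A(Y,-)$ to the sequence \eqref{eq:tau} and rewriting via hom-tensor adjunction, the identification of $\Ext^1_R(-,R)$ with $\DK$ on finite-length modules, and the Auslander--Bridger formula --- is precisely the standard proof supplied by those references, so your approach matches the paper's.
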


For Gorenstein orders, we have the following nice properties.
\begin{proposition}
 \label{gorenstein prop}
 Assume that $A$ is a Gorenstein isolated singularity, then we have
  \begin{enumerate}
    \item $\CM(A)$ is a Frobenius category;
    \item $\underline{\CM}(A)$ is a $K$-linear Hom-finite triangulated category;
    \item $\tau=\Omega \nu=[-1]\circ \nu$.
  \end{enumerate}
\end{proposition}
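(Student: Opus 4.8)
The plan is to establish the three assertions in order, since (2) and (3) are essentially formal consequences of the Frobenius structure obtained in (1) together with Theorem \ref{isolated singularity th}. For (1) I would start from the fact that $\CM(A)$ is an exact subcategory of $\mod A$ and check that it has enough projectives and enough injectives. Given $X \in \CM(A)$, a surjection $P \twoheadrightarrow X$ from a free module has kernel $\Omega X$; since $R = K[x]$ is a PID and $P$ is $R$-free (Remark \ref{rem:order}), the submodule $\Omega X$ is again finitely generated and $R$-free, hence lies in $\CM(A)$, giving enough projectives. Applying the exact duality $\DR : \CM(A^{\operatorname{op}}) \to \CM(A)$ to the analogous statement for $A^{\operatorname{op}}$ yields enough injectives, the injective objects being exactly $\inj A = \add \DR(A^{\operatorname{op}})$. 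The essential point is then $\proj A = \inj A$: the Gorenstein hypothesis says precisely that $\DR(A^{\operatorname{op}}) = \Hom_R(A_A, R)$ is projective as a left $A$-module, so $\inj A \subseteq \proj A$; and since the Nakayama functor $\nu = \DR \circ (-)^*$ is a composite of two dualities, it is an equivalence $\proj A \xrightarrow{\sim} \inj A$, so these Krull--Schmidt categories have the same finite number of indecomposables, forcing the inclusion to be an equality. With projectives and injectives coinciding, $\CM(A)$ is Frobenius.

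For (2), once (1) holds, Happel's theorem gives that $\underline{\CM}(A)$ is triangulated, with shift $[1]$ the cosyzygy and inverse shift $[-1]$ the syzygy $\Omega$; it is manifestly $K$-linear. For Hom-finiteness I would invoke Theorem \ref{isolated singularity th}(2), which gives $\underline{\Hom}_A(X,Y) \cong \DK \Ext^1_A(Y, \tau X)$, reducing the problem to showing $\Ext^1_A(Y, \tau X)$ is finite-dimensional over $K$. This is where the isolated-singularity hypothesis enters: $\Ext^1_A$ of Cohen--Macaulay modules is a finitely generated $R$-module that vanishes after applying $K(x) \otimes_R -$ (because $K(x) \otimes_R A$ is semisimple), hence is of finite length over $R$ and therefore finite-dimensional over $K$.

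For (3), I would read the claim directly off the defining sequence \eqref{eq:tau}, namely $0 \to \tau X \to \nu P_0 \to \nu X \to 0$. The middle term $\nu P_0$ is injective, being $\nu$ of a projective, and by part (1) injective objects are projective; thus this is a short exact sequence with projective middle term surjecting onto $\nu X$, so its kernel represents the syzygy and $\tau X \cong \Omega \nu X$ in $\underline{\CM}(A)$. Identifying $\Omega$ with the inverse shift $[-1]$ then yields $\tau = \Omega \nu = [-1] \circ \nu$. I expect the only genuinely structural step to be the identification $\proj A = \inj A$ in part (1): producing enough injectives through the duality $\DR$ and, above all, upgrading the Gorenstein \emph{inclusion} to an \emph{equality} via the Nakayama equivalence is the crux, after which parts (2) and (3) follow formally.
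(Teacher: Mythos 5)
Your parts (2) and (3) are correct and agree with the paper's (very terse) proof: for (2) the paper cites Happel and \cite[Lemma 3.3]{CM}, and your finite-length argument for $\Ext^1$ via the isolated-singularity hypothesis together with Theorem \ref{isolated singularity th}(2) is the standard way to see Hom-finiteness; for (3) the paper likewise reads the identity $\tau=\Omega\nu$ off the sequence \eqref{eq:tau}, using that $\nu P_0$ is projective-injective. The genuine gap is in part (1), at exactly the step you single out as the crux. To upgrade $\inj A\subseteq\proj A$ to an equality you argue that $\nu$ is an equivalence $\proj A\to\inj A$, ``so these Krull--Schmidt categories have the same finite number of indecomposables, forcing the inclusion to be an equality.'' Neither Krull--Schmidt nor finiteness is available here: $A$ is an \emph{arbitrary} Gorenstein $R$-order with $R=K[x]$ not local, and the paper explicitly warns (remark following Theorem \ref{th:indecomposable modules}) that the Krull--Schmidt--Azumaya property usually fails in this setting. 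Concretely, take $A=\CC[x,y]/(y^2-f(x))$ with $f$ squarefree of degree $3$: this is an $R$-order (free of rank $2$ over $R$), it is Gorenstein since $\Hom_R(A,R)\cong A$, and it is an isolated singularity since $K(x)\otimes_R A$ is a field; but $A$ is a Dedekind domain with infinite Picard group, so $\proj A$ has infinitely many indecomposables, and $I\oplus I^{-1}\cong A\oplus A$ for a non-principal ideal $I$ shows Krull--Schmidt fails. So the counting step is not merely unjustified; it is false in the stated generality, and with it your derivation of $\proj A=\inj A$ collapses.

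The repair is what the paper's one-line assertion ``they are also injective objects'' silently uses, namely the \emph{second} half of Definition \ref{def:gorenstein}: Gorenstein also means that $\Hom_R({}_AA,R)$ is projective as a right $A$-module, i.e.\ $\DR({}_AA)\in\add(A_A)$. Applying the exact duality $\DR:\CM(A^{\operatorname{op}})\to\CM(A)$ yields ${}_AA\cong\DR\DR({}_AA)\in\add\bigl(\DR(A_A)\bigr)=\inj A$, hence every projective left module is injective. Combined with your (correct) inclusion $\inj A=\add\bigl(\DR(A_A)\bigr)\subseteq\proj A$ coming from the first half of the definition, this gives $\proj A=\inj A$ with no Krull--Schmidt or finiteness input whatsoever. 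With that single step replaced, the rest of your argument --- enough projectives via syzygies over the PID $R$, enough injectives by applying $\DR$ to the statement for $A^{\operatorname{op}}$, then (2) and (3) as you wrote them --- is sound and matches the paper.
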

\begin{proof}
(1) The projective objects in $\CM(A)$ are just projective $A$-modules. They are also injective objects. Since each finitely generated $A$-module is a quotient of a projective $\Lambda$-module, it follows that $\CM(A)$ is a Frobenius category; (2) is due to \cite{Happel} and \cite[Lemma 3.3]{CM}; (3) is a direct consequence of  \eqref{eq:tau}.
\end{proof}

The order $\Lambda$ is Gorenstein. Indeed, as (graded) left $\Lambda$-modules,
$$\DR(\Lambda_\Lambda) = \Hom_R\left(
   \begin{bmatrix}
    R & R  & R & \cdots & R & R & (x^{-1}) \\
    (x) & R & R & \cdots & R & R & R \\
    (x^2) & (x) & R & \cdots& R & R & R \\
    \vdots&\vdots & \vdots &\ddots&\vdots&\vdots&\vdots\\
    (x^2) & (x^2) & (x^2) & \cdots & R  & R & R \\
    (x^2) & (x^2) & (x^2) & \cdots & (x) & R & R \\
    (x^2) & (x^2) & (x^2) & \cdots & (x^2) & (x) & R
   \end{bmatrix}
   , R \right)$$
can be identified with
$$\begin{bmatrix}
    R & (x^{-1})  & (x^{-2}) & \cdots  &(x^{-2}) & (x^{-2})& (x^{-2}) \\
    R & R & (x^{-1}) & \cdots& (x^{-2}) &(x^{-2})& (x^{-2})\\
    R & R & R & \cdots& (x^{-2}) &(x^{-2})& (x^{-2})\\
    \vdots&\vdots & \vdots &\ddots&\vdots&\vdots&\vdots\\
    R & R & R & \cdots& R  &(x^{-1})& (x^{-2}) \\
    R&R & R &\cdots& R& R&  (x^{-1})\\
    (x) & R &R & \cdots& R & R& R
   \end{bmatrix} = \Lambda G^{-1},$$
 where
  $$G=
   \begin{bmatrix}
  0           & 0                   & \ldots     & 0              & 1              & 0   \\
  0           & 0                      & \ldots     & 0              & 0              & 1   \\
   x^2      & 0                    & \ldots     & 0              & 0              & 0    \\
   0          & x^2                 & \ldots     & 0              & 0              & 0   \\
  \vdots   & \vdots       & \ddots    & \vdots      & \vdots     & \vdots \\
  0           & 0                    & \ldots      & x^2          & 0              & 0
  \end{bmatrix}.$$
  Therefore $\DR(\Lambda_\Lambda)$ is a projective (left) $\Lambda$-module.

According to Theorem \ref{isolated singularity th} and Proposition \ref{gorenstein prop},
we have $$\underline{\Hom}_{\Lambda}(X,Y) \cong \DK\underline{\Hom}_{\Lambda}(Y, \nu X)$$ for $X,Y\in \CM(\Lambda).$
Thus $\nu= (\DR\Lambda)\tens_{\Lambda}-$ is a Serre functor.
 We want to prove that $$(\DR\Lambda)\tens_{\Lambda}- \cong  \Omega^{-2}(-).$$
Thanks to the previous discussion, there is an isomorphism of $\Lambda$-modules:
  \[ f: \Lambda \to \DR(\Lambda_\Lambda) \text{, }  \mu \mapsto \mu G^{-1}. \]

  We define the automorphism $\alpha$ of $\Lambda$ by $\alpha (\lambda)= G^{-1} \lambda G$ for $\lambda \in \Lambda$. 
       The automorphism $\alpha$ corresponds to a $4\pi/n$ counterclockwise rotation of the quiver of $\Lambda$ {shown} in Remark \ref{remark:quiver}.
    In fact, if $$\lambda=
  \begin{bmatrix}
  \lambda_{11}             & \lambda_{12}           & \ldots     & \lambda_{1n-2}        &\lambda_{1n-1}      & x^{-1}\lambda_{1n}  \\
  x\lambda_{21}           & \lambda_{22}           & \ldots     & \lambda_{2n-2}        &\lambda_{2n-1}      & \lambda_{2n}   \\
  x^2\lambda_{31}       & x\lambda_{32}          & \ldots     & \lambda_{3n-2}        &\lambda_{3n-1}      & \lambda_{3n}   \\
  \vdots                        & \vdots                       & {\ddots}     & \vdots                      & \vdots                    & \vdots     \\
  x^2 \lambda_{n-21}   & x^2\lambda_{12}      & \ldots     & \lambda_{n-2n-2}     &\lambda_{n-2n-1}   & \lambda_{n-2n}  \\
  x^2\lambda_{n-11}    & x^2\lambda_{n-12}   & \ldots     & x\lambda_{n-1n-2}   &\lambda_{n-1n-1}   & \lambda_{n-1n}   \\
  x^2\lambda_{n1}       & x^2\lambda_{n2}      & \ldots     & x^2\lambda_{nn-2}   & x\lambda_{nn-1}   & \lambda_{nn}    
  \end{bmatrix}$$ be an element in $\Lambda$, where $\lambda_{ij} \in R$ for $i,j \in \{1,2, \ldots, n\}$,
    then
  $$\alpha(\lambda) =
    \begin{bmatrix}
  \lambda_{33}             & \lambda_{34}           & \ldots     & \lambda_{3n}        &\lambda_{31}      & x^{-1}\lambda_{32}  \\
  x\lambda_{43}           & \lambda_{44}           & \ldots     & \lambda_{4n}        &\lambda_{41}      & \lambda_{42}   \\
  x^2\lambda_{53}       & x\lambda_{54}          & \ldots     & \lambda_{5n}        &\lambda_{51}      & \lambda_{52}   \\
  \vdots                        & \vdots                       & {\ddots}     & \vdots                   & \vdots                 & \vdots     \\
  x^2 \lambda_{n3}      & x^2\lambda_{n4}      & \ldots     & \lambda_{nn}        &\lambda_{n1}       & \lambda_{n2}  \\
  x^2\lambda_{13}       & x^2\lambda_{14}      & \ldots     & x\lambda_{1n}       &\lambda_{11}       & \lambda_{12}   \\
  x^2\lambda_{23}       & x^2\lambda_{24}      & \ldots     & x^2\lambda_{2n}   & x\lambda_{21}    & \lambda_{22}    
  \end{bmatrix}.$$ 

Let $A$ and $B$ be two $R$-orders. We define ${}_\vartheta M_{\varsigma}$ for an $(A,B)$-bimodule $M$, $\vartheta \in \Aut(A)$ and $\varsigma \in \Aut(B)$ as follows:
${}_\vartheta M_{\varsigma}:=M$ as a vector space and the $(A,B)$-bimodule structure is given by $$a \times  m \times b = \vartheta(a) m \varsigma(b)$$ for $m \in{}_\vartheta M_{\varsigma}$ and $a\in A$, $b \in B$.
Since $\vartheta \in \Aut(A)$, ${}_\vartheta (-)$ is an automorphism of $\mod A$.

Considering the bimodule structures of ${}_1\Lambda_{\alpha}$ and ${}_1(\DR\Lambda)_{\alpha^{-1}}$, we get the following proposition.
 \begin{proposition}
  \label{pro:bimodule}
    The above $f: \Lambda \to \DR \Lambda$ gives an isomorphism of $\Lambda$-bimodules $${}_1\Lambda_{\alpha} \cong \DR\Lambda.$$    \end{proposition}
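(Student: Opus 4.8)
\emph{Approach.} The map $f$ is already known to be an isomorphism of left $\Lambda$-modules, so the entire content of the proposition concerns the right action: I must check that $f$ intertwines the $\alpha$-twisted right action on ${}_1\Lambda_{\alpha}$ (where $\mu \cdot \lambda = \mu\,\alpha(\lambda)$) with the genuine right $\Lambda$-action on $\DR\Lambda$. Since bijectivity of $f$ is in hand, it suffices to prove that $f$ is a homomorphism of $\Lambda$-bimodules. The plan is therefore twofold: (i) make the bimodule structure on $\DR(\Lambda_\Lambda)$ completely explicit under the identification $\DR(\Lambda_\Lambda) \cong \Lambda G^{-1}$ recorded before the statement, and then (ii) read off right-compatibility from the defining relation $\alpha(\lambda) = G^{-1}\lambda G$ by a one-line matrix manipulation.

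\emph{Step (i): the bimodule structure on $\DR\Lambda$.} I would realize the identification $\DR(\Lambda_\Lambda) \cong \Lambda G^{-1}$ through the nondegenerate trace pairing $\langle A,B\rangle = \tr(AB)$ on $\M_n(K(x))$. Under this pairing, $\DR(\Lambda_\Lambda) = \Hom_R(\Lambda_\Lambda, R)$ is identified with the dual lattice $\Lambda^{\#} = \{\, B \in \M_n(K(x)) \mid \tr(\Lambda B) \subseteq R \,\}$ via $B \mapsto \phi_B$, where $\phi_B(\mu) = \tr(\mu B)$. A direct entrywise computation shows that $\Lambda^{\#}$ is exactly the lattice displayed before the proposition, namely $\Lambda G^{-1}$; in particular $\Lambda G^{-1}$ is a $\Lambda$-sub-bimodule of $\M_n(K(x))$, which also follows abstractly from $G^{-1}\Lambda G = \Lambda$ (i.e. $\alpha \in \Aut(\Lambda)$). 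The payoff is that, using cyclic invariance of the trace, the left action becomes $\lambda\,\phi_B = \phi_{\lambda B}$ and the right action becomes $\phi_B\,\lambda = \phi_{B\lambda}$; thus both actions on $\DR(\Lambda_\Lambda) \cong \Lambda G^{-1}$ are plain matrix multiplications.

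\emph{Step (ii): the verification.} With this description the check is immediate. Left-compatibility $f(\lambda\mu) = \lambda\mu G^{-1} = \lambda\,f(\mu)$ recovers the known left-module statement. For the twisted right action I compute $f(\mu \cdot \lambda) = f(\mu\,\alpha(\lambda)) = \mu\,\alpha(\lambda)\,G^{-1} = \mu\,G^{-1}\lambda G\,G^{-1} = \mu G^{-1}\lambda = (\mu G^{-1})\,\lambda = f(\mu)\,\lambda$, where the final product is the right $\Lambda$-action on $\Lambda G^{-1} \cong \DR\Lambda$. Hence $f$ is a homomorphism of $\Lambda$-bimodules, and being already bijective it is the desired isomorphism ${}_1\Lambda_{\alpha} \cong \DR\Lambda$.

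\emph{Main obstacle.} The only genuinely delicate point is the bookkeeping in step (i): correctly transporting the right $\Lambda$-module structure of $\DR(\Lambda_\Lambda)$ onto $\Lambda G^{-1}$, and confirming entry by entry that the trace-pairing dual lattice $\Lambda^{\#}$ coincides with the matrix written just before the proposition. Once both actions are identified with two-sided matrix multiplication, the cancellation $G\,G^{-1} = \Id$ does all the remaining work, so I expect no obstacle beyond this identification.
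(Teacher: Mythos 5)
Your proposal is correct and follows essentially the same route as the paper: the paper's entire proof is your Step (ii) cancellation $f(\mu\,\alpha(\lambda)) = \mu\, G^{-1}\lambda G\, G^{-1} = f(\mu)\,\lambda$, taking for granted that under the identification $\DR(\Lambda_\Lambda) \cong \Lambda G^{-1}$ displayed before the proposition both the left and the right $\Lambda$-actions become plain matrix multiplication. Your Step (i), via the trace pairing, supplies the justification of that identification on the side of the \emph{right} action (the paper only asserts it as left modules), which is a worthwhile added detail but not a different argument.
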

    \begin{proof}
    Clearly, $f$ preserves the left action of $\Lambda$.
    Moreover, it preserves the right action since for $\lambda$, $ \mu \in \Lambda$, we have $$f(\mu \alpha(\lambda))= f(\mu (P^{-1} \lambda P) )=\mu (P^{-1} \lambda P)  P^{-1}= \mu P^{-1} \lambda=f(\mu) \lambda.$$
  \end{proof}
  
  By using the isomorphism of Proposition \ref{pro:bimodule}, we find the following description of  the Nakayama functor $\nu$.
  \begin{lemma}
  \label{lem:functoriso}
  We have an isomorphism  $\nu \cong {}_{\alpha^{-1}} (-)$ of functors $\underline{\CM}(\Lambda) \to \underline{\CM}(\Lambda)$.
  \end{lemma}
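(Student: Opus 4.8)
The plan is to deduce the statement directly from Proposition \ref{pro:bimodule} together with a standard computation of a twisted tensor functor. Recall that, as noted before Theorem \ref{isolated singularity th}, the Nakayama functor is given by $\nu \cong (\DR\Lambda)\tens_\Lambda -$ on $\CM(\Lambda)$. Proposition \ref{pro:bimodule} provides an isomorphism of $\Lambda$-bimodules ${}_1\Lambda_\alpha \cong \DR\Lambda$, so the first step is simply to rewrite $\nu \cong {}_1\Lambda_\alpha \tens_\Lambda -$. It then remains to identify the functor ${}_1\Lambda_\alpha \tens_\Lambda -$ with the twisting functor ${}_{\alpha^{-1}}(-)$.

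For this, I would exhibit an explicit natural isomorphism. Given a Cohen-Macaulay $\Lambda$-module $M$, define $\Phi_M \colon {}_1\Lambda_\alpha \tens_\Lambda M \to {}_{\alpha^{-1}}M$ by $\lambda \tens m \mapsto \alpha^{-1}(\lambda)m$, with inverse $m \mapsto 1 \tens m$. One checks that $\Phi_M$ is well defined by unwinding the tensor relation: in ${}_1\Lambda_\alpha \tens_\Lambda M$ the right action of $b\in\Lambda$ on the first factor is $\lambda\mapsto \lambda\alpha(b)$, so $\lambda\alpha(b)\tens m = \lambda\tens bm$, and both sides are sent to $\alpha^{-1}(\lambda)bm$. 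Left $\Lambda$-linearity is the assertion that $\Phi_M(a\lambda\tens m)=\alpha^{-1}(a)\alpha^{-1}(\lambda)m$ equals the action of $a$ on ${}_{\alpha^{-1}}M$, which is exactly $\alpha^{-1}(a)\cdot\alpha^{-1}(\lambda)m$ by definition of the left twist. Naturality in $M$ is then automatic: for a $\Lambda$-linear map $g\colon M\to M'$ the relevant square commutes, because $g$ commutes with multiplication by $\alpha^{-1}(\lambda)$ and ${}_{\alpha^{-1}}(g)=g$ as underlying maps.

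Finally, I would verify that the isomorphism descends to the stable category. Since $\alpha$ is an algebra automorphism, the functor ${}_{\alpha^{-1}}(-)$ is exact and carries $\proj\Lambda$ to $\proj\Lambda$; as $\Lambda$ is Gorenstein, the same holds for $\nu$, so both functors restrict to autoequivalences of $\CM(\Lambda)$ that preserve the ideal of maps factoring through projectives. Hence $\Phi$ induces the desired natural isomorphism $\nu\cong{}_{\alpha^{-1}}(-)$ of functors $\underline{\CM}(\Lambda)\to\underline{\CM}(\Lambda)$. The only delicate point is the bookkeeping: one must be careful that the left/right placement of the twists in ${}_1\Lambda_\alpha$ and the passage through $\tens_\Lambda$ produce $\alpha^{-1}$ rather than $\alpha$, and this direction of the twist is exactly what is needed for consistency with $\nu$ being the Serre functor.
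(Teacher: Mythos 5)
Your proof is correct and takes essentially the same route as the paper: both identify $\nu \cong {}_1\Lambda_\alpha \tens_\Lambda -$ via Proposition \ref{pro:bimodule} and then use exactly the map $\lambda \tens m \mapsto \alpha^{-1}(\lambda)m$ to identify this tensor functor with ${}_{\alpha^{-1}}(-)$. The only difference is that you spell out the well-definedness, naturality, and descent-to-stable-category checks that the paper leaves implicit.
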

  \begin{proof}
  Since $\DR\Lambda \cong  {}_1\Lambda_{\alpha} $, 
  it follows that $\nu  \cong {}_1\Lambda_{\alpha} \tens_{\Lambda}-$.
On the other hand, we have an isomorphism $H:  {}_1\Lambda_{\alpha} \tens_{\Lambda}- \cong {}_{\alpha^{-1}} (-)$ given by $\lambda \otimes m \mapsto \alpha^{-1}(\lambda)(m)$.
  Thus the assertion follows.
  \end{proof}
   
   Let $T=K[X,Y]$ be a polynomial ring of two variables over the field $K$ and $S=T/(p)$ be the quotient ring with respect to a polynomial $p$.

We define a $\ZZ/n\ZZ$-grading on $T$ by setting $\text{deg}(X)=1$ $(\text{mod }n)$ and $\text{deg}(Y)=-1$ $(\text{mod }n)$. 
   This makes $T$  a $\ZZ/n\ZZ$-graded algebra
   \[
   T=\bigoplus_{\overline{i}\in \ZZ/n\ZZ}T_{\overline{i}}=T_{\overline{0}} \oplus T_{\overline{1}} \oplus \ldots \oplus T_{\overline{n-1}}.
   \]
    Suppose that $p$ is homogeneous of degree $\overline{d}$ with respect to this grading. Then the quotient ring $$S:=T/(p)=S_{\overline{0}} \oplus S_{\overline{1}} \oplus \ldots \oplus S_{\overline{n-1}}$$ has a natural structure of a $\ZZ/n\ZZ$-graded algebra.
    The following result can be easily established from classical results about matrix factorization. For the convenience of the reader, we include a short argument.

\begin{theorem}[\cite{CM}]
\label{degree}
In $\underline{\CM}^{\ZZ/n\ZZ}(S)$, there is an isomorphism of autoequivalences $[2] \cong (-d)$.
\end{theorem}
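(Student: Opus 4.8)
The plan is to deduce this from Eisenbud's theory of matrix factorizations in its graded form, which is the classical input alluded to in \cite{CM}: first identify the square of the syzygy functor on $\underline{\CM}^{\ZZ/n\ZZ}(S)$ with a degree shift, and then transport this to the shift functor $[2]$ through the triangulated structure. Since $T=K[X,Y]$ is regular and $S=T/(p)$ is a graded hypersurface, every graded maximal Cohen--Macaulay $S$-module $M$ without free summand is of the form $M\cong\coker(\varphi)$ for a graded matrix factorization $(\varphi,\psi)$ of $p$ over $T$, that is, a pair of homogeneous maps of graded free $T$-modules $\varphi\colon G\to F$ and $\psi\colon F\to G(\bar d)$ with $\psi\varphi=p\cdot\Id_G$ and $\varphi(\bar d)\circ\psi=p\cdot\Id_F$. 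The degree $\bar d$ of $p$ is forced into the twist on the target of $\psi$ precisely because $\varphi$ and $\psi$ are homogeneous and their composite is multiplication by $p$. This is the graded version of the standard equivalence between $\underline{\CM}^{\ZZ/n\ZZ}(S)$ and the homotopy category of graded matrix factorizations of $p$.

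Next I would read off the syzygy functor from the two-periodic graded free resolution. Writing $\overline{(-)}$ for $S\tens_T(-)$, the module $M=\coker(\varphi)$ has resolution
\[ \cdots\to\bar F(-\bar d)\xrightarrow{\bar\psi(-\bar d)}\bar G\xrightarrow{\bar\varphi}\bar F\to M\to0, \]
so that $\Omega M\cong\coker\big(\bar\psi(-\bar d)\big)$. In other words, $\Omega$ corresponds on matrix factorizations to interchanging $\varphi$ and $\psi$ together with the grading twist dictated by $\bar d$. Applying $\Omega$ once more returns the original pair $(\varphi,\psi)$, but now globally twisted, and a careful bookkeeping of the accumulated shifts produces a natural isomorphism $\Omega^2\cong(\bar d)$ of autoequivalences of $\underline{\CM}^{\ZZ/n\ZZ}(S)$. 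This is exactly the two-periodicity of matrix factorizations made graded-equivariant, with the period realized as a twist by the degree of $p$.

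Finally I would invoke the triangulated structure. Because $S$ is Gorenstein (a hypersurface), Proposition \ref{gorenstein prop} makes $\CM(S)$ Frobenius and $\underline{\CM}^{\ZZ/n\ZZ}(S)$ a triangulated category whose suspension is the cosyzygy, $[1]=\Omega^{-1}$; hence $[2]=\Omega^{-2}\cong(-\bar d)$, which is the asserted isomorphism $[2]\cong(-d)$. The homological content here is entirely classical, so the only genuine work is the grading bookkeeping: I expect the \textbf{main obstacle} to be tracking precisely how the degree $\bar d$ enters each twist (its exact sign depending on the grading-shift convention in force), and, more substantively, verifying that the isomorphism $\Omega^2\cong(\bar d)$ is \emph{natural} in $M$ rather than merely objectwise, so that it genuinely upgrades to an isomorphism of autoequivalences.
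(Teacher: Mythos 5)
Your proof takes essentially the same route as the paper's: the paper also produces a graded matrix factorization of $p$ (obtained via the Auslander--Buchsbaum formula $\prdim_T(M)=1$ rather than by citing Eisenbud's theory), tensors it with $S$ to get the $2$-periodic-up-to-twist resolution, concludes that $\Omega^2$ is functorially a degree shift, and then implicitly uses $[1]=\Omega^{-1}$ on the Frobenius stable category, exactly as you do. The only slip is sign bookkeeping: with your stated conventions ($\psi\colon F\to G(\bar d)$, so multiplication by $p$ is a graded map $N\to N(\bar d)$), continuing your resolution one more step gives $\Omega^2 M\cong\coker\bigl(\bar\varphi(-\bar d)\bigr)\cong M(-\bar d)$ rather than $M(\bar d)$; this is precisely the convention-dependence you flagged (the paper's own proof writes $p\,\Id_U\colon U(d)\to U$, i.e.\ the shift convention opposite to the one it states in its lemma on the functors $\gamma_i$), so it affects only the sign conventions, not the substance of the argument.
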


\begin{proof}
For $M \in \CM^{\ZZ/n\ZZ}(S)$, using a similar argument than in \cite[Proposition 7.2]{CM}, the Auslander-Buchsbaum formula
$$\prdim_T(M) = \depth(T) - \depth_T(M) = 1$$ 
induces the existence of graded $T$-modules 
$$U = \bigoplus_{i = 1}^k T(j_i) \quad \text{and} \quad V = \bigoplus_{i = 1}^k T(j'_i),$$ 
$B \in \Hom_{\mod^{\ZZ/n\ZZ} T} (U(d), V)$, $C \in \Hom_{\mod^{\ZZ/n\ZZ} T} (V, U)$ and $\pi \in \Hom_{\mod^{\ZZ/n\ZZ} T} (U, M)$ such that
$$0 \rightarrow V \xrightarrow{C} U \xrightarrow{\pi} M \rightarrow 0$$
is a short exact sequence of graded $T$-modules, $C B =p \Id_U:U(d) \rightarrow U$, $B C(d) =p \Id_V:V(d) \rightarrow V$ and 
\begin{align*} \cdots \xrightarrow{C(2d) \tens S} & U(2d) \tens_T S \xrightarrow{B(d) \tens S} V(d) \tens_T S \xrightarrow{C(d) \tens S} U(d) \tens_T S \\ & \xrightarrow{B\tens S} V\tens_T S \xrightarrow{C\tens S} \bar U\tens S \xrightarrow{\pi} M \rightarrow 0\end{align*}
is an exact sequence in $\CM^{\ZZ/n\ZZ}(S)$ (the $d$-shifts come from the fact that $C B$ and $B C$ are homogeneous of degree $d$). Then, we conclude that $\Omega^2(M)$ is functorially isomorphic to $\coker (C(d) \tens S) \cong M(d)$ in $\underline{\CM}^{\ZZ/n\ZZ}(S)$.
\end{proof}

Setting $p:=X^{n-2}-Y^2$, we have $S=T/(X^{n-2}-Y^2)$. 
Identifying $R=K[x]$ to the subalgebra $K[XY]$ of $S$ via $x \mapsto XY$, we regard $S$ as an $R$-algebra and we get the following lemma.
   
\begin{lemma}
\label{lem:matrixS}
We have $$S_{\overline{i}}
=\begin{cases}
  R \overline{X}^i, &\text{if } i\in \{0,1,\ldots, n-2\}, \\
  R \overline{Y}, &\text{if } i=n-1.
\end{cases}
$$
\end{lemma}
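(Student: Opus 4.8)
The plan is to pin down an explicit grading-adapted $K$-basis of $S$ and then show that multiplication by the single element $x=\overline{XY}$ sweeps out each homogeneous component from the claimed generator. First I would note that, writing $p=Y^2-X^{n-2}$ (up to sign), $p$ is monic of degree $2$ in $Y$ over the subring $K[X]$, so $S=T/(p)=K[X][Y]/(Y^2-X^{n-2})$ is free as a $K[X]$-module with basis $\{1,\overline Y\}$. Consequently the monomials $\overline X^a$ and $\overline X^a\overline Y$ for $a\ge 0$ form a $K$-basis of $S$. Since $\deg\overline X^a=\overline a$ and $\deg(\overline X^a\overline Y)=\overline{a-1}$, for $0\le i\le n-1$ the component $S_{\overline i}$ has as $K$-basis those $\overline X^a$ with $a\equiv i\pmod n$ together with those $\overline X^a\overline Y$ with $a\equiv i+1\pmod n$ (all $a\ge 0$).

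Next I would compute the $R$-action. Using the relation $\overline Y^2=\overline X^{n-2}$, for $0\le i\le n-2$ and all $m\ge 0$ one gets $x^{2m}\overline X^i=\overline X^{\,i+mn}$ and $x^{2m+1}\overline X^i=\overline X^{\,i+1+mn}\overline Y$. As $m$ ranges over $\NN$, these produce exactly the two families listed above for $S_{\overline i}$, so $\{x^k\overline X^i\}_{k\ge 0}$ is precisely the $K$-basis of $S_{\overline i}$; hence $S_{\overline i}=R\overline X^i$, free of rank one over $R$. The case $i=n-1$ is handled identically: $x^{2m}\overline Y=\overline X^{\,mn}\overline Y$ (for $m\ge 0$) and $x^{2m-1}\overline Y=\overline X^{\,mn-1}$ (for $m\ge 1$) recover respectively the monomials $\overline X^a\overline Y$ with $a\equiv 0\pmod n$ and the monomials $\overline X^a$ with $a\equiv n-1\pmod n$, which together are exactly the $K$-basis of $S_{\overline{n-1}}$, giving $S_{\overline{n-1}}=R\overline Y$. (Taking $i=0$ in passing shows $\{x^k\}$ is $K$-independent, so $R=K[x]$ genuinely embeds in $S$.)

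I do not expect a genuine obstacle here; the only thing requiring care is the bookkeeping, namely matching the parity of the exponent $k$ of $x$ to whether the reduced monomial carries a factor of $\overline Y$, and checking that the exponents $i+mn$, $i+1+mn$, $mn$, $mn-1$ exhaust all non-negative representatives of the relevant residue classes modulo $n$. Because every computation reduces a monomial to the normal form with $\overline Y$-exponent in $\{0,1\}$ through the single relation $\overline Y^2=\overline X^{n-2}$, each claim follows by direct verification, and the lemma is established.
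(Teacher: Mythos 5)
Your proof is correct, and its core is the same computation as the paper's — everything reduces to trading powers via the relation $\overline{Y}^2 = \overline{X}^{n-2}$ — but you organize it along a slightly different route. The paper takes an arbitrary monomial $\overline{X^cY^d}$ of degree $\overline{i}$, writes $c = d+i+tn$, and rewrites the monomial as $\overline{(XY)^{e}X^{i}}$ by a case analysis on the sign of $t$; this proves generation, i.e.\ $S_{\overline{i}} = K[\overline{XY}]\,\overline{X}^{i}$, and the isomorphism $S_{\overline{i}} \cong R$ is then asserted. You instead first observe that $S = K[X][Y]/(Y^2 - X^{n-2})$ is free over $K[X]$ with basis $\{1,\overline{Y}\}$, so the monomials $\overline{X}^{a}$ and $\overline{X}^{a}\overline{Y}$ form a homogeneous $K$-basis of $S$, and then check that the orbit $\{x^{k}\overline{X}^{i}\}_{k\ge 0}$ (resp.\ $\{x^{k}\overline{Y}\}_{k\ge 0}$) enumerates exactly the basis elements lying in $S_{\overline{i}}$. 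This buys you something the paper leaves implicit: the linear independence of the elements $x^{k}\overline{X}^{i}$, hence that $S_{\overline{i}}$ is genuinely free of rank one over $R$ and that $R=K[x]$ embeds in $S$. That independence is not automatic from generation alone — note that $S$ is not even a domain when $n$ is even, since $X^{n-2}-Y^2$ then factors as $(X^{(n-2)/2}-Y)(X^{(n-2)/2}+Y)$ — so your normal-form bookkeeping is the cleanest way to justify the conclusion $S_{\overline{i}}\cong R$ that the paper uses afterwards.
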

\begin{proof}
   Any element in $S$ can be written as $\sum_{c,d}\lambda_{c,d} \overline{X^c Y^d}$, where $c$, $d$ run over the set of non-negative integers and $\lambda_{c,d} \in K$.
   Let $i \in \{0,1, \dots, n-2\}$. For an element $\sum_{c,d} \lambda_{c,d} \overline{X^c Y^d}$ in $S_{\overline{i}}$, we have $\text{deg}(\overline{X^c Y^d})=i$ $(\text{mod } n)$, which means $c-d=i$ $(\text{mod }n)$. 
 So $c=d+i+tn$ holds for some integer $t$. If $t\ge 0$, then $$\overline{X^c Y^d} = \overline{X^{d+i+tn} Y^d} = \overline{(XY)^{d+2t} X^i}.$$
  If $t<0$, then 
  \begin{align*}
  \overline{X^c Y^d} = \overline{X^c Y^{c-i-tn}}  = \overline{(XY)^{c-t(n-2)-i} X^i}.
  \end{align*}
   Therefore, $S_{\overline{i}} = K[\overline{XY}]\overline{X}^i$. In particular, the degree $0$ part $S_{\overline{0}} = K[\overline{XY}]$ is isomorphic to $R$ as a ring.
   Similarly, we have $S_{\overline{n-1}} = K[\overline{XY}]\overline{Y}$.
   Hence, we have $S_{\overline{i}} \cong R$ as $R$-module for each $\overline{i}\in \ZZ/n\ZZ$.
 \end{proof}
 We define the $R$-order $S^{[n]}$ as a subalgebra of $\M_n(S)$ as follows:  
 $$S^{[n]}=\begin{bmatrix}
      S_{\overline{0}} & S_{\overline{1}}  & S_{\overline{2}}    &\cdots & S_{\overline{n-2}}    & S_{\overline{n-1}}\\
      S_{\overline{n-1}} & S_{\overline{0}}  & S_{\overline{1}}    &\cdots & S_{\overline{n-3}}    & S_{\overline{n-2}}\\
      S_{\overline{n-2}} & S_{\overline{n-1}}  & S_{\overline{0}}    &\cdots & S_{\overline{n-4}}    & S_{\overline{n-3}}\\
      \vdots           & \vdots            & \vdots              &\ddots & \vdots              & \vdots\\
      S_{\overline{2}} & S_{\overline{3}}  & S_{\overline{4}}    &\cdots & S_{\overline{0}}    & S_{\overline{1}}\\
      S_{\overline{1}} & S_{\overline{2}}  & S_{\overline{3}}    &\cdots & S_{\overline{n-1}}    & S_{\overline{0}}
  \end{bmatrix}.$$
  
    \begin{proposition}
    \label{pro:iso}
We have an isomorphism $S^{[n]} \to \Lambda$ of $R$-algebras.
  \end{proposition}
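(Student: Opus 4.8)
The plan is to realize both $S^{[n]}$ and $\Lambda$ as explicit orders inside a matrix ring over $K[t]$, where $R = K[x] \hookrightarrow K[t]$ via $x \mapsto t^n$ as in the proof of Theorem~\ref{cor:tiled order}, and then to pass from one to the other by conjugating by a diagonal matrix. First I would fix the free generators of the graded pieces provided by Lemma~\ref{lem:matrixS}: write $u_i := \overline{X}^i$ for $0 \le i \le n-2$ and $u_{n-1} := \overline{Y}$, so that $S_{\overline i} = R u_i$. Using $\overline{X}^{n-2} = \overline{Y}^2$ and $x = \overline{X}\,\overline{Y}$ (whence $\overline{X}^{n-1} = x\overline{Y}$ and $\overline{X}^n = x^2$), a short case analysis gives the multiplication rule $u_i u_j = x^{c(i,j)} u_{\overline{i+j}}$ with $c(i,j) \in \{0,1,2\}$.

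The crucial observation is that these coefficients are governed by a single additive ``degree''. Setting $\deg u_i := 2i$ for $0 \le i \le n-2$ and $\deg u_{n-1} := n-2$ (equivalently, assigning $\overline X$ the $t$-degree $2$ and $\overline Y$ the $t$-degree $n-2$, consistent with $x \mapsto t^n$), one checks in each case that $c(i,j) = \big(\deg u_i + \deg u_j - \deg u_{\overline{i+j}}\big)/n$. This lets me define an $R$-algebra homomorphism $\Phi \colon S^{[n]} \to \M_n(K[t])$ sending the generator $u_{\overline{q-p}}$ of the $(p,q)$-entry $S_{\overline{q-p}}$ to $t^{\deg u_{\overline{q-p}}}$ (extended $R$-linearly, with $x \mapsto t^n$). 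The degree identity is exactly what makes $\Phi$ multiplicative, and $\Phi$ is injective because it is injective on each entry; hence $S^{[n]} \cong (t^{g_{pq}} R)_{p,q}$ with $g_{pq} = \deg u_{\overline{q-p}}$.

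Finally I would conjugate by $D := \diag(t^2, t^4, \dots, t^{2n})$. The $(p,q)$-entry of $D\,\Phi(S^{[n]})\,D^{-1}$ is $t^{2p - 2q + g_{pq}} R$, so it suffices to verify the single identity $g_{pq} + 2(p-q) = n\,a_{pq}$, where $(x^{a_{pq}})$ denotes the $(p,q)$-entry of $\Lambda$ from \eqref{the order}; since $x = t^n$, this exhibits $D\,\Phi(S^{[n]})\,D^{-1} = \Lambda$ inside $\M_n(K(t))$, and the composite is the desired $R$-algebra isomorphism.

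The main (purely bookkeeping) obstacle is this last identity, which I would establish by the case analysis $m = q - p \in \{0\}$, $\{1,\dots,n-2\}$, $\{n-1\}$ (only $(p,q)=(1,n)$), $\{-1\}$, and $\{-(n-1),\dots,-2\}$; in every case both sides reduce to $n a_{pq} - g_{pq} = 2(p-q)$, matching the special $(x^{-1})$, $(x)$ and $(x^2)$ patterns of $\Lambda$. No genuine difficulty arises: the content is entirely in the degree bookkeeping, and the non-injectivity of the ring map $S \to K[t]$ for even $n$ is harmless, since $\Phi$ spreads the graded pieces of $S$ across distinct matrix entries.
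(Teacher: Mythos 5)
Your proof is correct and is essentially the paper's own argument in different coordinates: the paper writes $S^{[n]}$ explicitly via Lemma \ref{lem:matrixS} and conjugates by $B=\diag\left(\overline{X}^i\right)_{1\le i\le n}$, identifying $R\cong K[XY]$. Your substitution $\overline{X}\mapsto t^2$, $\overline{Y}\mapsto t^{n-2}$ (compatible with $x\mapsto t^n$) transports that same conjugation into $\M_n(K[t])$, where $B$ becomes exactly your $D=\diag(t^{2},t^4,\dots,t^{2n})$, so the two proofs differ only in bookkeeping, not in substance.
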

  \begin{proof}
According to Lemma \ref{lem:matrixS}, we have
   $$S^{[n]}=\begin{bmatrix}
      R & R\overline{X}  & R\overline{X}^2    &\cdots & R\overline{X}^{n-2}   & R\overline{Y}\\
      R\overline{Y} & R  & R\overline{X}    &\cdots & R\overline{X}^{n-3}    & R\overline{X}^{n-2}\\
      R\overline{X}^{n-2} & R\overline{Y}  & R    &\cdots & R\overline{X}^{n-4}    & R\overline{X}^{n-3}\\
      \vdots           & \vdots            & \vdots              &\ddots & \vdots              & \vdots\\
      R\overline{X}^{2} & R\overline{X}^{3}  & R\overline{X}^4    &\cdots & R  & R\overline{X}\\
      R\overline{X}^{1} & R\overline{X}^{2}  &R\overline{X}^{3}    &\cdots & R\overline{Y}    & R
  \end{bmatrix}.$$
  Taking the conjugation by 
  $$B=\diag\left(\overline{X}^i\right)_{1 \leq i \leq n,}$$ we get easily $B S^{[n]} B^{-1} = \Lambda$ via the above identification $R \cong K[XY]$.
  \end{proof}
From now on, we identity $\Lambda$ and $S^{[n]}$.  
 Consider the matrix 
 $$G'=
   \begin{bmatrix}
  0           & 0            & 0            & \ldots     & 0              & 1              & 0   \\
  0           & 0            & 0            & \ldots     & 0              & 0              & 1   \\
   1      & 0            & 0            & \ldots     & 0              & 0              & 0    \\
   0          & 1        & 0            & \ldots     & 0              & 0              & 0   \\
  \vdots   & \vdots   & \vdots    & \vdots    & \vdots      & \vdots     & \vdots \\
  0           & 0            & \ldots     & 0            & 1         & 0              & 0
  \end{bmatrix}.$$
The automorphism $\beta$ of $S^{[n]}$ given by $\beta(s)=G'^{-1}s G'$ for $s \in S^{[n]}$ corresponds to the automorphism $\alpha$ of $\Lambda$.
  Thus we have an isomorphism ${}_1S^{[n]}_{\beta^{-1}} \rightarrow  {}_1 \Lambda_{\alpha^{-1}}$ of $S^{[n]}$-bimodules.

Using the same notation above, we have the following easy lemma (see \cite[Theorem 3.1]{IyamaLerner} for details in a wider context).
\begin{lemma}
\begin{enumerate}
 \item 
 The functor 
\begin{align*}
F:\mod^{\ZZ/n\ZZ}S &\to \mod S^{[n]}\\
M_{\overline{0}} \oplus M_{\overline{1}} \oplus \ldots \oplus M_{\overline{n-1}} 
& \mapsto 
  \begin{bmatrix}
  M_{\overline{0}} & M_{\overline{1}} & \ldots & M_{\overline{n-1}}
  \end{bmatrix}^{\operatorname{t}}
\end{align*} is an equivalence of categories.
\item For $i\in \ZZ$, we denote by
$(i):\mod^{\ZZ/n\ZZ}S \to \mod^{\ZZ/n\ZZ}S$
the grade shift functor defined by $M(i)_{\overline{j}}:=M_{\overline{i+j}}$ for $M \in \mod^{\ZZ/n\ZZ}S$.
The grade shift functor $(i)$ induces an autofunctor (denoted by $\gamma_i$) in $\mod S^{[n]}$ which makes the following diagram commute:
\[
  \begin{tikzcd}
  \mod^{\ZZ/n\ZZ}S \arrow{r}{F} \arrow{d}[swap]{(i)} &\mod S^{[n]} \arrow{d}{\gamma_i}\\
  \mod^{\ZZ/n\ZZ}S \arrow{r}{F} & \mod S^{[n]}.
  \end{tikzcd}
\]
\end{enumerate}
\end{lemma}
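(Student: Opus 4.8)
The plan is to recognize $F$ as a Morita-type equivalence induced by a projective generator, and then to deduce both parts formally. Concretely, I would work with the graded $S$-module
\[
  P=\bigoplus_{i=0}^{n-1}S(i)
\]
in $\mod^{\ZZ/n\ZZ}S$. First I would identify $S^{[n]}$, up to passing to the opposite ring, with the graded endomorphism ring $\End_{\mod^{\ZZ/n\ZZ}S}(P)$, and check that the functor $F$ is isomorphic to $\Hom_{\mod^{\ZZ/n\ZZ}S}(P,-)$. Granting this, part (1) is exactly the classical statement that $\Hom(P,-)$ is an equivalence onto $\mod\End(P)$ whenever $P$ is a finitely generated projective generator, which is the special case of \cite[Theorem 3.1]{IyamaLerner} relevant here.

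For the identification of the endomorphism ring I would compute homogeneous maps directly: a degree-$0$ graded homomorphism $S(i)\to S(j)$ is determined by the image of the canonical generator, giving $\Hom_{\mod^{\ZZ/n\ZZ}S}(S(i),S(j))\cong S_{\overline{j-i}}$, with composition corresponding to multiplication in $S$. Assembling these over $0\le i,j\le n-1$ reproduces the matrix order $S^{[n]}$, whose $(i,j)$-entry is $S_{\overline{j-i}}$. That $P$ is a generator follows because every object of $\mod^{\ZZ/n\ZZ}S$ is a quotient of a graded free module $\bigoplus_k S(k)$, and since the grading group $\ZZ/n\ZZ$ is finite, the shifts $S(0),\dots,S(n-1)$ already exhaust all isomorphism classes of shifts of $S$; as $P$ is visibly finitely generated and projective, it is a progenerator. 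The only point needing genuine care is the bookkeeping: matching the grading degrees of $\Hom(S(i),M)$ with the coordinates of the column vector $F(M)$, and fixing the composition and left/right (opposite) conventions so that matrix multiplication on $F(M)$ preserves degrees. Tracking these indices is precisely what makes $F$ literally coincide with $\Hom(P,-)$.

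For part (2) I would transport the grade-shift autoequivalence $(i)$ across the equivalence: choosing a quasi-inverse $F^{-1}$, set $\gamma_i:=F\circ(i)\circ F^{-1}$, so that $\gamma_i\circ F\cong F\circ(i)$ and the square commutes by construction. To see that $\gamma_i$ is a concrete autofunctor of $\mod S^{[n]}$, I would observe that under $F$ the shift $M\mapsto M(i)$ cyclically permutes the homogeneous components of $M$, hence cyclically permutes the $n$ coordinates of $F(M)$. This permutation is realized on $S^{[n]}$-modules by conjugation with the $i$-th power of the basic cyclic permutation matrix; in particular, for $i=2$ it recovers the matrix $G'$ and the automorphism $\beta$ introduced above. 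Thus $\gamma_i$ is simply the restriction-of-scalars twist ${}_{\theta_i}(-)$ by the corresponding cyclic automorphism $\theta_i$ of $S^{[n]}$, which is manifestly an autoequivalence.

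Overall the lemma is essentially formal once the progenerator $P$ is isolated, so I expect the main obstacle to be purely notational rather than conceptual: keeping the sign conventions in the degrees $\Hom(S(i),S(j))\cong S_{\overline{j-i}}$ and the opposite-ring conventions mutually consistent, so that the literal column-vector functor $F$ of the statement agrees on the nose with $\Hom(P,-)$ and so that the permutation realizing $\gamma_i$ matches the matrix $G'$.
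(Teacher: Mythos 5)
Your proof is correct, but note that it does not parallel anything in the paper: the paper offers no proof of this lemma at all, stating it as an ``easy lemma'' and deferring to \cite[Theorem 3.1]{IyamaLerner}, where the equivalence between graded modules and modules over the associated matrix order is established in a wider context. Your graded-Morita argument --- exhibiting a progenerator $P$ of $\mod^{\ZZ/n\ZZ}S$, identifying $S^{[n]}$ with (the opposite of) $\End(P)$, and obtaining part (2) by transport of structure via $\gamma_i := F\circ (i)\circ F^{-1}$ --- is essentially the standard argument underlying that citation, so you are supplying the proof the paper outsources; the citation buys the paper brevity and generality, while your argument buys self-containedness. Two remarks on the bookkeeping you explicitly defer, which is real but harmless. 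First, with the paper's conventions ($M(i)_{\overline{j}}=M_{\overline{i+j}}$ and the $j$-th slot of $F(M)$ holding $M_{\overline{j}}$), one has $\Hom_{\mod^{\ZZ/n\ZZ}S}(S(i),M)\cong M_{\overline{-i}}$, so the progenerator whose $\Hom$-functor reproduces the slots of $F$ in the stated order is $\bigoplus_{i=0}^{n-1}S(-i)$ rather than your $\bigoplus_{i=0}^{n-1}S(i)$; moreover, a left matrix action on the column $[M_{\overline{0}},\ldots,M_{\overline{n-1}}]^{\operatorname{t}}$ by ordinary matrix multiplication forces entries $A_{ij}\in S_{\overline{i-j}}$, which is the transpose of the displayed pattern of $S^{[n]}$ --- the two patterns are isomorphic rings (conjugate by the permutation $i\mapsto -i$), so nothing breaks, and the paper's own literal formulas carry the same transpose-level looseness. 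Second, in part (2) your side remark matching the twist with $G'$ has the sign reversed: the paper's proof of Theorem \ref{th:cyeq}~(1) shows ${}_\beta(-)\cong\gamma_{-2}$, not $\gamma_{2}$. Neither remark is a gap in your argument.
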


More precisely, for any left $S^{[n]}$-module $\begin{bmatrix}
  M_{\overline{0}} & M_{\overline{1}} & \ldots & M_{\overline{n-1}}
  \end{bmatrix}^{\operatorname{t}}$, we have $$\gamma_i \left(\begin{bmatrix}
 M_{\overline{0}} & M_{\overline{1}} & \ldots & M_{\overline{n-1}}
  \end{bmatrix}^{\operatorname{t}}\right)=\begin{bmatrix}
  M_{\overline{i}} & M_{\overline{i+1}} & \ldots & M_{\overline{i+n-1}}
  \end{bmatrix}^{\operatorname{t}}.$$
  
Now we can prove the $2$-Calabi-Yau property of $\underline{\CM}(\Lambda)$.
\begin{proof}[Proof of Theorem \ref{th:cyeq}~(1)]
    The equivalence $\mod^{\ZZ/n\ZZ}S \cong \mod S^{[n]} = \mod\Lambda$ induces an equivalence $$\CM^{\ZZ/n\ZZ}S \cong \CM S^{[n]} = \CM\Lambda.$$
     In the category $\underline{\CM}^{\ZZ/n\ZZ}S$, according to Theorem \ref{degree}, we have an isomorphism of functors $$[2] \cong (-\text{deg }(x^{n-2}-y^2))=(2).$$
     By Lemma \ref{lem:functoriso}, we have $\nu \cong {}_{\alpha^{-1}}(-)$. Therefore, it is enough to prove ${}_{\alpha^{-1}} (-) \cong (2)$.
     This is equivalent to prove that ${}_\beta M \cong \gamma_{-2}(M)$ holds for any $M \in \CM S^{[n]}$.
    
 Let $s_i$ be the row matrix which has $1$ in the $i$-th column and $0$ elsewhere. We have
   \begin{align*}
     {}_\beta M & \cong     \begin{bmatrix}
       s_{0} \times {}_\beta M & s_{1}\times {}_\beta M & s_2 \times {}_\beta M & \ldots & s_{n-1}\times {{}_\beta M}
      \end{bmatrix}^{\operatorname{t}}\\
     & =
     \begin{bmatrix}
     \beta(s_{0})M & \beta(s_{1})M & \beta(s_{2})M & \ldots & \beta(s_{n-1})M
     \end{bmatrix}^{\operatorname{t}}\\
     & =
     \begin{bmatrix}
     s_{n-2}M & s_{n-1}M & s_{0} M & \ldots & s_{n-3}M
     \end{bmatrix}^{\operatorname{t}} \cong \gamma_{-2}(M).
   \end{align*}
   Therefore, the category $\underline{\CM}(\Lambda)$ is $2$-Calabi-Yau.
  \end{proof}

\subsection{Application to cluster algebras}
\label{ss:Application to cluster algebras}

As an application of the previous results, we categorify the cluster algebra structure (including coefficients) on homogeneous coordinate ring of the Grassmannian of $2$-dimensional planes in $L^n$ for any field $L$. We suppose that $K$ is algebraically closed (note that $K$ and $L$ are not necessarily equal). As before, $\mathcal{S}$ is the set consisting of sides and diagonals of the polygon $P$ with $n$ vertices. Recall that the \emph{homogeneous coordinate ring of the Grassmannian of $2$-dimensional planes in $L^n$} can be presented in the following way:
$$L[\Gr_2(L^n)] = \frac{L[\Delta_{(s,t)}]_{(s,t) \in \mathcal{S}}}{I}$$
where $I$ is the ideal generated by the relations of the form $\Delta_{(p,s)} \Delta_{(q,t)} = \Delta_{(p,q)} \Delta_{(s,t)} + \Delta_{(p,t)} \Delta_{(q,s)}$ with $1 \leq p < q < s < t \leq n$. Notice that these relations relate the product of two crossing diagonals with the products of opposite sides of the corresponding quadrilateral. Moreover, Fomin and Zelevinsky \cite{FoZe02} defined on $L[\Gr_2(L^n)]$ a so-called \emph{cluster algebra structure}, whose cluster variables are the \emph{Pl\"ucker coordinates} $\Delta_{(s,t)}$ for $(s,t) \in \mathcal{S}$, and whose \emph{clusters} consist of the sets of Pl\"ucker coordinates corresponding to triangulations of $P$. The \emph{mutations} of clusters correspond naturally to the flips of triangulations and the \emph{exchange relations} to the former relations between Pl\"ucker coordinates.

We will use results of Fu and Keller \cite[section 3]{FuKeller} to prove that $\CM(\Lambda)$ \emph{categorifies} this cluster algebra structure. We fix a triangulation $\sigma$ of $P$. For $M \in \CM(\Lambda)$, following Fu and Keller and using their notations, we consider the following Laurent polynomial over $L$ in $2n-3$ variables $(x_i)_{i \in \sigma}$:
$$X'_M = \prod_{i \in \sigma} x_i^{\left \langle \Hom_{\Lambda}(T_\sigma, M), S_i \right\rangle_\tau} \sum_{e \in \NN^\sigma} \chi\left( \Gr_e (\Ext^1_{\Lambda}(T_\sigma, M))\right) \prod_{i \in \sigma} x_i^{-\langle e, S_i \rangle_3} \in L\left[x_i^{\pm 1}\right]_{i \in \sigma}$$
where $\Hom_{\Lambda}(T_\sigma, M)$ and $\Ext^1_{\Lambda}(T_\sigma, M)$ are seen as $\Gamma_\sigma$-modules using Theorem \ref{th:endomorphism}, $S_i$ is the simple top of $\Gamma_\sigma e_i$ and we define for a finitely generated $\Gamma_\sigma$-module and $i \in \sigma$, 
\begin{align*} &\langle X, S_i \rangle_\tau = \dim \Hom_{\Gamma_\sigma} (X, S_i) - \dim \Ext^1_{\Gamma_\sigma} (X, S_i) \\ \text{and} \quad &\langle X, S_i \rangle_3 = \sum_{i = 0}^3 (-1)^i \dim \Ext^i_{\Gamma_\sigma} (X, S_i).\end{align*}
As $\gldim \Gamma_\sigma \leq 3$, $\langle X, S_i \rangle_3$ depends only on $\underline{\dim}(X) \in \NN^\sigma$. Finally, $\chi( \Gr_e (\Ext^1_{\Lambda}(T_\sigma, M)))$ is the Euler characteristic of the Grassmannian of $\Gamma_\sigma$-submodules of $\Ext^1_{\Lambda}(T_\sigma, M)$ of dimension vector $e$ (the Euler characteristic is the one of the $l$-adic cohomology). The following theorem is a slight generalization of \cite[Theorem 3.3]{FuKeller} and can be proved in the same way.

\begin{theorem}[after {\cite[Theorem 3.3]{FuKeller}}]
 \label{th:FK}
 \begin{enumerate}
  \item For $i \in \sigma$, $X'_{e_F \Gamma_\sigma e_i} = x_i$.
  \item For any $M, N \in \CM(\Lambda)$, $X'_{M \oplus N} = X'_M X'_N$.
  \item If $M, N \in \CM(\Lambda)$ satisfy $\Ext^1_{\Lambda}(M, N) = 1$ and 
   $$0 \rightarrow M \rightarrow E \rightarrow N \rightarrow 0 \quad \text{and} \quad 0 \rightarrow N \rightarrow E' \rightarrow M \rightarrow 0$$
   are non-split short exact sequences then $X'_M X'_N = X'_E + X'_{E'}$.
 \end{enumerate}
\end{theorem}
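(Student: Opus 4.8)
The plan is to follow the proof of \cite[Theorem 3.3]{FuKeller} essentially verbatim, checking at each step that the structural hypotheses it requires are supplied by our setting, and recording where the mild generalization enters. All the inputs are already available: $\CM(\Lambda)$ is a Frobenius category by Proposition \ref{gorenstein prop}(1), its stable category $\underline{\CM}(\Lambda)$ is $2$-Calabi--Yau by Theorem \ref{th:cyeq}(1), the object $T_\sigma$ is cluster tilting by Theorem \ref{thm:cluster tilting} with $\End_\Lambda(T_\sigma) \cong \Gamma_\sigma^{\operatorname{op}}$ by Theorem \ref{th:endomorphism}, and $\gldim \Gamma_\sigma \le 3$. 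In particular $\Hom_\Lambda(T_\sigma, -)$ and $\Ext^1_\Lambda(T_\sigma, -)$ take values in $\mod \Gamma_\sigma$, the bilinear form $\langle -, S_i \rangle_3$ is well defined on dimension vectors precisely because $\gldim \Gamma_\sigma \le 3$, and every structure constant below (the Euler characteristics and the integers $\langle -, S_i \rangle_\tau$ and $\langle -, S_i \rangle_3$) is an integer. Hence each identity we prove holds over $\ZZ$ and therefore over the arbitrary coefficient field $L$; this last remark is the only point where the statement genuinely extends \loccit, whose coefficient ring coincides with the base field.

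For (1), when $M = e_F \Gamma_\sigma e_i$ is a summand of $T_\sigma$ we have $\Ext^1_\Lambda(T_\sigma, M) = 0$ by rigidity of the cluster tilting object, so the sum over $e \in \NN^\sigma$ collapses to its $e = 0$ term, contributing $\chi(\Gr_0(0)) = 1$. The module $\Hom_\Lambda(T_\sigma, M)$ is the indecomposable projective $\Gamma_\sigma$-module at $i$, for which $\langle \Hom_\Lambda(T_\sigma, M), S_j \rangle_\tau = \delta_{i,j}$, and this yields $X'_M = x_i$. For (2), the functors $\Hom_\Lambda(T_\sigma, -)$ and $\Ext^1_\Lambda(T_\sigma, -)$ are additive and the pairing $\langle -, S_i \rangle_\tau$ is additive on direct sums, so the monomial prefactors multiply; the sum factors thanks to the standard multiplicativity of Euler characteristics of quiver Grassmannians of a direct sum, $\chi(\Gr_e(V \oplus W)) = \sum_{a + b = e} \chi(\Gr_a(V)) \, \chi(\Gr_b(W))$, combined with the additivity of $\langle e, S_i \rangle_3$ in $e$.

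Part (3) is the substance of the theorem and the main obstacle. Following \loccit\ (and the cluster character machinery of \cite{Palu}), I would apply $\Hom_\Lambda(T_\sigma, -)$ to the two non-split short exact sequences and use the $2$-Calabi--Yau property of $\underline{\CM}(\Lambda)$ to produce the long exact sequences of $\Gamma_\sigma$-modules relating $\Ext^1_\Lambda(T_\sigma, E)$ and $\Ext^1_\Lambda(T_\sigma, E')$ to $\Ext^1_\Lambda(T_\sigma, M)$ and $\Ext^1_\Lambda(T_\sigma, N)$. The heart of the argument is then a stratification of the relevant quiver Grassmannians: the hypothesis $\dim \Ext^1_\Lambda(M, N) = 1$ forces a dichotomy under which each submodule counted in $\Gr_g(\Ext^1_\Lambda(T_\sigma, E))$ or in $\Gr_g(\Ext^1_\Lambda(T_\sigma, E'))$ either lifts to, or is glued from, a pair of submodules of $\Ext^1_\Lambda(T_\sigma, M)$ and $\Ext^1_\Lambda(T_\sigma, N)$, and a comparison of Euler characteristics across these strata produces the sum $X'_E + X'_{E'}$ out of the product $X'_M X'_N$. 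The delicate points, exactly as in Fu--Keller, are the bookkeeping of the monomial prefactors, which is controlled by the $2$-Calabi--Yau symmetry and the finiteness $\gldim \Gamma_\sigma \le 3$ guaranteeing that $\langle -, S_i \rangle_3$ depends only on dimension vectors, and the constructibility argument ensuring that the Euler characteristics add correctly over the strata. Since none of these steps uses anything beyond the Frobenius and stably $2$-Calabi--Yau structure already established, the proof of \loccit\ transfers without modification.
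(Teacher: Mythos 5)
Your overall route coincides with the paper's: the paper proves this theorem by deferring to \cite[Theorem 3.3]{FuKeller}, and your direct verifications of parts (1) and (2) together with your Caldero--Chapoton-style outline for part (3) are consistent with that proof. The genuine gap lies in your opening premise that ``all the inputs are already available'' and that the only real extension of \loccit\ is the arbitrary coefficient field $L$. The latter point is indeed a triviality (all structure constants are integers), but it is \emph{not} where the generalization lies. Fu--Keller's standing hypothesis is that the Frobenius category is Hom-finite over the base field, and this hypothesis \emph{fails} for $\CM(\Lambda)$: morphism spaces between Cohen-Macaulay $\Lambda$-modules are nonzero finitely generated torsion-free $R$-modules (with $R = K[x]$), hence infinite-dimensional over $K$ whenever nonzero; for instance $\End_\Lambda(T_\sigma) \cong \Gamma_\sigma^{\operatorname{op}}$ is an $R$-order. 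So one cannot transfer the proof of \loccit\ ``verbatim, checking at each step that the structural hypotheses it requires are supplied by our setting'': one of those hypotheses is not supplied, and coping with its failure is the entire content of the phrase ``slight generalization'' in the theorem's attribution.

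What must be supplied---and what the paper records in the remark immediately following the theorem---is the following. (a) $X'$ is still well defined, because $\Hom_\Lambda(T_\sigma, M)$ is finitely generated as a $\Gamma_\sigma$-module and each simple $S_i$ is finite-dimensional, so $\langle \Hom_\Lambda(T_\sigma,M), S_i\rangle_\tau$ is a finite integer even though $\Hom_\Lambda(T_\sigma,M)$ has no dimension vector over $K$. (b) $\Ext^1_\Lambda(T_\sigma, M)$ \emph{is} finite-dimensional over $K$, since it is computed by stable Hom-spaces in $\underline{\CM}(\Lambda)$, which is Hom-finite by Proposition \ref{gorenstein prop}; consequently the Grassmannians $\Gr_e(\Ext^1_\Lambda(T_\sigma,M))$ are genuine projective varieties and the sum over $e$ is finite. (c) One must then observe that every step of Fu--Keller's argument that invokes finite $K$-dimension of a morphism space does so only through channels (a) or (b)---this is the actual checking hidden behind ``can be proved in the same way.'' Your proposal never isolates this issue, so the assertion that ``the proof of \loccit\ transfers without modification'' is unjustified at precisely the point where this theorem differs from its source; with (a)--(c) made explicit, the rest of your outline is fine.
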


Therefore, the map $X'$ is called a \emph{cluster character}. 

\begin{remark}
 The only difference with the setting of \cite{FuKeller} is that $\CM(\Lambda)$ does not have finite dimensional morphism spaces over $K$. However, the considered morphism spaces are finitely generated as $\Gamma_\sigma$-modules and $\underline{\CM}(\Lambda)$ has finite dimensional morphism spaces over $K$. In particular, $X'$ is well defined. Moreover, under these conditions, it is easy to check that the proofs in \cite{FuKeller} still apply.
\end{remark}

Before stating the main result of this section, let us do an easy observation ($\sigma$ is still a fixed triangulation):

\begin{lemma} \label{lem:obs}
 The cluster algebra with initial seed $(\{X'_{e_F \Gamma_\sigma e_i} \,|\, i \in \sigma\}, (Q_\sigma, F))$ coincides with the $L$-vector space spanned by $\{X'_M \,|\, M \in \CM(\Lambda)\}$ in $L\left[x_i^{\pm 1}\right]_{i \in \sigma}$. 
\end{lemma}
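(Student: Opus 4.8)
The plan is to show that both the cluster algebra of the statement and the $L$-subspace $V$ spanned by $\{X'_M \mid M \in \CM(\Lambda)\}$ are equal to one and the same object, namely the $L$-subalgebra of $L[x_i^{\pm 1}]_{i\in\sigma}$ generated by the family $\{X'_N \mid N \text{ indecomposable in } \CM(\Lambda)\}$. Denote the cluster algebra in the statement by $\ca$.

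First I would observe that $V$ \emph{is} this subalgebra. By Theorem \ref{th:indecomposable modules} every $M \in \CM(\Lambda)$ is a direct sum of indecomposables, and by Theorem \ref{th:FK}(2) one has $X'_{\bigoplus_i N_i} = \prod_i X'_{N_i}$, with $X'_0 = 1$; hence the spanning set $\{X'_M\}$ is precisely the set of monomials in the $X'_N$. Consequently $V$ is closed under multiplication and coincides with the $L$-subalgebra generated by $\{X'_N \mid N \text{ indecomposable}\}$. Since $\ca$ is by construction the $L$-subalgebra of $L[x_i^{\pm 1}]_{i\in\sigma}$ generated by its cluster variables together with its coefficients $x_i$ ($i \in F$), the whole statement reduces to the identification, as subsets of the Laurent ring,
\[ \{\text{cluster variables and coefficients of } \ca\} = \{X'_N \mid N \text{ indecomposable}\}. \]
Once the two families coincide they generate the same subalgebra, whence $\ca = V$ as $L$-algebras, a fortiori as $L$-vector spaces.

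I would prove this identification by induction along the flip graph of triangulations of $P$, using the dictionary triangulations $\leftrightarrow$ cluster tilting objects $T_\tau = e_F\Gamma_\tau$ (Theorems \ref{thm:cluster tilting} and \ref{th:endomorphism}) and sides/diagonals $\leftrightarrow$ indecomposables $\epsilon(P_s,P_t)$ (Theorem \ref{thm:edges and modules}). For the initial seed, Theorem \ref{th:FK}(1) gives $X'_{e_F\Gamma_\sigma e_i} = x_i$ with $e_F\Gamma_\sigma e_i = \epsilon(i)$, so the initial cluster variables and coefficients are exactly the $X'$ of the indecomposable summands of $T_\sigma$. For the inductive step, a mutation of $\ca$ at a mutable vertex corresponds to flipping a diagonal $k$ of the current triangulation into the crossing diagonal $k'$ of the surrounding quadrilateral; the two diagonals cross, so $\Ext^1_\Lambda(\epsilon(k),\epsilon(k'))$ is one-dimensional and the two non-split exchange sequences of Lemma \ref{pro:extension} have middle terms $E, E'$ given by the two pairs of opposite edges of the quadrilateral (Proposition \ref{cor:extensioncross}). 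Theorem \ref{th:FK}(3) then yields $X'_{\epsilon(k)} X'_{\epsilon(k')} = X'_E + X'_{E'}$, which is precisely the exchange relation of the mutation, forcing the new cluster variable to equal $X'_{\epsilon(k')}$. As the flip graph is connected and every side or diagonal lies in some triangulation, both inclusions of the displayed equality follow.

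The main obstacle is this inductive step: one must match, term by term, the combinatorial exchange relation of $\ca$ with the homological one $X'_{\epsilon(k)}X'_{\epsilon(k')} = X'_E + X'_{E'}$. This hinges on the one-dimensionality of the relevant $\Ext^1$ and on the precise shape of the exchange sequences---that $E$ and $E'$ are the sums corresponding to the two pairs of opposite sides of the flipped quadrilateral---which is exactly the content of Lemma \ref{pro:extension}. The remaining ingredients, namely the connectivity of the flip graph of the polygon and the multiplicativity recorded in Theorem \ref{th:FK}(2), are standard.
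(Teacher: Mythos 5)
Your proposal is correct and follows essentially the same route as the paper: the paper's proof also rests on Theorem \ref{th:FK}, the connectivity of the flip graph (every diagonal lies in a triangulation reachable from $\sigma$ by flips), and multiplicativity via Krull--Schmidt, merely compressing your explicit flip-graph induction into the assertion that, by Theorem \ref{th:FK}, the cluster algebra is generated by the $X'_M$ for reachable rigid $M$. Your expansion of that step --- matching exchange relations with the exchange sequences of Lemma \ref{pro:extension} and the one-dimensionality of $\Ext^1$ --- is exactly the argument implicit in the paper's citation.
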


\begin{proof}
 Thanks to Theorem \ref{th:FK}, the cluster algebra is generated by $X'_M$ for all $M \in \CM(\Lambda)$ rigid such that $M$ is a summand of a cluster tilting object reachable from $T_\sigma$ by sequences of mutations. Here, all indecomposable objects are summand of such cluster tilting objects (they correspond to the diagonals of the polygon, which are all part of some triangulation reachable by sequences of flips). Thus, using Theorem \ref{th:FK}~(2), we get the result.
\end{proof}

We deduce from this theorem the following categorification:

\begin{theorem} \label{th:grassman}
 There is an isomorphism of cluster algebras (\emph{i.e.} an isomorphism of $L$-algebras mapping clusters to clusters and compatible with the mutation)
 \begin{align*}
  \kappa: L[\Gr_2(L^n)] & \rightarrow \sum_{M \in \CM(\Lambda)}  L X'_M \subset L[x_i^{\pm 1}]_{i \in \sigma}\\
   \Delta_{(s,t)} & \mapsto X'_{(s,t)} \quad \text{for } (s,t) \in \mathcal{S}
 \end{align*}
 where the cluster algebra structure of the right member is the one categorified in \cite{FuKeller} (see Lemma \ref{lem:obs}).
\end{theorem}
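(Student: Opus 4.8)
The goal is to show the map $\kappa$ is an isomorphism of cluster algebras. The strategy is to combine the cluster character theorem (Theorem \ref{th:FK}) with the combinatorial dictionary already established between $\CM(\Lambda)$ and triangulations of $P$. The essential point is that both sides — the coordinate ring $L[\Gr_2(L^n)]$ and the span $\sum_M L X'_M$ — carry cluster algebra structures whose combinatorics are governed by triangulations of the polygon, and we must match them seam for seam.

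First I would define $\kappa$ on generators by $\Delta_{(s,t)} \mapsto X'_{(s,t)}$ and check it respects the Plücker relations. By Proposition \ref{cor:extensioncross}, two diagonals $(P_s,P_t)$ and $(P_{s'},P_{t'})$ cross precisely when $\Ext^1_\Lambda((s,t),(s',t')) \neq 0$, and in that case Lemma \ref{pro:extension} gives exactly the two non-split short exact sequences whose middle terms are the two pairs of ``sides of the quadrilateral''. Applying Theorem \ref{th:FK}~(3) to these two sequences yields $X'_{(s,t)} X'_{(s',t')} = X'_E + X'_{E'}$, where by Theorem \ref{th:FK}~(2) the terms $X'_E, X'_{E'}$ factor as products $X'_{(p,q)} X'_{(s,t)}$ and $X'_{(p,t)} X'_{(q,s)}$ over the opposite sides. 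This is precisely the image under $\kappa$ of the defining Plücker relation $\Delta_{(p,s)} \Delta_{(q,t)} = \Delta_{(p,q)}\Delta_{(s,t)} + \Delta_{(p,t)}\Delta_{(q,s)}$, so $\kappa$ descends to a well-defined $L$-algebra homomorphism on the quotient $L[\Gr_2(L^n)] = L[\Delta_{(s,t)}]/I$.

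Next I would verify $\kappa$ sends clusters to clusters and intertwines the two mutation rules. By Theorem \ref{thm:cluster tilting}, triangulations $\sigma$ correspond bijectively to basic cluster tilting objects $T_\sigma$, hence to clusters on the categorified side; by the discussion preceding the theorem, clusters of $L[\Gr_2(L^n)]$ are indexed by triangulations via Plücker coordinates. Both mutations correspond to flips of triangulations, which on one side are the exchange relations among Plücker coordinates and on the other are realized by Theorem \ref{th:FK}~(3) applied to the unique exchange pair. Thus $\kappa$ maps the initial cluster $\{\Delta_i \mid i \in \sigma\}$ to $\{X'_{e_F\Gamma_\sigma e_i} \mid i \in \sigma\} = \{x_i \mid i \in \sigma\}$ by Theorem \ref{th:FK}~(1), and commutes with every mutation; the target cluster algebra is the one identified in Lemma \ref{lem:obs}.

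\textbf{The main obstacle and injectivity/surjectivity.} Surjectivity is immediate from Lemma \ref{lem:obs}, which shows the span $\sum_M L X'_M$ equals the cluster algebra generated by the initial seed, so every generator is in the image. The delicate part is \emph{injectivity} of $\kappa$: I must rule out that the Plücker relations already defining $L[\Gr_2(L^n)]$ are a strictly smaller set of relations than those holding among the $X'_{(s,t)}$. The cleanest route is to invoke that $L[\Gr_2(L^n)]$ is itself a cluster algebra (by \cite{FoZe02}) and that a homomorphism of cluster algebras sending an initial seed bijectively to an initial seed and compatible with mutations is automatically an isomorphism, because cluster variables on both sides are produced by the same mutation combinatorics from identified initial data — distinct cluster monomials on the target remain $L$-linearly independent Laurent polynomials in the $x_i$, forcing $\ker \kappa = 0$. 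I expect the sign and labeling bookkeeping in matching the exchange matrix $(Q_\sigma, F)$ with the quadrilateral Plücker relations — ensuring the frozen coefficients attached to the $n$ sides line up correctly — to be the one genuinely fiddly step, but it is combinatorial rather than conceptual, and follows the same reasoning as \cite[Theorem 3.3]{FuKeller}.
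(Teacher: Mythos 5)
Your overall architecture coincides with the paper's proof in three of its four steps: well-definedness of $\kappa$ is obtained exactly as in the paper (the two non-split sequences of Lemma \ref{pro:extension} for a crossing pair, fed into Theorem \ref{th:FK}~(2),(3)), surjectivity is quoted from Lemma \ref{lem:obs}, and the compatibility with clusters and mutations is the same flip/exchange-sequence dictionary. One small omission there: Theorem \ref{th:FK}~(3) requires $\dim_K\Ext^1_{\Lambda}(M,N)=1$, not merely $\Ext^1_{\Lambda}(M,N)\neq 0$, which is all that Proposition \ref{cor:extensioncross} gives you. The paper verifies this one-dimensionality for a crossing pair $(p,s)$, $(q,t)$ (via the explicit description of $\Ext^1_{\Lambda}$, or via Theorem \ref{th:cyeq} and the cluster category of type $A_{n-3}$) before applying the multiplication formula; you should add that check, though it is easy.

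The genuine gap is your injectivity argument. You deduce $\ker\kappa=0$ from the $L$-linear independence of the images of distinct cluster monomials, but that implication only works if cluster monomials span $L[\Gr_2(L^n)]$ as an $L$-vector space, i.e.\ if every element of $L[\Delta_{(s,t)}]_{(s,t)\in\mathcal{S}}/I$ can be straightened into a combination of products of pairwise non-crossing Pl\"ucker coordinates. That spanning statement is standard monomial theory for $\Gr_2(L^n)$: true, but nontrivial, and you neither prove nor cite it (the linear independence of cluster monomials you invoke is likewise a nontrivial input). The same objection applies to the blanket principle you appeal to, that a homomorphism matching initial seeds and commuting with mutation is automatically an isomorphism: surjectivity of such a map is clear, but any proof of its injectivity needs an ingredient of exactly this kind. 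The paper closes the point with a one-line localization argument that you could substitute directly: $L[\Gr_2(L^n)]$ is a domain whose fraction field is $L(\Delta_i)_{i\in\sigma}$, and $\kappa(\Delta_i)=x_i$ for $i\in\sigma$ by Theorem \ref{th:FK}~(1); hence if $a\in\ker\kappa$ were nonzero, writing $a=f/g$ with $f,g$ nonzero polynomials in $(\Delta_i)_{i\in\sigma}$ gives $ag=f$ in $L[\Gr_2(L^n)]$, so $0=\kappa(a)\kappa(g)=f\bigl((x_i)_{i\in\sigma}\bigr)$, which is absurd since the $x_i$ are algebraically independent. This is shorter, and it avoids both Fomin--Zelevinsky's theorem as a structural input for injectivity and the exchange-matrix bookkeeping you yourself flag as the delicate step; in the paper that bookkeeping never arises, because the cluster structure on the target is by definition the categorified one of Lemma \ref{lem:obs}, so mutation compatibility holds by construction.
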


\begin{proof}
 First of all, consider a relation $\Delta_{(p,s)} \Delta_{(q,t)} = \Delta_{(p,q)} \Delta_{(s,t)} + \Delta_{(p,t)} \Delta_{(q,s)}$ with $1 \leq p < q < s < t \leq n$. Thanks to Lemma \ref{pro:extension}, we have the two following non-split short exact sequences in $\CM(\Lambda)$:
 \begin{align} & 0\lra (p,s) \lra (p,t)\oplus(q,s) \lra (q,t)\lra 0 \notag \\
 \text{and} \quad &0\lra (q,t) \lra (p,q)\oplus(s,t) \lra (p,s)\lra 0. \label{excseq} \end{align}
 Moreover, using the explicit description of $\Ext^1_{\Lambda}$ or the description of the cluster category of type $A_{n-3}$ and Theorem \ref{th:cyeq}, we know that $\dim_K \Ext^1_{\Lambda}((p,s), (q,t)) = 1$. Therefore, thanks to Theorem \ref{th:FK}~(3), we get 
  $$X'_{(p,s)} X'_{(q,t)} = X'_{(p,q)} X'_{(s,t)} + X'_{(p,t)} X'_{(q,s)}$$
  and therefore the morphism is well defined. The surjectivity is immediate from Lemma \ref{lem:obs}. As the fraction field of $L[\Gr_2(L^n)]$ is the rational functions field $L(\Delta_i)_{i \in \sigma}$, $\kappa$ is injective.

 The compatibility with the cluster structure is immediate (by definition, the cluster structure on $\sum_{M \in \CM(\Lambda)}  L X'_M$ is determined by the cluster tilting objects and the exchange relations come from approximations sequences \eqref{excseq}).
\end{proof}

\section{Graded Cohen-Macaulay $\Lambda$-modules}
\label{s:graded}
In this section, we study a graded version of Theorem \ref{th:cyeq} giving a relationship between the category of Cohen-Macaulay $\Lambda$-modules and the cluster category of type $A_{n-3}$.
We study the category $\CM^{\ZZ}(\Lambda)$ of graded Cohen-Macaulay $\Lambda$-modules and its relationship with the bounded derived category $\cd^{\operatorname{b}}(KQ)$ of type $A_{n-3}$.

Let $Q$ be an acyclic quiver. 
We denote by $\ck^{\operatorname{b}}(\proj KQ)$ the bounded homotopy category of finitely generated projective $KQ$-modules, and by $\cd^{\operatorname{b}}(KQ)$ the bounded derived category of finitely generated $KQ$-modules.
These are triangulated categories and the canonical embedding 
$\ck^{\operatorname{b}}(\proj KQ) \to \cd^{\operatorname{b}}(KQ)$ is a triangle functor.

We define a grading on $\Lambda$ by $\Lambda_i = \Lambda \cap \M_n(K x^i)$ for $i \in \ZZ$.
 This makes $\Lambda=\bigoplus_{i\in \ZZ}\Lambda_i$ a $\ZZ$-graded algebra.
The category of graded Cohen-Macaulay $\Lambda$-modules, $\CM^{\ZZ}(\Lambda)$, is defined as follows.
The objects are graded $\Lambda$-modules which are Cohen-Macaulay,
and the morphisms in $\CM^{\ZZ}(\Lambda)$ are $\Lambda$-morphisms preserving the degree.
The category $\CM^{\ZZ}(\Lambda)$ is a Frobenius category. Its stable category is denoted by $\underline{\CM}^{\ZZ}(\Lambda)$.
For $i\in \ZZ$, we denote by
$(i):\CM^{\ZZ}(\Lambda) \to \CM^{\ZZ}(\Lambda)$
the grade shift functor: Given a graded Cohen-Macaulay $\Lambda$-module $X$, we define $X(i)$ to be $X$ as a $\Lambda$-module, with the grading $X(i)_j = X_{i+j}$ for any $j \in \ZZ$. 

\begin{remark}
We show that this grading of $\Lambda$ is analogous to the grading of $\Lambda_\sigma$ given by the $\theta$-length.
 Let $i,j\in F$. By Theorem \ref{thm:frozen part}, $e_i \Lambda_\sigma e_j \cong e_i \Lambda e_j$ holds.
Let $\lambda \in e_i \Lambda_\sigma e_j \cong e_i \Lambda e_j$. According to Theorem \ref{cor:tiled order}, 
we have $${\frac{\ell^{\theta}(\lambda)+\ell^{\theta}_{1,i}  -\ell^{\theta}_{1,j}  +n\delta_{i,n} - n\delta_{j,n}}{n}}={\deg}(\lambda).$$
Consider the two graded algebras $$\Lambda_\sigma =\bigoplus_{i=1}^n \Lambda_\sigma e_i \and \Lambda' := \End\left(\bigoplus_{i=1}^n \Lambda_\sigma e_i \left(\ell^{\theta}_{1,i} +n\delta_{i,n}\right) \right).$$ By graded Morita equivalence, we have $\CM^\ZZ(\Lambda')\cong \CM^\ZZ(\Lambda_\sigma)$.
Since $\Lambda \cong \Lambda'$ as $R$-orders and $deg(x)=n$ in $\Lambda'$, it follows that the Auslander-Reiten quiver of $\CM^\ZZ(\Lambda')$ has $n$ connected components each of which is a degree shift of the Auslander-Reiten quiver of $\CM^\ZZ(\Lambda)$.
\end{remark}

The exact duality $\DR: \CM(\Lambda^{\operatorname{op}}) \to \CM(\Lambda)$ induces naturally an exact duality $\DR: \CM^{\ZZ}(\Lambda^{\operatorname{op}}) \to \CM^{\ZZ}(\Lambda)$.
In the same way, $\Omega$ and $\tr$ can be enhanced as functors $\Omega:\underline{\CM}^{\ZZ}(\Lambda^{\operatorname{op}}) \to \underline{\CM}^{\ZZ}(\Lambda^{\operatorname{op}})$ and $\tr:  \underline{\CM}^{\ZZ}(\Lambda) \to \underline{\CM}^{\ZZ}(\Lambda^{\operatorname{op}})$. Thus, as before we denote $\tau= \DR \Omega \tr :\underline{\CM}^{\ZZ}(\Lambda) \to \underline{\CM}^{\ZZ}(\Lambda) $.
We introduce the properties of $ \CM^\ZZ(\Lambda)$ in the following theorems.
\begin{theorem}[{\cite[Theorem 1.1]{ARgraded}}]
\label{th:gradedtau}
 Let $A$ be a $\ZZ$-graded $R$-order. Let the degree of $x$ be $d$. If there is an Auslander-Reiten sequence $0\lra L \lra M \lra N \lra0$ in $\CM^\ZZ(A)$, then $L=\tau N(-d)$.
\end{theorem}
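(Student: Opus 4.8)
The plan is to reduce the statement to a degree-bookkeeping problem: after forgetting the grading, the left term $L$ of an almost split sequence in $\CM^\ZZ(A)$ must agree with the ungraded Auslander--Reiten translate of $N$, so the only thing to determine is the internal grading on $L$. First I would recall the construction underlying Theorem~\ref{isolated singularity th}, now carried out with homogeneous data: from a graded projective presentation $P_1 \xrightarrow{f} P_0 \to N \to 0$ with $f$ of degree $0$, one forms $\tr N = \coker(f^*)$ where $(-)^* = \Hom_A(-,A)$, the syzygy $\Omega\tr N = \im(f^*)$, and $\tau N = \DR\Omega\tr N$ with $\DR = \Hom_R(-,R)$. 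Each of $(-)^*$, $\Omega$ and $\DR$ upgrades verbatim to a degree-preserving functor on the graded categories, so $\tau = \DR\Omega\tr$ is a well-defined endofunctor of $\underline{\CM}^\ZZ(A)$ and one obtains a graded exact sequence $0 \to \tau N \to \nu P_0 \to \nu N \to 0$. The crux is that this naive graded sequence is almost split only up to a grade shift, and the whole task is to locate that shift.

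The key observation is that the shift is forced by the base ring $R = K[x]$ with $\deg x = d$, through its graded canonical module. Almost split sequences ending at $N$ are governed by Serre/Auslander--Reiten duality over the order, which uses not the $R$-duality $\Hom_R(-,R)$ but the \emph{dualizing} functor $\Hom_R(-,\omega_R)$, where $\omega_R$ is the graded canonical module of $R$. For $R = K[x]$ with $\deg x = d$ one computes $\omega_R \cong R(-d)$: indeed the top local cohomology $H^1_{\mathfrak m}(R) \cong R_x/R$ lives in degrees $-d, -2d, \ldots$, so the $a$-invariant is $a(R) = -d$, and graded local duality gives $\omega_R = R(a(R)) = R(-d)$. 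The comparison between the two dualities is the clean identity $\Hom_R(-,\omega_R) \cong \DR(-)\otimes_R \omega_R \cong \DR(-)(-d)$ on graded-free (that is, Cohen--Macaulay) modules, together with $\Hom_K(M,K)\cong \DR(M)\otimes_R \Hom_K(R,K)$ for such $M$. Thus the genuine graded translate differs from the naive $\DR\Omega\tr$ precisely by the twist $(-d)$ coming from $\omega_R$.

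I would then track this single twist through the chain $\tr,\ \Omega,\ \DR$. Since $\tr$ and $\Omega$ are grade-neutral — they are read off from degree-$0$ presentations and connecting maps of degree $0$ — the entire discrepancy between the naive graded $\tau N$ and the true graded Auslander--Reiten translate is concentrated in the duality step and equals the shift by $a(R) = -d$. Feeding this into graded Serre duality singles out the unique grading making $0 \to L \to M \to N \to 0$ almost split in $\CM^\ZZ(A)$, namely $L \cong \tau N(-d)$. As a consistency cross-check one can apply the forgetful functor $\CM^\ZZ(A)\to\CM(A)$, which sends the graded almost split sequence to the ungraded one and hence gives $L\cong\tau N$ as ungraded modules, the grading being then fixed by the degree-$0$ normalization of the selecting socle element. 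The main obstacle is exactly this bookkeeping: verifying that $\tr$ and $\Omega$ preserve internal degree on the nose, that graded local duality over $R=K[x]$ contributes the $a$-invariant $-d$ and no further shift, and that the element of $\underline{\End}_A(N)$ selecting the almost split sequence sits in internal degree $0$; once these are checked, the identity $L=\tau N(-d)$ is formal.
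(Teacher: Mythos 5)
The statement you are proving is not actually proved in the paper: Theorem~\ref{th:gradedtau} is imported verbatim from Auslander--Reiten \cite{ARgraded}, so there is no internal argument to compare yours with. Judged on its own terms, your sketch identifies the correct mechanism, and it is the standard one (essentially that of the cited reference): the genuine graded Auslander--Reiten translate over a graded $R$-order is $\Hom_R(\Omega\tr(-),\omega_R)$ built from the \emph{graded} canonical module, for $R=K[x]$ with $\deg x=d$ one has $\omega_R\cong R(-d)$ (your computation of the $a$-invariant via $H^1_{(x)}(R)\cong R_x/R$ is right), and since the paper's $\tau$ is defined with $\DR=\Hom_R(-,R)$ while $\tr$ and $\Omega$ are degree-neutral, the genuine graded translate is $\tau(-)(-d)$; combined with uniqueness of the almost split sequence ending at $N$, this gives $L\cong\tau N(-d)$.

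Two caveats, the first of which matters if the proof is meant to be self-contained. First, the load-bearing input --- graded Auslander--Reiten/Serre duality over the non-complete, non-local ring $K[x]$, taken with the dualizing module rather than with $R$ --- is precisely the substance of \cite{ARgraded}; your argument correctly reduces the theorem to that duality plus bookkeeping, but does not prove the duality, so as written it is a reduction rather than an independent derivation. Second, your opening reduction and closing ``cross-check'' assert that the forgetful functor $\CM^\ZZ(A)\to\CM(A)$ carries graded almost split sequences to ungraded ones. Over $K[x]$ this step is not legitimate as stated: ungraded almost split sequences need not exist in $\CM(A)$ at all, which is why the paper itself passes to $\Lambda\tens_R K\llbracket x\rrbracket$ when it wants to speak of the ungraded Auslander--Reiten quiver (Remark~\ref{remark:quiver}). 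This flaw is not fatal, because your duality argument produces the graded isomorphism $L\cong\tau N(-d)$ directly without that reduction; but the reduction should be deleted, or replaced by a comparison with the completed order.
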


\begin{theorem}
\label{th:graded modules}
\begin{enumerate}
   \item The set of isomorphism classes of indecomposable graded Cohen-Macaulay $\Lambda$-modules is $$\{(i,j) \mid i,j \in \ZZ, 0 < j-i < n\},$$ where 
  $$(i,j):=[\overbrace{R \cdots R}^{i} \overbrace{(x)  \cdots (x)}^{j-i} \overbrace{(x^2)  \cdots (x^2)}^{n-j}]^t \for 1\le i<j\le n,$$
 $$(i+kn,j+kn):=(i,j)(2k) \and (j+kn,i+(k+1)n):=(i,j)(2k+1)$$ for $k\in \ZZ$. The projective-injective objects are of the form $(i, i+1)$ and $(i, i+n-1)$ for $i \in \ZZ$.
 \item The non-split extensions of graded indecomposable Cohen-Macaulay $\Lambda$-modules are of the form 
$$0\lra (s,t) \lra (s,t')\oplus(s',t) \lra (s',t')\lra 0$$
with $s<s'<t<t'<s+n$.
 \item For any non-projective indecomposable graded Cohen-Macaulay $\Lambda$-module $(i,j)$, the Auslander-Reiten sequence ending with $(i,j)$ is of the following form: $$0\lra (i-1,j-1) \lra (i-1,j)\oplus(i,j-1) \lra (i,j)\lra 0.$$
 \item The Auslander-Reiten quiver of $\CM^{\ZZ}(\Lambda)$ is the following:
{\tiny \[
\begin{tikzcd}[column sep=tiny, row sep=tiny]
\arrow[dotted,-]{rr}&&(0,1) \drar & &(1,2)\drar\arrow[dotted,-]{rrr}&&&\phantom{X}\\
\arrow[dotted,-]{r}&(-1,1)\urar\drar & &(0,2)\urar\drar&&(1,3)\arrow[dotted,-]{rr} &&\phantom{X}\\
\arrow[dotted,-]{rr}&&(-1,2)\urar\drar&&(0,3)\urar\drar\arrow[dotted,-]{rrr}& && \phantom{X}\\ 
\arrow[dotted,-]{r}&(-2,2)\urar\drar & &(-1,3)\arrow[dotted,-]{ddd}\urar\drar &&(0,4) \arrow[dotted,-]{rr} &&\phantom{X}\\
\arrow[dotted,-]{rr}&&(-2,3)\arrow[dotted,-]{dd} \urar&&(-1,4)\arrow[dotted,-]{dd}\urar \arrow[dotted,-]{rrr}&&&\phantom{X}\\
&&\phantom{X}&\phantom{X}&\phantom{X}&&&\phantom{X}\\
&&\phantom{X}&\phantom{X}&\phantom{X}&&&\phantom{X}
\end{tikzcd}
\]}
 \end{enumerate}
\end{theorem}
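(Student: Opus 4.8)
The plan is to lift the ungraded results already established in Section \ref{ss:CM modules} to the graded category, using that the forgetful functor $\CM^{\ZZ}(\Lambda)\to\CM(\Lambda)$ is a $\ZZ$-Galois covering. First, for part (1), I would note that $\deg x=1$ for this grading, since $x\Id\in\Lambda\cap\M_n(Kx)$; hence the grade shift $(1)$ acts freely and the covering is well-behaved. I equip each column module $(i,j)=[\overbrace{R\cdots R}^{i}\overbrace{(x)\cdots(x)}^{j-i}\overbrace{(x^2)\cdots(x^2)}^{n-j}]^{\operatorname{t}}$ with its standard grading (each ideal $(x^a)\subset R$ graded by degree) and check it is a graded $\Lambda$-module whose degree-$0$ endomorphisms are $R_0=K$, so it is indecomposable in $\CM^{\ZZ}(\Lambda)$. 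By covering theory, every indecomposable graded module lies over an indecomposable ungraded module $(s,t)$ of Theorem \ref{th:indecomposable modules}, and the fiber is its grade-shift orbit. Computing the action of $(1)$ on the standard gradings gives the half-rotation rule $(a,b)(1)=(b,a+n)$, equivalently $(i,j)(2k)=(i+kn,j+kn)$ and $(i,j)(2k+1)=(j+kn,i+(k+1)n)$; this shows the orbit over $(s,t)$ is exactly $\{(i,j)\mid 0<j-i<n\}$. The projective-injectives are then identified via Gorensteinness (projectives coincide with injectives in $\CM(\Lambda)$) and Theorem \ref{thm:edges and modules}: the projective columns of $\Lambda$ are the sides $(i,i+1)$ together with the wrap-around side $(1,n)$, whose graded shifts are precisely the families $(i,i+1)$ and $(i,i+n-1)$.

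For part (2), I would rerun the matrix analysis of Lemma \ref{pro:extension} inside $\CM^{\ZZ}(\Lambda)$, now requiring every structure map to be homogeneous of degree $0$. Homogeneity forces the polynomials $P_1,P_2,Q_1,Q_2$ appearing there to be monomials, which together with the constraints already obtained in Lemma \ref{pro:extension} rigidifies the degree shifts and collapses the two ungraded families into the single graded family $0\lra (s,t)\lra (s,t')\oplus(s',t)\lra (s',t')\lra 0$. The inequality $s<s'<t<t'<s+n$ is exactly the condition that all four terms are valid graded indecomposables with matching gradings (equivalently, that $(P_s,P_t)$ and $(P_{s'},P_{t'})$ cross once lifted to the cover), and the second ungraded form of Lemma \ref{pro:extension} is recovered from this one by the half-rotation $(a,b)\mapsto(b,a+n)$ built into the $\ZZ$-indexing.

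To obtain the Auslander-Reiten sequences in part (3), I would combine part (2) with Theorem \ref{th:gradedtau}: since $d=\deg x=1$, an AR sequence ending at $N$ begins at $\tau N(-1)$. I compute $\tau$ from the $2$-Calabi-Yau property of $\underline{\CM}(\Lambda)$ (Theorem \ref{th:cyeq}\eqref{2-CY}) together with the functorial description $\nu\cong {}_{\alpha^{-1}}(-)$ of Lemma \ref{lem:functoriso}; tracking the grade shift through the half-rotation rule gives $\tau(i,j)=(j-1,i+n-1)$, so that $\tau(i,j)(-1)=(i-1,j-1)$. The sequence $0\lra (i-1,j-1)\lra (i-1,j)\oplus(i,j-1)\lra (i,j)\lra 0$ is then a non-split extension of the form permitted by part (2), and it is almost split because its left term equals $\tau(i,j)(-1)$ and its middle term is the direct sum of the sources of the irreducible maps into $(i,j)$. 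Part (4) follows by assembling (1)--(3): the vertices are the modules of (1), the arrows are read off the meshes of (3) (with the projective-injectives of (1) forming the boundary), producing the displayed strip, which is the $\ZZ$-cover of the Möbius-strip quiver of $\underline{\CM}(\Lambda)$ described in Remark \ref{remark:quiver}.

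I expect the main obstacle to be the bookkeeping in part (1): proving that the grade shift realizes the half-rotation $(a,b)\mapsto(b,a+n)$ and that the resulting graded lifts are exhaustive and pairwise non-isomorphic. Once this parametrization is fixed, parts (2)--(4) are graded refinements of statements already available ungraded and follow by carefully tracking degrees.
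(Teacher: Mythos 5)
Your parts (1), (2) and (4) follow essentially the paper's route. In (1) the paper proves your ``covering theory'' claim by hand: given an indecomposable graded $X$ and an indecomposable summand $(i,j)$ of its image under the forgetful functor, it decomposes a section and a retraction into homogeneous components and uses that the degree-zero endomorphism ring of $(i,j)$ is $K$ to split $(i,j)$ off some shift $X(k)$, whence $X\cong(i,j)(-k)$; this is exactly the ingredient you flag, so (1) is fine (note only that the ``half-rotation rule'' $(a,b)(1)=(b,a+n)$ is a labelling convention in the paper, not something to compute -- the content is exhaustiveness and pairwise non-isomorphy of the shifts). Part (2) is treated in the paper exactly as you propose, as a graded rerun of Lemma \ref{pro:extension}, and (4) is assembled from (1) and (3) as you say.

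The genuine gap is in part (3), in the computation of the graded AR translate. You propose to obtain $\tau(i,j)=(j-1,i+n-1)$ from the $2$-Calabi--Yau property of $\underline{\CM}(\Lambda)$ and Lemma \ref{lem:functoriso}, ``tracking the grade shift''. Both of these are statements about the \emph{ungraded} category (and Theorem \ref{th:cyeq}~(1) is proved via the $\ZZ/n\ZZ$-grading on $S$, a different grading from the $\ZZ$-grading on $\Lambda$ used here). Since $\tau=\DR\Omega\tr$ commutes with the degree-forgetful functor, ungraded information only tells you that the graded $\tau(i,j)$ equals $(i-1,j-1)(k)$ for \emph{some} $k\in\ZZ$; the integer $k$ is precisely what no ungraded argument can determine, so there is nothing for your ``tracking'' to track. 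Worse, the natural graded lift of Lemma \ref{lem:functoriso} fails: the bimodule isomorphism ${}_1\Lambda_{\alpha}\cong\DR\Lambda$ is right multiplication by $G^{-1}$, whose entries have degrees $0$ and $-2$, so it is not homogeneous, and the graded Nakayama functor is not ${}_{\alpha^{-1}}(-)$ composed with a uniform degree shift (it shifts different projectives by different amounts). The paper fills exactly this hole by a genuinely graded computation: from the graded projective cover \eqref{eq:projective cover} it gets $\Omega(i,j)=(i+1,j+1)(-1)$, applies $(-)^{*}$ to obtain explicit exact sequences of graded $\Lambda^{\operatorname{op}}$-lattices, and reads off $\tau(i,j)(-1)=\DR\Omega\tr\,(i,j)(-1)=(i-1,j-1)$. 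Once that formula is in hand, the rest of your (3) -- the AR sequence ending at $(i,j)$ starts at $\tau(i,j)(-1)$ by Theorem \ref{th:gradedtau} (with $d=\deg x=1$) and must be the unique non-split extension given by part (2) -- is the paper's argument; your additional clause that the middle term is ``the direct sum of the sources of the irreducible maps into $(i,j)$'' is circular and unnecessary, since uniqueness of the extension already suffices.
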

\begin{proof}
(1) First of all, it is immediate that the graded modules $(i,j)$ for $0 < j-i < n$ are not isomorphic. Therefore, we need to prove that there are no other isomorphism classes. We consider the degree forgetful functor $F: \CM^\ZZ(\Lambda) \to \CM(\Lambda)$. Let $X \in \CM^\ZZ(\Lambda)$ be indecomposable and $(i,j)$ be an indecomposable summand of $FX$ in $\CM(\Lambda)$. There are two morphisms $f: (i,j) \rightarrow F X$ and $g: F X \rightarrow (i,j)$ such that $gf = \Id_{(i,j)}$. Let us write 
$$f = \sum_{m \in \ZZ} f_m \and g = \sum_{m \in \ZZ} g_m$$
where $f_m$ is a graded morphism from $(i,j)$ to $X(-m)$ and $g_m$ a graded morphism from $X(m)$ to $(i,j)$. Thus, we have
$$\sum_{k \in \ZZ} g_{k} f_{-k} = \Id_{(i,j)}$$
and, as the graded endomorphism ring of $(i,j)$ is $K$, there exists $k \in \ZZ$ such that $g_k f_{-k}$ is a nonzero multiple of $\Id_{(i,j)}$. In other terms we found two graded morphisms $\tilde f: (i,j) \rightarrow X(k)$ and $\tilde g: X(k) \rightarrow (i,j)$ such that $\tilde g \tilde f = \Id_{(i,j)}$. Thus, as idempotents split in $\CM^\ZZ(\Lambda)$ and $X$ is indecomposable, we get that $X \simeq (i,j)(-k)$.
Therefore, the set of isomorphism classes of indecomposable graded Cohen-Macaulay $\Lambda$-modules is $\{(i,j) \mid {i,j\in \ZZ, 0 < j-i < n}\}$.

(2) We omit the proof since it is analogous to Lemma \ref{pro:extension}.

(3) Consider the non-projective indecomposable graded Cohen-Macaulay $\Lambda$-module $(i,j)$. Thanks to (2), there is a short exact sequence
\begin{equation}
\label{eq:projective cover}
 0\lra (i+1,j+1)(-1) = (j+1-n,i+1) \lra (j+1-n, j) \oplus (i,i+1) \lra (i,j)\lra 0.
\end{equation}
in $\CM^\ZZ(\Lambda)$. Therefore, as $(i,i+1)\oplus (j+1-n, j)$ is projective and $(i,j)$ is non-projective, $(i+1,j+1)(-1)$ is the syzygy of $(i,j)$. Apply $\Hom_{\Lambda}(-,\Lambda)$ to \eqref{eq:projective cover}, when $i=1$ we get the following short exact sequence in $\CM^\ZZ({\Lambda}^{\operatorname{op}})$:
\begin{align*}0 \ra [\overbrace{ (x)\cdots (x)}^{j-2} \overbrace{R  \cdots R}^{n-j+1} (x^{-1})] &\ra [\overbrace{R \cdots R}^{n-1}(x^{-1})] \oplus [\overbrace{ (x)\cdots (x)}^{j-2} R \overbrace{(x^{-1}) \cdots (x^{-1})}^{n-j+1}] \\ &\ra [\overbrace{R \cdots R}^{j-1}\overbrace{(x^{-1}) \cdots (x^{-1})}^{n-j+1}] \ra 0,\end{align*}
and when $i>1$ we have the following short exact sequence in $\CM^\ZZ({\Lambda}^{\operatorname{op}})$:
\begin{align*}0 \ra [{\overbrace{ (x^2)\cdots (x^2)}^{i-2}}\overbrace{ (x)\cdots (x)}^{{j-i}} \overbrace{R  \cdots R}^{n-j+2}] &\ra [{\overbrace{(x^2) \cdots (x^2)}^{i-2}(x)}\overbrace{R \cdots R}^{n-i+1}] \oplus [\overbrace{ (x)\cdots (x)}^{j-2} R \overbrace{(x^{-1}) \cdots (x^{-1})}^{n-j+1}] \\ &\ra [\overbrace{(x) \cdots (x)}^{i-1}\overbrace{R \cdots R}^{j-i}\overbrace{(x^{-1}) \cdots (x^{-1})}^{n-j+1}] \ra 0. \end{align*}
Therefore, according to Theorem \ref{th:gradedtau}, when $i=1$ we have $$\tau(1,j)(-1)=\DR([\overbrace{R \cdots R}^{j-1}\overbrace{(x^{-1}) \cdots (x^{-1})}^{n-j+1}])(-1)=(j-1,n)(-1)=(0,j-1),$$
 when $i>1$ we have $$\tau(i,j)(-1)=\DR([\overbrace{(x) \cdots (x)}^{i-1}\overbrace{R \cdots R}^{j-i}\overbrace{(x^{-1}) \cdots (x^{-1})}^{n-j+1}])(-1)=(i-1,j-1).$$
 By the previous point, the only non-split extension from $(i,j)$ to $(i-1,j-1)$ (up to isomorphism) is $$0\lra (i-1,j-1) \lra (i-1,j)\oplus(i,j-1) \lra (i,j)\lra 0,$$ so it is an Auslander-Reiten sequence.
 
 (4) This is a direct consequence of (1) and (3).
\end{proof}

\begin{definition}
Let $\cc$ be a triangulated category. An object $T$ is said to be \emph{tilting} if $\Hom_\cc(T, T[k])=0$ for any $k \neq 0$ and $\thick(T)=\cc$, where $\thick(T)$ is the smallest full triangulated subcategory of $\cc$ containing $T$ and closed under isomorphisms and direct summands.
\end{definition}

\begin{theorem}[{\cite[Theorem 4.3]{derdgcat}, \cite[Theorem 2.2]{IyTa13}, \cite{BK1991}}]
\label{BK}
Let $\cc$ be an algebraic triangulated Krull-Schmidt category. If $\cc$ has a tilting object $T$, then there exists a triangle-equivalence
$$\cc \rightarrow \ck^{\operatorname{b}}(\proj \End_{\cc}(T)).$$
\end{theorem}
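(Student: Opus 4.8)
The statement is the general tilting theorem for algebraic triangulated categories, and the plan is to assemble the three cited results through a differential graded (DG) enhancement of $\cc$. First, since $\cc$ is algebraic, I would fix a Frobenius category $\ce$ with $\cc \simeq \underline{\ce}$ and pass to a DG enhancement: there is a DG category $\ca$, built from the standard DG model of complexes over $\ce$, whose homotopy category $H^0(\ca)$ is triangle-equivalent to $\cc$ and whose shift and cones reproduce those of $\cc$. Under this equivalence I lift the tilting object to an object of $\ca$, still denoted $T$. This is the one genuinely nontrivial input, and it is precisely what \cite[Theorem 2.2]{IyTa13} (building on \cite{BK1991}) supplies.

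Next, I would form the DG endomorphism algebra $B := \ca(T,T)$. Its cohomology computes the graded morphism space of $\cc$,
\[ H^k(B) \cong \Hom_\cc(T, T[k]) \quad \text{for all } k \in \ZZ, \]
so the tilting hypothesis $\Hom_\cc(T, T[k]) = 0$ for $k \neq 0$ forces $B$ to have cohomology concentrated in degree $0$, with $H^0(B) = \End_\cc(T) =: \Gamma$. A DG algebra whose cohomology lives in a single degree is, through the standard truncation zig-zag $B \hookleftarrow \tau_{\le 0}B \twoheadrightarrow \Gamma$ of DG algebra quasi-isomorphisms, quasi-isomorphic to the ordinary algebra $\Gamma$ placed in degree $0$.

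Then I would invoke Keller's theorem \cite[Theorem 4.3]{derdgcat}: the DG functor $\operatorname{RHom}_{\ca}(T,-)$ sends $T$ to the free module $B$ and restricts to a triangle equivalence from $\thick(T)$ onto the perfect derived category $\operatorname{per}(B)$. Since $\thick(T) = \cc$ by hypothesis, and since quasi-isomorphic DG algebras have triangle-equivalent perfect derived categories, this yields
\[ \cc \simeq \operatorname{per}(B) \simeq \operatorname{per}(\Gamma) = \ck^{\operatorname{b}}(\proj \Gamma) = \ck^{\operatorname{b}}(\proj \End_\cc(T)), \]
where the middle identity is the elementary fact that for an ordinary ring the perfect complexes are exactly the bounded complexes of finitely generated projectives up to homotopy. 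The Krull-Schmidt hypothesis enters here to guarantee that idempotents split in $\cc$ (so that $\thick(T)$ is closed under summands and the equivalence lands inside honest perfect complexes) and that $\Gamma$ is semiperfect, making $\proj \Gamma$ well behaved.

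The main obstacle is entirely the first step: producing a functorial DG enhancement of the abstractly given $\cc$ and verifying that its suspension and distinguished triangles are faithfully reflected. Everything after the enhancement — the formality of $B$, Keller's equivalence onto $\operatorname{per}(B)$, and the identification $\operatorname{per}(\Gamma) = \ck^{\operatorname{b}}(\proj \Gamma)$ — is formal. This is exactly why one cites \cite{derdgcat}, \cite{IyTa13} and \cite{BK1991} rather than reconstructing the enhancement by hand.
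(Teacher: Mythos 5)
The paper offers no proof of this statement: it is quoted directly from the cited references (Iyama--Takahashi's Theorem 2.2 is essentially this exact statement), so there is no internal argument to compare against. Your reconstruction --- DG enhancement of the algebraic category, formality of the DG endomorphism algebra from the vanishing of $\Hom_{\cc}(T,T[k])$ for $k\neq 0$, Keller's Theorem 4.3 identifying $\thick(T)=\cc$ with $\operatorname{per}$ of the endomorphism algebra, and the Krull--Schmidt (idempotent-splitting) hypothesis to ensure the equivalence hits all of $\ck^{\operatorname{b}}(\proj \End_{\cc}(T))$ --- is correct and is precisely the standard argument contained in those references.
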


Now we get the following theorem which is analogous to Theorem \ref{thm:cluster tilting} and Theorem \ref{th:equivalence}.
\begin{theorem}
  \label{thm:cluster cat as stable cat for A type}
 Let $\Lambda$ be the graded $R$-order defined as above and $Q$ be a quiver of type $A_{n-3}$ with $n\ge3$. Then
  \begin{enumerate}
     \item for a triangulation $\sigma$ of the polygon $P$, the Cohen-Macaulay $\Lambda$-module $e_F \Gamma_\sigma$ can be lifted to a tilting object in $\underline{\CM}^\ZZ(\Lambda)$;
     \item then there exists a triangle-equivalence $\cd^{\operatorname{b}}(KQ) \cong \underline{\CM}^{\ZZ}(\Lambda)$.
  \end{enumerate}
  \end{theorem}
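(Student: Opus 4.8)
The plan is to apply the tilting theorem (Theorem~\ref{BK}) to $\underline{\CM}^{\ZZ}(\Lambda)$ and then to recognise the resulting homotopy category as $\cd^{\operatorname{b}}(KQ)$. First I would record that $\underline{\CM}^{\ZZ}(\Lambda)$ is an algebraic triangulated category, being the stable category of the Frobenius category $\CM^{\ZZ}(\Lambda)$, and that it is Krull--Schmidt: the indecomposables are classified in Theorem~\ref{th:graded modules}(1) and idempotents split. Thus Theorem~\ref{BK} becomes available as soon as a tilting object is produced, and the whole statement reduces to exhibiting one whose endomorphism algebra is (derived equivalent to) a path algebra of type $A_{n-3}$.

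For part~(1) I would equip $e_F\Gamma_\sigma$ with the grading inherited from the $\theta$-length grading of $\Gamma_\sigma$, normalised through the identification $\Lambda_\sigma\cong\Lambda$ and the graded Morita equivalence of the preceding remark so that $\deg x=1$; call the resulting graded module $T$. By Theorem~\ref{thm:edges and modules} its indecomposable summands are graded lifts of the sides and diagonals of $\sigma$, the summands coming from sides being projective and hence zero in $\underline{\CM}^{\ZZ}(\Lambda)$. To see that $T$ is tilting I would check the two defining conditions against the Auslander--Reiten quiver $\ZZ A_{n-3}$ of Theorem~\ref{th:graded modules}(4). Generation, $\thick(T)=\underline{\CM}^{\ZZ}(\Lambda)$, follows because the lifted diagonals meet every $\tau$-orbit and every indecomposable is reached from them by iterating the cones of the Auslander--Reiten triangles of Theorem~\ref{th:graded modules}(3). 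For rigidity, $\Hom_{\underline{\CM}^{\ZZ}(\Lambda)}(T,T[k])=0$ for $k\neq 0$, I would use that the shift $[1]=\Omega^{-1}$ acts on indecomposables by $(i,j)\mapsto(i-1,j-1)(1)$, as read off from the syzygy sequence~\eqref{eq:projective cover}; a nonzero self-extension in degree $k\neq 0$ would then force a morphism between two summands separated by $k$ applications of this autoequivalence, which the mutual position of the lifted diagonals forbids.

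For part~(2) I would compute the endomorphism algebra of $T$, taking $\sigma$ to be the fan triangulation $\{(P_1,P_3),\dots,(P_1,P_{n-1})\}$ used in the proof of Theorem~\ref{th:cyeq}. Its graded lifted diagonals occupy one vertex in each of the $n-3$ $\tau$-orbits of $\ZZ A_{n-3}$ and are mutually aligned, hence form a complete slice; the endomorphism algebra of a complete slice in $\ZZ A_{n-3}$ is the path algebra $KQ$ of the corresponding orientation of $A_{n-3}$, computed from the mesh relations. This is consistent with the ungraded identification $\underline{\End}_{\Lambda}(T_\sigma)\cong\Gamma_\sigma^{\operatorname{op}}/(e_F)\cong(KQ)^{\operatorname{op}}$ of Theorem~\ref{th:cyeq}, which for the fan triangulation is already hereditary. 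Theorem~\ref{BK} then gives a triangle-equivalence $\underline{\CM}^{\ZZ}(\Lambda)\cong\ck^{\operatorname{b}}(\proj KQ)$, and since $KQ$ is hereditary of finite global dimension, $\ck^{\operatorname{b}}(\proj KQ)\cong\cd^{\operatorname{b}}(KQ)$. For an arbitrary triangulation the lifted object is again tilting: this follows from the fan case together with the fact that flips of triangulations lift to tilting mutations of $T$ in $\underline{\CM}^{\ZZ}(\Lambda)$, which preserve the tilting property, so part~(1) holds as stated for every $\sigma$.

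I expect the principal obstacle to be the rigidity statement in part~(1) together with the accompanying claim that the graded lifts of the diagonals occupy a single fundamental domain of $\ZZ A_{n-3}$: this is where the grading bookkeeping is delicate, since one must confirm that no Auslander--Reiten path connects two of these lifts across a nonzero power of $[1]=\Omega^{-1}$. Once this combinatorial picture is secured, the identification of the endomorphism algebra as $KQ$ and the passage through Theorem~\ref{BK} are formal.
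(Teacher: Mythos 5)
Your skeleton (lift $e_F\Gamma_\sigma$ to a graded tilting object, apply Theorem~\ref{BK}, identify the endomorphism algebra for the fan triangulation) is the same as the paper's, but the step on which everything rests, namely generation $\thick(T)=\underline{\CM}^{\ZZ}(\Lambda)$, is argued from a false premise \emph{and} a false principle. False premise: the lifted diagonals need not meet every $\tau$-orbit. By Theorem~\ref{th:graded modules}, the $\tau$-orbit of $(i,j)$ consists of the $(i-t,j-t)$, so the orbits of non-projectives are indexed by $j-i\in\{2,\dots,n-2\}$, while a degree shift of $(i,j)$ only moves this index inside $\{j-i,\ n-(j-i)\}$. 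For $n=6$ and the triangulation $\sigma=\{(1,3),(3,5),(1,5)\}$, every graded lift of every diagonal has index in $\{2,4\}$, so the entire middle orbit $j-i=3$ of $\ZZ A_3$ is missed, whatever lifts you choose --- yet the theorem is true, so ``meeting every orbit'' cannot be the mechanism. False principle: one indecomposable per $\tau$-orbit does not generate. In $\cd^{\operatorname{b}}(KQ)$ with $Q\colon 1\to 2\to 3$, the objects $S_3$, $M$ (the indecomposable with dimension vector $(1,1,0)$) and $P_1$ lie in the three distinct $\tau$-orbits, but all three satisfy $\Hom(-,S_2[k])=0$ for every $k$, a property inherited by shifts, cones and direct summands; hence $S_2\notin\thick(S_3\oplus M\oplus P_1)$. (AR-triangle cones also cannot get you started from one object per orbit: to produce the third term of a mesh you need the other two.) For the fan triangulation the lifts do form a sectional path, and a genuine slice-by-slice mesh induction could be substituted, but that is not what you wrote and it does not cover general $\sigma$. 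The paper's mechanism is different: it lifts right $\add(T_\sigma)$-approximation sequences --- whose existence uses that $T_\sigma$ is cluster tilting in the ungraded category (Theorem~\ref{thm:cluster tilting}) --- to graded short exact sequences, and then propagates with $\Omega$.

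Your fallback for arbitrary $\sigma$, that flips lift to ``tilting mutations \dots which preserve the tilting property'', is also invalid: mutation preserves silting, not tilting. Already in $\cd^{\operatorname{b}}(K(1\to 2))\cong\underline{\CM}^{\ZZ}(\Lambda)$ (the case $n=5$), mutating the tilting object $P_1\oplus P_2$ at $P_1$ gives $P_2\oplus P_1[1]$, which has a nonzero morphism to its shift by $-1$ and is therefore silting but not tilting. The paper avoids both problems by proving (1) directly for every $\sigma$: it fixes a vertex $i_0$ with no incident diagonal and takes the explicit lifts $(i,j)'=(i,j)$ for $j<i_0$ and $(i,j)(-1)$ for $j>i_0$; rigidity (your ``mutual position'' claim) is then an explicit index computation from $\Omega^{k}(i,j)=(i+k,j+k)(-k)$, with the case $k=-1$ handled by the ungraded cluster-tilting property. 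None of this can even be started from your lift, because ``the $\theta$-length grading normalised so that $\deg x=1$'' is not well defined: $C$ (i.e.\ $x$) has $\theta$-degree $n$ and the arrow degrees are not divisible by $n$, and the Remark's graded Morita equivalence produces a category whose AR quiver has $n$ components, so choosing a component is precisely the choice of shifts you never make. Finally, computing $\End(T)$ as the path algebra of a complete slice in $\ZZ A_{n-3}$ presupposes that $\underline{\CM}^{\ZZ}(\Lambda)$ is standard (equivalent to the mesh category of its AR quiver), which is essentially what the theorem is proving; the paper instead obtains $\End_{\underline{\CM}^{\ZZ}(\Lambda)}(e_F\Gamma_\sigma)\cong\End_{\underline{\CM}(\Lambda)}(e_F\Gamma_\sigma)\cong\Gamma_\sigma^{\operatorname{op}}/(e_F)$ from Theorem~\ref{th:endomorphism}, after checking that there are no graded morphisms to nonzero shifts.
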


\begin{proof}
(1) First, we have $e_F\Gamma_\sigma\cong \bigoplus_{(i,j)\in \sigma}(i,j)$. So we need to choose some degree shift of each $(i, j)$ appearing in $\sigma$. There exists a vertex $i_0$ of $P$ such that $i_0$ does not have any incident internal edge in $\sigma$. For each $(i,j) \in \sigma$, denote 
$$(i,j)' = \begin{cases} (i,j), & \text{if } j < i_0, \\ (i,j)(-1) = (j-n, i), & \text{if } j > i_0. \end{cases} $$
and $\sigma' = \{(i,j)' \,|\, (i,j) \in \sigma\}$. Let us prove that the graded module $T'_{\sigma'} = \bigoplus_{(i,j) \in \sigma'} (i,j)$ is tilting (it is $T_\sigma$ if we forget the grading). Notice first that for any $(i,j) \in \sigma'$, we have $i_0-n < i < j < i_0$. Let us check that  $$\Hom_{\underline{\CM}^{\ZZ}(\Lambda)}((s,t), \Omega^k (i,j))=0$$ for any $(i,j), (s,t)\in \sigma'$ and $k \neq 0$. Thanks to \eqref{eq:projective cover} in the proof of Theorem \ref{th:graded modules}, we know that
$$\Omega^{k}(i,j)=(i+k,j+k)(-k)$$ 
and then, for $\ell \in \ZZ$,
\begin{align*}\Omega^{2\ell}(i,j)&=(i+(2-n)\ell,j+(2-n)\ell) \\ \text{and} \quad \Omega^{2\ell+1}(i,j)&=(j+(2-n)(\ell+1)-1,i+(2-n)\ell+1).\end{align*}
Therefore, denoting $(i',j') = \Omega^k(i,j)$, if $k > 0$, we get $j' \leq j + 2 - n \leq i_0 + 1 - n \leq s < t$ or $i' \leq j+1-n \leq i_0 - n < s$ so 
$$\Hom_{\underline{\CM}^{\ZZ}(\Lambda)}((s,t), \Omega^k (i,j))=0.$$
If $k < -1$, we get $i' \geq i + n - 2 \geq i_0 - 1 \geq t$ and $j' \geq j + n - 2 \geq t+1$, so any morphism from $(s,t)$ to $(i',j')$ in $\CM^\ZZ(\Lambda)$ factors through $(t-1,t)$ which is projective.

For $k = -1$, by Theorem \ref{thm:cluster tilting}, we get $$\Hom_{\underline{\CM}^{\ZZ}(\Lambda)}(T'_{\sigma'}, \Omega^{-1} T'_{\sigma'}) {\subset}\Hom_{\underline{\CM}(\Lambda)}(T_{\sigma},  \Omega^{-1} T_{\sigma})= 0.$$

Let us now prove that 
$$\thick(T'_{\sigma'})=\underline{\CM}^{\ZZ}(\Lambda).$$

Consider an object of the form $(s,i_0) \in \CM^\ZZ(\Lambda)$. Let $(s', t') \in \CM(\Lambda)$ be its image through the forgetful functor. There exists a short exact sequence $$0 \rightarrow T'_1 \rightarrow T'_0 \rightarrow (s', t') \rightarrow 0$$ of Cohen-Macaulay $\Lambda$-modules such that $T'_0, T'_1 \in \add(T_\sigma)$. Indeed, let $\tilde f: \tilde T'_0 \rightarrow (s',t')$ be a right $\add(T_\sigma)$-approximation of $(s',t')$ (\emph{i.e.} $\tilde T'_0 \in \add(T_\sigma)$ and every morphism $g: T_\sigma \rightarrow (s',t')$ factors through $\tilde f$). As $\CM(\Lambda)$ is an exact category, and 
$$0 \rightarrow  (s'+1, t'+1) \rightarrow  (s', s'+1) \oplus (t', t'+1) \xrightarrow{\gamma} (s',t') \rightarrow 0$$
is a short exact sequence, it is classical that the map $T'_0 =  (s', s'+1) \oplus (t', t'+1)  \oplus \tilde T'_0 \rightarrow (s',t')$ constructed from $\gamma$ and $\tilde f$ admits a kernel. So there is a short exact sequence $$0 \rightarrow T'_1 \rightarrow T'_0 \xrightarrow{f} (s', t') \rightarrow 0$$ where $f$ is a right $\add(T_\sigma)$-approximation. Then, applying the functor $\Hom_{\CM(\Lambda)}(T_\sigma, -)$ to the short exact sequence, and because $T_\sigma$ is cluster tilting (Theorem \ref{thm:cluster tilting}), we obtain easily that $T'_1 \in \add(T_\sigma)$.

Notice now that, for any $(i, j) \in \CM^{\ZZ}(\Lambda)$, if $\Ext^1_{\CM^\ZZ(\Lambda)}((s,i_0), (i,j)) \neq 0$, then $i_0-n < i < j < i_0$ by Theorem \ref{th:graded modules} (2), so we can lift the short exact sequence $$0 \rightarrow T'_1 \rightarrow T'_0 \rightarrow (s', t') \rightarrow 0$$ to a short exact sequence $$0 \rightarrow T_1 \rightarrow T_0 \rightarrow (s, i_0) \rightarrow 0$$
of graded Cohen-Macaulay $\Lambda$-modules where $T_0, T_1 \in \add(T'_{\sigma'})$. So $(s, i_0) \in \thick(T'_{\sigma'})$.

For any non-projective $(s,t) \in \CM^\ZZ(\Lambda)$ satisfying $s < i_0 < t < s+n-1$, there is a short exact sequence
$$0 \rightarrow (t+1-n, i_0) \rightarrow (t+1-n, t) \oplus (s, i_0) \rightarrow (s, t) \rightarrow 0$$
and $(t+1-n, t)$ is projective, so, as $(t+1-n, i_0), (s, i_0) \in \thick(T'_{\sigma'})$, we get $(s, t) \in \thick(T'_{\sigma'})$. In other terms, every $(s, t) \in \CM^\ZZ(\Lambda)$ such that $s < i_0$ and $t \geq i_0$ is in $\thick(T'_{\sigma'})$. By applying $\Omega$, we get that also every $(s, t) \in \CM^\ZZ(\Lambda)$ such that $i_0 - n < s < t \leq i_0$ is in $\thick(T'_{\sigma'})$. Putting both together, every $(s, t)$ such that $i_0-n < s < i_0$ is in $\thick(T'_{\sigma'})$. As applying $\Omega^2$ adds $2-n$ to $s$, we concludes that $\thick(T'_{\sigma'})=\underline{\CM}^{\ZZ}(\Lambda)$.

Therefore, $T'_{\sigma'}$ is a tilting object in $\underline{\CM}^\ZZ(\Lambda)$.

(2) Take the triangulation $\sigma$ whose set of diagonals is $\{(P_1,P_3),(P_1,P_4),\cdots,(P_1,P_{n-1})\}$.
The full subquiver $Q$ of $Q_\sigma$ with the set $Q_{\sigma,0} \smallsetminus F$ of vertices is an alternating quiver of type $A_{n-3}$. 
Thus, we have $$\Gamma_\sigma^{\operatorname{op}} / (e_F) \cong (KQ)^{\operatorname{op}}.$$
For any $i, j$ such that $1<i,j<n$ and $k \in \ZZ$, we have
$$(1,j)(2k) = (1+kn,j+kn) \and (1,j)(2k+1) = (j+kn,1+(k+1)n)$$
so $\End_{\underline{\CM}^ZZ(\Lambda)}((1,i),(1,j)(\ell)) = 0$ for $\ell \neq 0$. Indeed, if $\ell < 0$ this is immediate and if $\ell > 0$, any morphism factors through $(1,n)$ which is projective. Thus $\End_{\underline{\CM}^\ZZ(\Lambda)}(e_F \Gamma_\sigma) \cong \End_{\underline{\CM}(\Lambda)}(e_F \Gamma_\sigma )$. 

By Theorem \ref{th:endomorphism},  we have the following isomorphism 
\begin{align*}
\End_{\underline{\CM}^\ZZ(\Lambda)}(e_F \Gamma_\sigma)& \cong \End_{\underline{\CM}(\Lambda)}(e_F \Gamma_\sigma ) \\
&\cong \Gamma_\sigma^{\operatorname{op}} / (e_F).
\end{align*}

 Because $\underline{\CM}^\ZZ(\Lambda)$ is an algebraic triangulated Krull-Schmidt category, and $e_F \Gamma_\sigma$ is a tilting object in $\underline{\CM}^\ZZ(\Lambda)$, by Theorem \ref{BK}, there exists a triangle-equivalence $$\underline{\CM}^\ZZ(\Lambda) \cong \ck^{\operatorname{b}}(\proj\End^{\operatorname{op}}_{\underline{\CM}^\ZZ(\Lambda)}(e_F \Gamma_\sigma)).$$
Since $\gldim K{Q}_{n-3} < \infty$, we have a triangle-equivalence $$\ck^{\operatorname{b}}(\proj\End^{\operatorname{op}}_{\underline{\CM}^\ZZ(\Lambda)}(e_F \Gamma_\sigma)) \cong\ck^{\operatorname{b}}(\proj KQ) \cong \cd^{\operatorname{b}}(KQ).$$
Therefore there is a triangle-equivalence $\underline{\CM}^\ZZ(\Lambda) \cong \cd^{\operatorname{b}}(KQ)$. 

%
\end{proof}
 
\bibliographystyle{plain}
\bibliography{biblio}

\begin{thebibliography}{10}

\bibitem{Afourier}
C.~{Amiot}.
\newblock Cluster categories for algebras of global dimension 2 and quivers
  with potential.
\newblock {\em Ann. Inst. Fourier (Grenoble)}, 59(6):2525--2590, 2009.

\bibitem{AIRCC}
C.~{Amiot}, O.~{Iyama}, and I.~{Reiten}.
\newblock {Stable categories of Cohen-Macaulay modules and cluster categories},
  April 2011.
\newblock arXiv: 1104.3658.

\bibitem{Araya}
T.~Araya.
\newblock Exceptional sequences over graded {C}ohen-{M}acaulay rings.
\newblock {\em Math. J. Okayama Univ.}, 41:81--102 (2001), 1999.

\bibitem{FMO}
M.~Auslander.
\newblock Functors and morphisms determined by objects.
\newblock In {\em Representation theory of algebras ({P}roc. {C}onf., {T}emple
  {U}niv., {P}hiladelphia, {P}a., 1976)}, pages 1--244. Lecture Notes in Pure
  Appl. Math., Vol. 37. Dekker, New York, 1978.

\bibitem{ARgraded}
M.~Auslander and I.~Reiten.
\newblock Almost split sequences for {${\bf Z}$}-graded rings.
\newblock In {\em Singularities, representation of algebras, and vector bundles
  ({L}ambrecht, 1985)}, volume 1273 of {\em Lecture Notes in Math.}, pages
  232--243. Springer, Berlin, 1987.

\bibitem{BKM}
K.~Baur, A.~King, and R.~Marsh.
\newblock Dimer models and cluster categories of grassmannians.
\newblock in preparation.

\bibitem{BK1991}
A.~I. Bondal and M.~M. Kapranov.
\newblock Framed triangulated categories.
\newblock {\em Mat. Sb.}, 181(5):669--683, 1990.

\bibitem{BIRS}
A.~B. Buan, O.~Iyama, I.~Reiten, and D.~Smith.
\newblock Mutation of cluster-tilting objects and potentials.
\newblock {\em Amer. J. Math.}, 133(4):835--887, 2011.

\bibitem{BMRRT}
A.~B. Buan, R.~Marsh, M.~Reineke, I.~Reiten, and G.~Todorov.
\newblock Tilting theory and cluster combinatorics.
\newblock {\em Adv. Math.}, 204(2):572--618, 2006.

\bibitem{CaCh06}
P.~Caldero and F.~Chapoton.
\newblock Cluster algebras as {H}all algebras of quiver representations.
\newblock {\em Comment. Math. Helv.}, 81(3):595--616, 2006.

\bibitem{CPF06}
P.~Caldero, F.~Chapoton, and R.~Schiffler.
\newblock Quivers with relations arising from clusters ({$A_n$} case).
\newblock {\em Trans. Amer. Math. Soc.}, 358(3):1347--1364, 2006.

\bibitem{LF11}
Giovanni Cerulli~Irelli and Daniel Labardini-Fragoso.
\newblock Quivers with potentials associated to triangulated surfaces, {P}art
  {III}: tagged triangulations and cluster monomials.
\newblock {\em Compos. Math.}, 148(6):1833--1866, 2012.

\bibitem{RT}
C.~W. Curtis and I.~Reiner.
\newblock {\em Methods of representation theory. {V}ol. {I}}.
\newblock John Wiley \& Sons Inc., New York, 1981.
\newblock With applications to finite groups and orders, Pure and Applied
  Mathematics, A Wiley-Interscience Publication.

\bibitem{Thanvanden}
L.~{de Thanhoffer de V{\"o}lcsey} and M.~{Van den Bergh}.
\newblock {Explicit models for some stable categories of maximal Cohen-Macaulay
  modules}, June 2010.
\newblock 1006.2021.

\bibitem{DWZ1}
H.~Derksen, J.~Weyman, and A.~Zelevinsky.
\newblock Quivers with potentials and their representations. {I}. {M}utations.
\newblock {\em Selecta Math. (N.S.)}, 14(1):59--119, 2008.

\bibitem{FST08}
S.~Fomin, M.~Shapiro, and D.~Thurston.
\newblock Cluster algebras and triangulated surfaces. {I}. {C}luster complexes.
\newblock {\em Acta Math.}, 201(1):83--146, 2008.

\bibitem{FoZe02}
S.~Fomin and A.~Zelevinsky.
\newblock Cluster algebras. {I}. {F}oundations.
\newblock {\em J. Amer. Math. Soc.}, 15(2):497--529 (electronic), 2002.

\bibitem{FoZe03}
S.~Fomin and A.~Zelevinsky.
\newblock Cluster algebras. {II}. {F}inite type classification.
\newblock {\em Invent. Math.}, 154(1):63--121, 2003.

\bibitem{FuKeller}
C.~Fu and B.~Keller.
\newblock On cluster algebras with coefficients and 2-{C}alabi-{Y}au
  categories.
\newblock {\em Trans. Amer. Math. Soc.}, 362(2):859--895, 2010.

\bibitem{GLS11}
C.~Gei{\ss}, B.~Leclerc, and J.~Schr{\"o}er.
\newblock Kac-{M}oody groups and cluster algebras.
\newblock {\em Adv. Math.}, 228(1):329--433, 2011.

\bibitem{Happel}
D.~Happel.
\newblock {\em Triangulated categories in the representation theory of
  finite-dimensional algebras}, volume 119 of {\em London Mathematical Society
  Lecture Note Series}.
\newblock Cambridge University Press, Cambridge, 1988.

\bibitem{Happel2}
D.~Happel.
\newblock Auslander-{R}eiten triangles in derived categories of
  finite-dimensional algebras.
\newblock {\em Proc. Amer. Math. Soc.}, 112(3):641--648, 1991.

\bibitem{Iya-order}
O.~Iyama.
\newblock Representation theory of orders.
\newblock In {\em Algebra---representation theory ({C}onstanta, 2000)},
  volume~28 of {\em NATO Sci. Ser. II Math. Phys. Chem.}, pages 63--96. Kluwer
  Acad. Publ., Dordrecht, 2001.

\bibitem{IyamaLerner}
O.~{Iyama} and B.~{Lerner}.
\newblock {Tilting bundles on orders on \$P\^{}d\$}.
\newblock {\em ArXiv e-prints}, June 2013.
\newblock 1306.5867.

\bibitem{IyTa13}
O.~Iyama and R.~Takahashi.
\newblock Tilting and cluster tilting for quotient singularities.
\newblock {\em Math. Ann.}, 356(3):1065--1105, 2013.

\bibitem{JKS}
B.~Jensen, A.~King, and X.~Su.
\newblock A category for grassmannian cluster algebras.
\newblock in preparation.

\bibitem{Matrixrepresentation}
H.~Kajiura, K.~Saito, and A.~Takahashi.
\newblock Matrix factorization and representations of quivers. {II}. {T}ype
  {$ADE$} case.
\newblock {\em Adv. Math.}, 211(1):327--362, 2007.

\bibitem{Matrixweight}
H.~Kajiura, K.~Saito, and A.~Takahashi.
\newblock Triangulated categories of matrix factorizations for regular systems
  of weights with {$\epsilon=-1$}.
\newblock {\em Adv. Math.}, 220(5):1602--1654, 2009.

\bibitem{Ka91}
M.~Kashiwara.
\newblock On crystal bases of the {$Q$}-analogue of universal enveloping
  algebras.
\newblock {\em Duke Math. J.}, 63(2):465--516, 1991.

\bibitem{derdgcat}
B.~Keller.
\newblock Deriving {DG} categories.
\newblock {\em Ann. Sci. \'Ecole Norm. Sup. (4)}, 27(1):63--102, 1994.

\bibitem{KR}
B.~Keller and I.~Reiten.
\newblock Acyclic {C}alabi-{Y}au categories.
\newblock {\em Compos. Math.}, 144(5):1332--1348, 2008.
\newblock With an appendix by Michel Van den Bergh.

\bibitem{LF09}
D.~Labardini-Fragoso.
\newblock Quivers with potentials associated to triangulated surfaces.
\newblock {\em Proc. Lond. Math. Soc. (3)}, 98(3):797--839, 2009.

\bibitem{LF12}
D.~{Labardini-Fragoso}.
\newblock {Quivers with potentials associated to triangulated surfaces, part
  {IV:} Removing boundary assumptions}, June 2012.
\newblock arXiv: 1206.1798.

\bibitem{Lu91}
G.~Lusztig.
\newblock Quivers, perverse sheaves, and quantized enveloping algebras.
\newblock {\em J. Amer. Math. Soc.}, 4(2):365--421, 1991.

\bibitem{Lu97}
G.~Lusztig.
\newblock Total positivity and canonical bases.
\newblock In {\em Algebraic groups and {L}ie groups}, volume~9 of {\em Austral.
  Math. Soc. Lect. Ser.}, pages 281--295. Cambridge Univ. Press, Cambridge,
  1997.

\bibitem{MaReZe03}
R.~Marsh, M.~Reineke, and A.~Zelevinsky.
\newblock Generalized associahedra via quiver representations.
\newblock {\em Trans. Amer. Math. Soc.}, 355(10):4171--4186 (electronic), 2003.

\bibitem{Palu}
Y.~Palu.
\newblock Cluster characters for 2-{C}alabi-{Y}au triangulated categories.
\newblock {\em Ann. Inst. Fourier (Grenoble)}, 58(6):2221--2248, 2008.

\bibitem{almostsplitseqorder}
K.~W. Roggenkamp.
\newblock The construction of almost split sequences for integral group rings
  and orders.
\newblock {\em Comm. Algebra}, 5(13):1363--1373, 1977.

\bibitem{MR0412223}
K.~W. Roggenkamp and J.~W. Schmidt.
\newblock Almost split sequences for integral group rings and orders.
\newblock {\em Comm. Algebra}, 4(10):893--917, 1976.

\bibitem{POS}
D.~Simson.
\newblock {\em Linear representations of partially ordered sets and vector
  space categories}, volume~4 of {\em Algebra, Logic and Applications}.
\newblock Gordon and Breach Science Publishers, Montreux, 1992.

\bibitem{CM}
Y.~Yoshino.
\newblock {\em Cohen-{M}acaulay modules over {C}ohen-{M}acaulay rings}, volume
  146 of {\em London Mathematical Society Lecture Note Series}.
\newblock Cambridge University Press, Cambridge, 1990.

\end{thebibliography}
\end{document}